\newcommand\footnoteref[1]{\protected@xdef\@thefnmark{\ref{#1}}\@footnotemark}
\newtheorem{theorem}{Theorem}%
\newtheorem{lemma}[theorem]{Lemma}%
\newtheorem{corollary}[theorem]{Corollary}%
\newtheorem{definition}[theorem]{Definition}%
\let\originalleft\left
\let\originalright\right
\renewcommand{\left}{\mathopen{}\mathclose\bgroup\originalleft}
\renewcommand{\right}{\aftergroup\egroup\originalright}
\newcommand{\semcol}{\hspace{-.08cm} ; \!}
\newcommand{\vol}{\text{\normalfont  vol}}
\newcommand{\leffdim}{\textit{\underbar n}}
\newcommand{\ueffdim}{\bar n}
\newcommand{\jrdn}{\bar d_k}
\newcommand{\newldeff}{\mathfrak{d}}
\newcommand{\newudeff}{\mathfrak{d}_+}
\newtheorem*{lemma*}{Lemma}
\newtheorem*{proposition*}{Proposition}
\newtheorem*{theorem*}{Theorem}
\renewcommand*\env@matrix[1][\arraystretch]{%
  \edef\arraystretch{#1}%
  \hskip -\arraycolsep
  \let\@ifnextchar\new@ifnextchar
  \array{*\c@MaxMatrixCols c}}
\newcommand{\TODO}[1][0]{%
  \ifx#10
    $\square$
  \else
    $\boxtimes$
  \fi
}
\newcommand\blfootnote[1]{%
  \begingroup
  \renewcommand\thefootnote{}\footnote{#1}%
  \addtocounter{footnote}{-1}%
  \endgroup
}
\tikzstyle{inarrow} = [<-,very thick]
\tikzstyle{bcircle} = [circle,draw = blue]
\begin{document}

\title{Metric Entropy of Ellipsoids in Banach Spaces: Techniques and Precise Asymptotics}
\date{}
\author{Thomas Allard \\ tallard@ethz.ch  \and Helmut Bölcskei \\ hboelcskei@ethz.ch}

\maketitle

\abstract{
\noindent
We develop new techniques for computing the metric entropy of ellipsoids---with polynomially decaying semi-axes---in Banach spaces. 
Besides leading to a unified and comprehensive framework, these tools deliver numerous novel results as well as substantial improvements and generalizations of classical results. Specifically, we characterize the constant in the leading term in the asymptotic expansion of the metric entropy of $p$-ellipsoids with respect to $q$-norm, for arbitrary $p,q \in [1, \infty]$, to date known only in the case $p=q=2$. 
Moreover, for $p=q=2$, we improve upon classical results 
by specifying the second-order term in the asymptotic expansion. 
In the case $p=q=\infty$, we obtain a complete, as opposed to asymptotic, characterization of metric entropy and explicitly construct optimal coverings.
To the best of our knowledge, this is the first exact characterization of the metric entropy of an infinite-dimensional body. Application of our general results to function classes yields an improvement of the asymptotic expansion of the metric entropy of unit balls in Sobolev spaces and identifies the dependency 
of the metric entropy of unit balls in Besov spaces on the domain of the functions in the class.
Sharp results on the metric entropy of function classes find application, e.g., in machine learning, where they allow to specify the minimum required size of deep neural networks for function approximation, nonparametric regression, and classification over these function classes.
}

\blfootnote{\label{ack}The authors gratefully acknowledge support by the Lagrange Mathematics and Computing Research Center, Paris, France.}

\section{Introduction}\label{introduction}

We introduce a suite of new techniques for the characterization of the metric entropy of infinite-dimensional ellipsoids with polynomially decaying semi-axes.
Our approach builds on that underlying the results by the authors of the present paper in \cite{firstpaper} for ellipsoids of exponentially decaying semi-axes.
The case of polynomial decay is, however, significantly more challenging.
Specifically, the pillars of the techniques in \cite{firstpaper}, namely thresholding of semi-axes and the use of volume arguments, need to be replaced by the concept of block decomposition of semi-axes and density arguments, respectively.

The metric entropy of infinite-dimensional ellipsoids with polynomially decaying semi-axes has been studied quite widely in the literature, 
see e.g. \cite{marcus1974varepsilon,lorentzMetricEntropyApproximation1966,pinkus1985,donohoCountingBitsKolmogorov2000,grafSharpAsymptoticsMetric2004,luschgySharpAsymptoticsKolmogorov2004} and work on the entropy numbers of diagonal operators between sequence spaces \cite{carlInequalitiesEigenvaluesEntropy1980,carl1981entropy,carlEntropyCompactnessApproximation1990,edmunds1998entropy,kuhnENTROPYNUMBERSDIAGONAL2001,kuhnEntropyNumbersEmbeddings2005,kuhn2005entropy,kuhnEntropyNumbersSequence2008,KOSSACZKA2020319}. 
The new tools developed here yield substantial improvements and generalizations of the best known results.
Specifically, we characterize the constant in the leading term in the asymptotic expansion of the metric entropy of $p$-ellipsoids with respect to $q$-norm, for arbitrary $p,q \in [1, \infty]$, to date known only in the Hilbertian case $p=q=2$  \cite{donohoCountingBitsKolmogorov2000,luschgySharpAsymptoticsKolmogorov2004}. 
Moreover, for $p=q=2$, we also improve upon the classical results \cite{donohoCountingBitsKolmogorov2000,luschgySharpAsymptoticsKolmogorov2004} by specifying the second order term in the asymptotic expansion of metric entropy.
This result was applied in \cite{thirdpaper} by the authors of the present paper to improve upon the best known characterization
of the asymptotic expansion of the metric entropy of unit balls in Sobolev spaces.

In the case $p=q=\infty$, we even get exact expressions for metric entropy and explicitly construct optimal coverings.
To the best of our knowledge, this is the first exact, as opposed to asymptotic only, characterization of the metric entropy of an infinite-dimensional body.
Finally, for $p=q=2$ and $p=q=\infty$ the lower and the upper bounds, respectively, in our result for general values of $p$ and $q$ are shown to be tight.

We will exclusively be concerned with ellipsoids in real infinite-dimensional sequence spaces. 
Extensions to the complex case do not pose any technical difficulties and mostly entail inserting a multiplicative factor of $2$ only.
Concretely, we consider the following subsets of sequence spaces.
(We refer the reader to 
the end of this section for the notation conventions employed in the paper.)

\begin{definition}[Infinite-dimensional ellipsoids]\label{def: Infinite dimensional ellipsoid}
Let $p \in [1, \infty]$.
To a given sequence $\{\mu_n\}_{n \in \mathbb{N}^*}$ of positive real numbers, we associate the {ellipsoid norm} $\|\cdot\|_{p, \mu}$ on $\ell^p(\mathbb{N^*} \semcol \mathbb{R})$, defined as 
\begin{equation*}
\|\cdot\|_{p, \mu} \colon x \in \ell^p(\mathbb{N^*} \semcol \mathbb{R}) \longmapsto 
\begin{cases}
\left(\sum_{n=1}^\infty  \left\lvert x_n/\mu_n \right\rvert^p\right)^{1/p}, &\text{if } 1 \leq p < \infty, \\
\sup_{n \in \mathbb{N}^*} \, \left\lvert x_n/\mu_n \right\rvert,  &\text{if } p= \infty.
\end{cases}
\end{equation*}
The {infinite-dimensional $p$-ellipsoid} with respect to the norm $\|\cdot\|_{p, \mu}$ is
\begin{equation*}\label{eq: Infinite dimensional ellipsoid definition}
\mathcal{E}_{p}(\{\mu_n\}_{n\in \mathbb{N}^*}) 
\coloneqq 
\left\{ x \in \ell^p(\mathbb{N^*}\semcol\mathbb{R}) \mid \|x\|_{p, \mu} \leq 1 \right\}.
\end{equation*}
The elements of $\{\mu_n\}_{n \in \mathbb{N}^*}$ are referred to as the {semi-axes} of the ellipsoid $\mathcal{E}_{p}(\{\mu_n\}_{n\in \mathbb{N}^*})$.
\end{definition}

\noindent
Throughout the paper, unless explicitly stated otherwise, we assume that the semi-axes $\{\mu_n\}_{n\in \mathbb{N}^*}$ are arranged in non-increasing order and tend to zero, i.e., 
\begin{equation*}
\mu_1 \geq \dots \geq \mu_n \geq \dots \to 0.
\end{equation*}
As mentioned above, the case of exponentially decaying semi-axes has been studied in \cite{firstpaper}.
In the present paper, we consider semi-axes that exhibit polynomial decay, with
the canonical example we shall often refer to given by
\begin{equation}\label{eq:canonicaldecayregvar}
 \mu_n = \frac{c}{n^b}, \quad \text{for } n \in \mathbb{N}^* \text{ and fixed } b, c>0. \tag{CEx}
 \end{equation} 
The general semi-axis decay behavior we are concerned with is formalized through the notion of ``regular variation'' 
(see, e.g., \cite{binghamRegularVariation1987}).

\begin{definition}[Regularly varying sequence]\label{def:regular-variation}
A sequence $\{u_n\}_{n\in \mathbb{N}^*}$ of positive real numbers is regularly varying with index $\beta \in \mathbb{R}$ if
\begin{equation*}
\lim_{n \to \infty} \frac{u_{\lfloor \alpha n \rfloor}}{u_n} = \alpha^\beta,
\quad \text{for all }\,  \alpha>0.
\end{equation*}
A slowly varying sequence is a regularly varying sequence with index $\beta=0$.
\end{definition}

\noindent
The canonical example (\ref{eq:canonicaldecayregvar}) corresponds to a regularly varying sequence with index $-b$.
The concept of regularly varying sequences is more general though as it incorporates polynomial decay tempered by e.g. logarithmic or small oscillatory terms.
We shall also need the notion of regularly varying functions as defined in \cite{binghamRegularVariation1987}. Concretely,
$\phi \colon \mathbb{R}^*_+ \to \mathbb{R}^*_+$ is regularly varying with index $\beta \in \mathbb{R}$ if  
$\phi(\alpha t)/\phi(t) \to_{t\to\infty} \alpha^{\beta}$, for all $\alpha>0$. 
When $\beta=0$, we say that the function is slowly varying.

We now recall the definition of covering numbers and metric entropy.

\begin{definition}[Covering numbers and metric entropy]
Let $\varepsilon >0$, let $(\mathcal{X}, d)$ be a metric space, and let $\mathcal{K}\subseteq \mathcal{X}$ be a compact set. 
An \emph{$\varepsilon$-covering} of $\mathcal{K}$, with respect to the metric $d$, is a set $\{x_1,...\, ,x_N\} \subseteq \mathcal{X}$ such that for each $x \in \mathcal{K}$, there exists an $i\in \{1, \dots, N\}$ so that $d(x,x_i)\leq \varepsilon$. 
The \emph{$\varepsilon$-covering number} $N(\varepsilon \semcol \mathcal{K}, d)$ is the smallest cardinality of an $\varepsilon$-covering of $\mathcal{K}$.
The \emph{metric entropy} of the set $\mathcal{K}$ is given by
\begin{equation*}
H \left(\varepsilon \semcol \mathcal{K}, d\right)
\coloneqq \log N\left(\varepsilon \semcol \mathcal{K}, d\right).
\end{equation*}
\end{definition}

\noindent
The framework of regularly-varying functions is well-suited for the characterization of the metric entropy of function classes defined through regularity constraints. 
The results on the metric entropy $H(\varepsilon)$ of such smoothness classes known in the literature, see, e.g., \cite[Theorem 3.1.2]{edmunds1992entropy}, \cite[Theorem 2.2]{kuhn2005entropy}, or the books \cite{edmundsFunctionSpacesEntropy1996,lorentzConstructiveApproximationAdvanced1996,triebel2006theofctspace}, take the form
\begin{equation}\label{eq:akjfvbeajkjslzzl}
c \, \varepsilon^{-1/{b}}
\leq H(\varepsilon) 
\leq C\,  \varepsilon^{-1/{b}},
\quad \text{for } \varepsilon \in (0, \varepsilon^*),
\end{equation}
with $\varepsilon$ the covering ball radius, $\varepsilon^*>0$, 
positive constants $c$ and $C$,
and $b = {s}/{d}$, where $s$ is the degree of smoothness and $d$ the dimension of the domain of the functions in the class.
These results are typically obtained by expanding the functions under consideration into a suitable orthonormal basis (e.g., wavelets) and then characterizing the metric entropy of the resulting set of expansion coefficient sequences.

The constants $c$ and $C$ appearing in (\ref{eq:akjfvbeajkjslzzl}) are generally not characterized in the literature.
The machinery developed in this paper 
yields sharp characterizations of these constants 
for a very wide
range of smoothness conditions and norms, and in some cases precise small-$\varepsilon$ asymptotics of $H(\varepsilon)$ or even exact expressions for $H(\varepsilon)$. 
Moreover, we show in Section~\ref{sec:Besovapplic} how to obtain significant improvements of the best known results on the metric entropy of general Besov spaces. Such sharp characterizations of the metric entropy of function classes allow, for example, to infer the minimum required size of deep neural networks achieving optimal approximation \cite{elbrachterDeepNeuralNetwork2021} or optimal regression \cite{focm2024} over the function class. 

The main technical idea underlying our results is a decomposition of the semi-axes of the infinite-dimensional ellipsoid under consideration
into a collection of finite blocks and a residual infinite block; we refer to this procedure as block decomposition. 
The thresholding technique introduced in 
\cite{firstpaper} can be seen as a simple special case of block decomposition, with one finite block only, whose
size was referred to as ``effective dimension'' in \cite{firstpaper}. 
We then derive sharp bounds on the metric entropy of
the constituent finite-dimensional ellipsoids corresponding to the individual finite blocks and judiciously glue
these bounds together.
The contribution of the residual infinite block will turn out negligible relative to the covering ball radius.
We will find that the volume-based techniques for bounding the metric entropy of the constituent finite-dimensional ellipsoids developed in \cite{firstpaper}
are too weak in the case of polynomially decaying semi-axes. 
Resorting to density arguments as introduced by Rogers \cite{rogersNoteCoverings1957, rogersCoveringSphereSpheres1963, rogersbook1964} resolves this problem.

A natural object that appears when performing block decomposition is the so-called mixed ellipsoid.
Mixed ellipsoids also arise in the study of Besov and Triebel-Lizorkin spaces \cite[Chapter 1.5]{triebel2006theofctspace}, Lorentz-Marcinkiewicz spaces \cite{cobos1987entropy,merucci2006applications}, modulation spaces \cite[Chapters 11-12]{groechenigFoundationsTimeFrequencyAnalysis2001}, and Wiener amalgam spaces \cite{feichtinger2020wiener}.
In Section~\ref{sec:mixedellipsoidsseparateresults}, we show how our approach leads to improvements of the best known results on the metric entropy of mixed ellipsoids reported in \cite{leopoldEmbeddingsEntropyNumbers2000}, \cite[Chapter 3]{vybiral2006function}, and \cite{kuhnEntropyNumbersEmbeddings2005,kuhnEntropyNumbersSequence2008}.

\paragraph{Notation.}
For the finite set $A$, we write $\# A$ for its cardinality.
Given the collection of sets $\{A_i\}_{i\in I}$, we denote its Cartesian product by $\prod_{i\in I} A_i$.
$\mathbb{N}$ and $\mathbb{N}^*$ stand for the set of natural numbers including and, respectively, excluding zero, 
$\mathbb{R}$ is the set of real numbers, and $\mathbb{R}^*_+$ denotes the positive real numbers.
We adopt the convention $\sum_{n=n_1}^{n_2}=0$ when $n_2 < n_1$.
$(x)_+$ assigns to $x\in\mathbb{R}$ the value $0$ if $x<0$ and the value $x$ if $x\geq 0$.
$\lfloor x \rfloor$ stands for the integer part of $x\in\mathbb{R}_+^*$, that is, the largest integer smaller than $x$. $\log (\cdot)$ stands for the logarithm to base $2$ and $\ln(\cdot)$ is the natural logarithm.
For $k\in \mathbb{N}^*$, we define the $k$-fold iterated logarithm according to
\begin{equation*}
\log^{(k)}(\cdot) 
= \underbrace{\log \circ \cdots \circ \log}_{k \text{ times}} \, (\cdot).
\end{equation*}
Given a finite collection of positive real numbers $\mu_1, \dots, \mu_d$, we denote its geometric mean by $\bar \mu_d$.
When comparing the asymptotic behavior of the functions $f$ and $g$ as $x \to \ell$, with $\ell \in \mathbb{R}\cup\{-\infty, \infty\}$, we use the notation 
\begin{equation*}
f = o_{x \to \ell}(g) \iff \lim_{x \to \ell} \frac{f(x)}{g(x)} =0
\quad \text{and} \quad 
f = \mathcal{O}_{x \to \ell}(g) \iff \lim_{x \to \ell} \left\lvert \frac{f(x)}{g(x)}\right\rvert \leq C,
\end{equation*}
for some constant $C>0$.
Asymptotic equivalence is indicated according to
\begin{equation*}
f \sim_{x \to \ell} g \iff \lim_{x \to \ell} \frac{f(x)}{g(x)} =1.
\end{equation*}
We occasionally use the notation $\simeq$ to mean ``of the order of'' in a general sense.
It will be convenient to introduce the notation 
\begin{equation*}
V_{p,q, d} \coloneqq \left(\frac{\text{\normalfont{vol}}\left(\mathcal{B}_p \right)}{\text{\normalfont{vol}}(\mathcal{B}_q)}\right)^{1/d},
\end{equation*}
where $\mathcal{B}_p$ is the $p$-unit ball and $\mathcal{B}_q$ the $q$-unit ball, both in $\mathbb{R}^{d}$. 
For the Euclidean unit ball $\mathcal{B}_2$, we simply write $\mathcal{B}$.
We shall frequently make use of the asymptotic result \cite[Lemma 35]{firstpaper} 
\begin{equation}\label{eq: ratio volumes ksdjnazc}
V_{p,q, d}
=   \Gamma_{p,q} \,  d^{(1/q-1/p)} \left(1+O_{d\to\infty}\left(\frac{1}{d}\right)\right),
\end{equation}
where
\begin{equation}\label{eq:defgammapq}
\Gamma_{p,q}
\coloneqq \frac{\Gamma(1/p+1) \, p^{1/p}}{\Gamma(1/q+1) \, q^{1/q} \, e^{(1/q-1/p)} },
\quad \text{with } \Gamma \text{ the Euler function}.
\end{equation}

\section{Preliminaries and Technical Tools}

We first formally introduce ellipsoids in finite dimensions. The following definition is \cite[Definition 2]{firstpaper} particularized
to real vector spaces.

\begin{definition}[Finite-dimensional ellipsoids]\label{def: Finite dimensional ellipsoid norm}
Let $d \in \mathbb{N}^*$ and $p \in [1, \infty]$.
To a given set $\{\mu_1, \dots, \mu_d\}$ of positive real numbers, we associate the {$p$-ellipsoid norm} $\|\cdot\|_{p, \mu}$ on $\mathbb{R}^d$, defined as 
\begin{equation*}
\|\cdot\|_{p, \mu} \colon x \in \mathbb{R}^d \mapsto 
\begin{cases}
\left(\sum_{n=1}^d  \left\lvert x_n/\mu_n\right\rvert^p\right)^{1/p}, &\text{if } 1 \leq p < \infty, \\
\max_{1 \leq n \leq d} \, \left\lvert x_n/\mu_n\right\rvert ,  &\text{if } p = \infty.
\end{cases}
\end{equation*}
The {finite-dimensional $p$-ellipsoid} is the unit ball in $\mathbb{R}^d$ with respect to the norm $\|\cdot\|_{p, \mu}$ and is denoted by
\begin{equation*}
\mathcal{E}^d_p \left(\{\mu_n\}_{n=1}^d \right) 
\coloneqq 
\left\{ x \in \mathbb{R}^d \mid \|x\|_{p, \mu} \leq 1 \right\}.
\end{equation*}
The numbers $\{\mu_1, \dots, \mu_d\}$ are referred to as the {semi-axes} of the ellipsoid $\mathcal{E}^d_p (\{\mu_n\}_{n=1}^d) $.
We simply write $\mathcal{E}^d_p$ when the choice of semi-axes is clear from the context. 
For simplicity of exposition, and without loss of generality, we assume that the semi-axes $\{\mu_1, \dots, \mu_d\}$ are arranged in non-increasing order, i.e., 
\begin{equation}
\mu_1 \geq \dots \geq \mu_d > 0.
\end{equation}
\end{definition}


In \cite{firstpaper} we reduced the infinite-dimensional ellipsoid under consideration to a finite-dimensional ellipsoid by
truncation of the semi-axes after a certain number of components.
While this approach is sufficiently powerful in the case of exponentially 
decaying semi-axes as considered in \cite{firstpaper}, more 
subtlety is required when the semi-axes are only polynomially decaying.
Concretely, we truncate as in \cite{firstpaper}, but then organize the retained components into $k \in \mathbb{N}^{\ast}$ blocks.
Each of these blocks then corresponds to a finite-dimensional constituent ellipsoid, whose covering number will be analyzed individually.

A natural question that now arises is: \textit{How many components should be retained and how should we organize them into blocks?} 
Obviously, the number of components to be retained should, as in \cite{firstpaper}, depend on the covering ball radius $\varepsilon$.
For $p=q$, the picture is quite clear, namely one keeps the components corresponding to semi-axes greater than or equal to $\varepsilon$.
The intuition, already formulated in \cite{firstpaper} and illustrated in Fig. \ref{figdimred} for $p=q=2$, is that the remaining components are covered by $\varepsilon$-balls.

\begin{figure}[hbt!]
\begin{center}
  \begin{tikzpicture}[scale=3]
      \tikzstyle{axis}=[thick, ->, >=stealth]
  
      \definecolor{axis_color}{rgb}{0.6, 0, 0.6}
      \definecolor{fill_color}{RGB}{116,185,255}
      \definecolor{line_color}{RGB}{0,69,148}
      \definecolor{ball_contour}{RGB}{48,113,87}
  
      \def\ticklength{2pt}

      \draw[help lines, color=gray!30, dashed] (-1.6,-.4) grid[step=.25] (1.6,.4);

      \node[ellipse,
      draw, line_color, thick,
      fill=fill_color!50, fill opacity=0.3,
      minimum width = 7.5cm, 
      minimum height = 1.5cm] (0,0) {};

      \draw[axis_color, axis] (-1.5,0) -- (1.5,0) node[below] {\tiny $x_1$};
      \draw[axis_color, axis] (0,-.4) -- (0,.4) node[right] {\tiny $x_2$};
      \node at (0,0) [anchor=north west, axis_color] {\tiny $0$};

      \draw[axis_color] (1.25,\ticklength) -- (1.25,-\ticklength) node[anchor=south west, xshift=-3pt, yshift=3pt] {\tiny $\mu_1$};
      \draw[axis_color] (-1.25,\ticklength) -- (-1.25,-\ticklength) node[anchor=south east, xshift=3pt, yshift=3pt] {\tiny $-\mu_1$};
      \draw[axis_color] (\ticklength,.25) -- (-\ticklength,.25) node[anchor=south east, xshift=3pt, yshift=-1pt]  {\tiny $\mu_2$};
      \draw[axis_color] (\ticklength,-.25) -- (-\ticklength,-.25) node[anchor=north east, xshift=3pt, yshift=2pt] {\tiny $-\mu_2$};

      \draw[ball_contour, thick, fill=green!15, fill opacity=0.3] (-.97,0) node[opacity=1] {\tiny x} circle (.3);
      \draw[ball_contour, thick, fill=green!15, fill opacity=0.3] (-.61,0) node[opacity=1] {\tiny x} circle (.3);
      \draw[ball_contour, thick, fill=green!15, fill opacity=0.3] (-.275,0) node[opacity=1] {\tiny x} circle (.3);
      \draw[ball_contour, thick, fill=green!15, fill opacity=0.3] (0,0) node[opacity=1] {\tiny x} circle (.3);
      \draw[ball_contour, thick, fill=green!15, fill opacity=0.3] (.275,0) node[opacity=1] {\tiny x} circle (.3);
      \draw[ball_contour, thick, fill=green!15, fill opacity=0.3] (.61,0) node[opacity=1] {\tiny x} circle (.3);
      \draw[ball_contour, thick, fill=green!15, fill opacity=0.3] (.97,0) node[opacity=1] {\tiny x} circle (.3);

      \draw[ball_contour, {Stealth}-{Stealth}] (-.97,0) --  node[anchor=south west, yshift=1pt, xshift=-5pt] {\tiny $\varepsilon$} (-1.21,.18);

      \node[line_color] at (1.05,.21) (e) {\tiny \textbf{$\mathcal{E}^2_2$}};

  \end{tikzpicture}
  \captionof{figure}{The two-dimensional ellipsoid $\mathcal{E}^2_2$ is automatically covered in the dimension associated with $\mu_2$ by lining up $\varepsilon$-balls on the horizontal axis when $\varepsilon$ is larger than $\mu_2$.}
  \label{figdimred}      
\end{center}
  \end{figure}
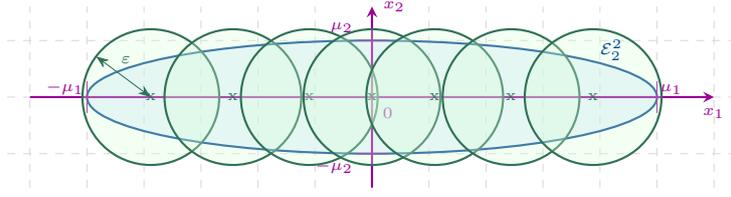

For $p\neq q$, more refined geometric considerations are needed. 
Indeed, as $d$ grows the proportion of the ellipsoid $\mathcal{E}^d_p$ covered by a $\|\cdot\|_q$-ball of fixed radius $\varepsilon$ decreases when $p>q$ and
increases for $p<q$. 
We can, however, compensate for this by scaling the covering ball radius by a factor of the order $ d^{(1/q-1/p)}$.
Fig. \ref{figdimredvol} illustrates the idea in the two-dimensional case.
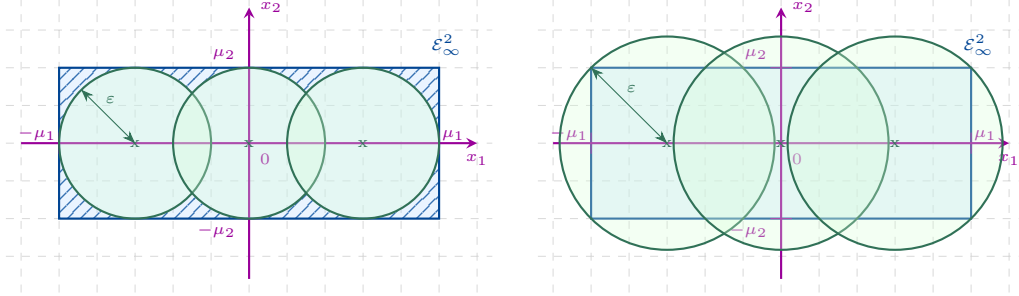
\begin{figure}[ht]
\begin{center}
  \begin{tikzpicture}[scale=2]
  
    \tikzstyle{axis}=[thick, ->, >=stealth]

    \definecolor{axis_color}{rgb}{0.6, 0, 0.6}
    \definecolor{fill_color}{RGB}{116,185,255}
    \definecolor{line_color}{RGB}{0,69,148}
    \definecolor{ball_contour}{RGB}{48,113,87}

    \def\ticklength{2pt}

    \begin{scope}[shift={(-1.75,0)}]

      \draw[pattern={Lines[angle=45, line width=.5pt, distance=1.25mm]}, pattern color=line_color] (1.25,.5) -- (1.25,-.5) -- (-1.25,-.5) -- (-1.25,.5) -- cycle;
      
      \draw[white, fill=white] (-.75,0) node[opacity=1] {\tiny x} circle (.5);
      \draw[white, fill=white] (0,0) node[opacity=1] {\tiny x} circle (.5);
      \draw[white, fill=white] (.75,0) node[opacity=1] {\tiny x} circle (.5);

      \draw[help lines, color=gray!30, dashed] (-1.6,-.99) grid[step=.25] (1.6,.99);
        
      \draw[line_color, thick, fill=fill_color!50, fill opacity=0.3] (1.25,.5) -- (1.25,-.5) -- (-1.25,-.5) -- (-1.25,.5) -- cycle;

      \draw[axis_color, axis] (-1.5,0) -- (1.5,0) node[below] {\tiny $x_1$};
      \draw[axis_color, axis] (0,-.9) -- (0,.9) node[right] {\tiny $x_2$};
      \node at (0,0) [anchor=north west, axis_color] {\tiny $0$};

      \draw[axis_color] (1.25,\ticklength) -- (1.25,-\ticklength) node[anchor=south west, xshift=-3pt, yshift=1pt] {\tiny $\mu_1$};
      \draw[axis_color] (-1.25,\ticklength) -- (-1.25,-\ticklength) node[anchor=south east, xshift=3pt, yshift=1pt] {\tiny $-\mu_1$};
      \draw[axis_color] (\ticklength,.5) -- (-\ticklength,.5) node[anchor=south east, xshift=3pt, yshift=-1pt]  {\tiny $\mu_2$};
      \draw[axis_color] (\ticklength,-.5) -- (-\ticklength,-.5) node[anchor=north east, xshift=3pt, yshift=2pt] {\tiny $-\mu_2$};

      \draw[ball_contour, thick, fill=green!15, fill opacity=0.3] (-.75,0) node[opacity=1] {\tiny x} circle (.5);
      \draw[ball_contour, thick, fill=green!15, fill opacity=0.3] (0,0) node[opacity=1] {\tiny x} circle (.5);
      \draw[ball_contour, thick, fill=green!15, fill opacity=0.3] (.75,0) node[opacity=1] {\tiny x} circle (.5);

      \draw[ball_contour, {Stealth}-{Stealth}] (-.75,0) --  node[anchor=south west, yshift=1pt, xshift=-5pt] {\tiny $\varepsilon$} (-1.105,.355);

      \node[line_color] at (1.3,.65) (e) {\tiny \textbf{$\mathcal{E}^2_\infty$}};

    \end{scope}

    \begin{scope}[shift={(1.75,0)}]

      \draw[help lines, color=gray!30, dashed] (-1.6,-.99) grid[step=.25] (1.6,.99);
        
      \draw[line_color, thick, fill=fill_color!50, fill opacity=0.3] (1.25,.5) -- (1.25,-.5) -- (-1.25,-.5) -- (-1.25,.5) -- cycle;

      \draw[axis_color, axis] (-1.5,0) -- (1.5,0) node[below] {\tiny $x_1$};
      \draw[axis_color, axis] (0,-.9) -- (0,.9) node[right] {\tiny $x_2$};
      \node at (0,0) [anchor=north west, axis_color] {\tiny $0$};

      \draw[axis_color] (1.25,\ticklength) -- (1.25,-\ticklength) node[anchor=south west, xshift=-3pt, yshift=1pt] {\tiny $\mu_1$};
      \draw[axis_color] (-1.25,\ticklength) -- (-1.25,-\ticklength) node[anchor=south east, xshift=3pt, yshift=1pt] {\tiny $-\mu_1$};
      \draw[axis_color] (\ticklength,.5) -- (-\ticklength,.5) node[anchor=south east, xshift=3pt, yshift=-1pt]  {\tiny $\mu_2$};
      \draw[axis_color] (\ticklength,-.5) -- (-\ticklength,-.5) node[anchor=north east, xshift=3pt, yshift=2pt] {\tiny $-\mu_2$};

      \draw[ball_contour, thick, fill=green!15, fill opacity=0.3] (-.75,0) node[opacity=1] {\tiny x} circle (.707);
      \draw[ball_contour, thick, fill=green!15, fill opacity=0.3] (0,0) node[opacity=1] {\tiny x} circle (.707);
      \draw[ball_contour, thick, fill=green!15, fill opacity=0.3] (.75,0) node[opacity=1] {\tiny x} circle (.707);

      \draw[ball_contour, {Stealth}-{Stealth}] (-.75,0) --  node[anchor=south west, yshift=1pt, xshift=-5pt] {\tiny $\varepsilon$} (-1.25,.5);

      \node[line_color] at (1.3,.65) (e) {\tiny \textbf{$\mathcal{E}^2_\infty$}};

    \end{scope}
  \end{tikzpicture} 
  \end{center}
  \captionof{figure}{%
    The shaded area of the two-dimensional ellipsoid $\mathcal{E}_\infty^2$ (left picture) is not covered when balls of radius $\varepsilon \simeq \mu_2$ are lined up on the horizontal axis.
    This problem disappears when considering balls of radius $\varepsilon \simeq  d^{(1/q-1/p)} \mu_d = \sqrt{2}\mu_2$ (right picture).
  }
  \label{figdimredvol}
\end{figure}
This observation leads us to the following heuristic. Choosing the effective dimension $d$ of the problem, i.e., the number of semi-axes of the infinite-dimensional ellipsoid to be retained, such that 
\begin{equation}\label{eq:heuristicseffdim1}
\varepsilon \simeq d^{(1/q-1/p)} \mu_d, \tag{Heur1}
\end{equation}
we expect the $\varepsilon$-covering number of the resulting $d$-dimensional ellipsoid to capture the behavior of the $\varepsilon$-covering number of the initial infinite-dimensional ellipsoid to a sufficient degree, concretely 
\begin{equation}\label{eq:heuristicseffdim2}
N\left(\varepsilon\semcol \mathcal{E}_p, \|\cdot\|_q \right)
\simeq N\left(\varepsilon\semcol \mathcal{E}^d_p, \|\cdot\|_q \right).\tag{Heur2}
\end{equation}
Rigorous definitions of the effective dimension and formalizations of the heuristics (\ref{eq:heuristicseffdim1}) and (\ref{eq:heuristicseffdim2}) are provided in Section~\ref{sec:proofofmainthmisinthissec}.
We proceed with the aspect of organizing the components retained into blocks.

\subsection{Block Decomposition}\label{sec:ellipsoidsinfinitedim3}

\begin{definition}[Block decomposition]\label{def:blockdecomp}
Let $p\in[1, \infty]$, let $d, k \in \mathbb{N}^*$, and let $d_1, \dots, d_k \in \mathbb{N}^* $ be such that $d = d_1 + \dots + d_k$.
We define the constituent ellipsoid corresponding to the $j$-th block of semi-axes of the
infinite-dimensional $p$-ellipsoid $\mathcal{E}_p$ as
\begin{equation*}
\mathcal{E}_p^{[j]} 
\coloneqq \left\{(x_{\bar d_{j-1}+1}, \dots, x_{\bar d_j}) \mid x \in \mathcal{E}_p \right\},
\quad j\in \{1, \dots, k+1\},
\end{equation*}
where 
\begin{equation}\label{eq:defdesbarres}
\bar d_j 
\coloneqq 
\begin{cases}
0, \quad &\text{if } j=0,\\
d_1+\dots+d_j, \quad &\text{if } j\in \{1, \dots, k\},\\
\infty , \quad &\text{if } j=k+1.
\end{cases}
\end{equation}
\end{definition}

\noindent
Note that the constituents $\mathcal{E}_p^{[j]}, j\in \{1, \dots, k\}$, are all finite-dimensional while
$\mathcal{E}_p^{[k+1]}$ is infinite-dimensional.
We next observe that
every $x\in \mathcal{E}_p$ can be decomposed into $k+1$ sub-vectors $x_j \in \mathcal{E}_p^{[j]}$, $j\in \{1, \dots, k+1\}$,
with the ellipsoid norm of $x_j$ upper-bounded by a non-negative number $\omega_j$ and $\omega_1^p + \dots + \omega_{k+1}^p = 1$. 

Formally, the ellipsoid $\mathcal E_p$ can be embedded into a union of Cartesian
products of rescaled versions of its constituents $\mathcal E_p^{[j]}$,
$j\in\{1,\ldots,k+1\}$, according to
\begin{equation}
\mathcal E_p \;\subseteq\;
\bigcup_{\omega\in\Omega}\;
\prod_{j=1}^{k+1} \omega_j \mathcal E_p^{[j]},
\label{eq:ellipsasunionofcartprodgeneralsett}
\end{equation}
where $\Omega\subset\mathbb R_+^{k+1}$ is a set of scaling vectors for which 
the inclusion \eqref{eq:ellipsasunionofcartprodgeneralsett} is valid. In particular, one may take
$\Omega=\{(1,1,\ldots,1)\}$. For 
\begin{equation}
\Omega = \bigl\{\omega\in\mathbb R_+^{k+1} \,\big|\, \|\omega\|_p = 1 \bigr\}, \label{eq:baromegaset}
\end{equation}
the inclusion in \eqref{eq:ellipsasunionofcartprodgeneralsett} is, in fact, an equality.
This representation is, however, highly redundant. In what follows, we therefore work with finite sets $\Omega$, typically sections of
lattices, see, e.g., (\ref{eq:1326}) in the proof of Theorem~\ref{thm:bound on mixed norm} or (\ref{eq:defomegacasepeqq31EEE}) in the proof of Lemma~\ref{lem:klmjhgfddddfhqsdfg}.
The following result formalizes how representations of $\mathcal{E}_{p}$, or more specifically inclusion relations of the form \eqref{eq:ellipsasunionofcartprodgeneralsett}, with $\Omega$ finite, can 
lead to upper bounds on the covering number of $\mathcal{E}_{p}$ in terms of the covering numbers of its constituents.

\begin{lemma}[Decomposition]\label{lem:lemboundtriangleineqlike}
Let $p, q \in [1,\infty]$, $k, d\in \mathbb{N}^*$, let $d_1, \dots, d_k \in \mathbb{N}^* $ be such that $d = d_1 + \dots + d_k$, and let $\rho_1,\dots, \rho_k, \bar \rho \in (0,\infty)$.
 Let $\mathcal{E}_p$ be an infinite-dimensional $p$-ellipsoid with non-increasing semi-axes $\{\mu_n\}_{n \in \mathbb{N}^*}$ and let $\Omega \subset \mathbb{R}_+^{k+1}$ be a finite set such that 
 \begin{equation}\label{eq:firstdecompcartprodelli}
\mathcal{E}_{p} 
\subseteq \bigcup_{\omega \in \Omega} \prod_{j=1}^{k+1} \omega_j \mathcal{E}_p^{[j]},
\end{equation}
where $ \mathcal{E}_p^{[j]} $ is the $j$-th constituent of $\mathcal{E}_{p} $ as per Definition \ref{def:blockdecomp}.
Then, we have 
 \begin{equation}\label{eq:lknjhgdshlkqncvzezzzzsxtt}
N \left(\bar \rho\semcol \mathcal{E}_p, \|\cdot\|_q \right)
\leq \left(\# \Omega\right) \, \prod_{j=1}^k N \left(\rho_j\semcol \mathcal{E}_p^{[j]}, \|\cdot\|_q \right) ,
 \end{equation}
provided that either
 \begin{enumerate}[label=(\roman*)]
 \item 
  $p\leq q \leq \infty $ and $\max_{\omega\in\Omega} \rho_{\omega} \leq \bar \rho $, where, for $\omega\in\Omega$,
 \begin{equation*}
  \rho_{\omega} = 
 \begin{cases}
     \left[(\omega_1\rho_1)^q + \dots + (\omega_k\rho_k)^q + (\omega_{k+1}\mu_{d+1})^q \right]^{1/q},
     \quad &\text{if} \quad 1\leq q < \infty, \\[.25cm]
     \max\left\{\max_{j=1, \dots, k} \, \{\omega_j \rho_j\}, \omega_{k+1} \mu_{d+1} \right\},
     \quad &\text{if} \quad q = \infty\!;
 \end{cases}
 \end{equation*}

or

\item
$\{\mu_n\}_{n \in \mathbb{N}^*}$ is additionally regularly varying of index $-b$, with $b>0$ such that $p/(pb+1)<q<p$, 
and $\max_{\omega\in\Omega} \rho_{\omega} \leq \bar \rho $, where, for $\omega\in\Omega$, 
 \begin{equation*}
 \rho_{\omega} =
   \left[(\omega_1\rho_1)^q + \dots + (\omega_k\rho_k)^q + (\omega_{k+1}\alpha_{d})^q \right]^{1/q},
 \end{equation*}
 with $\{\alpha_d\}_{d\in \mathbb{N}^*}$ a sequence satisfying
 \begin{equation}\label{eq:ijankjvbhjbvjqookknsd}
 \alpha_d = \left(\frac{b}{\frac{1}{q}-\frac{1}{p}}  -1 \right)^{(\frac{1}{p}-\frac{1}{q})}  \, \mu_d \, d^{(\frac{1}{q}-\frac{1}{p})}
\left(1+o_{d\to\infty}(1)\right)\!;
 \end{equation}

or
  \item
  $\{\mu_n\}_{n \in \mathbb{N}^*}$ is additionally regularly varying of index $-b$, with $b>0$ such that $q=p/(pb+1)$, and satisfies 
  $\sum_{n\in\mathbb{N}^*}\mu_n^{1/b}<\infty$,
and $\max_{\omega\in\Omega} \rho_{\omega} \leq \bar \rho $, where, for $\omega\in\Omega$,  
 \begin{equation*}
 \rho_{\omega} =
   \left[(\omega_1\rho_1)^q + \dots + (\omega_k\rho_k)^q + (\omega_{k+1}\alpha_{d})^q \right]^{1/q},
 \end{equation*}
 with $\alpha_d = \left(\sum_{n=d+1}^\infty \mu_n^{1/b}\right)^b$, for all $d\in \mathbb{N}^*$.
 \end{enumerate}
\end{lemma}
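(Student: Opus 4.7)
My plan is to build a $\bar\rho$-covering of $\mathcal{E}_p$ by covering each set in the union (\ref{eq:firstdecompcartprodelli}) separately and then taking the union over $\omega \in \Omega$. For each fixed $\omega\in\Omega$, the first $k$ constituents $\omega_j\mathcal{E}_p^{[j]}$ are finite-dimensional, and by scale invariance of covering numbers each admits a $\|\cdot\|_q$-covering of size $N(\rho_j\semcol \mathcal{E}_p^{[j]}, \|\cdot\|_q)$ by balls of radius $\omega_j\rho_j$. For the infinite-dimensional residual $\omega_{k+1}\mathcal{E}_p^{[k+1]}$, I would use the single covering point $\{0\}$, so that the relevant quantity reduces to $\omega_{k+1}$ times a bound on the $\|\cdot\|_q$-diameter of $\mathcal{E}_p^{[k+1]}$.

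The heart of the argument is therefore the diameter bound, which is where the three cases split. In case (i), for $x\in\mathcal{E}_p^{[k+1]}$ the monotonicity $\mu_n\leq\mu_{d+1}$ for $n>d$ yields $\|x\|_p\leq\mu_{d+1}$, and the sequence-space embedding $\|\cdot\|_q\leq\|\cdot\|_p$ (valid for $p\leq q$, and uniformly including $q=\infty$) delivers $\|x\|_q\leq\mu_{d+1}$. In cases (ii) and (iii), where $q<p$, I would apply H\"older's inequality with exponents $p/q$ and $p/(p-q)$ to obtain
\begin{equation*}
\|x\|_q \;=\; \Bigl(\sum_{n>d}\bigl\lvert x_n/\mu_n\bigr\rvert^q\mu_n^q\Bigr)^{1/q} \;\leq\; \Bigl(\sum_{n>d}\mu_n^{s}\Bigr)^{1/s}, \qquad s\coloneqq \frac{pq}{p-q},
\end{equation*}
so that $\|x\|_q\leq\alpha_d$ with the explicit choice $\alpha_d\coloneqq \bigl(\sum_{n>d}\mu_n^s\bigr)^{1/s}$. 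In case (iii) one checks algebraically that $s=1/b$, yielding exactly the stated formula. In case (ii), the condition $q>p/(pb+1)$ is equivalent to $sb>1$, so $\mu_n^s$ (regularly varying of index $-sb<-1$) is summable, and Karamata's theorem for regularly varying sequences gives $\sum_{n>d}\mu_n^s\sim \mu_d^s\,d/(sb-1)$; substituting $sb=b/(1/q-1/p)$ and $1/s=1/q-1/p$ collapses this to the asymptotic (\ref{eq:ijankjvbhjbvjqookknsd}).

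With the residual bound in hand, I would assemble, for each $\omega\in\Omega$, the Cartesian product of the $k$ coverings of the finite blocks with the single point $\{0\}$ for the residual, giving a covering of $\prod_{j=1}^{k+1}\omega_j\mathcal{E}_p^{[j]}$ of cardinality at most $\prod_{j=1}^k N(\rho_j\semcol \mathcal{E}_p^{[j]}, \|\cdot\|_q)$. The identity $\|x\|_q^q=\sum_j \|x_j\|_q^q$ for $q<\infty$ (respectively $\|x\|_\infty=\max_j \|x_j\|_\infty$) turns the per-block radii $\omega_j\rho_j$ together with the residual radius ($\omega_{k+1}\mu_{d+1}$ in case (i), $\omega_{k+1}\alpha_d$ in cases (ii)--(iii)) into the exact expression $\rho_\omega$ stated in the lemma. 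Since $\rho_\omega\leq\bar\rho$ for every $\omega\in\Omega$ by hypothesis, the union of these coverings over $\omega\in\Omega$ is a $\bar\rho$-covering of $\mathcal{E}_p$ and yields (\ref{eq:lknjhgdshlkqncvzezzzzsxtt}).

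The routine parts are the scale invariance of covering numbers and the Cartesian product assembly. The genuine obstacle is the residual diameter bound in cases (ii)--(iii): one must recognise that the correct H\"older exponent is $s=pq/(p-q)$, verify that $sb>1$ precisely under the stated hypothesis $q>p/(pb+1)$ (with equality giving the critical case (iii)), and then extract the constant from Karamata's theorem to match (\ref{eq:ijankjvbhjbvjqookknsd}). The $q=\infty$ subcases of (i) and the sharpness of H\"older (achieved in the limit by mass-spreading sequences) require only cosmetic adjustments.
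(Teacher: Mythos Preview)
Your proposal is correct and follows essentially the same route as the paper: cover each product $\prod_j \omega_j\mathcal{E}_p^{[j]}$ by the Cartesian product of $\rho_j$-coverings of the finite blocks together with $\{0\}$ for the residual, bound the residual $\|\cdot\|_q$-norm by $\mu_{d+1}$ via the $\ell^p\hookrightarrow\ell^q$ embedding in case~(i) and by H\"older with exponent $s=pq/(p-q)$ plus Karamata in cases~(ii)--(iii), and then take the union over $\omega\in\Omega$. The only cosmetic difference is that in case~(i) the paper bounds $\sum_{n>d}|x_n/\mu_n|^q\le\sum_{n>d}|x_n/\mu_n|^p$ directly (using that each term is at most~$1$) rather than passing through $\|x\|_p\le\mu_{d+1}$ first, but the two arguments are equivalent.
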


\begin{proof}[Proof.]
See Section~\ref{sec:prooflemboundtriangleineqlike}.
\end{proof}

\noindent
In Lemma~\ref{lem:lemboundtriangleineqlike}--and in fact throughout the paper--we understand the ratio $p/(pb+1)$ to equal $1/b$ when $p=\infty$.

As already mentioned, the thresholding technique employed in \cite{firstpaper} is a simple special case of the block decomposition method developed here.
For instance, for $p\leq q < \infty$ and given $\rho>0$, \cite[Lemma 42]{firstpaper} states that there exist a $d^* \in \mathbb{N}^*$ and a constant $K \geq 1$ such that, for all $d \geq d^*$, it holds that
\begin{equation*}
  N \left(\bar \rho\semcol \mathcal{E}_p, \|\cdot\|_q \right)
\leq N \left(\rho\semcol \mathcal{E}_p^{d}, \|\cdot\|_q \right),
\quad \text{where} \quad 
\bar \rho = (\rho^q + K\mu_{d+1}^q)^{1/q}.
\end{equation*}
Case (i) in Lemma~\ref{lem:lemboundtriangleineqlike} above, with $k=1$ and $\Omega= \{(1, 1)\}$, is readily seen to strengthen this result to
\begin{equation*}
  N \left(\bar \rho\semcol \mathcal{E}_p, \|\cdot\|_q \right)
\leq N \left(\rho\semcol \mathcal{E}_p^{d}, \|\cdot\|_q \right), 
\quad \text{where} \quad 
\bar \rho = (\rho^q + \mu_{d+1}^q)^{1/q},
\end{equation*}
for all $d \in \mathbb{N}^*$. The added flexibility---in terms of the choice of $k$ and $\Omega$---afforded by Lemma~\ref{lem:lemboundtriangleineqlike} is instrumental in the case of polynomially decaying semi-axes.

We proceed to develop the tools for bounding the covering numbers of the individual constituent ellipsoids $\mathcal{E}_p^{[j]}$.

\subsection{Volume and Density Arguments}\label{sec:volumedensityarg}

The metric entropy bounds on the (single) finite-dimensional constituent ellipsoid in \cite{firstpaper} are based on volume arguments.
We now show that this approach alone, while sufficiently powerful in the case of exponentially decaying semi-axes, is too weak for
polynomially decaying semi-axes. To this end, we first recall a result on volume estimates for metric entropy.

\begin{lemma}[Volume estimates, {\cite[Lemma 5.7]{wainwrightHighDimensionalStatistics2019}}]\label{lem: volume estimates}
Let $d \in \mathbb{N}^*$ and fix $\varepsilon>0$.
Consider the norms $\|\cdot\|$ and $\|\cdot\|'$ on $\mathbb{R}^d$
and let $\mathcal{B}$ and $\mathcal{B}'$ be their respective unit balls.
Then, the $\varepsilon$-covering number $N (\varepsilon\semcol \mathcal{B}, \|\cdot\|' ) $ satisfies
\begin{equation}\label{eq: volume estimates}
 \frac{\text{\normalfont{vol}}(\mathcal{B})}{\text{\normalfont{vol}}(\mathcal{B}')}
\leq N \left(\varepsilon \semcol \mathcal{B}, \|\cdot\|' \right) \, \varepsilon^d 
\leq 2^d \frac{\text{\normalfont{vol}}\left(\mathcal{B} + \frac{\varepsilon}{2} \mathcal{B}' \right)}{\text{\normalfont{vol}}(\mathcal{B}')}.
\end{equation}
\end{lemma}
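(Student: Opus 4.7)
My plan is to handle the two inequalities in \eqref{eq: volume estimates} separately. The left-hand inequality follows from a covering-plus-volume-subadditivity argument, while the right-hand inequality is obtained via a maximal packing argument combined with the standard fact that a maximal packing is automatically a covering.

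For the lower bound, I would fix an optimal $\varepsilon$-covering $\{x_1, \dots, x_N\} \subseteq \mathbb{R}^d$ of $\mathcal{B}$ with respect to $\|\cdot\|'$, where $N = N(\varepsilon \semcol \mathcal{B}, \|\cdot\|')$. By the definition of an $\varepsilon$-covering,
\begin{equation*}
\mathcal{B} \subseteq \bigcup_{i=1}^N (x_i + \varepsilon \mathcal{B}').
\end{equation*}
Taking Lebesgue volumes, using translation invariance, subadditivity under finite unions, and the scaling identity $\vol(\varepsilon \mathcal{B}') = \varepsilon^d \vol(\mathcal{B}')$, yields $\vol(\mathcal{B}) \leq N \, \varepsilon^d \, \vol(\mathcal{B}')$, which is precisely the left-hand inequality in \eqref{eq: volume estimates}.

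For the upper bound, I would introduce the $\varepsilon$-packing number $M$ of $\mathcal{B}$ with respect to $\|\cdot\|'$, defined as the largest cardinality of a set $\{y_1, \dots, y_M\} \subseteq \mathcal{B}$ satisfying $\|y_i - y_j\|' > \varepsilon$ for all $i \neq j$. Two observations are needed. First, by maximality, adding any point of $\mathcal{B}$ to such a set would violate the packing condition, so every point of $\mathcal{B}$ lies within $\|\cdot\|'$-distance at most $\varepsilon$ of some $y_i$; hence the packing is automatically an $\varepsilon$-covering, giving $N(\varepsilon \semcol \mathcal{B}, \|\cdot\|') \leq M$. Second, the pairwise-distance condition $\|y_i - y_j\|' > \varepsilon$ forces the translated balls $y_i + (\varepsilon/2)\mathcal{B}'$, $i \in \{1,\dots,M\}$, to have pairwise disjoint interiors, and since $y_i \in \mathcal{B}$, each such ball lies inside $\mathcal{B} + (\varepsilon/2)\mathcal{B}'$. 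Additivity of volume over disjoint sets then gives $M \, (\varepsilon/2)^d \vol(\mathcal{B}') \leq \vol\bigl(\mathcal{B} + (\varepsilon/2) \mathcal{B}'\bigr)$, and combining with $N \leq M$ yields the right-hand inequality in \eqref{eq: volume estimates} after multiplying through by $2^d$.

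The argument is essentially bookkeeping, and I do not anticipate a genuine obstacle. The only point that deserves care is the convention on strict versus non-strict inequalities in the definitions of packing and covering: one must ensure that the half-radius balls around packing centers have pairwise disjoint interiors, which is what makes the additive volume estimate valid, and that the maximality of the packing really does translate into the covering property used to bound $N$ by $M$.
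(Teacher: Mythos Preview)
Your argument is correct and is exactly the standard packing/covering volume comparison. The paper does not supply its own proof of this lemma; it simply cites \cite[Lemma~5.7]{wainwrightHighDimensionalStatistics2019}, whose proof proceeds along the same lines as yours.
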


In our setting the norms $\|\cdot\|$ and $\|\cdot\|'$ in Lemma~\ref{lem: volume estimates} will typically be identified with $\|\cdot\|_{p,\mu}$ and $\|\cdot\|_q$, respectively, so that the associated unit balls are $\mathcal{B} = \mathcal{E}_p^d$ and $\mathcal{B}'= \mathcal{B}_q$.
With these choices, considering the case $p=q$ and recalling that by (\ref{eq:heuristicseffdim1}) we are interested in the behavior of $N\left(\varepsilon \semcol \mathcal{E}^d_p, \|\cdot\|_p \right)$ for
$\varepsilon \simeq \mu_d$, it follows from Lemma~\ref{lem: volume estimates} that
\begin{equation}\label{eq:lkojihuezdjksfpoezss}
\frac{\bar \mu_d^d}{\mu_d^d}
\leq N\left(\varepsilon \semcol \mathcal{E}^d_p, \|\cdot\|_p \right) 
\leq 4^d \, \frac{\bar \mu_d^d}{\mu_d^d},
\end{equation}
where we used $\mathcal{E}^d_p = A_\mu \mathcal{B}_p$ and the upper bound is valid for $\varepsilon \in (0,2\mu_d]$.
Recall that $\bar{\mu}_d$ is the geometric mean of the semi-axes as introduced in the notation paragraph at the end of Section~\ref{introduction}.
For exponentially decaying semi-axes $\mu_n = c\,e^{-bn}$, we have $\frac{\bar \mu_d^d}{\mu_d^d} = e^{(\frac{bd^2}{2}-\frac{bd}{2})}$, which shows that the $4^d$-gap between the lower and the upper bound in \eqref{eq:lkojihuezdjksfpoezss} is negligible as $d$ grows large. 
However, for polynomially
decaying semi-axes $\mu_n = \frac{c}{n^b}$, it follows that 
\begin{equation*}
\frac{\bar \mu_d^d}{\mu_d^d}
= \frac{d^{bd}}{(d !)^b}
= e^{bd\, (1+o_{d\to\infty}(1))},
\end{equation*}
and hence 
the $4^d$-gap is non-negligible as $4^d$ is of the same order as $\frac{\bar \mu_d^d}{\mu_d^d}$.

The $4^d$-gap in (\ref{eq:lkojihuezdjksfpoezss}) stems from two sources, namely the factor $2^d$ in the upper bound in (\ref{eq: volume estimates}) and the discrepancy between the terms $\text{\normalfont{vol}}(\mathcal{B})$ in the lower and $\text{\normalfont{vol}}(\mathcal{B} + \frac{\varepsilon}{2} \mathcal{B}')$ in the upper bound.
The factor $2^d$ can be handled using density arguments akin to those introduced by Rogers in \cite{rogersNoteCoverings1957,rogersbook1964}.
The idea of using density arguments to upper-bound the metric entropy of ellipsoids has been used extensively in the literature, see e.g. \cite{donohoCountingBitsKolmogorov2000,donohoDataCompressionHarmonic1998,luschgySharpAsymptoticsKolmogorov2004}, and can be traced back to \cite{prosserEntropyCapacityCertain1966}.
The discrepancy between $\text{\normalfont{vol}}(\mathcal{B})$ and $\text{\normalfont{vol}}(\mathcal{B} + \frac{\varepsilon}{2} \mathcal{B}')$, resolved above by using $\mathcal{E}^d_p = A_\mu \mathcal{B}_p$ and restricting $\varepsilon$ to the interval $(0,2\mu_d]$,
can be dealt with 
through arguments from convex geometry (see e.g. \cite{hugLecturesConvexGeometry2020}).
Combining these two techniques in a suitable manner, along with additional ideas, leads to the following central component of our main results on infinite-dimensional ellipsoids reported in Section~\ref{sec:proofofmainthmisinthissec}.

\begin{theorem}\label{thm:profinitethm}
Let $p,q\in [1, \infty]$, $\eta>0$, $d\in \mathbb{N}^*$, $\mu_1, \dots,\mu_d \in \mathbb{R}^*_+$, and let $\mathcal{E}_p^d$ be the $p$-ellipsoid with semi-axes $\{\mu_1, \dots,\mu_d \}$. Then, we have
\begin{equation}\label{eq:lkjbkjbkjbckjbkebzhzzzqdfguyuy}
N\left(\varepsilon\semcol \mathcal{E}^d_p, \|\cdot\|_q \right)^{1/d} \, \varepsilon
\geq  V_{p, q, d} \, \bar \mu_d, 
\quad \text{for all } \varepsilon > 0.
\end{equation}
Moreover, there exists a sequence $\{\kappa(d)\}_{d \in \mathbb{N}^*}$ satisfying 
\begin{equation*}
 \kappa(d)= 1+ O_{d\to \infty}\left(\frac{\log d}{d}\right),
\end{equation*} 
such that
\begin{enumerate}[label={[\textbf{FD\arabic*}]},
leftmargin=1.5cm
]

\item \label{item:ifinitedim}
\begin{equation}\label{eq:lkjbkjbkjbckjbkebzhzzzqdfguyuy1}
\left(N\left(\varepsilon\semcol \mathcal{E}^d_p, \|\cdot\|_q \right)-1\right)^{1/d} \varepsilon 
\leq \kappa(d) \left(1+ {\eta}\right) \, V_{p, q, d} \,  \bar \mu_d ,
\end{equation}
for all
\begin{equation}\label{eq:extraassumptionub1}
\varepsilon \in \left(0, \eta \, d^{-(1/p-1/q)_+} \mu_d\right].
\end{equation}

\item \label{item:iifinitedim}
\begin{equation}\label{eq:lkjbkjbkjbckjbkebzhzzzqdfguyuy2}
\left(N\left(\varepsilon\semcol \mathcal{E}^d_p, \|\cdot\|_q \right)-1\right)^{1/d} \varepsilon 
\leq \kappa(d) \left(1+ {\eta}\right) \, d^{(1/q-1/p)}  \,  \bar \mu_d ,
\end{equation}
whenever
\begin{equation}\label{eq:extraassumptionub2}
p\geq q 
\quad \text{and} \quad 
\varepsilon \in \left(0, \eta \, d^{(1/q-1/p)} \, \mu_d\right].
\end{equation}

\end{enumerate}
\end{theorem}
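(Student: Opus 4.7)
\emph{Lower bound.} The inequality \eqref{eq:lkjbkjbkjbckjbkebzhzzzqdfguyuy} follows at once from the left inequality in \eqref{eq: volume estimates} applied with $\mathcal{B} = \mathcal{E}_p^d$ and $\mathcal{B}' = \mathcal{B}_q$. Writing $\mathcal{E}_p^d = A_\mu \mathcal{B}_p$ with $A_\mu = \mathrm{diag}(\mu_1, \dots, \mu_d)$, one has $\vol(\mathcal{E}_p^d) = \bar \mu_d^{\,d}\, \vol(\mathcal{B}_p)$, so that $V_{p,q,d}$ is naturally identified with $(\vol(\mathcal{B}_p)/\vol(\mathcal{B}_q))^{1/d}$; this step requires no restriction on $\varepsilon$.

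\emph{Upper bound \ref{item:ifinitedim}.} The factor $2^d$ in the right inequality of \eqref{eq: volume estimates} is, as discussed following \eqref{eq:lkojihuezdjksfpoezss}, the principal obstruction to a sharp upper bound in the polynomial-decay regime. The plan is to replace it by a Rogers-type density bound \cite{rogersNoteCoverings1957, rogersbook1964}, to the effect that, for every convex body $K$ and symmetric convex body $L$ in $\mathbb{R}^d$, $N(K;L) - 1 \leq \theta(d)\, \vol(K + L)/\vol(L)$, with $\theta(d)$ at most polynomial in $d$, so that $\kappa(d) \coloneqq \theta(d)^{1/d} = 1 + O(\log d/d)$. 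Establishing this density bound with the boundary treatment tailored to $\mathcal{E}_p^d$---which is also the source of the additive $-1$ in \eqref{eq:lkjbkjbkjbckjbkebzhzzzqdfguyuy1}---constitutes the main technical hurdle of the argument. Granted the density bound, applied with $K = \mathcal{E}_p^d$ and $L = \varepsilon \mathcal{B}_q$, it remains to control $\vol(\mathcal{E}_p^d + \varepsilon \mathcal{B}_q)$. Combining the classical $\ell^p$-embedding inclusions---$\mathcal{B}_q \subseteq \mathcal{B}_p$ when $p \geq q$ and $\mathcal{B}_q \subseteq d^{1/p - 1/q} \mathcal{B}_p$ when $p \leq q$---with $\mu_d \mathcal{B}_p \subseteq \mathcal{E}_p^d$, the hypothesis \eqref{eq:extraassumptionub1} is seen to imply $\varepsilon \mathcal{B}_q \subseteq \eta\, \mathcal{E}_p^d$ in both regimes. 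Convexity of $\mathcal{E}_p^d$ then yields $\mathcal{E}_p^d + \varepsilon \mathcal{B}_q \subseteq (1+\eta)\mathcal{E}_p^d$, whence $\vol(\mathcal{E}_p^d + \varepsilon \mathcal{B}_q) \leq (1 + \eta)^d \bar\mu_d^{\,d}\, \vol(\mathcal{B}_p)$. Substituting into the Rogers bound and extracting $d$-th roots produces \eqref{eq:lkjbkjbkjbckjbkebzhzzzqdfguyuy1}.

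\emph{Upper bound \ref{item:iifinitedim}.} For the wider $\varepsilon$-range \eqref{eq:extraassumptionub2}, the plan is to reduce to the $p = q$ instance of \ref{item:ifinitedim}. For $p \geq q$, the embedding $\mathcal{B}_p \subseteq d^{1/q - 1/p} \mathcal{B}_q$ implies that any $\tilde\varepsilon$-covering of $\mathcal{E}_p^d$ in $\|\cdot\|_p$ is simultaneously a $(\tilde\varepsilon\, d^{1/q - 1/p})$-covering in $\|\cdot\|_q$, so that $N(\varepsilon; \mathcal{E}_p^d, \|\cdot\|_q) \leq N\bigl(\varepsilon\, d^{1/p - 1/q}; \mathcal{E}_p^d, \|\cdot\|_p\bigr)$. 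Setting $\tilde\varepsilon \coloneqq \varepsilon\, d^{1/p - 1/q}$, the hypothesis \eqref{eq:extraassumptionub2} becomes $\tilde\varepsilon \leq \eta \mu_d$, which is precisely the hypothesis of \ref{item:ifinitedim} in the $p = q$ case (where $V_{p,p,d} = 1$). Rearranging the resulting estimate yields \eqref{eq:lkjbkjbkjbckjbkebzhzzzqdfguyuy2}.
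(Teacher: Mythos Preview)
Your proposal is correct. The lower bound and the treatment of \ref{item:ifinitedim} match the paper's approach; the only caveat is that where you cite the Rogers-type density inequality $N(K;L)-1 \le \theta(d)\,\vol(K+L)/\vol(L)$ with $\theta(d)^{1/d}=1+O(\log d/d)$ as a known input, the paper actually carries it out from scratch via a two-scale random-placement argument (splitting $\varepsilon=\varepsilon_1+\varepsilon_2$ with $\varepsilon_1/\varepsilon_2=d\ln d$, averaging over i.i.d.\ uniform centers in $\mathcal{E}_p^d+\varepsilon\mathcal{B}_q$, then packing the residual)---this is precisely the ``main technical hurdle'' you flag, and the $-1$ arises from an isolated $\varepsilon^d$ term in that construction rather than from any $\mathcal{E}_p^d$-specific boundary treatment. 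For \ref{item:iifinitedim} your route genuinely differs: you reduce to the $p=q$ instance of \ref{item:ifinitedim} via the covering comparison $N(\varepsilon;\mathcal{E}_p^d,\|\cdot\|_q)\le N(\varepsilon\,d^{1/p-1/q};\mathcal{E}_p^d,\|\cdot\|_p)$, whereas the paper instead feeds the inclusion $\mathcal{E}_p^d\subseteq d^{1/q-1/p}\mathcal{E}_q^d$ directly into the density bound and controls $\vol(\mathcal{E}_p^d+\varepsilon\mathcal{B}_q)\le (1+\eta)^d d^{d(1/q-1/p)}\vol(\mathcal{E}_q^d)$. Your reduction is cleaner and avoids re-entering the volume computation; the paper's version keeps both cases on equal footing inside a single density estimate. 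Both yield the identical bound with the same $\kappa(d)$.
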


\begin{proof}[Proof.]
The proof is presented in Section~\ref{sec:prooftheoremprofinitethm}.
\end{proof}

Comparing (\ref{eq:lkjbkjbkjbckjbkebzhzzzqdfguyuy}) and (\ref{eq:lkjbkjbkjbckjbkebzhzzzqdfguyuy1}), we see that the bounds in Theorem~\ref{thm:profinitethm} are essentially sharp as $d\to\infty$, up to the multiplicative constant $1+\eta$.
The restrictions on the range of $\varepsilon$ in (\ref{eq:extraassumptionub1}) and (\ref{eq:extraassumptionub2}) are not fully satisfactory. 
Specifically, as we are interested in the regime $\varepsilon \simeq d^{(1/q-1/p)} \mu_d$, they imply that $\eta $ cannot be taken arbitrarily small, which would be required to make the $(1+\eta)$-factor discrepancy between the lower bound (\ref{eq:lkjbkjbkjbckjbkebzhzzzqdfguyuy}) and the upper bound (\ref{eq:lkjbkjbkjbckjbkebzhzzzqdfguyuy1}) 
vanish.
We do not know whether it is possible to get around this tradeoff on the choice of $\eta$.
In fact, the discussion after Theorem~\ref{thm: scaling metric entropy infinite ellipsoids regular} below strongly suggests that the answer is negative
for general values of $p$ and $q$. For $p=q=2$, however, one can apply stronger density arguments and circumvent the tradeoff.
This is the focus of the next section.

\subsection{Mixed Ellipsoids}\label{sec:mixedellipsoidsseparateresults}

We show how, for $p=q=2$, Theorem~\ref{thm:profinitethm} can be strengthened by 
resorting to sharp density results on the covering of Euclidean balls by Euclidean balls due to Rogers \cite{rogersCoveringSphereSpheres1963}. 
A similar idea was used by Donoho in \cite{donohoCountingBitsKolmogorov2000,donohoDataCompressionHarmonic1998}, and later exploited in \cite{luschgySharpAsymptoticsKolmogorov2004},
in the context of ellipsoids resulting from the analysis of unit balls in Sobolev spaces.
We will need to work with so-called mixed ellipsoids defined as follows.

\begin{definition}[Mixed ellipsoid norm]\label{def: Finite dimensional mixed ellipsoid norm}
Let $p_1,p_2 \in[1, \infty]$ and $\{d_n\}_{n\in\mathbb{N}^*} \subset \mathbb{N}^* $.
To a given sequence $\{\mu_n\}_{n\in\mathbb{N}^*}$ of positive real numbers, we associate the \emph{mixed ellipsoid norm} $\|\cdot\|_{p_1, p_2, \mu}$, defined as 
\begin{equation}
\|\cdot\|_{p_1, p_2, \mu} 
\colon (x_1, x_2, \dots) \in \mathbb{R}^{d_1} \times\mathbb{R}^{d_2} \times \dots \mapsto 
\left\| \left\{\left \|x_j \right\|_{p_2}\right\}_{j=1}^\infty\right\|_{p_1, \mu},
\end{equation}
where $\|\cdot\|_{p_2}$ is the $\ell^{p_2}$-norm and $\|\cdot\|_{p_1, \mu}$ is the ellipsoid norm as per Definition \ref{def: Infinite dimensional ellipsoid}.
\end{definition}

\noindent
For $p_1$ and $p_2$ finite, the mixed ellipsoid norm of the vector $x=(x_{j,i})_{j \in \mathbb{N}^*, 1 \leq i \leq d_j}$ is given by
\begin{equation}
\|x\|_{p_1, p_2, \mu} 
= \left( \sum_{j=1}^\infty \frac{1}{\mu_j^{p_1}} \left(\sum_{i=1}^{d_j} x_{j,i}^{p_2} \right)^{p_1/p_2} \right)^{1/p_1}.
\end{equation}

\begin{definition}[Mixed ellipsoid]\label{def: Finite dimensional mixed ellipsoid}
Let $p_1,p_2 \in[1, \infty]$ and $\{d_n\}_{n\in\mathbb{N}^*} \subset \mathbb{N}^* $.
To a given sequence $\{\mu_n\}_{n\in\mathbb{N}^*}$ of positive real numbers, we associate the \emph{mixed ellipsoid}
\begin{equation}
\mathcal{E}_{p_1, p_2} 
\coloneqq 
\left\{ x \mid \|x\|_{p_1, p_2, \mu} \leq 1 \right\}.
\end{equation}
The elements of $\{\mu_n\}_{n\in\mathbb{N}^*}$ are referred to as the \emph{semi-axes} of the mixed ellipsoid and $\{d_n\}_{n\in\mathbb{N}^*}$
are its dimensions.
\end{definition}

\noindent
Using arguments akin to those leading to (\ref{eq:ellipsasunionofcartprodgeneralsett})--(\ref{eq:baromegaset}),
we can represent a mixed ellipsoid as a union of Cartesian products of balls, concretely 
\begin{equation}\label{eq:ksjhdacozrgheht}
\mathcal{E}_{p_1, p_2} 
= \bigcup_{\delta \in \mathcal{E}_{p_1}} \prod_{j=1}^\infty \mathcal{B}_{p_2}(0 \semcol \delta_j).
\end{equation}
A reduction of the continuum $\mathcal{E}_{p_1}$ in \eqref{eq:ksjhdacozrgheht} to a finite set,
effected by first intersecting $\mathcal{E}_{p_1}$ with a regular infinite lattice and then suitably truncating the result, will allow us, in the proof of Theorem~\ref{thm:bound on mixed norm},
to apply Lemma~\ref{lem:lemboundtriangleineqlike} and bound the metric entropy of $\mathcal{E}_{p_1, p_2} $ in terms of 
the metric entropies of the balls $\mathcal{B}_{p_2}(0\semcol\delta_j)$.
The latter can be upper-bounded, for $p_2=2$, through the following result from \cite{rogersCoveringSphereSpheres1963}.

\begin{lemma}\label{lem:rogernb2}
There exists $K>0$ such that
\begin{equation*}
N\left(\varepsilon \semcol \mathcal{B}(0\semcol\mu), \|\cdot\|_2 \right)^{1/d} \, \varepsilon
\leq \left(K d^{5/2}\right)^{1/d} \mu,
\end{equation*}
for all $d\geq 9$ and all $\mu >0$, with $ \varepsilon \in (0,\mu)$.
\end{lemma}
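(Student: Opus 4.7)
The plan is to reduce the statement, via a straightforward scaling argument, to Rogers' covering bound for the unit Euclidean ball in $\mathbb{R}^d$, and then invoke that bound directly. Concretely, I would first observe that the map $x \mapsto x/\mu$ is a linear isomorphism that sends $\mathcal{B}(0;\mu)$ to $\mathcal{B}(0;1)$ and rescales Euclidean distances by $1/\mu$, so it carries any $\varepsilon$-covering of $\mathcal{B}(0;\mu)$ into an $(\varepsilon/\mu)$-covering of $\mathcal{B}(0;1)$ and vice versa. This yields the elementary identity
$$N\left(\varepsilon \semcol \mathcal{B}(0;\mu), \|\cdot\|_2\right) = N\left(\varepsilon/\mu \semcol \mathcal{B}(0;1), \|\cdot\|_2\right).$$
Setting $\tilde\varepsilon := \varepsilon/\mu \in (0,1)$, the claim of the lemma reduces, after taking $d$-th powers, to proving
$$N\left(\tilde\varepsilon \semcol \mathcal{B}(0;1), \|\cdot\|_2\right) \leq K d^{5/2} \tilde\varepsilon^{-d}$$
for some absolute constant $K$, all $d \geq 9$, and all $\tilde\varepsilon \in (0, 1)$.

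This last inequality is the content of Rogers' \emph{Covering a sphere with spheres} \cite{rogersCoveringSphereSpheres1963}, which I would cite rather than reproduce. The proof in that paper proceeds by a probabilistic averaging argument over translates of a suitable lattice: one samples a translate uniformly at random, bounds the expected number of lattice points whose $\tilde\varepsilon$-balls suffice to cover $\mathcal{B}(0;1)$, and then extracts a deterministic covering no larger than the expectation. The polynomial prefactor $d^{5/2}$ originates from integrating tail estimates of Gaussian-like densities on the sphere, and the threshold $d \geq 9$ is the point beyond which the relevant Gamma-function ratios in those estimates can be controlled uniformly.

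The only step that requires any care in my plan is the translation between the exact form of the bound as stated in \cite{rogersCoveringSphereSpheres1963}---which may carry subleading logarithmic or additive correction terms---and the clean form $K d^{5/2} \tilde\varepsilon^{-d}$ that is needed here. This amounts to routine bookkeeping: one enlarges the constant $K$ so as to absorb all lower-order factors uniformly for $d \geq 9$, exploiting that $d^{5/2}$ dominates any $d^\alpha (\log d)^\beta$ with $\alpha<5/2$ on this range. I do not foresee any genuine obstacle beyond this cosmetic adjustment; the substantive mathematical content is entirely supplied by Rogers' theorem and the scaling reduction is essentially free.
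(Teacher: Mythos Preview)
Your proposal is correct and takes essentially the same approach as the paper: the paper's proof is a single sentence stating that the lemma is ``simply a reformulation of \cite[Theorem 3]{rogersCoveringSphereSpheres1963} in terms of covering numbers.'' Your scaling reduction and subsequent invocation of Rogers' bound is exactly this reformulation spelled out in more detail.
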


\begin{proof}[Proof.]
This is simply a reformulation of \cite[Theorem 3]{rogersCoveringSphereSpheres1963} in terms of covering numbers.
\end{proof}

\noindent
We are now ready to present the result, which will lead to the announced strengthening of Theorem~\ref{thm:profinitethm}. 

\begin{theorem}\label{thm:bound on mixed norm}
  Let $\mathcal{E}_{2, 2}$ be a mixed ellipsoid with non-increasing semi-axes $\{\mu_n\}_{n\in\mathbb{N}^*}$ and dimensions 
  $\{d_n\}_{n\in\mathbb{N}^*}$,  
  let $\varepsilon \in (0, \mu_1)$, 
  and let $\gamma\geq 1$ be allowed to depend on $\varepsilon$.
  Let $k\in\mathbb{N}^*$ be such that $\mu_{k+1} \leq \varepsilon < \mu_k$.
  Then, under the assumption $d_1, \dots, d_k\geq 9$, we have
  \begin{equation}\label{eq:pojohoaooooooab1}
  N \left(\varepsilon_\gamma \semcol {\mathcal{E}}_{2, 2}, \|\cdot\|_2 \right)^{1/\bar d_k} \, \varepsilon_\gamma
  \leq \kappa_k \prod_{j=1}^k \mu_j^{d_j/\bar d_k},
  \end{equation}
  where we recall that $\bar d_k$ is defined in (\ref{eq:defdesbarres}), $\varepsilon_\gamma \coloneqq \varepsilon \left(1+ \bar d_k^{-\gamma} \sqrt{k+1}\right)$, and $\{\kappa_k\}_{k\in\mathbb{N}^*}$ is a sequence satisfying
  \begin{equation}\label{eq:jsqhkqoioiloio}
\log(\kappa_k)
      = O_{k \to \infty} \left(\frac{\gamma \, k \, \log (\jrdn)}{\jrdn}\right).
  \end{equation}
\end{theorem}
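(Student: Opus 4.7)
The plan is to combine the union representation~\eqref{eq:ksjhdacozrgheht} with the decomposition lemma (Lemma~\ref{lem:lemboundtriangleineqlike}(i)) and Rogers' density estimate (Lemma~\ref{lem:rogernb2}). The underlying idea is to truncate the product at block $k$ (valid because $\mu_{k+1}\le\varepsilon$), discretize the continuum $\bar\Omega$ to a finite lattice of spacing $\bar d_k^{-\gamma}$, and then cover each finite-dimensional cell produced by that discretization using Rogers' bound on coverings of Euclidean balls by Euclidean balls.

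Starting from~\eqref{eq:ksjhdacozrgheht} and reparametrizing via $\omega_j:=\delta_j/\mu_j$, I would write $\mathcal{E}_{2,2}=\bigcup_{\omega\in\bar\Omega}\prod_{j\ge 1}\omega_j(\mu_j\mathcal{B}_2^{d_j})$ with $\bar\Omega=\{\omega\in\ell^2_+:\|\omega\|_2\le 1\}$. Monotonicity of $\{\mu_j\}$ controls the tail: for $j>k$, $\mu_j\le\mu_{k+1}\le\varepsilon$, so $\|\{x_j\}_{j>k}\|_2^2\le\mu_{k+1}^2\omega_{k+1}^2$ where $\omega_{k+1}^2:=\sum_{j>k}\omega_j^2$. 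This packages the tail as a single effective $(k+1)$-th coordinate, matching the shape required by Lemma~\ref{lem:lemboundtriangleineqlike}. I would then discretize the projection of $\bar\Omega$ to $\mathbb{R}^{k+1}$ by ceiling each coordinate to a non-negative lattice of spacing $\eta:=\bar d_k^{-\gamma}$; the resulting finite set $\Omega$ has $\#\Omega\le(2/\eta)^{k+1}$, and each $\omega'\in\Omega$ satisfies $\|\omega'\|_2\le 1+\eta\sqrt{k+1}$ by the triangle inequality in $\mathbb{R}^{k+1}$. Ceiling preserves the inclusion $\omega_j\mu_j\mathcal{B}_2^{d_j}\subseteq\omega'_j\mu_j\mathcal{B}_2^{d_j}$, so hypothesis~\eqref{eq:firstdecompcartprodelli} holds.

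Applying Lemma~\ref{lem:lemboundtriangleineqlike}(i) with $\rho_j=\varepsilon$ for $j\le k$ and using $\mu_{k+1}\le\varepsilon$ gives
\[
\bar\rho_{\omega'}^{\,2}=\varepsilon^{2}\sum_{j=1}^{k}(\omega'_j)^{2}+(\omega'_{k+1}\mu_{k+1})^{2}\;\le\;\varepsilon^{2}\|\omega'\|_2^{2}\;\le\;\varepsilon_\gamma^{\,2}.
\]
Since $\varepsilon<\mu_k\le\mu_j$ and $d_j\ge 9$ for $j\le k$, Lemma~\ref{lem:rogernb2} yields $N(\varepsilon;\mu_j\mathcal{B}_2^{d_j},\|\cdot\|_2)\le K d_j^{5/2}(\mu_j/\varepsilon)^{d_j}$. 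Multiplying these bounds over $j$, taking the $\bar d_k$-th root, and multiplying by $\varepsilon_\gamma$ produces~\eqref{eq:pojohoaooooooab1} with
\[
\kappa_k=\bigl(2\bar d_k^{\gamma}\bigr)^{(k+1)/\bar d_k}K^{k/\bar d_k}\Bigl(\prod_{j=1}^{k}d_j^{5/2}\Bigr)^{\!1/\bar d_k}\bigl(1+\eta\sqrt{k+1}\bigr).
\]
A direct computation of $\log\kappa_k$, using $d_j\le\bar d_k$, $\gamma\ge 1$, and $\sqrt{k+1}/\bar d_k^{\gamma}\le k\log\bar d_k/\bar d_k$, gives the advertised asymptotic~\eqref{eq:jsqhkqoioiloio}.

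The main obstacle is the simultaneous bookkeeping in the lattice step: the ceiling map must (i) enlarge every $\omega\in\bar\Omega$ to a lattice point $\omega'\in\Omega$ so that the original cell sits inside the enlarged one---this is why a ceiling rather than a nearest-point projection is required; (ii) inflate $\|\omega'\|_2$ by at most the additive factor $\eta\sqrt{k+1}$, so that $\bar\rho_{\omega'}\le\varepsilon_\gamma$; and (iii) produce at most $(2/\eta)^{k+1}$ lattice points, so that the $\#\Omega$-factor contributes only a $\gamma k\log\bar d_k/\bar d_k$-term to $\log\kappa_k$---this is precisely what pins the lattice spacing to $\eta=\bar d_k^{-\gamma}$. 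A minor but essential point is that Lemma~\ref{lem:lemboundtriangleineqlike} is stated for standard infinite-dimensional $p$-ellipsoids, whereas the constituents here are the Euclidean balls $\mu_j\mathcal{B}_2^{d_j}$; however, the union-over-$\Omega$ plus product-of-covers mechanism underlying the lemma transfers transparently to the mixed-ellipsoid block structure used here.
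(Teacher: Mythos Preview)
Your proposal is correct and follows essentially the same route as the paper: discretize the weight vector $\omega$ to a finite lattice of spacing $\bar d_k^{-\gamma}$ (the paper rounds $\omega_j^2$ rather than $\omega_j$, a cosmetic difference), invoke Lemma~\ref{lem:lemboundtriangleineqlike}(i) with $\rho_j=\varepsilon$ and the tail absorbed via $\mu_{k+1}\le\varepsilon$, then bound each block by Rogers' Lemma~\ref{lem:rogernb2} and collect the multiplicative errors into $\kappa_k$. Your remark that Lemma~\ref{lem:lemboundtriangleineqlike} must be read with constituent blocks $\mathcal E_2^{[j]}=\mu_j\mathcal B_2^{d_j}$ is exactly what the paper makes explicit before applying the lemma.
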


\begin{proof}
    See Section~\ref{sec:proofofthmmixednorm}.
\end{proof}

\noindent
The assumption $\varepsilon \in (0, \mu_1)$ comes without loss of generality as the ellipsoid $\mathcal{E}_{2, 2}$ is included in a ball of radius $\mu_1$, so that the corresponding covering number is equal to $1$ whenever $\varepsilon \geq \mu_1$. Theorem~\ref{thm:bound on mixed norm} leaves little room for improvement. This can be seen by applying \eqref{eq:lkjbkjbkjbckjbkebzhzzzqdfguyuy} to conclude that
\begin{equation}\label{eq:pppoooaaazeeerffghccc}
 N \left(\varepsilon_\gamma \semcol {\mathcal{E}}_{2, 2}, \|\cdot\|_2 \right)^{1/\bar d_k} \, \varepsilon_\gamma
\geq \prod_{j=1}^k \mu_j^{d_j/\bar d_k}.
\end{equation}
The upper and lower bounds (\ref{eq:pojohoaooooooab1}) and (\ref{eq:pppoooaaazeeerffghccc}) agree up to a multiplicative factor $\kappa_k$, which by (\ref{eq:jsqhkqoioiloio}) is close to $1$ for large $k$ if $\gamma \, k \, \log (\jrdn)/\jrdn \to_{k\to\infty}0$.

We conclude by showing how Theorem~\ref{thm:bound on mixed norm} improves upon Theorem~\ref{thm:profinitethm} in the case $p=q=2$.
First, we observe that ${\mathcal{E}}_{2, 2}$
can be thought of as 
a finite, say $d$-dimensional, ellipsoid if we impose that the semi-axes above a certain index are very close to zero.
Therefore, upon application of case \ref{item:iifinitedim} in Theorem~\ref{thm:profinitethm}, with $p=q=2$ and $\eta =1$, we obtain 
\begin{equation}\label{eq:pojohoaooooooab2}
\left[N\left(\varepsilon\semcol {\mathcal{E}}_{2, 2}, \|\cdot\|_2 \right)-1\right]^{1/d} \varepsilon 
\leq \kappa(d) \left(1+ {\eta}\right)  \prod_{i=1}^k \mu_i^{d_i/d}
= 2\, \kappa(d) \prod_{i=1}^k \mu_i^{d_i/d}
\end{equation}
from (\ref{eq:lkjbkjbkjbckjbkebzhzzzqdfguyuy2}).
Therefore, Theorem~\ref{thm:bound on mixed norm} improves on Theorem~\ref{thm:profinitethm} by getting rid of the multiplicative factor $2$ in the upper bound (\ref{eq:pojohoaooooooab2}) compared to (\ref{eq:pojohoaooooooab1}).

\section{Ellipsoids in Infinite Dimensions}\label{sec:infdimell}

In this section, we state our main results on the metric entropy of infinite-dimensional ellipsoids.
The general case is treated in Section~\ref{sec:mainresultsgeneral}, and in Sections \ref{sec:hilbertiancase} and \ref{sec:optimalityres1} we show that the results can be sharpened for $p=q=2$ and $p=q=\infty$, respectively.

\subsection{The General Case}\label{sec:mainresultsgeneral}

\begin{theorem}\label{thm: scaling metric entropy infinite ellipsoids regular}
Let $p, q \in [1, \infty], b>0$.
Let $\{\mu_n\}_{n\in \mathbb{N}^*}$ be a non-increasing sequence that is regularly varying with index $-b$, and 
let $\phi \colon \mathbb{R}^*_+ \to \mathbb{R}^*_+$ be the step function given by 
$\phi(t) = \mu_{\lfloor t \rfloor + 1}$, for $t > 0$. 
Let $\mathcal{E}_p$ be the $p$-ellipsoid with semi-axes $\{\mu_n\}_{n\in \mathbb{N}^*}$.
\begin{enumerate}[label=(\roman*)]
\item 
If (a) $q < p/(pb+1)$ or (b) $q = p/(pb+1)$ with $ \lim_{n\to\infty}n \mu_n^{1/b}>0 $,
then the ellipsoid $\mathcal{E}_p$ is not compact in $\ell^q$, i.e., there exists $\varepsilon^*>0$ such that 
\begin{equation*}
H \left(\varepsilon\semcol \mathcal{E}_p, \|\cdot\|_q \right) 
= \infty,
\quad \text{for all } \varepsilon \in (0, \varepsilon^*);
\end{equation*}

\item
If $q = p/(pb+1)$ and $\sum_{n\in\mathbb{N}^*}\mu_n^{1/b}<\infty$, then 
\begin{equation*}
    \Gamma_{p,q} 
     \leq \lim_{\varepsilon \to 0} \varepsilon \, \frac{H \left(\varepsilon\semcol \mathcal{E}_p, \|\cdot\|_q \right)^{\left(\frac{1}{p}-\frac{1}{q}\right)}}{\phi \left(H \left(\varepsilon\semcol \mathcal{E}_p, \|\cdot\|_q \right) \right)}
    \quad \text{and} \quad 
    \lim_{\varepsilon \to 0} \frac{\varepsilon}{{\psi} \left(H \left(\varepsilon\semcol \mathcal{E}_p, \|\cdot\|_q \right) \right)}
    \leq 1,
\end{equation*}
with $\Gamma_{p,q}$ as in (\ref{eq:defgammapq}) and $\psi \colon \mathbb{R}_+ \to \mathbb{R}_+^*$ given by 
\begin{equation*}
    \psi \colon x \longmapsto 
    \left(\sum_{n=\lfloor x \rfloor+1}^\infty\mu_n^{1/b}\right)^b;
\end{equation*}

\item
If $p/(pb+1)<q \leq p$, it holds that 
\begin{equation*}
\Gamma_{p,q} \left(\frac{b}{\ln(2)} \right)^{b^*}
\leq \lim_{\varepsilon \to 0} \varepsilon \, \frac{H \left(\varepsilon\semcol \mathcal{E}_p, \|\cdot\|_q \right)^{\left(\frac{1}{p}-\frac{1}{q}\right)}}{\phi \left(H \left(\varepsilon\semcol \mathcal{E}_p, \|\cdot\|_q \right) \right)}
\leq \gamma_{p,q, b} \left(\frac{b}{\ln(2)}  + 1\right)^{b^*},
\end{equation*}
with $\Gamma_{p,q}$ as in (\ref{eq:defgammapq}) and 
\begin{equation*}
b^* = b+\frac{1}{p}-\frac{1}{q}, \quad
\gamma_{p,q,b}
=  \left(\frac{b}{b+\frac{1}{p}-\frac{1}{q}}   \right)^{(\frac{1}{q}-\frac{1}{p})};
\end{equation*}

\item
If $p<q$, it holds that
\begin{equation}\label{eq:jdhhjnjbyugssfkl}
  \Gamma_{p,q} \left(\frac{b}{\ln(2)} \right)^{b^*}
  \leq \lim_{\varepsilon \to 0} \varepsilon \, \frac{H \left(\varepsilon\semcol \mathcal{E}_p, \|\cdot\|_q \right)^{\left(\frac{1}{p}-\frac{1}{q}\right)}}{\phi \left(H \left(\varepsilon\semcol \mathcal{E}_p, \|\cdot\|_q \right) \right)} 
\end{equation}
and
\begin{equation*}
   H \left(\varepsilon\semcol \mathcal{E}_p, \|\cdot\|_q \right) \,
\varepsilon^{1/b^*}
=
\begin{cases}
O_{\varepsilon \to 0}\left(\log^{(2)}(\varepsilon^{-1})\right),
\quad &\text{if } b\geq 1,\\[.25cm]
O_{\varepsilon \to 0}\left(\log^{1-b}(\varepsilon^{-1})\right),
\quad &\text{if } b< 1,
\end{cases}
\end{equation*}
with $\Gamma_{p,q}$ as in (\ref{eq:defgammapq}) and 
\begin{equation*}
b^* = b+\frac{1}{p}-\frac{1}{q}.
\end{equation*}
\end{enumerate}
\end{theorem}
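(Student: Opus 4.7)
The theorem splits naturally into two parts: the non-compactness assertion in case (i), and the two-sided asymptotic bounds in cases (ii)--(iv). For case (i) I would exhibit an infinite subset of $\mathcal{E}_p$ with $q$-distances bounded away from zero. Concretely, with $N_k = 2^k$, I would construct vectors $x^{(k)}$ supported on the disjoint blocks $\{N_k+1,\dots,N_{k+1}\}$ with constant amplitude chosen so that $\|x^{(k)}\|_{p,\mu} \leq 1$; this forces $\|x^{(k)}\|_q \asymp \mu_{N_{k+1}} N_k^{1/q - 1/p}$, and regular variation of $\{\mu_n\}$ combined with either $q<p/(pb+1)$ or the boundary hypothesis $\liminf n\mu_n^{1/b}>0$ ensures this quantity does not vanish. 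Since $x^{(k)}$ and $x^{(j)}$ have disjoint supports, this yields pairwise $q$-distances bounded below uniformly, hence $H(\varepsilon;\mathcal{E}_p,\|\cdot\|_q) = \infty$ for all small $\varepsilon$.

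\textbf{Lower bounds in (ii)--(iv).} The coordinate embedding $\mathcal{E}_p^d \hookrightarrow \mathcal{E}_p$ combined with the universal lower bound \eqref{eq:lkjbkjbkjbckjbkebzhzzzqdfguyuy} of Theorem~\ref{thm:profinitethm} gives
\begin{equation*}
H(\varepsilon;\mathcal{E}_p,\|\cdot\|_q) \geq d \log_2\!\left(V_{p,q,d}\,\bar\mu_d/\varepsilon\right), \quad \text{for all } d\in\mathbb{N}^*.
\end{equation*}
I would optimize over $d$ by selecting $d = d(\varepsilon)$ according to the heuristic (Heur1), i.e.\ $\varepsilon \simeq d^{1/q-1/p}\mu_d$; a Karamata-type inversion argument (using that $n\mapsto n^{1/q-1/p}\mu_n$ is regularly varying of index $b^* = b + 1/p - 1/q$) makes this choice unambiguous up to a slowly varying factor. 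Controlling $\log \bar\mu_d = \sum_{n\leq d} \log\mu_n$ via Karamata's theorem and substituting back produces the leading constant $\Gamma_{p,q}(b/\ln 2)^{b^*}$ stated in (iii) and (iv), as well as the $\Gamma_{p,q}$ in (ii).

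\textbf{Upper bounds in (ii)--(iv).} For the upper bounds I would apply the decomposition Lemma~\ref{lem:lemboundtriangleineqlike}, choosing the regime according to the relation of $p,q,b$: clause (i) for $p\leq q$ (case (iv)), clause (ii) for $p/(pb+1)<q<p$ within case (iii), and clause (iii) for the boundary case $q=p/(pb+1)$ (case (ii)). The effective dimension $d$ is fixed via (Heur1); the interval $\{1,\dots,d\}$ is partitioned into $k$ blocks of geometrically growing sizes $d_j$ so that, by regular variation, the semi-axes within each block are comparable to a single scale $\tilde\mu_j$, reducing each constituent to a near-Euclidean ellipsoid to which Theorem~\ref{thm:profinitethm} applies sharply. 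The finite weight set $\Omega$ is taken as a sufficiently fine lattice intersection with $\bar\Omega$ on the $\ell^p$-sphere in $\mathbb{R}^{k+1}$, fine enough that $\log \#\Omega$ is negligible relative to $d\log(1/\varepsilon)$ but coarse enough that \eqref{eq:firstdecompcartprodelli} holds. Balancing the per-block radii $\rho_j$ so that the weighted $\ell^q$-combination appearing in $\rho_\omega$ matches the target $\bar\rho \simeq \varepsilon$, and summing the per-block contributions via \eqref{eq:lknjhgdshlkqncvzezzzzsxtt}, produces the matching upper constant $\gamma_{p,q,b}(b/\ln 2 + 1)^{b^*}$ in case (iii) and the analogous expression involving $\psi$ in case (ii).

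\textbf{Main obstacle.} The critical difficulty is case (iv) with $p<q$, where the $(1+\eta)$-loss in Theorem~\ref{thm:profinitethm} cannot be absorbed uniformly as $\varepsilon\to 0$, accounting for the strict inequality in \eqref{eq:jdhhjnjbyugssfkl} and the logarithmic correction in the companion upper bound. Here one must allow the number of blocks $k$ to grow with $\varepsilon$ at a controlled rate, trading the per-block multiplicative loss $(1+\eta)^k$ against the increasing fidelity of the piecewise-constant approximation of $\{\mu_n\}$ within blocks; the split $b\geq 1$ versus $b<1$ arises because the residual $\|\cdot\|_q$-mass carried by the tail of $\{\mu_n\}$ scales as $\mu_d$ when $b>1/q$ and otherwise acquires an extra logarithmic-in-$d$ factor. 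Finally, case (ii) is technically distinctive because the residual block is only $q$-summable marginally, forcing the use of the exact tail quantity $\alpha_d = (\sum_{n>d}\mu_n^{1/b})^b$ from clause (iii) of Lemma~\ref{lem:lemboundtriangleineqlike}; the summability hypothesis $\sum\mu_n^{1/b}<\infty$ is precisely what makes this $\alpha_d$ finite and tend to $0$.
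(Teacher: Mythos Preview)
Your plan is broadly workable but diverges from the paper's proof in organization and in one technical point. For (i), you build an explicit separated set on dyadic blocks; the paper instead reuses the volume lower bound \eqref{eq:lkjbkjbkjbckjbkebzhzzzqdfguyuy}, noting that $N(\varepsilon)^{1/d} \geq V_{p,q,d}\,\mu_d/\varepsilon \asymp d^{1/q-1/p-b}\ell(d)/\varepsilon$, which tends to infinity (or is bounded below in the boundary sub-case) as $d\to\infty$. For the two-sided bounds in (ii)--(iv) the paper does not optimize over $d$ directly but introduces the \emph{effective dimensions} $\ueffdim_s,\leffdim_s$ (Definition~\ref{def:uploweffdim}) together with Lemmas~\ref{lem:propertieseffdim}--\ref{lem:klmjhgfddddfhqsdfgEEE}, which translate bounds of the form $\varepsilon \gtrless c\,\mu_{\leffdim_s}$ into the stated limits via Lemma~\ref{lem:1328} and \eqref{eq:mlkojihgdqskjcdknazmqcfkjnjfdezq}; this cleanly decouples the Karamata-type analytic input from the covering-number input. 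More importantly, for the upper bounds in (ii) and (iii) the paper uses a \emph{single} finite block ($k=1$ in Lemma~\ref{lem:lemboundtriangleineqlike}), not a multi-block partition: the factor $(1+\eta)$ with $\eta\approx 1$ in Theorem~\ref{thm:profinitethm}\ref{item:iifinitedim} is precisely what produces the constant $(b/\ln 2 + 1)^{b^*}$ and the $s=2+o(1)$ in Lemmas~\ref{lem:klmjhgfddddfhqsdfg}--\ref{lem:klmjhgfddddfhqsdfgEEE}, so further blocking buys nothing there. For (iv) the paper does let $k$ grow, but with \emph{equal} block sizes $d_1=\dots=d_{k-1}=\mathfrak{d}$ (not geometric), and handles the last block separately via the Pietsch entropy-number estimate \cite[Lemma~12.2.2]{pietsch1980ideals} rather than Theorem~\ref{thm:profinitethm}. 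Finally, your diagnosis of the $b\geq 1$ versus $b<1$ split is off: it does not stem from the residual tail $q$-mass but from the sum $\sum_{j<k}\log(1+\mu_{j\mathfrak{d}}/\mu_{\mathfrak{d}}) \asymp \sum_{j}\log(1+j^{-b})$, which is $O(\log k)$ for $b\geq 1$ and $O(k^{1-b})$ for $b<1$; see \eqref{eq:nbeihjzbkjgtvoij1}.
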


\begin{proof}[Proof.]
See Section~\ref{sec:proofofmainthmisinthissec}.
\end{proof}

\noindent
Theorem~\ref{thm: scaling metric entropy infinite ellipsoids regular} covers all possible combinations of $p,q\in[1,\infty]$ and $\phi$ regularly varying with index $-b<0$, except for the case $q=p/(pb+1)$ with semi-axes such that $\lim_{n\to\infty}n \mu_n^{1/b}=0$ and $\sum_{n\in\mathbb{N}^*}\mu_n^{1/b}=\infty$, e.g., 
$\mu_n=c/(n\log(n))^{b}$. 
In the canonical example (\ref{eq:canonicaldecayregvar}) this corner case can not occur. 
In the particular case $p=q$,
Theorem~\ref{thm: scaling metric entropy infinite ellipsoids regular} characterizes the constants $c$ and $C$ in (\ref{eq:akjfvbeajkjslzzl}) according to 
\begin{equation}\label{eq:lkjbhvghbkjljjljoo}
  \left(\frac{b}{\ln(2)} \right)^{b}
  \leq \lim_{\varepsilon \to 0} \frac{\varepsilon}{\phi \left(H \left(\varepsilon\semcol \mathcal{E}_p, \|\cdot\|_p \right) \right)}
  \leq \left(\frac{b}{\ln(2)}  + 1\right)^{b},
  \end{equation}
which
shows that they match in the large-$b$ regime.
We shall see in Sections \ref{sec:hilbertiancase} and \ref{sec:optimalityres1}, respectively, that, for $p=q=2$, the lower and, for $p=q=\infty$, the upper bound in
(\ref{eq:lkjbhvghbkjljjljoo}) is tight. 

For illustration and interpretation purposes, we now particularize Theorem~\ref{thm: scaling metric entropy infinite ellipsoids regular} to semi-axes decaying, to first order, as in the canonical example (\ref{eq:canonicaldecayregvar}).
\begin{corollary}\label{cor:oadcpzrjefnkznrs}
  Let $p, q \in [1, \infty]$, $b>0$, $c>0$, and let $\{\mu_n\}_{n\in \mathbb{N}^*}$ be a non-increasing sequence of positive real numbers such that
    \begin{equation*}
    \mu_n
    =  \frac{c}{n^b} + o_{n \to \infty} \left(\frac{1}{n^{b}}\right).
  \end{equation*}
Let $\mathcal{E}_p$ be the $p$-ellipsoid with semi-axes $\{\mu_n\}_{n\in \mathbb{N}^*}$.
Then, 
\begin{enumerate}[label=(\roman*)]

\item
if $q \leq p/(pb+1)$, the ellipsoid $\mathcal{E}_p$ is not compact in $\ell^q$, i.e.,  
 there exists $\varepsilon^*>0$ such that 
\begin{equation*}
H \left(\varepsilon\semcol \mathcal{E}_p, \|\cdot\|_q \right) 
= \infty,
\quad \text{for all } \varepsilon \in (0, \varepsilon^*);
\end{equation*}

\item 
if $p/(pb+1)<q \leq p$, we have 
\begin{align*}
\frac{b}{\ln(2)}{\left(\frac{\Gamma_{p,q}\,  c}{\varepsilon}\right)}^{1/b^*} + o_{\varepsilon \to 0}\left(\varepsilon^{-1/b^*}\right)
&\leq H \left(\varepsilon\semcol \mathcal{E}_p, \|\cdot\|_q \right)\\
&\leq  \left(\frac{b}{\ln(2)}  + 1\right){\left(\frac{\gamma_{p,q, b}\,  c}{\varepsilon}\right)}^{1/b^*} + o_{\varepsilon \to 0}\left(\varepsilon^{-1/b^*}\right),
\end{align*}
with $\Gamma_{p,q}$ as in (\ref{eq:defgammapq}) and 
\begin{equation*}
b^* = b+\frac{1}{p}-\frac{1}{q},
\quad 
\gamma_{p,q,b}
=  \left(\frac{b}{b+\frac{1}{p}-\frac{1}{q}}   \right)^{(\frac{1}{q}-\frac{1}{p})};
\end{equation*}

\item
if $p<q$, we have 
\begin{align*}
\frac{b}{\ln(2)}{\left(\frac{\Gamma_{p,q}\,  c}{\varepsilon}\right)}^{1/b^*} + o_{\varepsilon \to 0}\left(\varepsilon^{-1/b^*}\right)
&\leq H \left(\varepsilon\semcol \mathcal{E}_p, \|\cdot\|_q \right) \\
 &=
\begin{cases}
O_{\varepsilon \to 0}\left(\varepsilon^{-1/b^*}\,\log^{(2)}(\varepsilon^{-1})\right),
\quad &\text{if } b\geq 1,\\[.25cm]
O_{\varepsilon \to 0}\left(\varepsilon^{-1/b^*}\,\log^{1-b}(\varepsilon^{-1})\right),
\quad &\text{if } b< 1,
\end{cases}
\end{align*}
with $\Gamma_{p,q}$ as in (\ref{eq:defgammapq}) and
\begin{equation*}
b^* = b+\frac{1}{p}-\frac{1}{q}.
\end{equation*}
\end{enumerate}
\end{corollary}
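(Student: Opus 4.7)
The plan is to derive each case of the corollary as a direct specialization of Theorem~\ref{thm: scaling metric entropy infinite ellipsoids regular} to sequences of the form $\mu_n = c\, n^{-b} + o(n^{-b})$, the only real work being to invert the asymptotic relations between $\varepsilon$ and $H(\varepsilon)$ that are stated implicitly through $\phi$. First, I would verify that $\{\mu_n\}$ is regularly varying with index $-b$, which is immediate from $\mu_{\lfloor \alpha n\rfloor}/\mu_n = \alpha^{-b}(1+o(1))$. Consequently, the step function $\phi(t) = \mu_{\lfloor t\rfloor + 1}$ satisfies
\begin{equation*}
\phi(t) = c\, t^{-b}\bigl(1 + o_{t\to\infty}(1)\bigr),
\end{equation*}
an identity that will be used repeatedly below.

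For case~(i), the sub-case $q < p/(pb+1)$ is handled by direct invocation of Theorem~\ref{thm: scaling metric entropy infinite ellipsoids regular}(i)(a). In the boundary sub-case $q = p/(pb+1)$, I would compute $n\mu_n^{1/b} = c^{1/b}(1+o(1)) \to c^{1/b} > 0$, verifying the hypothesis of Theorem~\ref{thm: scaling metric entropy infinite ellipsoids regular}(i)(b), from which non-compactness follows. The alternative sub-case $\sum_n \mu_n^{1/b} < \infty$ of Theorem~\ref{thm: scaling metric entropy infinite ellipsoids regular}(ii) is excluded here, since $\mu_n^{1/b} \sim c^{1/b}/n$ makes the series divergent.

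For cases~(ii) and~(iii), I would first establish that $H(\varepsilon) \to \infty$ as $\varepsilon \to 0$. Indeed, $H(\varepsilon)$ is non-increasing in $\varepsilon$; a finite limit would, via $\phi(H) \geq \phi(\lim H) > 0$, force the ratio $\varepsilon\, H^{1/p-1/q}/\phi(H)$ to tend to $0$, contradicting the positive lower bound furnished by Theorem~\ref{thm: scaling metric entropy infinite ellipsoids regular}(iii) or~(iv). With $H \to \infty$ established, the substitution $\phi(H) = c\, H^{-b}(1+o(1))$ is legitimate and transforms $\varepsilon H^{1/p-1/q}/\phi(H)$ into $\varepsilon H^{b^*}/c$ up to a $(1+o(1))$ factor. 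For case~(ii), Theorem~\ref{thm: scaling metric entropy infinite ellipsoids regular}(iii) then sandwiches $\varepsilon H^{b^*}/c$ between $\Gamma_{p,q}(b/\ln 2)^{b^*}(1+o(1))$ and $\gamma_{p,q,b}(b/\ln 2 + 1)^{b^*}(1+o(1))$; taking $1/b^*$-th roots yields the stated two-sided bound on $H(\varepsilon)$. The lower bound in case~(iii) is obtained analogously from the lower bound in Theorem~\ref{thm: scaling metric entropy infinite ellipsoids regular}(iv), while the upper bound in~(iii) is a direct rewriting of the theorem's statements $H(\varepsilon)\varepsilon^{1/b^*} = O(\log^{(2)}(\varepsilon^{-1}))$ (resp.\ $O(\log^{1-b}(\varepsilon^{-1}))$).

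The argument is essentially bookkeeping. The only technical care required is propagating the multiplicative $(1+o(1))$ factor from the substitution of $\phi$ through the $1/b^*$-th root so that it appears as an additive $o(\varepsilon^{-1/b^*})$ remainder in the final bounds; this is a standard sandwich argument relying on the continuity of $t \mapsto t^{1/b^*}$ and does not constitute a genuine obstacle.
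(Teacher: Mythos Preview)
Your proposal is correct and follows the same approach as the paper, which simply notes that $\mu_n = c\,n^{-b} + o(n^{-b})$ is regularly varying with index $-b$ and invokes Theorem~\ref{thm: scaling metric entropy infinite ellipsoids regular}. You have spelled out in more detail the inversion step from the implicit relation $\varepsilon\,H^{1/p-1/q}/\phi(H)$ to explicit bounds on $H(\varepsilon)$, together with the observation that for $q = p/(pb+1)$ one lands in case~(i)(b) rather than~(ii) since $\sum_n \mu_n^{1/b}$ diverges; the paper treats all of this as immediate.
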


\begin{proof}
  The result follows directly from Theorem~\ref{thm: scaling metric entropy infinite ellipsoids regular} 
  by noting that the non-increasing sequence  $\mu_n  =  \frac{c}{n^b} + o_{n \to \infty} \left(\frac{1}{n^{b}}\right)$ is regularly varying with index $-b$.
\end{proof}

\subsection{The Hilbertian Case}\label{sec:hilbertiancase}

We now show that in the Hilbertian case $p=q=2$ techniques developed in Section~\ref{sec:mixedellipsoidsseparateresults} can be leveraged 
to obtain a precise asymptotic characterization of metric entropy.

\begin{theorem}\label{thm:mainrespqtwo}
Let $b>0$, let $\{\mu_n\}_{n\in \mathbb{N}^*}$ be a non-increasing sequence that is regularly varying with index $-b$, and 
let $\phi \colon \mathbb{R}^*_+ \to \mathbb{R}^*_+$ be the step function given by 
$\phi(t) = \mu_{\lfloor t \rfloor + 1}$, for $t > 0$. Let $\mathcal{E}_2$ be the $2$-ellipsoid with semi-axes $\{\mu_n\}_{n\in \mathbb{N}^*}$.
  Then,
\begin{equation*}
\lim_{\varepsilon \to 0} \frac{\varepsilon}{\phi \left(H \left(\varepsilon\semcol \mathcal{E}_2, \|\cdot\|_2 \right) \right)}
= \left(\frac{b}{\ln(2)}\right)^b.
\end{equation*}
\end{theorem}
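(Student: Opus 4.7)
The lower bound $\liminf_{\varepsilon \to 0} \varepsilon/\phi(H(\varepsilon\semcol \mathcal{E}_2, \|\cdot\|_2)) \geq (b/\ln(2))^b$ is the specialization of Theorem~\ref{thm: scaling metric entropy infinite ellipsoids regular}(iii) to $p=q=2$, so the task is to sharpen the matching upper bound from the generic $(b/\ln(2)+1)^b$ down to $(b/\ln(2))^b$. The loose factor in the generic upper bound stems from the multiplicative $(1+\eta)^d$ in Theorem~\ref{thm:profinitethm}, whose $\eta$ cannot be taken arbitrarily small. In the Hilbertian setting this defect is precisely what Theorem~\ref{thm:bound on mixed norm} eliminates (via $\kappa_k \to 1$), so the plan is to reduce the problem to a mixed-ellipsoid covering via block decomposition.

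Concretely, I would pick a sequence of block sizes $\{d_j\}_{j\in\mathbb{N}^*}$ (to be tuned), set $\bar d_j = d_1+\dots+d_j$, and let $\tilde\mu_j = \mu_{\bar d_{j-1}+1}$, the largest semi-axis in block $B_j = \{\bar d_{j-1}+1,\dots,\bar d_j\}$. Monotonicity of $\{\mu_n\}$ immediately gives $\|x\|_{2,\mu} \geq \|x\|_{2,2,\tilde\mu}$, so $\mathcal{E}_2 \subseteq \tilde{\mathcal{E}}_{2,2}$ where $\tilde{\mathcal{E}}_{2,2}$ denotes the mixed ellipsoid of dimensions $\{d_j\}$ and semi-axes $\{\tilde\mu_j\}$. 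For given small $\varepsilon > 0$ and fixed $\gamma\geq 1$, define $\varepsilon' = \varepsilon/(1+\bar d_k^{-\gamma}\sqrt{k+1})$ and $k=k(\varepsilon)$ such that $\tilde\mu_{k+1} \leq \varepsilon' < \tilde\mu_k$. Applying Theorem~\ref{thm:bound on mixed norm} to $\tilde{\mathcal{E}}_{2,2}$ at input radius $\varepsilon'$ (whose $\varepsilon'_\gamma = \varepsilon$) then yields
\begin{equation*}
H(\varepsilon\semcol \mathcal{E}_2, \|\cdot\|_2)
\leq \frac{\bar d_k}{\ln(2)}\left[\ln \kappa_k - \ln \varepsilon + \sum_{j=1}^k \frac{d_j}{\bar d_k}\ln \mu_{\bar d_{j-1}+1}\right].
\end{equation*}

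To evaluate the right-hand side asymptotically, I would invoke the regular variation of $\{\mu_n\}$. A concrete choice such as $d_j = \max\{j, 9\}$ ensures $d_j/\bar d_j \to 0$; by uniform convergence for slowly varying functions, $\mu_{\bar d_{j-1}+1}/\mu_n \to 1$ uniformly for $n \in B_j$, so the blockwise sum admits the Riemann-sum-type approximation $\sum_{j=1}^k d_j \ln \mu_{\bar d_{j-1}+1} = \sum_{n=1}^{\bar d_k}\ln \mu_n + o(\bar d_k)$. The Karamata identity $\tfrac{1}{N}\sum_{n=1}^N \ln \mu_n = \ln \mu_N + b + o(1)$, valid for any regularly varying sequence of index $-b$, then gives $\sum_{j=1}^k (d_j/\bar d_k)\ln \mu_{\bar d_{j-1}+1} = \ln \mu_{\bar d_k} + b + o(1)$. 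The same choice of $\{d_j\}$ ensures $\kappa_k = 1 + o(1)$ via (\ref{eq:jsqhkqoioiloio}) and, for any fixed $\gamma>0$, $\varepsilon/\varepsilon' = 1 + o(1)$; combined with the defining inequality $\tilde\mu_{k+1}\leq\varepsilon'<\tilde\mu_k$ and $d_j/\bar d_j \to 0$, these force $\mu_{\bar d_k} = \varepsilon(1+o(1))$. Assembling,
\begin{equation*}
H(\varepsilon\semcol \mathcal{E}_2, \|\cdot\|_2) \leq \bar d_k \, \frac{b}{\ln(2)}\left(1+o(1)\right),
\end{equation*}
and since $\phi$ is regularly varying of index $-b$ with $\phi(\bar d_k) = \mu_{\bar d_k+1}(1+o(1)) = \varepsilon(1+o(1))$, rearranging yields $\varepsilon/\phi(H(\varepsilon\semcol \mathcal{E}_2, \|\cdot\|_2)) \leq (b/\ln(2))^b(1+o(1))$, matching the lower bound.

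The hardest part is verifying the geometric-mean Karamata identity for a \emph{general} regularly varying sequence (not just $\mu_n = c n^{-b}$): this requires the representation theorem for slowly varying sequences and uniform control of the slowly varying factor over the window $[1, \bar d_k]$, which is standard (see, e.g., \cite{binghamRegularVariation1987}) but requires care. A secondary difficulty is confirming that a single admissible choice of $\{d_j\}$ and $\gamma$ simultaneously suppresses all three error sources---$\kappa_k - 1$, $\varepsilon/\varepsilon' - 1$, and the block-uniformity error---which the linear rate $d_j \asymp j$ accomplishes.
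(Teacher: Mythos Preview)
Your proposal is correct and follows essentially the same route as the paper: embed $\mathcal{E}_2$ into a mixed ellipsoid by block decomposition, invoke Theorem~\ref{thm:bound on mixed norm} to kill the $(1+\eta)$ factor, and then use the Ces\`aro/Karamata identity for the log-geometric mean together with the regular variation of $\phi$. The only tactical differences are that the paper chooses $\varepsilon$-dependent equal blocks of size $\lfloor\sqrt{\nu}\rfloor$ (with $\nu$ defined by $\mu_{\nu+1}\le\tilde\varepsilon<\mu_\nu$) rather than your fixed sequence $d_j=\max\{j,9\}$, and it packages the conclusion through the effective-dimension framework (Lemmata~\ref{lem:1328}, \ref{lem:1327}, \ref{lem:relatpqtwomuneps}) rather than computing the limit directly---both choices give the same $O(\log(\bar d_k)/\sqrt{\bar d_k})$ error and are interchangeable.
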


\begin{proof}[Proof.]
  The proof is presented in Section~\ref{sec:proofmainresultspqtwo}.
\end{proof}
\noindent
Theorem~\ref{thm:mainrespqtwo} shows that, for $p=q=2$, the lower bound (\ref{eq:lkjbhvghbkjljjljoo}) in Theorem~\ref{thm: scaling metric entropy infinite ellipsoids regular} is tight.
We again particularize to the canonical example (\ref{eq:canonicaldecayregvar}) for illustration purposes.

\begin{corollary}\label{cor:firstorderhilbertzz}
  Let $b>0$, $c>0$, and let $\{\mu_n\}_{n\in \mathbb{N}^*}$ be a non-increasing sequence of positive real numbers such that
    \begin{equation*}
    \mu_n
    =  \frac{c}{n^b} + o_{n \to \infty} \left(\frac{1}{n^{b}}\right).
  \end{equation*}
  Let $\mathcal{E}_2$ be the $2$-ellipsoid with semi-axes $\{\mu_n\}_{n\in \mathbb{N}^*}$.
Then, 
\begin{equation*}
H \left(\varepsilon\semcol \mathcal{E}_2, \|\cdot\|_2\right)
= \frac{b \, {c}^{1/b}}{\ln(2)}\, {\varepsilon}^{-1/b} + o_{\varepsilon \to 0}\left(\varepsilon^{-1/b}\right).
\end{equation*}
\end{corollary}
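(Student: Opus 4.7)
The plan is to deduce Corollary~\ref{cor:firstorderhilbertzz} directly from Theorem~\ref{thm:mainrespqtwo} by inverting the asymptotic relation it provides. First I would verify the hypotheses of that theorem for the sequence $\mu_n = c/n^b + o(1/n^b)$: non-increasingness is part of the assumption, and regular variation of index $-b$ is immediate since $\mu_{\lfloor \alpha n\rfloor}/\mu_n \to \alpha^{-b}$ for every $\alpha > 0$. Writing $H(\varepsilon) := H(\varepsilon\semcol \mathcal{E}_2, \|\cdot\|_2)$ and $\phi(t) := \mu_{\lfloor t\rfloor + 1}$, Theorem~\ref{thm:mainrespqtwo} then yields
\begin{equation*}
\phi(H(\varepsilon)) \sim \left(\frac{\ln 2}{b}\right)^b \varepsilon \qquad \text{as } \varepsilon \to 0.
\end{equation*}

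Next, I would use the hypothesis $\mu_n = c\,n^{-b} + o(n^{-b})$ to obtain $\phi(t) \sim c\,t^{-b}$ as $t \to \infty$. To transfer this pointwise asymptotic into the above display, one needs $H(\varepsilon) \to \infty$ as $\varepsilon \to 0$; this follows from the compactness of $\mathcal{E}_2$ in $\ell^2$ (case (iii) of Theorem~\ref{thm: scaling metric entropy infinite ellipsoids regular} applies since $2 > 2/(2b+1)$ for all $b>0$) together with the infinite-dimensionality of $\mathcal{E}_2$, which prevents $H(\varepsilon)$ from staying bounded as $\varepsilon \to 0$. Substituting $\phi(H(\varepsilon)) \sim c\,H(\varepsilon)^{-b}$ into the previous display yields
\begin{equation*}
H(\varepsilon)^b \sim \left(\frac{b}{\ln 2}\right)^b \frac{c}{\varepsilon},
\end{equation*}
and extracting the $b$-th root gives $H(\varepsilon) \sim (b\,c^{1/b}/\ln 2)\,\varepsilon^{-1/b}$, which is equivalent to the claimed expansion $H(\varepsilon) = (b\,c^{1/b}/\ln 2)\,\varepsilon^{-1/b} + o(\varepsilon^{-1/b})$.

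No serious obstacle is anticipated: all the substantive work is carried by Theorem~\ref{thm:mainrespqtwo}, and the corollary amounts to a mechanical unwinding of that result once the regular-variation hypothesis has been checked and the divergence $H(\varepsilon) \to \infty$ has been justified.
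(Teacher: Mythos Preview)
Your proposal is correct and follows essentially the same approach as the paper: the paper's proof is a one-liner stating that the result follows directly from Theorem~\ref{thm:mainrespqtwo} once one notes that the sequence $\mu_n = c/n^b + o(1/n^b)$ is regularly varying with index $-b$. You have simply spelled out the inversion step (including the justification that $H(\varepsilon)\to\infty$) that the paper leaves implicit.
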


\begin{proof}
  The result follows directly from Theorem~\ref{thm:mainrespqtwo} by noting that the non-increasing sequence  $\mu_n  =  \frac{c}{n^b} + o_{n \to \infty} \left(\frac{1}{n^{b}}\right)$ is regularly varying with index $-b$.
\end{proof}

\noindent
For $b\geq 1/2$, the results in Theorem~\ref{thm:mainrespqtwo} and Corollary \ref{cor:firstorderhilbertzz} were reported previously in \cite[Theorem 2.1 and Corollary 2.4]{luschgySharpAsymptoticsKolmogorov2004}. 
We next show that Corollary \ref{cor:firstorderhilbertzz}
can be strengthened even further, by providing the second-order term in the asymptotic expansion of metric entropy.

\begin{theorem}\label{thm: Metric entropy for polynomial ellipsoids}
Let $\alpha_1, \alpha_2>0$ be such that
\begin{equation}\label{eq:conditiononalpha12}
\alpha_1<\alpha_2 <  \alpha_1 + 1/2.
\end{equation}
Let $c_1>0$, $c_2 \in \mathbb{R}$, and let $\{\mu_n\}_{n\in \mathbb{N}^*}$ be a non-increasing sequence of positive real numbers such that
\begin{equation}\label{eq:formsemiaxessecondterm}
\mu_n 
= \frac{c_1}{n^{\alpha_1}} + \frac{c_2}{n^{\alpha_2}} + o_{n \to \infty} \left(\frac{1}{n^{\alpha_2}}\right).
\end{equation}
Let $\mathcal{E}_2$ be the $2$-ellipsoid with semi-axes $\{\mu_n\}_{n\in \mathbb{N}^*}$.
Then,
\begin{equation*}
H \left(\varepsilon\semcol \mathcal{E}_2, \|\cdot\|_2\right)
= \frac{\alpha_1{c_1}^{\frac{1}{\alpha_1}}}{\ln(2)} \, {\varepsilon}^{-\frac{1}{\alpha_1}}
+ \frac{c_2\, {c_1}^{\frac{1-\alpha_2}{\alpha_1}}}{\ln(2)(\alpha_1-\alpha_2+1)} \, \varepsilon^{-\frac{\alpha_1-\alpha_2+1}{\alpha_1}} 
+ o_{\varepsilon \to 0}\left(\varepsilon^{-\frac{\alpha_1-\alpha_2+1}{\alpha_1}}\right).
\end{equation*}
\end{theorem}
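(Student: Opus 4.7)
The plan is to refine the proof of Theorem~\ref{thm:mainrespqtwo} so as to obtain, rather than a mere statement about the limiting constant, an \emph{equality}
\begin{equation*}
H(\varepsilon\semcol \mathcal{E}_2, \|\cdot\|_2)\,\ln 2 \;=\; \sum_{j=1}^{N(\varepsilon)} \ln(\mu_j/\varepsilon) \;+\; E(\varepsilon),\qquad E(\varepsilon)=o\!\left(\varepsilon^{-(\alpha_1-\alpha_2+1)/\alpha_1}\right),
\end{equation*}
where $N(\varepsilon)$ denotes the largest integer with $\mu_{N(\varepsilon)}>\varepsilon$. The upper bound on $H(\varepsilon)$ follows from Theorem~\ref{thm:bound on mixed norm} applied, after block-decomposing $\mathcal{E}_2$ via Lemma~\ref{lem:lemboundtriangleineqlike}, to a finite mixed ellipsoid with $k$ roughly equal blocks of size $\bar d_k/k \geq 9$ covering the first $N(\varepsilon)$ semi-axes; the contribution of the infinite tail is absorbed via case~(i) of Lemma~\ref{lem:lemboundtriangleineqlike}. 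The matching lower bound comes from the volume bound (\ref{eq:lkjbkjbkjbckjbkebzhzzzqdfguyuy}) applied to the truncation to the first $N(\varepsilon)$ coordinates.

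Once this equality is available, I would expand $\sum_{j=1}^{N(\varepsilon)} \ln(\mu_j/\varepsilon)$ using (\ref{eq:formsemiaxessecondterm}). Writing
\begin{equation*}
\ln \mu_j \;=\; \ln c_1 - \alpha_1 \ln j + \tfrac{c_2}{c_1} j^{\alpha_1-\alpha_2} + o(j^{\alpha_1-\alpha_2})
\end{equation*}
via $\ln(1+x)=x+O(x^2)$ (valid since $\alpha_2>\alpha_1$ implies $j^{\alpha_1-\alpha_2}\to 0$), Stirling's formula applied to $\sum_{j=1}^N \ln j$, and a direct integral comparison for $\sum_{j=1}^N j^{\alpha_1-\alpha_2}$, yields
\begin{equation*}
\sum_{j=1}^N \ln(\mu_j/\varepsilon) \;=\; N\bigl(\ln(c_1/\varepsilon)-\alpha_1\ln N + \alpha_1\bigr) + \frac{c_2}{c_1(\alpha_1-\alpha_2+1)}\,N^{\alpha_1-\alpha_2+1} + O(\ln N).
\end{equation*}
Then $N=N(\varepsilon)$ is determined by inverting $\mu_N=\varepsilon(1+o(1))$ to two orders, giving $N = N_0(1+\delta)$ with $N_0 = c_1^{1/\alpha_1}\varepsilon^{-1/\alpha_1}$ and $\delta = \tfrac{c_2}{\alpha_1 c_1^{\alpha_2/\alpha_1}}\varepsilon^{(\alpha_2-\alpha_1)/\alpha_1}$. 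A short computation shows that the first piece $N(\ln(c_1/\varepsilon)-\alpha_1\ln N+\alpha_1)=\alpha_1 N \bigl(1-\ln(1+\delta)\bigr)$ collapses to $\alpha_1 N_0 + O(N_0\delta^2)$, while the $N^{\alpha_1-\alpha_2+1}$ term contributes its value at $N_0$ up to a multiplicative $1+O(\delta)$. Since $\alpha_2>\alpha_1$ forces $N_0\delta^2 = o(\varepsilon^{-(\alpha_1-\alpha_2+1)/\alpha_1})$, assembling the terms and dividing by $\ln 2$ yields precisely the claimed two-term expansion.

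The main technical obstacle is the first step: controlling $E(\varepsilon)$. The factor $\kappa_k$ in (\ref{eq:jsqhkqoioiloio}) contributes an additive $O\!\bigl(\gamma\, k\,\log \bar d_k\bigr)$ to $H$, while the $\varepsilon_\gamma$-versus-$\varepsilon$ adjustment contributes $O\!\bigl(\bar d_k^{\,1-\gamma}\sqrt{k+1}\bigr)$. With $\bar d_k\asymp\varepsilon^{-1/\alpha_1}$, balancing by choosing $k\asymp \bar d_k^{1/2}\asymp \varepsilon^{-1/(2\alpha_1)}$ and $\gamma$ sufficiently large (allowed to grow mildly with $\varepsilon$) yields $E(\varepsilon)=O\!\bigl(\varepsilon^{-1/(2\alpha_1)}\,\log^C(1/\varepsilon)\bigr)$ for some $C>0$. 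This is $o(\varepsilon^{-(\alpha_1-\alpha_2+1)/\alpha_1})$ precisely when $\alpha_1-\alpha_2+1>1/2$, i.e., when $\alpha_2<\alpha_1+1/2$, which is exactly the upper constraint in (\ref{eq:conditiononalpha12}); the complementary lower constraint $\alpha_1<\alpha_2$ is what ensures the second-order term in the expansion of $\mu_n$ is indeed of lower order and validates the Taylor expansion of $\ln\mu_j$. Thus the assumption (\ref{eq:conditiononalpha12}) is not merely convenient but the sharp range in which the mixed-ellipsoid method of Theorem~\ref{thm:bound on mixed norm} can resolve the second-order term.
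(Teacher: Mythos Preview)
Your proposal is correct and aligns with the paper's proof: the same volume lower bound (\ref{eq:lkjbkjbkjbckjbkebzhzzzqdfguyuy}), the same mixed-ellipsoid upper bound via Theorem~\ref{thm:bound on mixed norm} with $k\asymp\sqrt{N(\varepsilon)}$ roughly equal blocks, and the same Stirling-plus-two-term-inversion arithmetic, which the paper isolates as Lemmas~\ref{lem:kojihgfyfqsifgsquzehfv} and~\ref{lem:oijhbhjavqiaxvsregvfd}. The only organizational difference is that the paper routes the upper bound through the effective-dimension framework of Definition~\ref{def:uploweffdim} and Lemma~\ref{lem:relatpqtwomuneps}, obtaining the sandwich $\sum_{n\le\leffdim_s}\log(\mu_n/\mu_{\leffdim_s})<H-\leffdim_s\log s\le\sum_{n\le\ueffdim_s}\log(\mu_n/\mu_{\ueffdim_s})$ with $s=1+O(\varepsilon^{1/(2\alpha_1)}\log(\varepsilon^{-1}))$, whereas you work directly with $N(\varepsilon)$; your presentation is slightly more transparent for this single statement, while the paper's abstraction is reused across several results. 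One detail you gloss over is the error incurred when replacing the semi-axes within each block by their maximum in passing from $\mathcal{E}_2$ to the containing mixed ellipsoid (cf.\ (\ref{eq:lkjhgffazfaghkfrehjkzvhv13})--(\ref{eq:lkjhgffazfaghkfrehjkzvhv15})), but this is again $O(\sqrt{N}\log N)$ and hence already subsumed in your stated bound for $E(\varepsilon)$.
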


\begin{proof}[Proof.]
  See Section~\ref{sec:proofofhighlightresult}.
\end{proof}

\noindent
Theorem~\ref{thm: Metric entropy for polynomial ellipsoids} was applied in \cite{thirdpaper} by the authors of the present paper to obtain significant improvements over the previously best known results on the metric entropy of unit balls in Sobolev spaces. 
Moreover, Theorems~\ref{thm:mainrespqtwo} and
\ref{thm: Metric entropy for polynomial ellipsoids} allow us to sharpen the
best known characterization of entropy numbers of diagonal operators given in
\cite[Proposition~1.3.2]{carlEntropyCompactnessApproximation1990}. As shown in
\cite[Theorem~3]{thirdpaper}, this sharpening consists of (i) a full 
determination of the first-order term, rather than bounds differing by a
multiplicative factor~6 as in
\cite[Proposition~1.3.2]{carlEntropyCompactnessApproximation1990}, and (ii) an
explicit characterization of the second-order term when information on the
second-order behavior of the semi-axes is available.
Finally, an extension of Theorem~\ref{thm: Metric entropy for polynomial
ellipsoids} beyond polynomially decaying semi-axes is provided in \cite[Theorem~2]{Pinskerpaper}.

\subsection{Hyperrectangles}\label{sec:optimalityres1}

We next show that, for $p=q=\infty$, an exact, as opposed to asymptotic, expression for metric entropy, valid for all $\varepsilon > 0$, can be obtained.
To the best of our knowledge, this is the first time an exact expression for the metric entropy of an infinite-dimensional body has been derived.

{\begin{theorem}\label{thm:infellpinfnorm}
Let $\{\mu_n\}_{n \in \mathbb{N}^*}$ be a sequence of positive real numbers and let $\mathcal{E}_\infty$ be the $\infty$-ellipsoid with semi-axes $\{\mu_n\}_{n\in \mathbb{N}^*}$.
Then, for all $\varepsilon >0$, we have 
\begin{equation}\label{eq:okijhugffcdsvhcdz2214}
H \left(\varepsilon\semcol \mathcal{E}_\infty, \|\cdot\|_\infty\right)
= \sum_{k=1}^{\infty} \log \left(1+\frac{1}{k} \right)M_k(\varepsilon),
\end{equation}
with the counting function
\begin{equation}\label{eq:hjqjhjhbsfvbfbvsjkhbvjhbsfvbsjksjfd}
  M_k \colon t \in \mathbb{R}_+^* \longmapsto \#\left\{n \in \mathbb{N}^* |\, \mu_n>  k  t \right\}, 
  \quad \text{for } k\in \mathbb{N}^*.
\end{equation}
\end{theorem}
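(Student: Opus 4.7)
The proof hinges on the fact that $\mathcal{E}_\infty$ has a pure product structure: as a set, $\mathcal{E}_\infty = \prod_n [-\mu_n, \mu_n] \cap \ell^\infty$, and the $\|\cdot\|_\infty$-metric is the supremum over coordinates. This reduces the problem to a one-dimensional computation, and I will in fact establish the stronger identity
\[
N\bigl(\varepsilon \semcol \mathcal{E}_\infty, \|\cdot\|_\infty\bigr) = \prod_{n=1}^\infty \lceil \mu_n/\varepsilon \rceil,
\]
from which (\ref{eq:okijhugffcdsvhcdz2214}) will follow by a short rearrangement.

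For the upper bound, I would first record the elementary one-dimensional fact $N(\varepsilon \semcol [-\mu, \mu], |\cdot|) = \lceil \mu/\varepsilon \rceil$: the lower direction uses that each $\varepsilon$-ball has length $2\varepsilon$, and the upper direction is realized by equispaced centers. A Cartesian product of such optimal one-dimensional coverings, taking $\{0\}$ in every coordinate with $\mu_n \leq \varepsilon$, yields an $\varepsilon$-covering of $\mathcal{E}_\infty$ of size $\prod_n \lceil \mu_n/\varepsilon \rceil$, since $\|x - c\|_\infty = \sup_n |x_n - c_n|$.

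The matching lower bound is a product packing argument. In each coordinate $n$, I place $\lceil \mu_n/\varepsilon \rceil$ equispaced points in $[-\mu_n, \mu_n]$ at pairwise distances strictly exceeding $2\varepsilon$; this is optimal, since $k$ such points span more than $2\varepsilon(k-1)$ and hence force $k < \mu_n/\varepsilon + 1$. The Cartesian product $S$ of these one-dimensional packings is contained in $\mathcal{E}_\infty$, and any two distinct elements of $S$ differ by strictly more than $2\varepsilon$ in at least one coordinate, hence in $\|\cdot\|_\infty$; therefore no $\|\cdot\|_\infty$-ball of radius $\varepsilon$ contains two of them, yielding $N(\varepsilon) \geq |S| = \prod_n \lceil \mu_n/\varepsilon \rceil$. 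When $M_1(\varepsilon) = \infty$, restricting $S$ to any finite subfamily of coordinates with $\mu_n > \varepsilon$ produces arbitrarily large finite packings, so both sides of (\ref{eq:okijhugffcdsvhcdz2214}) are $+\infty$ and the identity still holds.

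Finally, I recast the product as the claimed sum. Grouping indices by the value $k \geq 1$ of $\lceil \mu_n/\varepsilon \rceil$ (which equals $k$ precisely when $\mu_n \in ((k-1)\varepsilon, k\varepsilon]$) shows that this count is $M_{k-1}(\varepsilon) - M_k(\varepsilon)$, and Abel summation turns
\[
\sum_n \log \lceil \mu_n/\varepsilon \rceil \;=\; \sum_{k \geq 2} \log(k)\, \bigl[M_{k-1}(\varepsilon) - M_k(\varepsilon)\bigr]
\]
into $\sum_{k \geq 1} \log(1 + 1/k)\, M_k(\varepsilon)$, completing the proof. I do not expect a serious obstacle: the product structure of the $\ell^\infty$ setting makes the covering computation essentially one-dimensional, and the only real care required is the bookkeeping for the infinite product and the handling of the non-compact cases where both sides of (\ref{eq:okijhugffcdsvhcdz2214}) equal $+\infty$.
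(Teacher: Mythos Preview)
Your proposal is correct and follows essentially the same route as the paper: first establish $N(\varepsilon\semcol \mathcal{E}_\infty,\|\cdot\|_\infty)=\prod_n\lceil\mu_n/\varepsilon\rceil$ via the product structure (this is the paper's Lemma~\ref{lem:firstellipinfnorm}), then group indices by the value of $\lceil\mu_n/\varepsilon\rceil$ (Lemma~\ref{lem:firstellipinfnorm2}) and Abel-sum to reach the stated form. Your lower bound via an explicit $2\varepsilon$-separated product packing is in fact more carefully argued than the paper's one-line projection remark, and your explicit handling of the $M_1(\varepsilon)=\infty$ case is a small addition the paper leaves implicit.
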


\begin{proof}[Proof.]
See Section~\ref{sec:proofthminfellpinfnorm}.
\end{proof}

\noindent
We emphasize that Theorem~\ref{thm:infellpinfnorm} applies to general---i.e., not necessarily non-increasing and tending to zero---semi-axes.
For decreasing semi-axes $\{\mu_n\}_{n \in \mathbb{N}^*}$,
it is readily verified that (\ref{eq:okijhugffcdsvhcdz2214}) can be rewritten 
as
\begin{equation}\label{eq:okijhugffcdsvhcdz}
  H \left(\varepsilon\semcol \mathcal{E}_\infty, \|\cdot\|_\infty\right)
  = \sum_{k=1}^{\infty} \log \left(1+\frac{1}{k} \right)\left(\left\lceil\phi^{-1}(k\varepsilon)\right\rceil-1\right),
\end{equation}
where $\phi \colon \mathbb{R}^*_+ \to \mathbb{R}^*_+$ is an arbitrary decreasing function satisfying $\phi(n)=\mu_n$, for all $n\in \mathbb{N}^*$.
Particularizing (\ref{eq:okijhugffcdsvhcdz}) to the canonical example, yields
an expression for the asymptotic behavior of $\mathcal{E}_\infty$ in terms of samples of the Riemann zeta function as follows.

\begin{corollary}\label{cor:infellpinfnorm}
Let $b>0$, $c>0$, and let
\begin{equation*}
\mu_n
= \frac{c}{n^b},
\quad \text{for } n\in \mathbb{N}^*.
\end{equation*}
Let $\mathcal{E}_\infty$ be the $\infty$-ellipsoid with semi-axes $\{\mu_n\}_{n\in \mathbb{N}^*}$.
Then, 
\begin{equation}\label{eq:lbforoptimalityinfinfmainres}
H \left(\varepsilon\semcol \mathcal{E}_\infty, \|\cdot\|_\infty\right)
= \frac{c^{1/b}\, \varepsilon^{-1/b}}{\ln(2)}\sum_{\ell=1}^\infty \frac{(-1)^{\ell+1}}{\ell}\zeta\left(\ell+\frac{1}{b}\right) + O_{\varepsilon \to 0}\left( \log\left(\varepsilon^{-1}\right)\right),
\end{equation}
where $\zeta(\cdot)$ denotes the Riemann zeta function.
In particular, we have the lower bound
\begin{equation}\label{eq:lbforoptimalityinfinf}
H \left(\varepsilon\semcol \mathcal{E}_\infty, \|\cdot\|_\infty\right)
\geq c^{1/b}\, \varepsilon^{-1/b} + O_{\varepsilon \to 0}\left( \log\left(\varepsilon^{-1}\right)\right).
\end{equation}
\end{corollary}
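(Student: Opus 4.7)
The plan is to start directly from the exact formula provided by Theorem~\ref{thm:infellpinfnorm}, exploit the explicit form of $\mu_n=c/n^b$ to evaluate $M_k(\varepsilon)$, and then carefully interchange two summations to obtain the zeta series. Concretely, $\mu_n>k\varepsilon$ is equivalent to $n<(c/(k\varepsilon))^{1/b}$, so
\[
M_k(\varepsilon)=\left\lceil\left(\tfrac{c}{k\varepsilon}\right)^{1/b}\right\rceil-1=\left(\tfrac{c}{k\varepsilon}\right)^{1/b}+O(1),
\]
uniformly in $k$, with $M_k(\varepsilon)=0$ once $k\geq c/\varepsilon$. Substituting into \eqref{eq:okijhugffcdsvhcdz2214} splits $H(\varepsilon\semcol\mathcal{E}_\infty,\|\cdot\|_\infty)$ into a main term $c^{1/b}\varepsilon^{-1/b}\sum_{1\le k<c/\varepsilon}\log(1+1/k)k^{-1/b}$ and an $O(1)$-error contribution $\sum_{k<c/\varepsilon}\log(1+1/k)\cdot O(1)$. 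Using $\log(1+1/k)\le (k\ln 2)^{-1}$, the error is bounded by $O(\sum_{k\leq c/\varepsilon}1/k)=O(\log(\varepsilon^{-1}))$.

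Next I would extend the main-term sum to all $k\in\mathbb{N}^*$; the tail $\sum_{k>c/\varepsilon}\log(1+1/k)k^{-1/b}$ is comparable to $\int_{c/\varepsilon}^\infty t^{-1-1/b}\,dt=O(\varepsilon^{1/b})$, which, after multiplication by $c^{1/b}\varepsilon^{-1/b}$, contributes only $O(1)$. The task reduces to evaluating $C_b:=\sum_{k=1}^\infty\log(1+1/k)k^{-1/b}$, which I would handle by writing $\log=\ln/\ln 2$ and substituting the series
\[
\ln(1+1/k)=\sum_{\ell=1}^{\infty}\frac{(-1)^{\ell+1}}{\ell k^{\ell}},\qquad k\geq 1.
\]
Interchanging summations is then the one nontrivial point, since at $k=1$ the inner series converges only conditionally. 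I would circumvent this by considering the partial sums $f_N(k)=\sum_{\ell=1}^{N}(-1)^{\ell+1}/(\ell k^\ell)$, noting that $|f_N(k)|\leq 1/k$ for $k\geq 2$ by Leibniz's alternating-series bound and $|f_N(1)|\leq 1$ by the same token, giving a summable dominator. Dominated convergence then justifies
\[
C_b=\frac{1}{\ln 2}\lim_{N\to\infty}\sum_{\ell=1}^{N}\frac{(-1)^{\ell+1}}{\ell}\sum_{k=1}^\infty k^{-(\ell+1/b)}=\frac{1}{\ln 2}\sum_{\ell=1}^\infty\frac{(-1)^{\ell+1}}{\ell}\zeta\!\left(\ell+\tfrac{1}{b}\right),
\]
and the convergence of the outer series follows from the alternating-series test applied to $\zeta(\ell+1/b)/\ell\to 0$ (eventually monotonically). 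Collecting all contributions yields the claimed expansion \eqref{eq:lbforoptimalityinfinfmainres}.

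Finally, for the lower bound \eqref{eq:lbforoptimalityinfinf}, I would observe that $C_b\geq 1$ trivially: every summand in the defining series $C_b=\sum_{k=1}^\infty\log(1+1/k)k^{-1/b}$ is nonnegative, so retaining only the $k=1$ term already gives $\log(2)\cdot 1=1$. Hence $H(\varepsilon\semcol\mathcal{E}_\infty,\|\cdot\|_\infty)\geq c^{1/b}\varepsilon^{-1/b}+O(\log(\varepsilon^{-1}))$, as required.

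The main obstacle is the rigorous interchange of summation that produces the zeta series, precisely because $\ln(1+1/k)$ at $k=1$ is the conditionally convergent Leibniz series for $\ln 2$; the Leibniz tail bound together with dominated convergence resolves this cleanly, and all other steps are routine estimates bounded by $O(\log(\varepsilon^{-1}))$.
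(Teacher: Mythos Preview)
Your proof is correct and follows essentially the same route as the paper: evaluate $M_k(\varepsilon)=\lceil (c/(k\varepsilon))^{1/b}\rceil-1$, split into a main term plus an $O(\log(\varepsilon^{-1}))$ error via the telescoping sum $\sum_{k\le N}\log(1+1/k)=\log(N+1)$, extend the main-term sum to all $k$ at $O(1)$ cost, and expand $\ln(1+1/k)$ to obtain the zeta series. Your treatment of the summation interchange via dominated convergence and the Leibniz tail bound is in fact more careful than the paper's appeal to Fubini (which strictly speaking fails at $k=1$, where the inner series is only conditionally convergent), and your derivation of the lower bound by retaining the $k=1$ term is exactly the implicit argument behind the paper's ``follows directly''.
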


\begin{proof}[Proof.]
The proof is presented in Section~\ref{sec:proofcorinfellpinfnorm}.
\end{proof}

\noindent
We have already argued in Section~\ref{sec:hilbertiancase} that the lower bound (\ref{eq:lkjbhvghbkjljjljoo}) in Theorem~\ref{thm: scaling metric entropy infinite ellipsoids regular} cannot be improved in general.  
Corollary \ref{cor:infellpinfnorm} now allows us to draw the same conclusion for the upper bound 
in (\ref{eq:lkjbhvghbkjljjljoo}).
Indeed, it follows from (\ref{eq:lbforoptimalityinfinf}) and (\ref{eq:lkjbhvghbkjljjljoo}) that
\begin{align*}
c^{1/b}\, \varepsilon^{-1/b}\left(1 + o_{\varepsilon \to 0}\left(1\right)\right)
\leq H \left(\varepsilon\semcol \mathcal{E}_\infty, \|\cdot\|_\infty\right)
\leq {c}^{1/b} \, \left(\frac{b}{\ln(2)}  + 1\right)  {\varepsilon}^{-1/b}\left(1 + o_{\varepsilon \to 0}\left(1\right)\right),
\end{align*}
which shows that the lower and upper bounds match for small values of $b$.

We conclude by remarking that the proofs of Theorem~\ref{thm:infellpinfnorm} and Corollary \ref{cor:infellpinfnorm} explicitly construct optimal coverings for $\infty$-ellipsoids.

\section{Applications to Besov Spaces}\label{sec:Besovapplic}

We now show how the tools developed in this paper can be applied to characterize the metric entropy of function sets in 
Besov spaces $B_{p_1, p_2}^s(\Omega)$ with
domain (nonempty, bounded, and open) $\Omega \subset \mathbb{R}^d$ (see, e.g., \cite[Section~C.3]{grohsPhaseTransitionsRate2021} for a definition). These spaces provide a rich framework that encompasses a wide range of classical function spaces.
Notably, they correspond to Hölder-Zygmund spaces when $p_1=p_2=\infty$ (see, e.g., \cite[Chapter 2.5.7]{triebelTheoryFunctionSpaces1983}) and are closely related to Sobolev spaces or, more generally, Triebel-Lizorkin spaces (see, e.g., \cite[Chapter 2.3.2, Proposition 2 and Chapter 2.5.6]{triebelTheoryFunctionSpaces1983} for a formalization of this relation). 
The result we obtain for Besov spaces in Theorem~\ref{thm:depomdomainn} below hence immediately applies to all these spaces.

Summarily, the results available in the literature say that the metric entropy of a class of functions $\mathcal{F}_s$ with
domain $\Omega \subset \mathbb{R}^d$ in Sobolev spaces scales according to
\begin{equation}\label{eq:besov-scaling}
H \left( \varepsilon \semcol \mathcal{F}_s, \|\cdot\|_{L^2(\Omega)} \right)
\asymp_{\varepsilon \to 0} \varepsilon^{-d/s}.
\end{equation}
Such a scaling behavior was first formally established for $p$-Sobolev spaces in \cite{Birman_1967,birman1980quantitative}.
The corresponding method of proof has become standard and was since used widely in the literature, see, e.g., \cite{edmunds1989entropy,Carl_1981,edmunds1979embeddings,konigEigenvalueDistributionCompact1986,bookeigen1987pietsch,edmunds1992entropy,edmundsFunctionSpacesEntropy1996}.
Specifically, \cite{Carl_1981} used similar techniques to prove \eqref{eq:besov-scaling} for function classes $\mathcal{F}_s$ in general Besov spaces (see also \cite[Proposition 3.c.9]{konigEigenvalueDistributionCompact1986} and \cite{bookeigen1987pietsch} for an approach based on splines).
Further extensions to the embedding of a Besov space into another Besov space are provided in \cite{edmunds1989entropy,edmunds1992entropy}.
A summary of the techniques underlying all these results can be found in \cite[Sections 3.3.2 and 3.3.3]{edmundsFunctionSpacesEntropy1996}.

None of the results available in the literature identifies how the metric entropy depends on the domain $\Omega$ of the functions in $\mathcal{F}_{s}$.
We next provide such a result by first exhibiting the ellipsoidal structure of Besov spaces and then applying Corollary \ref{cor:oadcpzrjefnkznrs}.

\begin{theorem}\label{thm:depomdomainn}
Let $d\in \mathbb{N}^*$, $p_1\in [2, \infty]$,  $p_2 \in [1, \infty]$, and $s>0$.
Then, there exist $\varepsilon^*>0$, $c>0$, and $C>0$ such that, for all nonempty, bounded, and open sets $\Omega \subset \mathbb{R}^d$ and for all $\varepsilon \in (0, \varepsilon^*)$, we have 
\begin{equation}\label{eq:knadcnzaefe}
c \,  \vol{\left(\Omega\right)}^{1-\frac{d}{s}(\frac{1}{p_1}-\frac{1}{2})}\,\varepsilon^{-d/s}
\leq  H \left( \varepsilon \semcol \mathcal{B}_{p_1, p_2}^s(\Omega), \|\cdot\|_{L^2(\Omega)}  \right)
\leq C \, \vol{\left(\Omega\right)}^{1-\frac{d}{s}(\frac{1}{p_1}-\frac{1}{2})}\,\varepsilon^{-d/s},
\end{equation}
where $\mathcal{B}_{p_1, p_2}^s(\Omega)$ denotes the unit ball in the corresponding Besov space.
\end{theorem}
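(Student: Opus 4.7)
My plan is to recast the entropy problem on $\mathcal{B}^s_{p_1,p_2}(\Omega)$ as a problem on an ellipsoid in a wavelet-coefficient sequence space and then invoke Corollary~\ref{cor:oadcpzrjefnkznrs}. First I would fix a compactly supported $L^2$-orthonormal wavelet basis $\{\psi_{j,k}\}$ of $L^2(\mathbb{R}^d)$ with enough regularity and vanishing moments to characterize $B^s_{p_1,p_2}$ (e.g., a Daubechies basis). For $\Omega$ bounded and open, the restricted index set $\mathcal{I}_j(\Omega)=\{k:\supp(\psi_{j,k})\cap\Omega\neq\emptyset\}$ has cardinality $D_j=\vol(\Omega)\,2^{jd}(1+o_{j\to\infty}(1))$ by a standard tiling argument with boundary error controlled by the surface measure of $\partial\Omega$. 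Functions $f\in L^2(\Omega)$, extended by zero, admit the expansion $f=\sum c_{j,k}\psi_{j,k}$ with $\|f\|_{L^2(\Omega)}=\|c\|_{\ell^2}$, and the standard wavelet characterization of Besov norms yields
\begin{equation*}
\|f\|_{B^s_{p_1,p_2}(\Omega)}\asymp\left(\sum_j 2^{jp_1\sigma}\|c_j\|_{\ell^{p_2}(\mathcal{I}_j(\Omega))}^{p_1}\right)^{1/p_1},\qquad\sigma\coloneqq s+d\left(\tfrac{1}{2}-\tfrac{1}{p_2}\right),
\end{equation*}
with equivalence constants depending only on $s,d,p_1,p_2$. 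This identifies $\mathcal{B}^s_{p_1,p_2}(\Omega)$, up to $\Omega$-independent constants, with a mixed ellipsoid in the sense of Definition~\ref{def: Finite dimensional mixed ellipsoid} with semi-axes $\mu_j=2^{-j\sigma}$ and block dimensions $D_j$.

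I would then first treat the diagonal case $p_1=p_2$, in which the mixed ellipsoid degenerates to a pure $p_1$-ellipsoid with each $\mu_j$ appearing with multiplicity $D_j$. Reordering the semi-axes in non-increasing order and summing the geometric contributions across levels (using $D_\ell\sim\vol(\Omega)\,2^{\ell d}$) produces the regularly-varying sequence
\begin{equation*}
\tilde\mu_n\sim\vol(\Omega)^{b}\,n^{-b},\qquad b\coloneqq \frac{s}{d}+\frac{1}{2}-\frac{1}{p_1}.
\end{equation*}
Applying Corollary~\ref{cor:oadcpzrjefnkznrs} with $(p,q)=(p_1,2)$ and effective constant $c=\Theta(\vol(\Omega)^b)$: the hypothesis $p_1\geq 2$ together with $s>0$ yields $p_1/(p_1b+1)<2\leq p_1$, placing us in case (ii) of the corollary, which delivers matching lower and upper bounds of order $\varepsilon^{-1/b^*}$ with $b^*=b+1/p_1-1/2=s/d$. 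The leading constant scales as $\vol(\Omega)^{b\cdot d/s}=\vol(\Omega)^{1-(d/s)(1/p_1-1/2)}$, exactly matching (\ref{eq:knadcnzaefe}).

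For the off-diagonal case $p_1\neq p_2$, I would handle the mixed ellipsoid directly using the block-decomposition framework of Section~\ref{sec:ellipsoidsinfinitedim3}. Concretely, for each realization $\delta$ in the representation (\ref{eq:ksjhdacozrgheht}), one covers each block $\mathcal{B}_{p_2}(\mathbb{R}^{D_j};\delta_j)$ by $\ell^2$-balls of radius $\rho_j$ using Theorem~\ref{thm:profinitethm} (case~\ref{item:ifinitedim} when $p_2\leq 2$ and case~\ref{item:iifinitedim} when $p_2\geq 2$, both giving $N_j\lesssim (D_j^{1/2-1/p_2}\delta_j/\rho_j)^{D_j}$ up to constants of order one), and then glues the block covers across levels via a decomposition analogous to Lemma~\ref{lem:lemboundtriangleineqlike} while discretizing $\delta$ via intersection with a suitable lattice. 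The key identity $D_j^{1/2-1/p_2}\mu_j=\vol(\Omega)^{1/2-1/p_2}\,2^{-js}$ shows that the effective $\ell^2$-radii at level $j$ decay purely as $2^{-js}$; an appropriate choice of $\rho_j$ (scaling with $\sqrt{D_j/\bar{D}}\,\varepsilon$ where $\bar{D}$ is the total effective dimension) together with the combinatorial contribution of the $\delta$-discretization then recovers the same $\vol(\Omega)^{1-(d/s)(1/p_1-1/2)}\varepsilon^{-d/s}$ scaling as in the diagonal case. The matching lower bound follows analogously from the lower bound in Theorem~\ref{thm:profinitethm} applied block by block, combined with the representation (\ref{eq:ksjhdacozrgheht}).

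The main obstacle I anticipate lies in the bookkeeping in the off-diagonal case: the $\vol(\Omega)$ contributions arising from the $p_2$-dependent exponent in $\mu_j$, from the block dimensions $D_j^{1/2-1/p_2}$ that appear when converting $\ell^{p_2}$-covers to $\ell^2$-covers, and from the discretization of the outer $\ell^{p_1}$-ball must combine exactly so that the final volume exponent depends only on $p_1$ and not on $p_2$. A naive sandwich via the diagonal ball $\mathcal{B}^s_{p_1,p_1}(\Omega)$ produces $\vol(\Omega)^{1-(d/s)(1/p_2-1/2)}$ rather than the claimed exponent, so the block-by-block approach is essential. A secondary technical point is that the boundary correction in $D_j=\vol(\Omega)\,2^{jd}(1+o(1))$ must be uniform enough in $\Omega$ for the constants $c,C$ in~(\ref{eq:knadcnzaefe}) to be domain-independent, which is routine for regular enough domains but calls for explicit verification at the level of generality stated.
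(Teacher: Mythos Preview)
Your diagonal case ($p_1=p_2$) is essentially the paper's argument: wavelet characterization, reindexing of the block semi-axes into a regularly varying sequence with $b=s/d+1/2-1/p_1$ and $c\asymp\vol(\Omega)^b$, then Corollary~\ref{cor:oadcpzrjefnkznrs}(ii). The paper also splits the index set into $J^+=\{(j,t,m):\supp\psi_{j,t,m}\cap\Omega\neq\emptyset\}$ and $J^-=\{(j,t,m):\supp\psi_{j,t,m}\subseteq\Omega\}$ and invokes a transference lemma (Lemma~\ref{eq:aznkjejfz}, taken from \cite{grohsPhaseTransitionsRate2021}) to pass between the coefficient ellipsoids and $\mathcal{B}^s_{p,p}(\Omega)$ with $\Omega$-independent constants; this is precisely the device that disposes of your ``secondary technical point'' about boundary uniformity without any regularity hypothesis on $\partial\Omega$.

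Where you diverge from the paper is the off-diagonal case, and here you are working much harder than necessary. The paper does \emph{not} attack the mixed ellipsoid directly. Instead it invokes the standard Besov inclusion
\[
B^{s+\eta}_{p_1,p_1}(\Omega)\subset B^s_{p_1,p_2}(\Omega)\subset B^{s-\eta}_{p_1,p_1}(\Omega),\qquad \eta>0,
\]
from \cite[Chapter~2.3.2, Proposition~2]{triebelTheoryFunctionSpaces1983}, and lets $\eta\to 0$. Since in the diagonal result the volume exponent $1-(d/s)(1/p-1/2)$ is evaluated at $p=p_1$, the sandwich already delivers the claimed $p_1$-dependent exponent; the parameter $p_2$ disappears entirely. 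Your assertion that ``a naive sandwich via $\mathcal{B}^s_{p_1,p_1}(\Omega)$ produces $\vol(\Omega)^{1-(d/s)(1/p_2-1/2)}$'' is the crux of the mismatch: that would only happen if you sandwiched at the \emph{coefficient} level by comparing $\ell^{p_2}$ to $\ell^{p_1}$ within each block (which changes $\sigma$ and hence $b$). Sandwiching at the \emph{function-space} level with a perturbed smoothness index keeps the outer parameter $p_1$ and the correct exponent, so the block-by-block mixed-ellipsoid argument you sketch is not needed here. Your direct route could in principle be made to work, but the paper's one-line reduction is both shorter and sidesteps the bookkeeping you flag as the main obstacle.
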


\begin{proof}
The proof can be found in Section~\ref{sec:proofthm199}.
\end{proof}

\noindent
We emphasize that the constants $c$ and $C$ in (\ref{eq:knadcnzaefe}) may depend on $p_1$, $p_2$, $d$, $s$, and $\varepsilon^*$, but are independent of $\Omega$ and $\varepsilon$. 
Consequently, the dependence of $H \left( \varepsilon \semcol \mathcal{B}_{p_1, p_2}^s(\Omega), \|\cdot\|_2 \right)$
on the domain $\Omega$ enters solely through the proportionality---to $\vol{\left(\Omega\right)}^{1-\frac{d}{s}(\frac{1}{p_1}-\frac{1}{2})}$---of the constants in the lower and upper bound in \eqref{eq:knadcnzaefe}. In particular, for $p_1=2$ this yields proportionality to $\vol(\Omega)$.
Although Theorem~\ref{thm:depomdomainn} is stated for $p_1 \in [2, \infty]$, the same techniques used in its proof, combined with
(iii) in Corollary \ref{cor:oadcpzrjefnkznrs}, extend the result to $p_1\in [1, \infty]$.

Further, the statement in Theorem~\ref{thm:depomdomainn} can be made more precise for $p$-Sobolev spaces, in the case $p=2$. 
Indeed, in \cite{thirdpaper} we determined the exact constant in the leading term of the small-$\varepsilon$ expansion of the metric entropy and, for $d\geq 3$ under mild additional assumptions on $\Omega$, we also identified the second-order term.
These results build on the techniques developed in Section~\ref{sec:hilbertiancase}.

Finally, the bounds in \eqref{eq:knadcnzaefe} admit an equivalent formulation in terms of the entropy numbers $\{e_n\}_{n\in\mathbb{N}^*}$ of the embedding map of ${B}_{p_1, p_2}^s(\Omega)$ into $L^2(\Omega)$.
This reformulation enables direct comparison with the literature on the metric entropy of Besov spaces (see, e.g., \cite{edmunds1989entropy,Carl_1981,edmunds1979embeddings,konigEigenvalueDistributionCompact1986,bookeigen1987pietsch,edmunds1992entropy,edmundsFunctionSpacesEntropy1996} cited above), where results are typically stated in terms of entropy numbers.
Specifically, there exist constants $c',C'>0$ independent of $\Omega$ and $n$, such that
\begin{equation*}
c' \vol{\left(\Omega\right)}^{\frac{s}{d} +\frac{1}{2}  -\frac{1}{p_1}} n^{-\frac{s}{d}}
  \leq e_n 
  \leq C' \vol{\left(\Omega\right)}^{\frac{s}{d} +\frac{1}{2}  -\frac{1}{p_1}} n^{-\frac{s}{d}},
  \quad \text{for all }n\in\mathbb{N}^*.
\end{equation*}

\section{Proofs of the Main Results}

\subsection{Proof of Lemma~\ref{lem:lemboundtriangleineqlike}}\label{sec:prooflemboundtriangleineqlike}

From assumption (\ref{eq:firstdecompcartprodelli}) we have
 \begin{equation}\label{eq:covnb1324}
N \left( \rho\semcol \mathcal{E}_p, \|\cdot\|_q \right)
\leq \left(\# \Omega\right)
\,  \max_{\omega \in \Omega}  \, N \left( \rho\semcol \prod_{j=1}^{k+1} \omega_j \mathcal{E}_p^{[j]} , \|\cdot\|_q \right),
 \end{equation}
 for all $\rho>0$.
 In particular, setting $\rho =\bar \rho $ in (\ref{eq:covnb1324}) and using $\rho_\omega \leq \bar \rho$, for all $\omega \in \Omega$, it follows that
 
 \begin{equation*}
   N \left(\bar \rho\semcol \mathcal{E}_p, \|\cdot\|_q \right)
\leq \left(\# \Omega\right)
\,  \max_{\omega \in \Omega}  \, N \left( \rho_\omega\semcol \prod_{j=1}^{k+1} \omega_j \mathcal{E}_p^{[j]} , \|\cdot\|_q \right).
 \end{equation*}
 In order to simplify the exposition, let us introduce the notation
 \begin{equation*}
 \mathcal{E}_{p, \omega}^{\times}
 \coloneqq \prod_{j=1}^{k+1} \omega_j \mathcal{E}_p^{[j]},
 \quad \text{for }
 \omega \in \Omega.
 \end{equation*}
 The proof can be completed by showing that
 \begin{equation}\label{eq:lknjhgdshlkqncvzezzzzsxx2}
   \, N \left( \rho_\omega\semcol \mathcal{E}_{p, \omega}^{\times} , \|\cdot\|_q \right)
   \leq \prod_{j=1}^{k} N (\rho_j\semcol \mathcal{E}_p^{[j]}, \|\cdot\|_q ),
    \quad \text{for all} \ 
 \omega \in \Omega.
 \end{equation}
To this end, we fix $\omega \in \Omega$.
For $j\in \{1, \dots, k\}$, let $\{x^{(1, j)}, \dots, x^{(N_j,j)}\}$ be a $\rho_j$-covering of $\mathcal{E}_p^{[j]}$, in $\|\cdot\|_q$-norm, of cardinality $N_j \coloneqq N (\rho_j\semcol \mathcal{E}_p^{[j]}, \|\cdot\|_q )$.
We further introduce the infinite-dimensional vectors 
\begin{equation*}
\bar x^{(i_1, \dots, i_k)}
\coloneqq (\omega_1x^{(i_1, 1)}, \omega_2x^{(i_2, 2)}, \dots, \omega_kx^{(i_k, k)},  0,  \dots), 
\quad \text{where } i_j \in \{1, \dots, N_j\},
\end{equation*}
for all $j\in \{1, \dots, k\}$.
There are $N \coloneqq N_1 \times \dots \times N_k$ such vectors.
To establish (\ref{eq:lknjhgdshlkqncvzezzzzsxx2}), it hence suffices to show that the set 
$\{\bar x^{(i_1 \dots, i_k)}\}_{i_1, \dots, i_k} $ forms a $\rho_\omega$-covering of $\mathcal{E}_{p, \omega}^{\times}$.
With this goal in mind, we arbitrarily fix $\bar x\in \mathcal{E}_{p, \omega}^{\times}$ and define the associated constituent vectors
\begin{equation*}
x^{(j)}
\coloneqq (\bar x_{\bar d_{j-1}+1}, \dots, \bar x_{\bar d_j}),
\quad \text{for } j\in \{1, \dots, k\}.
\end{equation*}
(Recall the notation $\bar d_j$ introduced in Definition \ref{def:blockdecomp}.)
Now, for all $j\in \{1, \dots, k\}$, note that $x^{(j)} \in \omega_j \mathcal{E}_p^{[j]}$ and let $i^*_j \in \{1, \dots, N_j\}$ be such that 
\begin{equation}\label{eqlkiudizerufhhjhjs0}
\left\|x^{(j)}-\omega_j x^{(i^*_j, j)}\right\|_q\leq \omega_j \rho_j.
\end{equation}
The existence of this $i^*_j$ follows from $\{x^{(1, j)}, \dots, x^{(N_j,j)}\}$ being a $\rho_j$-covering of $\mathcal{E}_p^{[j]}$.

First, we consider the case $q=\infty$ and note that (\ref{eqlkiudizerufhhjhjs0}) becomes
\begin{equation}\label{eqlkiudizerufhhjhjs0prime}
\max_{\ell =1,\dots, d_j} \left\lvert x^{(j)}_{\ell }-\omega_j x^{(i^*_j, j)}_{\ell}\right\rvert 
= \max_{\ell=1,\dots, d_j} \left\lvert \bar x_{\bar d_{j-1}+\ell}- \bar x^{(i^*_1, \dots, i^*_k)}_{\bar d_{j-1}+\ell}\right\rvert 
\leq \omega_j \rho_j,
\end{equation}
for all $j\in \{1, \dots, k\}$.
We therefore have the decomposition
\begin{align}
\left\|\bar x- \bar x^{(i^*_1, \dots, i^*_k)} \right\|_\infty
&= \sup_{n\in \mathbb{N}^*} \left\lvert\bar x_n - \bar x_n^{(i^*_1, \dots, i^*_k)}\right\rvert \nonumber \\
&= \max\left\{\max_{j=1, \dots, k} \ \max_{\ell=1,\dots, d_j} \left\lvert \bar x_{\bar d_{j-1}+\ell}- \bar x^{(i^*_1, \dots, i^*_k)}_{\bar d_{j-1}+\ell}\right\rvert , \sup_{n > d}\left\lvert\bar x_n\right\rvert\right\}. \label{eqlkiudizerufhhjhjsinf222}
\end{align}
The first term inside the outer-most maximum in (\ref{eqlkiudizerufhhjhjsinf222}) is upper-bounded by $\omega_j \rho_j$ thanks to (\ref{eqlkiudizerufhhjhjs0prime}), while the second term is upper-bounded by $\omega_{k+1}\mu_{d+1}$. This shows that the set $\{\bar x^{(i_1 \dots, i_k)}\}_{i_1, \dots, i_k} $, indeed, constitutes a $\rho_\omega$-covering of $\mathcal{E}_{p, \omega}^\times$, thereby concluding the proof for $q=\infty$.

We proceed to the case $1\leq q < \infty$
and start by observing that 
\begin{align}
\left\|\bar x- \bar x^{(i^*_1, \dots, i^*_k)} \right\|_q^q
&= \sum_{n\in \mathbb{N}^*} \left\lvert\bar x_n - \bar x_n^{(i^*_1, \dots, i^*_k)}\right\rvert^q \label{eqlkiudizerufhhjhjs21} \\
&= \sum_{j=1}^k \sum_{\ell=1}^{d_j} \left\lvert \bar x_{\bar d_{j-1}+\ell}- \omega_j x^{(i^*_j, j)}_{\ell} \right\rvert^q + \sum_{n= d+1}^\infty \left\lvert\bar x_n\right\rvert^q. \label{eqlkiudizerufhhjhjs22}
\end{align}
The sum over the index $\ell$ in (\ref{eqlkiudizerufhhjhjs22}) is equal to $\|x^{(j)}-\omega_j x^{(i^*_j, j)}\|_q^q$, and is, via (\ref{eqlkiudizerufhhjhjs0}), upper-bounded by $(\omega_j \rho_j)^q$, for all $j\in \{1, \dots, k\}$.
The term $\sum_{n= d+1}^\infty \left\lvert\bar x_n\right\rvert^q$ can be dealt with by distinguishing three cases.

\paragraph{The case $p\leq q<\infty$.}
We write 
\begin{align*}
\sum_{n=d+1}^\infty \left\lvert\bar x_n\right\rvert^q
&= \sum_{n=d+1}^\infty \left\lvert\bar x_n/\mu_n\right\rvert^q \mu_n^q \\
&\leq \mu_{d+1}^q \sum_{n=d+1}^\infty \left\lvert\bar x_n/\mu_n\right\rvert^q 
\leq \mu_{d+1}^q \sum_{n= d+1}^\infty \left\lvert\bar x_n/\mu_n\right\rvert^p 
\leq (\omega_{k+1}\mu_{d+1})^q,
\end{align*}
where the first inequality is a consequence of the semi-axes being non-increasing, the second inequality follows from $q\geq p$, and the last one is by
$\bar x \in \mathcal{E}_{p, \omega}^\times$.
In summary, we have 
\begin{align*}
\left\|\bar x- \bar x^{(i^*_1, \dots, i^*_k)} \right\|_q^q
\leq (\omega_1 \rho_1)^q + \dots + (\omega_k\rho_k)^q + (\omega_{k+1} \mu_{ d+1})^q 
=\rho_\omega^q, 
\end{align*}
which shows that the set $\{\bar x^{(i_1 \dots, i_k)}\}_{i_1, \dots, i_k} $, indeed, constitutes a $\rho_\omega$-covering of $\mathcal{E}_{p, \omega}^\times$, thereby concluding the proof of statement (i).

\paragraph{The case $p/(pb+1)<q<p$.}
We proceed to prove statement (ii). In this case, the quantity $\sum_{n= d+1}^\infty \left\lvert\bar x_n\right\rvert^q$ in (\ref{eqlkiudizerufhhjhjs22}) can be upper-bounded, by application of the Hölder inequality with the parameters $p/q$ and $(1-q/p)^{-1}$, according to
\begin{align}
\sum_{n= d+1}^\infty \left\lvert\bar x_n\right\rvert^q
&=\sum_{n= d +1}^\infty \left\lvert\bar x_n/\mu_n \right\rvert^q \mu_n^q \label{eq:kjkhqdcsbindczoeoajzidnerbgt1} \\
&\leq \left(\sum_{n= d +1}^\infty \left\lvert\bar x_n/\mu_n\right\rvert^p\right)^{q/p}
\left(\sum_{n= d +1}^\infty \mu_n^{q\left(1-\frac{q}{p}\right)^{-1}}\right)^{1-\frac{q}{p}},
\label{eq:kjkhqdcsbindczoeoajzidnerbgt2}
\end{align}
with the usual modifications when $p=\infty$.
As $\bar x \in \mathcal{E}_{p, \omega}^\times$, we must have $\{\bar x_n\}_{n=d+1}^\infty \in  \omega_{k+1} \mathcal{E}_p^{[k+1]}$, which in turn implies 
\begin{equation}\label{eq:upper-bound-last-ellipsoid}
\left(\sum_{n= d +1}^\infty \left\lvert\bar x_n/\mu_n\right\rvert^p\right)^{q/p}
\leq \omega_{k+1}^q.
\end{equation}
Now, let $\phi \colon \mathbb{R}^*_+ \to \mathbb{R}^*_+$ be the step function given by 
$\phi(t) = \mu_{\lfloor t \rfloor + 1}$, for $t > 0$.
In particular, $\phi$ is non-increasing and \cite[Theorem~1.9.5]{binghamRegularVariation1987} ensures that it is regularly varying with index $-b$.
Moreover, observing that 
\begin{equation*}
     \mu_n^{q\left(1-\frac{q}{p}\right)^{-1}}
    =  \int_{n-1}^{n} \phi(t)^{q\left(1-\frac{q}{p}\right)^{-1}} dt,
    \quad \text{for all } n \in \mathbb{N}^*,
\end{equation*}
we can write the second sum in (\ref{eq:kjkhqdcsbindczoeoajzidnerbgt2}) as an integral according to
\begin{equation}\label{eq:flksnvgjkrnsdserint}
  \sum_{n= d +1}^\infty \mu_n^{q\left(1-\frac{q}{p}\right)^{-1}}
  = \int_{d}^{\infty} \phi(t)^{q\left(1-\frac{q}{p}\right)^{-1}}dt.
\end{equation}
Next, applying Karamata's theorem (see \cite[Theorem 1.5.11, case (ii)]{binghamRegularVariation1987}) to the function 
$\phi(\cdot)^{q\left(1-\frac{q}{p}\right)^{-1}}$, 
which is regularly varying with index $-b/(1/q-1/p)<-1$, the integral in (\ref{eq:flksnvgjkrnsdserint}) is seen to satisfy
\begin{equation*}
    \int_{d}^{\infty} \phi(t)^{q\left(1-\frac{q}{p}\right)^{-1}}dt
    \sim_{d\to\infty} \left(\frac{b}{\frac{1}{q}-\frac{1}{p}}  -1 \right)^{-1}  d \, \phi(d)^{q\left(1-\frac{q}{p}\right)^{-1}}. 
\end{equation*}
With $\mu_{d+1}=\mu_{d}(1+o_{d\to\infty}(1))$ thanks to \cite[Lemma 1.9.6] {binghamRegularVariation1987}, we have
thus established that 
\begin{align}\label{eq:lkjhgfzaghidlzdoqpp} 
 \sum_{n= d+1}^\infty {\mu_n}^{q\left(1-\frac{q}{p}\right)^{-1} } 
 \leq \left(\frac{b}{\frac{1}{q}-\frac{1}{p}}  -1 \right)^{-1}  d \, {\mu_d}^{q\left(1-\frac{q}{p}\right)^{-1} } 
\left(1+o_{ d\to\infty}(1)\right).
\end{align}
Inserting (\ref{eq:lkjhgfzaghidlzdoqpp}) into (\ref{eq:kjkhqdcsbindczoeoajzidnerbgt1})--(\ref{eq:kjkhqdcsbindczoeoajzidnerbgt2}) yields
\begin{equation}\label{eq:jkojzoeijoifjerognjjnj}
\sum_{n= d+1}^\infty \left\lvert\bar x_n\right\rvert^q
\leq (\omega_{k+1}\alpha_{ d})^q,
\end{equation}
with $\{\alpha_d\}_{d\in \mathbb{N}^*}$ a sequence satisfying
 \begin{equation*}
     \alpha_d 
= \left(\frac{b}{\frac{1}{q}-\frac{1}{p}}  -1 \right)^{(\frac{1}{p}-\frac{1}{q})}  \, \mu_d \, d^{(\frac{1}{q}-\frac{1}{p})}
    \left(1+o_{d\to\infty}(1)\right).
 \end{equation*}
Finally, combining (\ref{eq:jkojzoeijoifjerognjjnj}) with (\ref{eqlkiudizerufhhjhjs21})--(\ref{eqlkiudizerufhhjhjs22}), we obtain
\begin{align*}
\left\|\bar x- \bar x^{(i^*_1, \dots, i^*_k)} \right\|_q^q
\leq (\omega_{1} \rho_1)^q + \dots + (\omega_{k} \rho_k)^q + (\omega_{k+1} \alpha_{ d})^q
=\rho_\omega^q, 
\end{align*}
which establishes that the set $\{\bar x^{(i_1 \dots, i_k)}\}_{i_1, \dots, i_k} $, indeed, constitutes a $\rho_\omega$-covering of $\mathcal{E}_{p, \omega}^\times$, 
thereby concluding the proof of statement (ii).

\paragraph{The case $q=p/(pb+1)$.}
We finalize the proof by establishing statement (iii).
As in case (ii) we start with the Hölder inequality to get \eqref{eq:kjkhqdcsbindczoeoajzidnerbgt1}--\eqref{eq:kjkhqdcsbindczoeoajzidnerbgt2} and we note that \eqref{eq:upper-bound-last-ellipsoid} holds.
Now, as $q=p/(pb+1)$ is equivalent to $b=1/q-1/p$, it follows that
\begin{equation}\label{eq:jkojzoeijoifjerognjjnjooo}
\sum_{n= d+1}^\infty \left\lvert\bar x_n\right\rvert^q
\leq (\omega_{k+1}\alpha_{ d})^q,
\end{equation}
with $\alpha_d = \left(\sum_{n=d+1}^\infty \mu_n^{1/b}\right)^b$. The remainder of the proof proceeds exactly as in case (ii).

\subsection{Proof of Theorem~\ref{thm:profinitethm}}\label{sec:prooftheoremprofinitethm}

The proof relies on density arguments and is inspired by the foundational work in \cite{rogersNoteCoverings1957, rogersCoveringSphereSpheres1963, rogersbook1964}.
Take $\varepsilon > 0$ to be fixed throughout the proof.
We start with the observation, used before in \cite[Theorems 4 and 5]{firstpaper}, that
$\mathcal{E}^d_p$ is the image of 
$\mathcal{B}_p$ under the diagonal operator 
\begin{equation}\label{eq:nkjnqlragjkenazkjbvaaaaassqa}
A_\mu = 
\begin{pmatrix}
\mu_1 & 0 & \dots & 0 \\
0 & \mu_2 & \dots & 0 \\
\vdots & \vdots & \ddots & \vdots \\
0 & 0 & \dots & \mu_d 
\end{pmatrix}.
\end{equation}
Indeed, we have 
\begin{align*}
\mathcal{E}^d_p 
&= \left\{ x \in \mathbb{R}^d \mid \|x\|_{p, \mu} \leq 1 \right\} \\
&= \left\{ x \in \mathbb{R}^d \mid \|z\|_{p}\leq 1, \text{ such that  } x_n = \mu_n z_n, \text{ for } n \in \{1, \dots, d\} \right\} \\
&= \left\{ A_\mu z \mid  z \in \mathbb{R}^d, \text{ with } \|z\|_{p} \leq 1 \right\} 
= A_\mu \mathcal{B}_p.
\end{align*}
It hence follows that
\begin{equation}\label{eq: volume ellipsoid}
\vol(\mathcal{E}^d_p) 
=\vol(\mathcal{B}_p) \, \text{det}(A_\mu) 
=  \vol(\mathcal{B}_p) \, \bar \mu_d^{d}.
\end{equation}
We can now apply the volume estimates in Lemma~\ref{lem: volume estimates}, with $\mathcal{B}= \mathcal{E}^d_p$ and $\mathcal{B}'= \mathcal{B}_q$, to obtain the first part of the statement according to
\begin{equation*}
N\left(\varepsilon\semcol \mathcal{E}^d_p, \|\cdot\|_q \right) \, \varepsilon^d
\geq \frac{\vol{\left(\mathcal{E}^d_p\right)}}{\vol{(\mathcal{B}_q)}} 
\stackrel{(\ref{eq: volume ellipsoid})}{=} \frac{\vol{\left(\mathcal{B}_p\right)}}{\vol{(\mathcal{B}_q)}} \bar \mu_d^d = V_{p, q, d}^d \, \bar \mu_d^d.
\end{equation*}

It will be convenient to introduce the following quantities
\begin{equation*}
\varepsilon_1 \coloneqq \frac{d \, \ln(d)}{1+ d \, \ln(d)} \, \varepsilon 
\quad \text{and} \quad
\varepsilon_2 \coloneqq \frac{1}{1+ d \, \ln(d)} \, \varepsilon,
\end{equation*}
which obviously satisfy 
\begin{equation}\label{eq:njhgfdfgfhqsfdjhvdhjze}
\frac{\varepsilon_1}{\varepsilon_2}
= d \, \ln(d)
\quad \text{and} \quad
\varepsilon = \varepsilon_1 + \varepsilon_2.
\end{equation}
Given $N_1 \in \mathbb{N^*}$, a compact set $\mathcal{K}\subset \mathbb{R}^d$, and a set $S\coloneqq {\{x_1, \dots, x_{N_1}\}\subset \mathbb{R}^d}$ of $N_1$ points, we next consider the proportion of $\mathcal{K}$ not covered by $\|\cdot\|_q$-balls of radius $\varepsilon_1$ centered at the points in $S$ as
\begin{equation*}
s_{\mathcal{K}} (x_1, \dots, x_{N_1})
\coloneqq 1-  \frac{\vol\left(\left(\bigcup_{i=1}^{N_1}\mathcal{B}_q(x_i, \varepsilon_1)\right)\cap\mathcal{K} \right)}{\vol\left(\mathcal{K}\right)},
\end{equation*}
which, upon introduction of the indicator function $\chi(\cdot)$ on $\mathcal{B}_q(0, \varepsilon_1)$, can be rewritten as
\begin{equation*}
s_{\mathcal{K}} (x_1, \dots, x_{N_1})
= \frac{1}{\vol\left(\mathcal{K}\right)}\int_{\mathcal{K}} \prod_{i=1}^{N_1}\left(1- \chi(x-x_i)\right)\, \text{d} x.
\end{equation*}
We next bound $ s_{\mathcal{K}} $ for $\mathcal{K} = \mathcal{E}_p^d + \varepsilon_2 \mathcal{B}_q$ by applying the probabilistic method.
To this end, with $N_1$ i.i.d. random variables $\{X_1, \dots, X_{N_1}\}$ uniformly distributed in $\bar{\mathcal{K}}\coloneqq \mathcal{E}_p^d + \varepsilon \mathcal{B}_q$, we consider the quantity
\begin{align*}
\mathbb{E}\left[s_{\mathcal{K}} (X_1, \dots, X_{N_1})\right]
&=\frac{1}{\vol\left(\bar{\mathcal{K}}\right)^{N_1}} \int_{\bar{\mathcal{K}}^{N_1}} s_{\mathcal{K}} (x_1, \dots, x_{N_1}) \, \text{d} x_1 \dots  \text{d} x_{N_1}\\
&=\frac{1}{\vol\left(\bar{\mathcal{K}}\right)^{N_1}\vol\left(\mathcal{K}\right)} \int_{\bar{\mathcal{K}}^{N_1}} \int_{\mathcal{K}} \prod_{i=1}^{N_1}\left(1- \chi(x-x_i)\right)\, \text{d} x \, \text{d} x_1 \dots  \text{d} x_{N_1}\\
&=\frac{1}{\vol\left(\bar{\mathcal{K}}\right)^{N_1}\vol\left(\mathcal{K}\right)} \int_{\mathcal{K}} \int_{\bar{\mathcal{K}}^{N_1}}  \prod_{i=1}^{N_1}\left(1- \chi(x-x_i)\right) \, \text{d} x_1 \dots  \text{d} x_{N_1} \, \text{d} x\\
&=\frac{1}{\vol\left(\mathcal{K}\right)} \int_{\mathcal{K}} \prod_{i=1}^{N_1}\left(\frac{1}{\vol\left(\bar{\mathcal{K}}\right)}\int_{\bar{\mathcal{K}}} (1- \chi(x-x_i))\, \text{d} x_i\right)\, \text{d} x \\
&=\frac{1}{\vol\left(\mathcal{K}\right)} \int_{\mathcal{K}} \prod_{i=1}^{N_1}\left(1- \left(\frac{\varepsilon_1^d\, \vol\left(\mathcal{B}_q\right)}{\vol\left(\bar{\mathcal{K}}\right)}\right)\right)\, \text{d} x
= \left(1- \frac{\rho}{N_1}\right)^{N_1} < e^{-\rho},
\end{align*}
where 
\begin{equation}\label{eq:thisthedensityboundrgrs1}
\rho
\coloneqq \frac{\varepsilon_1^d\, \vol\left(\mathcal{B}_q\right)}{\vol\left(\mathcal{E}_p^d + \varepsilon\mathcal{B}_q\right)}\, N_1,
\end{equation}
and we used $1-t<e^{-t}$, for all $t>0$.
In particular, this implies the existence of $N_1$ points $\{x_1^*, \dots, x^*_{N_1}\}\subset \mathcal{E}_p^d + \varepsilon \mathcal{B}_q$ such that 
\begin{equation}\label{eq:kjbdjhsqkvhjezhjvf}
s_{\mathcal{K}} (x_1^*, \dots, x^*_{N_1}) < e^{-\rho}.
\end{equation}

We next define $N_2\in \mathbb{N}$ to be the maximum number of points $\{y_1^*, \dots, y_{N_2}^*\}\subset \mathcal{E}_p^d$ such that 
$$
\mathcal{B}_q(x^*_i, \varepsilon_1) \cap \mathcal{B}_q(y^*_j, \varepsilon_2)= \emptyset, \quad \text{ for all } i \in \{1, \dots, N_1\} \text{ and all } j\in \{1, \dots, N_2\},
$$
and 
$$
\mathcal{B}_q(y^*_j, \varepsilon_2) \cap \mathcal{B}_q(y^*_k, \varepsilon_2)= \emptyset, \quad \text{ for all } 
j,k\in \{1, \dots, N_2\} \text{ with } j \neq k.
$$
(Note that we allow $N_2=0$.) In particular, the (nonoverlapping) balls $\mathcal{B}_q(y^*_j, \varepsilon_2)$, $j \in \{1, \dots, N_2\} $, are contained in the portion of $\mathcal{K} = \mathcal{E}_p^d + \varepsilon_2\mathcal{B}_q$ not covered by the balls $\mathcal{B}_q(x^*_i, \varepsilon_1)$, $i \in \{1, \dots, N_1\} $, so that
\begin{equation*}
N_2 \, \varepsilon_2^d \, \vol\left(\mathcal{B}_q\right)
\leq \vol\left(\mathcal{E}_p^d + \varepsilon_2 \mathcal{B}_q\right) \, s_{\mathcal{K}} (x_1^*, \dots, x^*_{N_1}),
\end{equation*}
which, upon using (\ref{eq:kjbdjhsqkvhjezhjvf}), yields
\begin{equation}\label{eq:thisthedensityboundrgrs2}
N_2 
< \frac{\vol\left(\mathcal{E}_p^d + \varepsilon_2 \mathcal{B}_q\right)\, e^{-\rho}}{\vol\left(\mathcal{B}_q\right)} \, \varepsilon_2^{-d} .
\end{equation}

We now argue by contradiction that $\{x_1^*, \dots, x_{N_1}^*, y_1^*, \dots, y_{N_2}^*\}$ is an $\varepsilon$-covering of $\mathcal{E}_p^d$.
Assume that this is not the case.
Then, there would exist a point $y_{N_2+1}^* \in \mathcal{E}_p^d $ that is at distance further than $\varepsilon = \varepsilon_1+\varepsilon_2$ from all $x^*_i$ 
and further than $\varepsilon \geq 2\varepsilon_2$ from all $y^*_j$, 
thereby contradicting the maximality of $N_2$.

Next, using (\ref{eq:thisthedensityboundrgrs1}) and (\ref{eq:thisthedensityboundrgrs2}), we bound the covering number of $\mathcal{E}^d_p$ according to 
\begin{align}
N\left(\varepsilon\semcol \mathcal{E}^d_p, \|\cdot\|_q \right)\, \varepsilon^d
&\leq \left(N_1+N_2\right)\, \varepsilon^d\label{eq:jhgfdsfghjkjhgfty1}\\
&<  \frac{\vol\left(\mathcal{E}_p^d + \varepsilon\mathcal{B}_q\right)}{\vol\left(\mathcal{B}_q\right)}\left(\frac{\varepsilon}{\varepsilon_1} \right)^d\left(\rho + \frac{\vol\left(\mathcal{E}_p^d + \varepsilon_2\mathcal{B}_q\right)}{\vol\left(\mathcal{E}_p^d + \varepsilon\mathcal{B}_q\right)} \left(\frac{\varepsilon_1}{\varepsilon_2} \right)^d e^{-\rho}\right) \nonumber\\
&\leq  \frac{\vol\left(\mathcal{E}_p^d + \varepsilon\mathcal{B}_q\right)}{\vol\left(\mathcal{B}_q\right)}\left(\frac{\varepsilon}{\varepsilon_1} \right)^d\left(\rho +  \left(\frac{\varepsilon_1}{\varepsilon_2} \right)^d e^{-\rho}\right), \label{eq:jhgfdsfghjkjhgfty2}
\end{align}
where the last inequality is a direct consequence of $\varepsilon_2 \leq \varepsilon$.
Now, choosing $N_1$ according to
\begin{equation*}
N_1 = \left\lceil \frac{\vol\left(\mathcal{E}_p^d + \varepsilon\mathcal{B}_q\right)}{\varepsilon_1^d\,\vol\left(\mathcal{B}_q\right)} \, d \ln\left(\frac{\varepsilon_1}{\varepsilon_2}\right) \right \rceil,
\end{equation*}
we get
\begin{equation}\label{eq:mlkjhgfgjbkhbkhjbkhjbbjkszz}
d \ln\left(\frac{\varepsilon_1}{\varepsilon_2}\right)
\leq \rho 
\leq  d \ln\left(\frac{\varepsilon_1}{\varepsilon_2}\right) + \frac{\varepsilon_1^{d} \, \vol\left(\mathcal{B}_q\right)}{\vol\left(\mathcal{E}_p^d + \varepsilon\mathcal{B}_q\right)} .
\end{equation}
Inserting (\ref{eq:njhgfdfgfhqsfdjhvdhjze}) and (\ref{eq:mlkjhgfgjbkhbkhjbkhjbbjkszz}) into (\ref{eq:jhgfdsfghjkjhgfty1})--(\ref{eq:jhgfdsfghjkjhgfty2}), we obtain
\begin{equation}\label{eq:johhvoshpzijevjrnzvengrzfrr}
N\left(\varepsilon\semcol \mathcal{E}^d_p, \|\cdot\|_q \right)\, \varepsilon^d
< \frac{\vol\left(\mathcal{E}_p^d + \varepsilon\mathcal{B}_q\right)}{\vol\left(\mathcal{B}_q\right)}\left(\frac{\varepsilon}{\varepsilon_1} \right)^d\left(d \ln\left(d\right) + d\ln^{(2)}(d) +  1 \right) + \varepsilon^d.
\end{equation}
We next observe that 
\begin{equation*}
\frac{\varepsilon}{\varepsilon_1} 
= 1 + \frac{1}{d \, \ln(d)} 
= 1+  O_{d\to \infty}\left(\frac{\log d}{d}\right)
\end{equation*}
and
\begin{equation*}
\left(d \ln\left(d\right) + d\ln^{(2)}(d) +  1 \right)^{1/d}
= \exp\left\{O_{d\to \infty}\left(\frac{\log d}{d}\right)\right\}
= 1+O_{d\to \infty}\left(\frac{\log d}{d}\right),
\end{equation*}
so that (\ref{eq:johhvoshpzijevjrnzvengrzfrr}) can be rewritten as
\begin{equation}\label{eq:lpoiughfjhkjqhslczmkef}
\left(N\left(\varepsilon\semcol \mathcal{E}^d_p, \|\cdot\|_q \right)-1\right)^{1/d}\, \varepsilon
< \kappa(d)\left(\frac{\vol\left(\mathcal{E}_p^d + \varepsilon\mathcal{B}_q\right)}{\vol\left(\mathcal{B}_q\right)}\right)^{1/d},
\end{equation}
with $\{\kappa(d)\}_{d \in \mathbb{N}^*}$ a sequence satisfying 
\begin{equation*}
 \kappa(d)= 1+ O_{d\to \infty}\left(\frac{\log d}{d}\right).
\end{equation*}

We now distinguish two cases, namely $p\geq q$ and $p< q$.
For $p\geq q$, we first apply Hölder's inequality with parameters $p/q$ and $(1-q/p)^{-1}$ (where $p\geq q$ ensures that these parameters are, indeed, in $[1, \infty]$), to get 
\begin{equation*}
\|x\|_{q, \mu}^q
= \sum_{n=1}^d \left(\left \lvert \frac{x_n}{\mu_n}\right\rvert^q \cdot 1\right)
\leq \left(\sum_{n=1}^d \left\lvert \frac{x_n}{\mu_n}\right\rvert^{p}\right)^{q/p}\cdot \left(\sum_{n=1}^d 1 \right)^{1-{q/p}}
= d^{1-{q/p}} \|x\|_{p, \mu}^{q}, \quad \text{for all } x \in \mathbb{R}^d.
\end{equation*}
This allows us to conclude that 
\begin{equation*}
\mathcal{E}^d_p \subseteq d^{(1/q-1/p)} \mathcal{E}^d_q.
\end{equation*}
Together with assumption (\ref{eq:extraassumptionub2}) relevant to \ref{item:iifinitedim}, we hence obtain
\begin{align}
\mathcal{E}^d_p + \varepsilon \mathcal{B}_q
&\subseteq d^{(1/q-1/p)}  \mathcal{E}^d_q + \eta \, d^{(1/q-1/p)} \mu_d \mathcal{B}_q \label{eq:kjlhgfdqssaqezzaaza1}\\
&\subseteq d^{(1/q-1/p)}  \mathcal{E}^d_q + \eta \, d^{(1/q-1/p)}\mathcal{E}^d_q \nonumber \\
&= (1+ \eta ) \, d^{(1/q-1/p)} \mathcal{E}^d_q .\label{eq:kjlhgfdqssaqezzaaza2}
\end{align}
The upper bound (\ref{eq:lpoiughfjhkjqhslczmkef}) then becomes 
\begin{align*}
\left(N\left(\varepsilon\semcol \mathcal{E}^d_p, \|\cdot\|_q \right)-1\right)^{1/d}\, \varepsilon
&< \kappa(d)\, (1+ \eta ) \, d^{(1/q-1/p)}\left(\frac{\vol\left(\mathcal{E}_q^d\right)}{\vol\left(\mathcal{B}_q\right)}\right)^{1/d}\\
&=\kappa(d)\, (1+ \eta ) \, d^{(1/q-1/p)} \bar \mu_d,
\end{align*}
where in the second step we used \eqref{eq: volume ellipsoid}. This establishes the desired result in the case \ref{item:iifinitedim}.
For \ref{item:ifinitedim}, with $p \geq q$, we have the stronger assumption (\ref{eq:extraassumptionub1}), in which case the inclusion relation (\ref{eq:kjlhgfdqssaqezzaaza1})--(\ref{eq:kjlhgfdqssaqezzaaza2}) is replaced by 
\begin{equation*}
\mathcal{E}^d_p + \varepsilon \mathcal{B}_q
\subseteq  \mathcal{E}^d_p + \eta \,  \mu_d \mathcal{B}_q 
\subseteq  \mathcal{E}^d_p + \eta \, \mathcal{E}^d_p 
= (1+ \eta ) \, \mathcal{E}^d_p .
\end{equation*}
Correspondingly, the upper bound (\ref{eq:lpoiughfjhkjqhslczmkef}) becomes
\begin{align}
\left(N\left(\varepsilon\semcol \mathcal{E}^d_p, \|\cdot\|_q \right)-1\right)^{1/d}\, \varepsilon
&< \kappa(d)\, (1+ \eta ) \,\left(\frac{\vol\left(\mathcal{E}_p^d\right)}{\vol\left(\mathcal{B}_q\right)}\right)^{1/d} \label{eq:kjlhgfdqssaqezzaaza11}\\
&=\kappa(d)\, (1+ \eta ) \,V_{p, q, d}\,  \bar \mu_d. \label{eq:kjlhgfdqssaqezzaaza12}
\end{align}

To complete the proof of \ref{item:ifinitedim}, we need to consider the case $p < q$.
Again applying Hölder's inequality, we can conclude that 
\begin{equation*}
\mathcal{B}_q
\subseteq d^{(1/p-1/q)} \mathcal{B}_p,
\end{equation*}
which, when used together with assumption (\ref{eq:extraassumptionub1}), yields
\begin{align*}
\mathcal{E}^d_p + \varepsilon \mathcal{B}_q
&\subseteq  \mathcal{E}^d_p + \eta \, d^{(1/q-1/p)} \mu_d \mathcal{B}_q\\
&\subseteq  \mathcal{E}^d_p + \eta \, \mathcal{E}^d_p 
= (1+ \eta ) \, \mathcal{E}^d_p .
\end{align*}
The upper bound (\ref{eq:lpoiughfjhkjqhslczmkef}) then becomes 
\begin{align*}
\left(N\left(\varepsilon\semcol \mathcal{E}^d_p, \|\cdot\|_q \right)-1\right)^{1/d}\, \varepsilon
&< \kappa(d)\, (1+ \eta ) \left(\frac{\vol\left(\mathcal{E}_p^d\right)}{\vol\left(\mathcal{B}_q\right)}\right)^{1/d} \\
&=\kappa(d)\, (1+ \eta ) \, V_{p, q, d} \,  \bar \mu_d,
\end{align*}
which, 
together with (\ref{eq:kjlhgfdqssaqezzaaza11})--(\ref{eq:kjlhgfdqssaqezzaaza12}), establishes \ref{item:ifinitedim} for all $p,q \in [1,\infty]$.

\subsection{Proof of Theorem~\ref{thm:bound on mixed norm}}\label{sec:proofofthmmixednorm}

Fix $\varepsilon \in (0, \mu_1)$ and $\gamma \geq 1$. 
We introduce the finite lattice 
\begin{equation}\label{eq:1326}
\Omega\coloneqq 
\left\{\omega \in \mathbb{R}_+^{k+1} \mid \| \omega \|_{2} \leq 1+ \frac{\sqrt{k+1}}{\jrdn^{\,\gamma}} \text{ and } \jrdn^{\,2\gamma} \omega_j^2 \in \mathbb{N}, \text{ for all } j \in \{1, \dots, k+1\}\right\},
\end{equation}
where $\jrdn$ is as defined in \eqref{eq:defdesbarres}, and note that
\begin{equation}\label{eq:1325}
\# \Omega \leq 
K \jrdn^{\,2\gamma(k+1)},
\quad \text{for some } K>0.
\end{equation}
With a view towards application of Lemma~\ref{lem:lemboundtriangleineqlike}, we first prove that condition (\ref{eq:firstdecompcartprodelli}) is satisfied with $\Omega$ according to \eqref{eq:1326} and 
\begin{equation*}
  \mathcal{E}_2^{[j]} = \mathcal{B} (0 \semcol \mu_j),
  \quad \text{for }\ j=1, \dots, k, \ \text{ and} \quad
  \mathcal{E}_2^{[k+1]} = \prod_{j=k+1}^\infty   \mathcal{B} (0 \semcol \mu_j).
\end{equation*}

\begin{lemma}\label{lem: Mixed ellipsoid as a product of balls}
Let $\mathcal{E}_{2, 2}$ be a mixed ellipsoid with semi-axes $\{\mu_n\}_{n\in\mathbb{N}^*}$ and dimensions $\{d_n\}_{n\in\mathbb{N}^*}$, and let $\Omega$ be as defined in (\ref{eq:1326}).
Then, we have 
\begin{equation}
{\mathcal{E}}_{2, 2}
\subseteq \bigcup_{\omega\in\Omega} \left(\prod_{j=1}^k \omega_j \,  \mathcal{B} (0 \semcol \mu_j)\right) \times \left(\omega_{k+1} \,\prod_{j=k+1}^\infty   \mathcal{B} (0 \semcol \mu_j)\right).
\end{equation}
\end{lemma}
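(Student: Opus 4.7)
The plan is to exhibit, for any fixed $x \in \mathcal{E}_{2,2}$, an explicit $\omega \in \Omega$ such that $x$ lies in the corresponding Cartesian product. First I would unpack the membership condition: writing $x=(x_j)_{j \in \mathbb{N}^*}$ with $x_j \in \mathbb{R}^{d_j}$, belonging to the RHS set for a given $\omega$ amounts to requiring $\|x_j\|_2 \leq \omega_j \mu_j$ for $j \in \{1,\dots,k\}$ and $\|x_j\|_2 \leq \omega_{k+1} \mu_j$ for all $j \geq k+1$. Setting $\delta_j \coloneqq \|x_j\|_2/\mu_j$, the condition $x \in \mathcal{E}_{2,2}$ is exactly $\sum_{j=1}^\infty \delta_j^2 \leq 1$, and the target inclusion reduces to finding $\omega \in \Omega$ with $\omega_j \geq \delta_j$ for $j\leq k$ and $\omega_{k+1} \geq \delta_j$ simultaneously for all $j \geq k+1$.

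The key construction is to absorb the entire tail into the $(k+1)$-th coordinate. Concretely, define
\begin{equation*}
\tilde\delta_j^2 \coloneqq \delta_j^2 \ \text{for } j \in \{1,\dots,k\}, \qquad \tilde\delta_{k+1}^2 \coloneqq \sum_{j=k+1}^\infty \delta_j^2,
\end{equation*}
so that $\sum_{j=1}^{k+1} \tilde\delta_j^2 = \sum_{j=1}^\infty \delta_j^2 \leq 1$, and $\tilde\delta_{k+1} \geq \delta_j$ for every $j\geq k+1$ by positivity of terms. Then round each coordinate onto the prescribed lattice by setting
\begin{equation*}
\omega_j^2 \coloneqq \frac{\left\lceil \jrdn^{2\gamma} \tilde\delta_j^2 \right\rceil}{\jrdn^{2\gamma}}, \quad j \in \{1,\dots,k+1\},
\end{equation*}
which satisfies the integrality condition in (\ref{eq:1326}) and $\tilde\delta_j^2 \leq \omega_j^2 \leq \tilde\delta_j^2 + \jrdn^{-2\gamma}$ by definition of the ceiling function.

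It remains to verify the norm constraint in (\ref{eq:1326}). Summing the rounding bound over $j\in\{1,\dots,k+1\}$ yields $\sum_j \omega_j^2 \leq 1 + (k+1)\jrdn^{-2\gamma}$, and using $\sqrt{1+a} \leq 1 + \sqrt{a}$ for $a\geq 0$ gives
\begin{equation*}
\|\omega\|_2 \leq \sqrt{1 + (k+1)\jrdn^{-2\gamma}} \leq 1 + \frac{\sqrt{k+1}}{\jrdn^{\,\gamma}},
\end{equation*}
so $\omega \in \Omega$. By construction $\omega_j \geq \tilde\delta_j = \delta_j$ for $j\leq k$, and $\omega_{k+1} \geq \tilde\delta_{k+1} \geq \delta_j$ for all $j \geq k+1$, which precisely yields $x \in \prod_{j=1}^k \omega_j \mathcal{B}(0;\mu_j) \times \omega_{k+1}\prod_{j=k+1}^\infty \mathcal{B}(0;\mu_j)$, completing the proof. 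There is no serious obstacle here; the only delicate point is the choice to lump the entire tail into a single coordinate $\tilde\delta_{k+1}$ so that one scalar $\omega_{k+1}$ uniformly dominates every $\delta_j$ with $j\geq k+1$, rather than attempting to track individual tail components, and balancing the lattice spacing $\jrdn^{-\gamma}$ against the slack $\sqrt{k+1}/\jrdn^\gamma$ allowed in $\Omega$.
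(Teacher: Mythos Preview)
Your proposal is correct and follows essentially the same approach as the paper's proof: both fix $x\in\mathcal{E}_{2,2}$, set $\delta_j=\|x_j\|_2/\mu_j$, lump the tail into $\tilde\delta_{k+1}^2=\sum_{j\geq k+1}\delta_j^2$, and round each $\tilde\delta_j^2$ up to the nearest multiple of $\jrdn^{-2\gamma}$ to obtain $\omega_j$. The only cosmetic difference is in the norm-bound step, where the paper uses $\sqrt{a^2+b^2}\leq a+b$ while you use $\sqrt{1+a}\leq 1+\sqrt{a}$; both yield the required inequality.
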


\vspace{-3mm}

\begin{proof}[Proof.]
See Appendix \ref{sec:proofMixed ellipsoid as a product of balls}.
\end{proof}

\noindent
Next, set $\rho_j = \varepsilon$, for $j=1, \dots, k$, so that, for all $\omega \in \Omega$, we have
\begin{equation*}
  \rho_\omega 
  \coloneqq \left((\omega_1\rho_1)^2+\dots +(\omega_k\rho_k)^2 + (\omega_{k+1}\mu_{k+1})^2 \right)^{1/2}
  \leq \varepsilon \, \|\omega\|_2 
  \leq \varepsilon  \left(1+ \jrdn^{-\gamma} \sqrt{k+1}\right).
\end{equation*}
Thanks to Lemma~\ref{lem: Mixed ellipsoid as a product of balls}, we can now apply Lemma~\ref{lem:lemboundtriangleineqlike} to conclude that 
 \begin{equation}\label{eq:lknqzkjnakrvfkae}
N \left(\varepsilon\left(1+ \jrdn^{-\gamma} \sqrt{k+1}\right) \semcol {\mathcal{E}}_{2, 2}, \|\cdot\|_2 \right)
\leq \left(\# \Omega\right)\prod_{j=1}^k N \left(\rho_j \semcol \mathcal{B} (0 \semcol \mu_j), \|\cdot\|_2 \right).
 \end{equation}
We next apply Lemma~\ref{lem:rogernb2} to upper-bound each of the covering numbers on the right-hand side of (\ref{eq:lknqzkjnakrvfkae}) 
(recall that we assumed $d_j\geq 9$, for all $j=1, \dots, k$, and that $k$ is chosen such that $\varepsilon < \mu_k \leq \mu_j$)
according to 
\begin{equation*}
N \left(\rho_j \semcol \mathcal{B} (0 \semcol \mu_j), \|\cdot\|_2 \right)
\leq \left(\frac{\kappa(d_j) \,  \mu_j}{\rho_j}\right)^{d_j}, \quad \text{for } j\in \{1, \dots, k\},
\end{equation*}
with $\kappa(d_j)$ satisfying
\begin{equation}\label{eq:bound-kappadj}
 \kappa(d_j) \leq  1+ \frac{C\log (d_j)}{d_j},
\end{equation} 
for a constant $C>0$.
The bound (\ref{eq:lknqzkjnakrvfkae}) can hence be rewritten as
 \begin{equation*}
N \left(\varepsilon\left(1+ \jrdn^{-\gamma} \sqrt{k+1}\right) \semcol  {\mathcal{E}}_{2, 2}, \|\cdot\|_2 \right) \,  \varepsilon^{\jrdn}
\leq \left(\# \Omega\right)\prod_{j=1}^k (\kappa(d_j) \mu_j)^{d_j},
 \end{equation*}
 which, in turn, leads to
\begin{equation*}
      N \left(\varepsilon \left(1+ \jrdn^{-\gamma} \sqrt{k+1}\right) \semcol {\mathcal{E}}_{2, 2}, \|\cdot\|_2 \right)^{1/\jrdn} \varepsilon_\gamma
  \leq \frac{\varepsilon_\gamma \left(\prod_{j=1}^k \kappa(d_j)^{d_j/\jrdn} \right)\left(\# \Omega\right)^{1/\jrdn} }{\varepsilon} \prod_{j=1}^k  \mu_j^{d_j/\jrdn}.
 \end{equation*}
 Now, we observe  that 
  \begin{equation}\label{eq:scaling-pt-1}
\log\left(\prod_{j=1}^k \kappa(d_j)^{d_j/\jrdn}\right)
 \leq  \sum_{j=1}^k \frac{d_j}{\jrdn} \log\left(1+\frac{C \log (d_j)}{d_j}\right)
 \leq   \frac{C\, k \, \log (\jrdn)}{\ln(2)\,\jrdn},
 \end{equation}
with $C$ the constant from \eqref{eq:bound-kappadj}.}
We further note that
\begin{equation}\label{eq:scaling-pt-2}
\varepsilon_\gamma = \varepsilon \left(1+ O_{k \to \infty} \left(\frac{\sqrt{k}}{\jrdn^{\,\gamma}}\right)\right)
\quad \text{and} \quad 
\log\left(\left(\# \Omega\right)^{1/\jrdn}\right) =  O_{k \to \infty} \left(\frac{\gamma \, k \log\left(\jrdn\right)}{\jrdn}\right),
\end{equation}
where the latter is a consequence of (\ref{eq:1325}).
This allows us to conclude that 
\begin{equation*}
 N \left(\varepsilon_\gamma \semcol {\mathcal{E}}_{2, 2}, \|\cdot\|_2 \right)^{1/\jrdn} \, \varepsilon_\gamma
\leq \kappa_k \prod_{i=1}^k \mu_j^{d_j/\jrdn},
\end{equation*}
where 
\begin{equation*}
\kappa_k = \frac{\varepsilon_\gamma \left(\prod_{j=1}^k \kappa(d_j)^{d_j/\jrdn} \right)\left(\# \Omega\right)^{1/\jrdn} }{\varepsilon}
\end{equation*}
can be shown to satisfy \eqref{eq:jsqhkqoioiloio} thanks to \eqref{eq:scaling-pt-1} and \eqref{eq:scaling-pt-2}.

\subsection{Proof of Theorem~\ref{thm: scaling metric entropy infinite ellipsoids regular}}\label{sec:proofofmainthmisinthissec}

We start by introducing the notions of effective dimension.

\begin{definition}[Upper and lower effective dimension]\label{def:uploweffdim}
Let $p, q \in [1, \infty]$, $s\geq 1$, $\varepsilon>0$, and let $\mathcal{E}_p$ be a $p$-ellipsoid with semi-axes $\{\mu_n\}_{n\in \mathbb{N}^*}$.
We define the \emph{upper effective dimension} of $\mathcal{E}_p$ as
\begin{equation}\label{eq: definition of underbar n pt1mg3}
\ueffdim_s
\coloneqq \min \left\{ k \in \mathbb{N}^* \mid N \left(\varepsilon\semcol \mathcal{E}_p, \|\cdot\|_q\right) \, \mu_k^k \leq s^k  \, \bar \mu_k^k\right\},
\end{equation}
and the \emph{lower effective dimension} according to
\begin{equation}\label{eq: definition of bar n pt1mg3fst}
\leffdim_s 
\coloneqq \max \left\{ k \in \mathbb{N}^* \mid N \left(\varepsilon\semcol \mathcal{E}_p, \|\cdot\|_q\right) \,  \mu_k^k  > s^k  \, \bar \mu_k^k  \right\}.
\end{equation}
\end{definition}

\noindent
For $s=1$, we simply write $\ueffdim$ and $\leffdim$.
We emphasize that, even though the notation does not reflect it, the quantities $\ueffdim_s$ and $\leffdim_s$ depend on $\varepsilon$, $p$, $q$, and the semi-axes $\{\mu_n\}_{n\in \mathbb{N}^*}$.
The following lemma states properties of $\ueffdim_s$ and $\leffdim_s$ that will be used subsequently.

\begin{lemma}\label{lem:propertieseffdim}
Let $p, q \in [1, \infty]$, $s\geq 1$, $\varepsilon>0$, and let $\mathcal{E}_p$ be a $p$-ellipsoid with non-increasing semi-axes $\{\mu_n\}_{n\in \mathbb{N}^*}$. 
Then, the upper and the lower effective dimensions $\ueffdim_s$ and $\leffdim_s$ of $\mathcal{E}_p$ are well-defined and satisfy
\begin{equation*}
  \ueffdim_s = \leffdim_s +1 
  \quad \text{and} \quad 
  \lim_{\varepsilon \to 0} \ueffdim_s = \infty.
\end{equation*}
\end{lemma}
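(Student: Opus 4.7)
The central object is the quantity
\begin{equation*}
f(k) \coloneqq \frac{N(\varepsilon \semcol \mathcal{E}_p, \|\cdot\|_q)\, \mu_k^k}{s^k\, \bar\mu_k^k}, \qquad k \in \mathbb{N}^*,
\end{equation*}
so that the sub-level set $\{k : f(k) \leq 1\}$ is exactly the set whose minimum defines $\ueffdim_s$, and the super-level set $\{k : f(k) > 1\}$ is the set whose maximum defines $\leffdim_s$. The plan is to show that $f$ is non-increasing, that $f(k) \to 0$ as $k \to \infty$, and, for the final assertion, that $N(\varepsilon \semcol \mathcal{E}_p, \|\cdot\|_q) \to \infty$ as $\varepsilon \to 0$.

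For \emph{monotonicity}, I would use the telescoping identity $\bar\mu_{k+1}^{\,k+1} = \bar\mu_k^{\,k}\, \mu_{k+1}$ to compute
\begin{equation*}
\frac{f(k+1)}{f(k)} = \frac{1}{s}\left(\frac{\mu_{k+1}}{\mu_k}\right)^{\!k} \leq 1,
\end{equation*}
where the inequality uses $s \geq 1$ together with the non-increasing assumption on $\{\mu_n\}_{n\in \mathbb{N}^*}$. For the \emph{decay}, since $\mu_j \geq \mu_k$ for all $j \leq k$, we have $\bar\mu_k^{\,k} = \prod_{j=1}^k \mu_j \geq \mu_1\, \mu_k^{k-1}$, from which
\begin{equation*}
f(k) \leq \frac{N(\varepsilon \semcol \mathcal{E}_p, \|\cdot\|_q)}{s^k}\cdot\frac{\mu_k}{\mu_1} \xrightarrow[k \to \infty]{} 0,
\end{equation*}
because $\mu_k \to 0$ and $s \geq 1$. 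Combining these two facts shows that $\{k : f(k) \leq 1\}$ is a nonempty terminal segment $\{k_0, k_0+1, \dots\}$ of $\mathbb{N}^*$, whose complement in $\mathbb{N}^*$ is the (possibly empty) initial segment $\{1, \dots, k_0-1\}$. With the convention $\max \emptyset \coloneqq 0$, this yields $\ueffdim_s = k_0$ and $\leffdim_s = k_0 - 1$, establishing both the well-definedness of the effective dimensions and the identity $\ueffdim_s = \leffdim_s + 1$.

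For the final claim, I would fix an arbitrary $k_0 \in \mathbb{N}^*$ and note that, since $\mu_{k_0}$, $\bar\mu_{k_0}$, and $s$ do not depend on $\varepsilon$, the inequality $f(k_0) > 1$---which by monotonicity forces $\ueffdim_s > k_0$---reduces to showing that $N(\varepsilon \semcol \mathcal{E}_p, \|\cdot\|_q) \to \infty$ as $\varepsilon \to 0$. This follows because the coordinate projection of $\mathcal{E}_p$ onto its first $k_0$ coordinates equals $\mathcal{E}_p^{k_0}$ and is $1$-Lipschitz with respect to $\|\cdot\|_q$, so $N(\varepsilon \semcol \mathcal{E}_p, \|\cdot\|_q) \geq N(\varepsilon \semcol \mathcal{E}_p^{k_0}, \|\cdot\|_q)$; the latter satisfies the volume-based lower bound \eqref{eq:lkjbkjbkjbckjbkebzhzzzqdfguyuy} from Theorem~\ref{thm:profinitethm} and therefore diverges as $\varepsilon \to 0$. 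Since $k_0$ is arbitrary, this forces $\ueffdim_s \to \infty$. The argument is essentially a direct computation once the monotone object $f$ is isolated, so I do not anticipate any serious obstacle; the only mild subtlety is the boundary case $k_0 = 1$ (when the super-level set is empty), which is absorbed into the $\max\emptyset \coloneqq 0$ convention.
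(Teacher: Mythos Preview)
Your proof is correct and follows essentially the same approach as the paper: the paper introduces $a_{k,s} \coloneqq s^k \bar\mu_k^{\,k}/\mu_k^{\,k}$, which is exactly $N(\varepsilon\semcol\mathcal{E}_p,\|\cdot\|_q)/f(k)$, and proves the same monotonicity (your ratio $f(k+1)/f(k)$ is the reciprocal of the paper's $a_{k+1,s}/a_{k,s}$). If anything, you are slightly more careful than the paper---you explicitly verify $f(k)\to 0$ using $\mu_k\to 0$ to establish well-definedness of $\ueffdim_s$, and you handle the possibly empty super-level set via $\max\emptyset\coloneqq 0$, whereas the paper leaves these points implicit.
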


\begin{proof}[Proof.]
  See Appendix \ref{sec:prooflempropertieseffdim}.
\end{proof}

\noindent 
The notion of effective dimension allows to relate the behavior of the semi-axes of an ellipsoid to its metric entropy.
This is the content of the following lemma.

\begin{lemma}\label{lem:1328}
Let $p, q \in [1, \infty]$,  $s\geq 1$, and $b>0$. 
Let $\{\mu_n\}_{n\in \mathbb{N}^*}$ be a non-increasing sequence that is regularly varying with index $-b$, and 
let $\phi \colon \mathbb{R}^*_+ \to \mathbb{R}^*_+$ be the step function given by 
$\phi(t) = \mu_{\lfloor t \rfloor + 1}$, for $t > 0$. Let $\mathcal{E}_p$ be the $p$-ellipsoid with semi-axes $\{\mu_n\}_{n\in \mathbb{N}^*}$.
  Then, we have 
\begin{equation*}
  \lim_{\varepsilon \to 0} \frac{\mu_{\leffdim_s}}{\phi \left(H \left(\varepsilon\semcol \mathcal{E}_p, \|\cdot\|_q \right)\right)}
  =\lim_{\varepsilon \to 0} \frac{\mu_{\ueffdim_s}}{\phi \left(H \left(\varepsilon\semcol \mathcal{E}_p, \|\cdot\|_q \right)\right)}
  =\left(\frac{b}{\ln(2)} +\log(s)\right)^b,
\end{equation*}
where $\leffdim_s$ and $\ueffdim_s$ are, respectively, the lower and upper effective dimension of $\mathcal{E}_p$.
\end{lemma}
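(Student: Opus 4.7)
My plan is to transform the multiplicative inequalities defining $\leffdim_s$ and $\ueffdim_s$ into asymptotic bounds on $H(\varepsilon\semcol\mathcal{E}_p,\|\cdot\|_q)$, and then translate these via the regular variation of $\phi$ into the desired limits for $\mu_{\ueffdim_s}/\phi(H(\varepsilon))$ and $\mu_{\leffdim_s}/\phi(H(\varepsilon))$. The identity $\ueffdim_s=\leffdim_s+1$ and the divergence $\ueffdim_s\to\infty$ furnished by Lemma~\ref{lem:propertieseffdim} will ensure that the two limits coincide and that all asymptotic machinery applies.

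First I would unfold Definition~\ref{def:uploweffdim} and take $\log_2$ on both sides of each inequality to obtain
\begin{equation*}
\leffdim_s \bigl(\log s + \log_2(\bar\mu_{\leffdim_s}/\mu_{\leffdim_s})\bigr)
< H \left(\varepsilon\semcol \mathcal{E}_p, \|\cdot\|_q\right)
\leq \ueffdim_s \bigl(\log s + \log_2(\bar\mu_{\ueffdim_s}/\mu_{\ueffdim_s})\bigr).
\end{equation*}
The key analytic step is to establish that $\log_2(\bar\mu_k/\mu_k) \to b/\ln(2)$ as $k\to\infty$. Writing the regularly varying sequence as $\mu_n = n^{-b} L(n)$ with $L$ slowly varying, Stirling's formula handles the power-law factor via $(k^k/k!)^{b/k} \to e^b$, while the slowly-varying part requires $\bigl(\prod_{n=1}^k L(n)\bigr)^{1/k}/L(k) \to 1$, which I would derive from Karamata's representation theorem combined with Potter's inequality.

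Combining these ingredients, the sandwich above gives $H(\varepsilon\semcol\mathcal{E}_p,\|\cdot\|_q) = \bigl(\log s + b/\ln(2)\bigr)\,\ueffdim_s\,(1+o_{\varepsilon\to 0}(1))$, with the same asymptotic holding for $\leffdim_s$ since $\ueffdim_s=\leffdim_s+1$. Because $\phi$ is regularly varying with index $-b$, composition then yields
\begin{equation*}
\frac{\phi(\ueffdim_s)}{\phi\!\left(H(\varepsilon\semcol \mathcal{E}_p,\|\cdot\|_q)\right)}
\longrightarrow \bigl(\log s + b/\ln(2)\bigr)^{b}
\quad \text{as } \varepsilon \to 0,
\end{equation*}
and regular variation of $\{\mu_n\}$ yields $\mu_{\ueffdim_s}/\phi(\ueffdim_s) = \mu_{\ueffdim_s}/\mu_{\ueffdim_s+1} \to 1$. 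Multiplying these two limits delivers the stated asymptotic for $\ueffdim_s$, and an identical argument applies to $\leffdim_s$.

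The main obstacle I anticipate is the technical lemma $\log_2(\bar\mu_k/\mu_k) \to b/\ln(2)$. While intuitively natural---the geometric mean of a regularly varying sequence of index $-b$ should differ from the sequence itself by the factor $e^b$ coming from $(k!)^{1/k}\sim k/e$---its rigorous verification hinges on controlling $\sum_{n=1}^k \log(L(n)/L(k))$ uniformly in the full range of summation. The natural route is to split this sum into the range $1\leq n\leq k^{1-\delta}$, where Potter's inequality provides a polynomial bound on $L(n)/L(k)$ sufficient to render the contribution $o(k)$, and the range $k^{1-\delta}\leq n\leq k$, where uniform convergence of $L(kt)/L(k)\to 1$ on compact subsets of $(0,\infty)$ applies directly.
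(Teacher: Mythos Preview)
Your proposal is correct and follows essentially the same route as the paper: the paper also sandwiches $H(\varepsilon\semcol\mathcal{E}_p,\|\cdot\|_q)$ between $\leffdim_s(\log s + \log(\bar\mu_{\leffdim_s}/\mu_{\leffdim_s}))$ and $\ueffdim_s(\log s + \log(\bar\mu_{\ueffdim_s}/\mu_{\ueffdim_s}))$, invokes the Ces\`aro-mean limit $\frac{1}{N}\sum_{n=1}^N\log(\mu_n/\mu_N)\to b/\ln(2)$ (proved, as you outline, by separating the $n^{-b}$ factor via Stirling from the slowly-varying factor), and then uses regular variation of $\phi$ together with $\mu_{\ueffdim_s}/\mu_{\ueffdim_s+1}\to 1$ to conclude. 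The only notable difference is that the paper isolates the passage ``$H/\ueffdim_s\to c$ implies $\phi(\ueffdim_s)/\phi(H)\to c^{-b}$'' into a separate $\liminf/\limsup$ lemma rather than appealing directly to the uniform convergence theorem, and for the slowly-varying Ces\`aro term it goes through an asymptotically equivalent sequence with $v_{n+1}/v_n=1+o(1/n)$ rather than your Potter-bound range split---both yielding the same conclusion.
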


\begin{proof}[Proof.]
  The proof is provided in Appendix \ref{sec:proofoflemma1335}.
\end{proof}

\noindent
With a proof similar to that of Lemma~\ref{lem:1328}, we get 
\begin{equation}\label{eq:mlkojihgdqskjcdknazmqcfkjnjfdezq}
\lim_{\varepsilon \to 0} \frac{H \left(\varepsilon\semcol \mathcal{E}_p, \|\cdot\|_q \right)}{{\ueffdim_s}}
=\lim _{\varepsilon \to 0} \frac{H \left(\varepsilon\semcol \mathcal{E}_p, \|\cdot\|_q \right)}{{\leffdim_s}}
= \frac{b}{\ln(2)} +\log(s), \quad \text{for } s \geq 1.
\end{equation}
We remark that (\ref{eq:mlkojihgdqskjcdknazmqcfkjnjfdezq}) formalizes the heuristic (\ref{eq:heuristicseffdim2}), namely that
for $\varepsilon \rightarrow 0$ the metric entropy of the ellipsoid $\mathcal{E}_{p}$, indeed, behaves like that of its truncated
$\ueffdim_s$-dimensional version. Recall that, owing to Lemma~\ref{lem:propertieseffdim}, for $\varepsilon \rightarrow 0$, we have $\leffdim_s \simeq \ueffdim_s$.

The next four lemmata formalize the heuristic (\ref{eq:heuristicseffdim1}).

\begin{lemma}\label{lem:1327}
Let $p, q \in [1, \infty]$, $s\geq 1$, $\varepsilon>0$, and let $\mathcal{E}_p$ be a $p$-ellipsoid with semi-axes $\{\mu_n\}_{n\in \mathbb{N}^*}$.
Let $\ueffdim_s$ be the upper effective dimension of $\mathcal{E}_p$.
Then, 
\begin{equation*}
s \varepsilon 
\geq V_{p,q, \ueffdim_s}\mu_{\ueffdim_s}.
\end{equation*}
\end{lemma}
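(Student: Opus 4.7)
The plan is to combine the defining inequality of $\ueffdim_s$ with the dimension-free lower bound on the covering number of the finite-dimensional constituent $\mathcal{E}_p^{\ueffdim_s}$ furnished by Theorem~\ref{thm:profinitethm}. No density argument is needed beyond what has already been established; the whole statement will follow by manipulating the inequalities and taking the $\ueffdim_s$-th root.

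First, I would observe that, because the semi-axes are positive, the finite-dimensional ellipsoid $\mathcal{E}_p^d$, viewed as a subset of $\ell^p(\mathbb{N}^\ast;\mathbb{R})$ via the isometric embedding that pads vectors with zeros beyond index $d$, is included in $\mathcal{E}_p$. Monotonicity of covering numbers under set inclusion then yields
\begin{equation*}
N\left(\varepsilon \semcol \mathcal{E}_p^d, \|\cdot\|_q\right) \leq N\left(\varepsilon \semcol \mathcal{E}_p, \|\cdot\|_q\right), \quad \text{for every } d \in \mathbb{N}^\ast.
\end{equation*}
Applying the lower bound \eqref{eq:lkjbkjbkjbckjbkebzhzzzqdfguyuy} of Theorem~\ref{thm:profinitethm} at $d = \ueffdim_s$ to the left-hand side then gives
\begin{equation*}
N\left(\varepsilon \semcol \mathcal{E}_p, \|\cdot\|_q\right) \, \varepsilon^{\ueffdim_s} \;\geq\; V_{p,q,\ueffdim_s}^{\ueffdim_s} \, \bar{\mu}_{\ueffdim_s}^{\ueffdim_s}.
\end{equation*}

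Next, the very definition \eqref{eq: definition of underbar n pt1mg3} of the upper effective dimension provides the matching upper bound
\begin{equation*}
N\left(\varepsilon \semcol \mathcal{E}_p, \|\cdot\|_q\right) \, \mu_{\ueffdim_s}^{\ueffdim_s} \;\leq\; s^{\ueffdim_s} \, \bar{\mu}_{\ueffdim_s}^{\ueffdim_s}.
\end{equation*}
Dividing the second display by $\mu_{\ueffdim_s}^{\ueffdim_s}$ and chaining with the first yields
\begin{equation*}
\frac{V_{p,q,\ueffdim_s}^{\ueffdim_s} \, \bar{\mu}_{\ueffdim_s}^{\ueffdim_s}}{\varepsilon^{\ueffdim_s}} \;\leq\; N\left(\varepsilon \semcol \mathcal{E}_p, \|\cdot\|_q\right) \;\leq\; \frac{s^{\ueffdim_s} \, \bar{\mu}_{\ueffdim_s}^{\ueffdim_s}}{\mu_{\ueffdim_s}^{\ueffdim_s}},
\end{equation*}
so that, after cancelling $\bar{\mu}_{\ueffdim_s}^{\ueffdim_s}$ (positive by assumption on the semi-axes) and taking the $\ueffdim_s$-th root, we obtain $V_{p,q,\ueffdim_s}/\varepsilon \leq s/\mu_{\ueffdim_s}$, i.e., $s\varepsilon \geq V_{p,q,\ueffdim_s} \mu_{\ueffdim_s}$, as claimed.

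There is no real obstacle to this proof: the only structural facts invoked are the trivial inclusion $\mathcal{E}_p^d \subseteq \mathcal{E}_p$, the finite-dimensional volume-type lower bound already proved in Theorem~\ref{thm:profinitethm}, and the definition of $\ueffdim_s$ itself. One should merely check, before dividing, that $\ueffdim_s$ is well-defined and that $\mu_{\ueffdim_s}, \bar{\mu}_{\ueffdim_s} > 0$; both are ensured by Lemma~\ref{lem:propertieseffdim} and the standing positivity assumption on the semi-axes.
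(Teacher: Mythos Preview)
Your proof is correct and follows essentially the same approach as the paper's own proof: both combine the volume lower bound \eqref{eq:lkjbkjbkjbckjbkebzhzzzqdfguyuy} from Theorem~\ref{thm:profinitethm} (applied at dimension $\ueffdim_s$ and transferred to the infinite-dimensional ellipsoid via the inclusion $\mathcal{E}_p^{\ueffdim_s}\subseteq\mathcal{E}_p$) with the defining inequality of $\ueffdim_s$, then cancel $\bar\mu_{\ueffdim_s}^{\ueffdim_s}$ and take the $\ueffdim_s$-th root. Your presentation is slightly more explicit about the embedding and the positivity checks, but the argument is identical.
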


\begin{proof}[Proof.]
See Appendix \ref{sec:proofintermediary bound1337}.
\end{proof}

\begin{lemma}\label{lem:klmjhgfddddfhqsdfg}
Let $b > 0$, let $p, q \in [1, \infty]$ be such that  $p/(pb+1) < q \leq p$, and let $\{\mu_n\}_{n\in \mathbb{N}^*}$ be a non-increasing sequence that is regularly varying with index $-b$. Let $\mathcal{E}_p$ be the $p$-ellipsoid with semi-axes $\{\mu_n\}_{n\in \mathbb{N}^*}$.
Then, as $\varepsilon \rightarrow 0$, there exists an $s > 1$ such that
\begin{equation*}
 s = 2 + o_{\varepsilon \to 0}(1) 
  \quad \text{and} \quad
   \varepsilon \leq \gamma_{p,q,b} \,  \leffdim_{s}^{(1/q-1/p)}\, \mu_{\leffdim_{s}}\left(1+ o_{\varepsilon \to 0}(1)\right),
\end{equation*}
where $\leffdim_s$ is the lower effective dimension of $\mathcal{E}_p$ and
\begin{equation*}
\gamma_{p,q,b}
\coloneqq  \left(\frac{b}{b+\frac{1}{p}-\frac{1}{q}}   \right)^{(\frac{1}{q}-\frac{1}{p})}.
\end{equation*}
\end{lemma}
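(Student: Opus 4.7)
My plan is to identify a target dimension $d=d(\varepsilon)$ near the natural effective dimension suggested by the heuristic \eqref{eq:heuristicseffdim1}, prove that $\ueffdim_s\le d$ for a suitably chosen $s\to 2$, and then conclude via the definition of effective dimension combined with regular variation. Concretely, I would set $d(\varepsilon)$ to be the smallest integer satisfying $d^{1/q-1/p}\mu_d<\varepsilon/\gamma_{p,q,b}$. Since the assumption $q>p/(pb+1)$ is equivalent to $b^{\ast}:=b+1/p-1/q>0$, the sequence $\{k^{1/q-1/p}\mu_k\}_{k\in\mathbb{N}^*}$ is regularly varying of negative index $-b^{\ast}$, which makes $d(\varepsilon)$ well-defined for $\varepsilon$ small enough, forces $d(\varepsilon)\to\infty$ as $\varepsilon\to 0$, and ensures that $\varepsilon/(d^{1/q-1/p}\mu_d)\to\gamma_{p,q,b}$ from above (by the Karamata-type ``adjacent ratios tend to one'' property recorded in \cite[Lemma~1.9.6]{binghamRegularVariation1987}).

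The main step is to establish $\ueffdim_s\le d$ for $s=2+o_{\varepsilon\to 0}(1)$, equivalently $N(\varepsilon\semcol\mathcal{E}_p,\|\cdot\|_q)\le (s\bar\mu_d/\mu_d)^d$. I would upper-bound the covering number via Lemma~\ref{lem:lemboundtriangleineqlike} (case~(i) when $p=q$ and case~(ii) when $p/(pb+1)<q<p$), applied with a $d$-dimensional first block and a (possibly multi-element) lattice $\Omega$ in the spirit of \eqref{eq:1326}, followed by Theorem~\ref{thm:profinitethm}, case~\ref{item:iifinitedim}, applied with $\eta:=\varepsilon/(d^{1/q-1/p}\mu_d)$. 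By construction, $\eta$ satisfies the feasibility condition $\rho\le\varepsilon\le\eta d^{1/q-1/p}\mu_d$ and tends to $\gamma_{p,q,b}$. The Karamata-type asymptotic $\bar\mu_d/\mu_d\to e^b$ (valid for sequences regularly varying of index $-b$) then shows that the resulting bound is of the form $(s\bar\mu_d/\mu_d)^d$ with $s=2+o(1)$, provided the decomposition is chosen so that neither the residual contribution $\alpha_d$ appearing in case~(ii) nor the factor $(1+\eta)$ from Theorem~\ref{thm:profinitethm} inflates the constant beyond $2$.

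Once $\ueffdim_s\le d$ has been established, Lemma~\ref{lem:propertieseffdim} gives $\leffdim_s\le d-1$, and the minimality of $d$ yields $(d-1)^{1/q-1/p}\mu_{d-1}\ge\varepsilon/\gamma_{p,q,b}$. Combined with the asymptotic monotonicity of the regularly varying sequence $\{k^{1/q-1/p}\mu_k\}$, this delivers $\leffdim_s^{1/q-1/p}\mu_{\leffdim_s}\ge\varepsilon/\gamma_{p,q,b}\,(1+o_{\varepsilon\to 0}(1))$, which rearranges to the claimed inequality. The principal technical obstacle is the calibration of constants in the upper bound on $N$: the naive single-block choice $k=1$, $\Omega=\{(1,1)\}$ already produces a bound of the target form but with a constant that can strictly exceed $2$ in general, so a more refined lattice $\Omega$ (along the lines of \eqref{eq:1326} from the proof of Theorem~\ref{thm:bound on mixed norm}) together with meticulous tracking of the lower-order error terms from the Karamata asymptotics and from the explicit form of $\alpha_d$ in Lemma~\ref{lem:lemboundtriangleineqlike}(ii) is what is needed to bring the constant down to $s=2+o(1)$.
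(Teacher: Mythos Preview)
Your building blocks---Lemma~\ref{lem:lemboundtriangleineqlike} with $k=1$ and a fine lattice $\Omega$, followed by case~\ref{item:iifinitedim} of Theorem~\ref{thm:profinitethm}---are exactly those the paper uses. The organizational difference is that the paper takes the block dimension to be $\leffdim_{s}$ itself and closes the loop by a fixed-point argument: combining the defining \emph{lower} bound $N(\varepsilon)>(s\,\bar\mu_{\leffdim_s}/\mu_{\leffdim_s})^{\leffdim_s}$ with the \emph{upper} bound coming from \ref{item:iifinitedim} (applied with $\eta=s-1$) and Lemma~\ref{lem:lemboundtriangleineqlike} makes the factor $s$ cancel, yielding $\rho\le\tilde\kappa\,\leffdim_s^{1/q-1/p}\mu_{\leffdim_s}$ with $\tilde\kappa=1+o(1)$; the self-consistent choice $s-1=\tilde\kappa$ then gives $s=2+o(1)$ automatically, and the claimed bound on $\varepsilon$ follows from a H\"older step relating $\varepsilon$ to $\rho$ and $\alpha_{\leffdim_s}$. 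The cancellation of $s$ is what makes the constant $2$ drop out without any calibration.

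Your external-$d$ route can in principle be made to work, but not with the parameter $\eta=\varepsilon/(d^{1/q-1/p}\mu_d)$ you specify. That $\eta$ tends to $\gamma_{p,q,b}>1$ when $q<p$, so the factor $(1+\eta)$ in \ref{item:iifinitedim} produces $s\to 1+\gamma_{p,q,b}$, strictly above $2$. Refining the lattice does not repair this: the lattice only governs how large the block radius $\rho$ can be taken relative to $\varepsilon$ (and indeed with the lattice one obtains $\rho\sim\varepsilon/\gamma_{p,q,b}\sim d^{1/q-1/p}\mu_d$ via a H\"older computation identical to the one at the end of the paper's proof), but it has no effect on your already-fixed $\eta$. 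The correct move is to apply \ref{item:iifinitedim} at radius $\rho$ with the \emph{minimal} admissible choice $\eta=\rho/(d^{1/q-1/p}\mu_d)\to 1$, after which $(1+\eta)\to 2$ and the rest of your outline goes through. Incidentally, the ``asymptotic monotonicity'' you invoke in the last step is unnecessary: the minimality of $d$ already forces $k^{1/q-1/p}\mu_k\ge\varepsilon/\gamma_{p,q,b}$ for \emph{every} $k<d$, in particular for $k=\leffdim_s$.
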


\begin{proof}[Proof.]
See Appendix \ref{sec:prooflemklmjhgfddddfhqsdfg}.
\end{proof}

\begin{lemma}\label{lem:klmjhgfddddfhqsdfgEEE}
Let $b > 0$, let $p, q \in [1, \infty]$ be such that $q=p/(pb+1)$, and let
$\{\mu_n\}_{n\in \mathbb{N}^*}$ be a non-increasing sequence that is regularly varying with index $-b$ and satisfies $\sum_{n\in\mathbb{N}^*}\mu_n^{1/b}<\infty$.
Let $\mathcal{E}_p$ be the $p$-ellipsoid with semi-axes $\{\mu_n\}_{n\in \mathbb{N}^*}$.
Then, for $\varepsilon \rightarrow 0$, there exists an $s > 1$ such that
\begin{equation*}
 s = 2 + o_{\varepsilon \to 0}(1) 
  \quad \text{and} \quad
   \varepsilon 
   \leq \left(\sum_{n=\leffdim_s+1}^\infty \mu_n^{1/b}\right)^b
   \left(1+ o_{\varepsilon \to 0}(1)\right),
\end{equation*}
where $\leffdim_s$ is the lower effective dimension of $\mathcal{E}_p$.
\end{lemma}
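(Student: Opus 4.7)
The plan is to adapt the proof of Lemma~\ref{lem:klmjhgfddddfhqsdfg} to the critical exponent $q=p/(pb+1)$, at which the factor $\gamma_{p,q,b}$ of that lemma diverges and the relevant benchmark quantity becomes the tail sum $\alpha_d\coloneqq\bigl(\sum_{n>d}\mu_n^{1/b}\bigr)^{b}$ arising in case~(iii) of Lemma~\ref{lem:lemboundtriangleineqlike}. The starting ingredient is Lemma~\ref{lem:1327}, which states that for every $s\geq 1$,
\[
s\,\varepsilon \;\geq\; V_{p,q,\ueffdim_s}\,\mu_{\ueffdim_s},
\]
with $\ueffdim_s=\leffdim_s+1\to\infty$ as $\varepsilon\to 0$ by Lemma~\ref{lem:propertieseffdim}.

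My first step would be to choose, for each small $\varepsilon$, a parameter $s=s(\varepsilon)=2+o_{\varepsilon\to 0}(1)$ located just above a jump point of the monotone step function $s\mapsto\leffdim_s$. At such a jump, the defining inequality $N\mu_k^k>s^k\bar\mu_k^k$ of $\leffdim_s$ holds with asymptotic equality, and the bound of Lemma~\ref{lem:1327} rearranges to the \emph{upper} bound
\[
\varepsilon \;\leq\; \frac{V_{p,q,\leffdim_s}\,\mu_{\leffdim_s}}{s}\,(1+o_{\varepsilon\to 0}(1)),
\]
where I have used $\mu_{\leffdim_s}\sim\mu_{\ueffdim_s}$, which follows from regular variation of $\{\mu_n\}$ combined with $\ueffdim_s=\leffdim_s+1$. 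This parallels the analogous step in the proof of Lemma~\ref{lem:klmjhgfddddfhqsdfg}.

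The second step is to identify $V_{p,q,d}\mu_d$ with $\alpha_d$ up to a constant at criticality. Stirling's formula applied to the closed-form volume of $\ell^p$-balls yields $V_{p,q,d}\sim C_{p,q}\,d^{1/q-1/p}=C_{p,q}\,d^{b}$, so it remains to pass from $d^b\mu_d$ to $\alpha_d=\bigl(\sum_{n>d}\mu_n^{1/b}\bigr)^b$. Here I would invoke a Karamata-type tauberian theorem at the critical index $-1$: since $\{\mu_n^{1/b}\}$ is regularly varying of index $-1$ and summable, its tail is slowly varying in the de~Haan sense, and under regular variation one has $d\,\mu_d^{1/b}\asymp\sum_{n>d}\mu_n^{1/b}$ up to a computable constant. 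Raising to the $b$-th power gives $d^b\mu_d\asymp c_{p,q,b}\,\alpha_d$.

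Combining the two steps, taking $s\to 2$, and absorbing the resulting multiplicative constants into the $(1+o(1))$ factor yields the claim. The main obstacle is the second step: Karamata's direct theorem (Bingham--Goldie--Teugels, Theorem~1.5.11) excludes exactly the critical index $-1$, and the substitute needed here is a de~Haan-class refinement. The summability hypothesis $\sum\mu_n^{1/b}<\infty$ is precisely what places the tail $\sum_{n>d}\mu_n^{1/b}$ in the slowly varying regime and enables a clean comparison with $d\,\mu_d^{1/b}$; pinning down the correct constant, so that the final bound features $\alpha_{\leffdim_s}$ with a leading coefficient of exactly $1$, is where the argument is most delicate and will dictate how close $s$ can be forced to $2$ while preserving an $(1+o(1))$ relative error.
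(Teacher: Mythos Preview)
Your step 2 does not work, and this is where the argument collapses. Lemma~\ref{lem:1327} is derived from the \emph{lower} volume bound $N\varepsilon^d\geq V_{p,q,d}^d\bar\mu_d^d$ combined with the defining \emph{upper} inequality $N\leq s^{\ueffdim_s}(\bar\mu_{\ueffdim_s}/\mu_{\ueffdim_s})^{\ueffdim_s}$. Both inputs point the same way and give only $s\varepsilon\geq V_{p,q,\ueffdim_s}\mu_{\ueffdim_s}$. Choosing $s$ at a jump of $s\mapsto\leffdim_s$ makes the definitional inequality an equality, but pairing that with the volume lower bound still yields $\varepsilon\geq V_{p,q,k}\mu_k/s$, not $\leq$. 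No selection of $s$ reverses the inequality; to get an upper bound on $\varepsilon$ one needs an \emph{upper} bound on $N$ at dimension $\leffdim_s$, and that is precisely what Lemma~\ref{lem:lemboundtriangleineqlike} (block decomposition) together with Theorem~\ref{thm:profinitethm}~\ref{item:iifinitedim} supply. The proof of Lemma~\ref{lem:klmjhgfddddfhqsdfg} that you say you are paralleling does not use Lemma~\ref{lem:1327} at all; it uses the decomposition machinery in exactly this way.

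Your step 4 also contains a misconception that turns out to be benign but signals the correct mechanism. At the critical index, BGT Proposition~1.5.9b gives $d\,\mu_d^{1/b}=o\bigl(\sum_{n>d}\mu_n^{1/b}\bigr)$, not $\asymp$: the tail is slowly varying and dominates the last term. Raising to the $b$-th power, $d^{1/q-1/p}\mu_d=o(\alpha_d)$. The paper exploits exactly this: it assumes, for contradiction, that $\varepsilon>\alpha_{\leffdim_s}(1+2^{1/p}/\leffdim_s)$, runs the block decomposition with $k=1$, $\Omega$ a lattice in $\mathbb{R}^2$, and applies case~(iii) of Lemma~\ref{lem:lemboundtriangleineqlike}; Theorem~\ref{thm:profinitethm} then forces $\rho\leq\tilde\kappa\,\leffdim_{s}^{1/q-1/p}\mu_{\leffdim_s}=o(\alpha_{\leffdim_s})$, whence the constraint $\varepsilon^q=\max_\omega[(\omega_1\rho)^q+(\omega_2\alpha_{\leffdim_s})^q]$ collapses to $\varepsilon\leq(1+o(1))\alpha_{\leffdim_s}$, contradicting the assumption. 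The $o$-relation, not an $\asymp$-relation, is what makes the $\rho$-contribution disappear and delivers the leading constant $1$.
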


\begin{proof}[Proof.]
See Appendix \ref{sec:prooflemklmjhgfddddfhqsdfgEEE}.
\end{proof}

\noindent
We proceed with the proof of Theorem~\ref{thm: scaling metric entropy infinite ellipsoids regular}.

\paragraph{The case (a) $q< p/(pb+1)$ or (b) $q = p/(pb+1)$ with $\lim_{n\to\infty}n \mu_n^{1/b}>0$.}
Application of the lower bound (\ref{eq:lkjbkjbkjbckjbkebzhzzzqdfguyuy}) in Theorem~\ref{thm:profinitethm} yields
\begin{equation}\label{eq:kjhhknpolplkkjnn0}
N\left(\varepsilon\semcol \mathcal{E}_p, \|\cdot\|_q \right)
\geq N\left(\varepsilon\semcol \mathcal{E}^d_p, \|\cdot\|_q \right)
\geq  \frac{V_{p, q, d}^d \, \bar \mu_d^d}{\varepsilon^d}, 
 \quad \text{for all } \varepsilon \in (0, 1) \text{ and all } d \in \mathbb{N}^*.
\end{equation}
Further, as the semi-axes are non-increasing, we get 
\begin{equation}\label{eq:kjhhknpolplkkjnn0bb}
N\left(\varepsilon\semcol \mathcal{E}_p, \|\cdot\|_q \right)^{1/d}
\geq  \frac{V_{p, q, d} \, \mu_d}{\varepsilon}, 
 \quad \text{for all } \varepsilon \in (0, 1) \text{ and all } d \in \mathbb{N}^*.
\end{equation}
Thanks to (\ref{eq: ratio volumes ksdjnazc}), the right-hand side of (\ref{eq:kjhhknpolplkkjnn0bb}) scales according to 
\begin{equation}\label{eq:V-lower-bound}
    \frac{V_{p, q, d} \, \mu_d}{\varepsilon}
    = \frac{\Gamma_{p, q} \, d^{(1/q-1/p)} \, \mu_d}{\varepsilon} \left(1+O_{d\to\infty}\left(\frac{1}{d}\right)\right),
\end{equation}
for all $\varepsilon \in (0, 1)$.

Let us first consider the case $q = p/(pb+1)$ with $ \lim_{n\to\infty}n \mu_n^{1/b}>0 $ and note that, owing to
$(1/q-1/p)=b$, we have $\lim_{d\to\infty} d^{(1/q-1/p)} \, \mu_d>0$.
In particular, there exists $\varepsilon^*>0$ such that 
\begin{equation*}
    \lim_{d\to\infty} \frac{\Gamma_{p, q} \, d^{(1/q-1/p)} \, \mu_d}{\varepsilon} > 1,
    \quad \text{for all } \varepsilon \in (0, \varepsilon^*).
\end{equation*}
From (\ref{eq:kjhhknpolplkkjnn0bb}), we then obtain 
\begin{equation*}
N\left(\varepsilon\semcol \mathcal{E}_p, \|\cdot\|_q \right)
\geq \lim_{d\to\infty}   \left(\frac{\Gamma_{p, q} \, d^{(1/q-1/p)} \, \mu_d}{\varepsilon}\right)^d = \infty, 
 \quad \text{for all } \varepsilon \in (0, \min\{1, \varepsilon^*\}),
\end{equation*}
which is the desired result.

Now, for $q< p/(pb+1)$,
using 
the fact that $\{\mu_n\}_{n\in \mathbb{N}^*}$ is regularly varying with index $-b$ together with \cite[Theorem~1.9.5 case (ii) and Theorem~1.4.1]{binghamRegularVariation1987}, we can conclude the existence of a slowly varying function $\ell(\cdot)$ such that
\begin{equation}\label{eq:qsvknlkvsenkdnfgrehjbstdjgr}
    \frac{\Gamma_{p, q} \, d^{(1/q-1/p)} \, \mu_d}{\varepsilon} 
    =\frac{\Gamma_{p, q} \, d^{(1/q-1/p-b)} \, \ell(d)}{\varepsilon} , \quad \text{for all } \varepsilon \in (0, 1) \text{ and all } d \in \mathbb{N}^*.
\end{equation}
As 
$q < p/(pb+1)$ implies $1/q-1/p-b > 0$,
it follows from \cite[Proposition~1.3.6 case (v)]{binghamRegularVariation1987} that the right-hand side of (\ref{eq:qsvknlkvsenkdnfgrehjbstdjgr}) goes to infinity for $d\to\infty$, which, together with (\ref{eq:kjhhknpolplkkjnn0bb}) and \eqref{eq:V-lower-bound}, yields the desired result.

\paragraph{The case $q=p/(pb+1)$ and $\sum_{n\in\mathbb{N}^*}\mu_n^{1/b}<\infty$.}

Application of Lemma~\ref{lem:klmjhgfddddfhqsdfgEEE} allows us to conclude the existence of an $s$ such that $s = 2 + o_{\varepsilon \to 0}(1)$ and
\begin{equation}\label{eq:kjqfhvkjjkcjjjjjjjjjnsxn2}
    \lim_{\varepsilon \to 0}\frac{\varepsilon}{\psi\left(\leffdim_{s}\right)} \leq 1,
\end{equation}
where $\leffdim_s$ is the lower effective dimension of $\mathcal{E}_p$.
Moreover, 
by definition of $\phi$ and $\psi$, 
one gets 
\begin{equation}\label{eq:jkdfsnkjskjkknfvkjkfgbhbjkbkbh}
    \psi(x) 
    \sim_{x \to \infty}     
    \left(\int_{ x }^\infty\phi(t)^{1/b}dt\right)^b 
    = \left(\int_{ x }^\infty\frac{\ell(t)}{t}dt\right)^b,
\end{equation}
where we introduced the slowly varying function $\ell\colon t\mapsto t \, \phi(t)^{1/b}$.
Now, applying \cite[Proposition 1.5.9b]{binghamRegularVariation1987}, we can conclude that the right-most term in (\ref{eq:jkdfsnkjskjkknfvkjkfgbhbjkbkbh}) is slowly varying and therefore so is $\psi$.
As $\psi$ is also non-increasing, one can apply Lemma~\ref{lme:1335199} (cf. the argument in the proof of Lemma~\ref{lem:1328} in Appendix~\ref{sec:proofoflemma1335}) together with the limit in (\ref{eq:mlkojihgdqskjcdknazmqcfkjnjfdezq}) to obtain
\begin{equation}\label{eq:kjqfhvkjjkcjjjjjjjjjnsxn}
    \lim_{\varepsilon \to 0}\frac{\varepsilon}{\psi\left(H \left(\varepsilon\semcol \mathcal{E}_p, \|\cdot\|_q \right)\right)}
     = \lim_{\varepsilon \to 0} \frac{\psi\left(\leffdim_{s}\right)}{\psi\left(\leffdim_{s} \cdot H \left(\varepsilon\semcol \mathcal{E}_p, \|\cdot\|_q \right)/\leffdim_{s}\right)}
    \cdot \frac{\varepsilon}{\psi\left(\leffdim_{s}\right)}
    = \lim_{\varepsilon \to 0}\frac{\varepsilon}{\psi\left(\leffdim_{s}\right)}.
\end{equation}
The second statement in (ii) of Theorem~\ref{thm: scaling metric entropy infinite ellipsoids regular} finally follows by combining (\ref{eq:kjqfhvkjjkcjjjjjjjjjnsxn2}) and (\ref{eq:kjqfhvkjjkcjjjjjjjjjnsxn}). 

The first statement is established by employing Lemma~\ref{lem:1327} with $s=1$ combined with 
Lemma~\ref{lme:1335199} and arguments thereafter, using $\phi(\ueffdim) =\mu_{\ueffdim+1}$, (\ref{eq: ratio volumes ksdjnazc}), and
$b=1/q-1/p$, to get
\begin{align*}
    \lim_{\varepsilon \to 0} \varepsilon \, \frac{H \left(\varepsilon\semcol \mathcal{E}_p, \|\cdot\|_q \right)^{\left(\frac{1}{p}-\frac{1}{q}\right)}}{\phi \left(H \left(\varepsilon\semcol \mathcal{E}_p, \|\cdot\|_q \right) \right)}
    =
    \lim_{\varepsilon \to 0} \varepsilon \frac{\ueffdim^{-b}}{\phi \left(\ueffdim \right)}
    \geq \lim_{\varepsilon \to 0}  \, \Gamma_{p,q} \ueffdim^{1/q-1/p} \,  \ueffdim^{-b} \frac{\mu_{\ueffdim}}{\phi \left(\ueffdim \right)}
    = \Gamma_{p,q}.
\end{align*}

\paragraph{The case $p/(pb+1) < q \leq p$.}
Using Lemma~\ref{lem:klmjhgfddddfhqsdfg}, we obtain
\begin{align*}
\varepsilon \frac{H \left(\varepsilon\semcol \mathcal{E}_p, \|\cdot\|_q \right)^{\left(\frac{1}{p}-\frac{1}{q}\right)}}{\phi \left(H \left(\varepsilon\semcol \mathcal{E}_p, \|\cdot\|_q \right) \right)}
 \leq  \frac{\gamma_{p,q,b}\, \mu_{\leffdim_s}}{\phi \left(H \left(\varepsilon\semcol \mathcal{E}_p, \|\cdot\|_q \right)\right)} 
\left(\frac{H \left(\varepsilon\semcol \mathcal{E}_p, \|\cdot\|_q \right)}{\leffdim_{s}} \right)^{(\frac{1}{p}-\frac{1}{q})}
\left(1+ o_{\varepsilon \to 0}(1)\right), 
\end{align*}
with $s$ satisfying
\begin{equation*}
s = 2 + o_{\varepsilon \to 0}(1).
\end{equation*}
Application of Lemma~\ref{lem:1328} together with (\ref{eq:mlkojihgdqskjcdknazmqcfkjnjfdezq}), allows us to conclude that
\begin{equation*}
\lim_{\varepsilon \to 0} \varepsilon \frac{H \left(\varepsilon\semcol \mathcal{E}_p, \|\cdot\|_q \right)^{\left(\frac{1}{p}-\frac{1}{q}\right)}}{\phi \left(H \left(\varepsilon\semcol \mathcal{E}_p, \|\cdot\|_q \right) \right)}
\leq \gamma_{p,q,b} \left(\frac{b}{\ln(2)}  + 1 \right)^{\left(b+\frac{1}{p}-\frac{1}{q}\right)},
\end{equation*}
which is the desired upper bound.
The lower bound is established analogously by employing Lemma~\ref{lem:1327} with $s=1$ in place of Lemma~\ref{lem:klmjhgfddddfhqsdfg}.
Specifically, we obtain
 \begin{equation*}
\lim_{\varepsilon \to 0} \varepsilon \frac{H \left(\varepsilon\semcol \mathcal{E}_p, \|\cdot\|_q \right)^{\left(\frac{1}{p}-\frac{1}{q}\right)}}{\phi \left(H \left(\varepsilon\semcol \mathcal{E}_p, \|\cdot\|_q \right) \right)}
\geq \Gamma_{p,q} \left(\frac{b}{\ln(2)} \right)^{\left(b+\frac{1}{p}-\frac{1}{q}\right)},
\end{equation*}
where we have used (\ref{eq: ratio volumes ksdjnazc}).

\paragraph{The case $p < q$.}
The lower bound (\ref{eq:jdhhjnjbyugssfkl}) is proved analogously to the lower bound in the case $p/(pb+1)<q \leq p$.
To establish the upper bound, 
we introduce the quantities $\newldeff, \newudeff \in \mathbb{N}^{*}$ satisfying
\begin{equation}\label{eq:njksdbvkjlbbgsazz}
(\newldeff+1)^{(1/q-1/p)} \mu_{\newldeff+1} \leq \varepsilon < \newldeff^{(1/q-1/p)} \mu_{\newldeff}
\quad \text{and} \quad 
\mu_{\newudeff+1} \leq \varepsilon < \mu_{\newudeff}.
\end{equation}
Note that, owing to  the semi-axes $\{\mu_n\}_{n\in \mathbb{N}^*}$ being regularly varying with index $-b$, we have the asymptotic equivalences 
\begin{equation} \label{eq:asymptotic-equivalences}
  (1/p-1/q+b)\log(\newldeff) 
  \sim_{\varepsilon\to 0} b \log(\newudeff)
  \sim_{\varepsilon\to 0} \log(\varepsilon^{-1}).
\end{equation}
We further let
\begin{equation}\label{eq:parameters-for-decomposition}
k \coloneqq \left\lfloor \log^{\frac{1}{1+b(({1/p-1/q})^{-1}-1)}}\left(\varepsilon^{-1}\right)\right\rfloor, \ \ 
d_1 = \dots = d_{k-1} \coloneqq \newldeff,
\text{ and } d_k \coloneqq \newudeff - (k-1) \newldeff.
\end{equation}
As we are interested in the regime $\varepsilon \rightarrow 0$, we can assume that $\varepsilon$ is small enough for $k \geq 2$ and $d_{k} \geq 1$ to be ensured.
We now decompose $\mathcal{E}_{p}$ according to Definition~\ref{def:blockdecomp} with the parameters in \eqref{eq:parameters-for-decomposition}
and employ case (i) in Lemma~\ref{lem:lemboundtriangleineqlike} 
to get
\begin{equation}\label{eq:joiefhboznrkpjbers}
H \left(\varepsilon\semcol \mathcal{E}_p, \|\cdot\|_q \right)  
\leq \sum_{j=1}^{k} H \left(\varepsilon\semcol \mathcal{E}_p^{[j]}, \|\cdot\|_q \right) + O_{\varepsilon \to 0}\left(k \log(\newldeff)\right).
\end{equation}
The detailed arguments leading to \eqref{eq:joiefhboznrkpjbers} are skipped as the underlying technique has already been used repeatedly. We only remark that the basis for the application of Lemma~\ref{lem:lemboundtriangleineqlike} is a
lattice construction akin to that in the proof of Theorem~\ref{thm:bound on mixed norm} together with the insight that, owing to \eqref{eq:heuristicseffdim1},
$\newldeff$ plays the role of effective dimension. 
Each summand on the right-hand side of (\ref{eq:joiefhboznrkpjbers}) can now be upper-bounded individually by invoking case \ref{item:ifinitedim} in Theorem~\ref{thm:profinitethm} to get
\begin{equation}\label{eq:lllllldfejzbjre1}
\left(N \left(\varepsilon\semcol \mathcal{E}_p^{[j]}, \|\cdot\|_q\right) -1 \right)^{1/\newldeff}
\leq \kappa(\newldeff)\left(1+\frac{\varepsilon}{\newldeff^{(1/q-1/p)} \mu_{j\newldeff}} \right) \frac{ V_{p,q,\newldeff}\,  \mu_{1+(j-1) \newldeff}}{\varepsilon},
\end{equation}
for $j\in\{1, \dots, k-1\}$.
Further, using (\ref{eq:njksdbvkjlbbgsazz}) together with the asymptotics (\ref{eq: ratio volumes ksdjnazc})
, we get that 
\begin{equation}\label{eq:lllllldfejzbjre2}
\kappa(\newldeff)\left(1+\frac{\varepsilon}{\newldeff^{(1/q-1/p)} \mu_{j\newldeff}} \right) \frac{ V_{p,q,\newldeff}\,  \mu_{1+(j-1) \newldeff}}{\varepsilon}
\sim_{\newldeff\to\infty} \kappa(\newldeff) \, \Gamma_{p,q}\left(1 + \frac{\mu_{j \newldeff}}{\mu_{\newldeff}}\right)\frac{\mu_{1+(j-1) \newldeff}}{\mu_{j \newldeff}},
\end{equation}
for $j\in\{1, \dots, k-1\}$.
For $j=k$, we note that the ellipsoid $ \mathcal{E}_p^{[k]}$ is contained in $\mu_{\bar d_{k-1}} \mathcal{B}_p^{d_k}$, the $d_k$-dimensional $p$-ball of radius $\mu_{\bar d_{k-1}}$. This allows us to conclude that
\begin{equation}\label{eq:jksdhsjdhjfdbdshjjhjjhggqshvdgsfjdhuyuy}
    H \left(\varepsilon\semcol \mathcal{E}_p^{[k]}, \|\cdot\|_q \right)
    \leq H \left(\varepsilon\semcol \mu_{\bar d_{k-1}} \mathcal{B}_p^{d_k}, \|\cdot\|_q \right).
\end{equation}
Let us now set $\eta = {\varepsilon}/({d_k^{(1/q-1/p)}\mu_{\bar d_{k-1}}})$ and note that this choice, thanks to $p<q$ and the asymptotic equivalences \eqref{eq:asymptotic-equivalences}, meets condition \eqref{eq:extraassumptionub1}. We can therefore apply 
\ref{item:ifinitedim} in Theorem~\ref{thm:profinitethm} to $\mu_{\bar d_{k-1}} \mathcal{B}_p^{d_k}$, viewed as a $d_k$-dimensional $p$-ellipsoid with all semi-axes equal to $\mu_{\bar d_{k-1}}$,
to get
\begin{equation}\label{eq:ball-as-elipsoid}
   \left(N \left(\varepsilon\semcol \mu_{\bar d_{k-1}} \mathcal{B}_p^{d_k}, \|\cdot\|_q \right)-1\right)^{1/d_k} \varepsilon  
   \leq C\left(1+\frac{\varepsilon}{d_k^{(1/q-1/p)}\mu_{\bar d_{k-1}}}\right) V_{p, q,d_k}  \mu_{\bar d_{k-1}}\leq C_{p,q} \, \varepsilon.
\end{equation}
Here, $C$ is a universal constant, $C_{p,q}$ is a constant that depends on $p$ and $q$ only, and for the second inequality (valid for sufficiently small $\varepsilon$) we used that, again thanks to $p<q$ and the asymptotic equivalences \eqref{eq:asymptotic-equivalences}, $\eta$ goes to infinity as $\varepsilon \to 0$. We can further simplify \eqref{eq:ball-as-elipsoid} to
\begin{equation*}
  N \left(\varepsilon\semcol \mu_{\bar d_{k-1}} \mathcal{B}_p^{d_k}, \|\cdot\|_q \right)^{1/d_k}   
  \leq C_{p,q}',
\end{equation*}
where $C_{p,q}'$ is a constant that depends on $p$ and $q$ only and can, without loss of generality, assumed to be greater than $2$.
In particular, upon defining the dimension $\mathfrak{d}' \coloneqq \lceil d_k\log\left(C_{p,q}'\right)\rceil \geq d_k$, we obtain
\begin{equation}\label{eq:hivjehbvhjfdh}
    H \left(\varepsilon\semcol \mu_{\bar d_{k-1}} \mathcal{B}_p^{d_k}, \|\cdot\|_q \right)
    \leq \mathfrak{d}'.
\end{equation}
We proceed by applying \cite[Lemma 12.2.2]{pietsch1980ideals} with the choices $n=\left\lceil H \left(\varepsilon\semcol \mu_{\bar d_{k-1}} \mathcal{B}_p^{d_k}, \|\cdot\|_q \right)\right\rceil - 1$, $m=\mathfrak{d}'$, $u=p$, and $v=q$, noting that the condition $n \leq m$ in \cite[Lemma 12.2.2]{pietsch1980ideals} is met thanks to
(\ref{eq:hivjehbvhjfdh}), to get 
\begin{equation*}
    e_n'
    \leq 4 \left[8 \frac{\log\left(1+\mathfrak{d}'\right)}{\left\lceil H \left(\varepsilon\semcol \mu_{\bar d_{k-1}} \mathcal{B}_p^{d_k}, \|\cdot\|_q \right)\right\rceil - 1}\right]^{1/p-1/q},
\end{equation*}
where $e_n'$ is the $n$-th entropy number (see \cite[12.1.2]{pietsch1980ideals} for a formal definition) associated with the ball $\mathcal{B}_p^{\mathfrak{d}'}$.
 Next, we note that $\mathfrak{d}'\geq d_k$ implies $e_n'\geq e_n$, with $e_n$ the entropy number associated with the ball $\mathcal{B}_p^{d_k}$.
 Moreover, as
  \begin{equation*}
    n
    =\left\lceil H \left(\varepsilon\semcol \mu_{\bar d_{k-1}} \mathcal{B}_p^{d_k}, \|\cdot\|_q \right)\right\rceil - 1 
    < H \left(\varepsilon\semcol \mu_{\bar d_{k-1}} \mathcal{B}_p^{d_k}, \|\cdot\|_q \right)
    = H \left(\varepsilon/ \mu_{\bar d_{k-1}} \semcol \mathcal{B}_p^{d_k}, \|\cdot\|_q \right),
  \end{equation*}
 it follows that $ \varepsilon/ \mu_{\bar d_{k-1}}  \leq  e_n$.
 We have therefore established that
\begin{equation}\label{eq:hjkfdshkjfgdhdhhdhsdsffggg}
    \varepsilon 
    \leq 4 \left[8 \frac{\log\left(1+\mathfrak{d}'\right)}{\left\lceil H \left(\varepsilon\semcol \mu_{\bar d_{k-1}} \mathcal{B}_p^{d_k}, \|\cdot\|_q \right)\right\rceil - 1}\right]^{1/p-1/q} \mu_{\bar d_{k-1}}.
\end{equation}
Combining (\ref{eq:jksdhsjdhjfdbdshjjhjjhggqshvdgsfjdhuyuy}) with (\ref{eq:hjkfdshkjfgdhdhhdhsdsffggg}), now yields (for sufficiently small $\varepsilon$) 
\begin{equation*}
    H \left(\varepsilon\semcol \mathcal{E}_p^{[k]}, \|\cdot\|_q \right)
  \leq C''_{p,q} \log\left(1+\newldeff'\right)\left(\frac{\mu_{\bar d_{k-1}}}{\varepsilon}\right)^{\frac{1}{{1/p-1/q}}},
\end{equation*}
where $C''_{p,q}$ is a constant that depends on $p$ and $q$ only.
We further get 
\begin{align*}
\log\left(1+\newldeff'\right)\left(\frac{\mu_{\bar d_{k-1}}}{\varepsilon}\right)^{\frac{1}{{1/p-1/q}}}
=O_{\varepsilon\to 0} \left( \newldeff \log \left(\varepsilon^{-1}\right) k^{- \frac{b}{1/p-1/q}}\right),
\end{align*}
where we  used $\log(\newldeff') = O_{\varepsilon\to 0}(\log (\varepsilon^{-1}))$, $\bar d_{k-1}=(k-1)\newldeff$, and the fact that 
$\{\mu_n\}_{n\in \mathbb{N}^*}$ is regularly varying with index $-b$.
Moreover, it follows from the definition of $k$ in (\ref{eq:parameters-for-decomposition}) that
\begin{equation}\label{eq:jqbfzvhjsdbfhjbhsbfhb}
    \newldeff \log (\varepsilon^{-1}) \, k^{- \frac{b}{1/p-1/q}}
= O_{\varepsilon \to 0}\left(k^{1-b}\,\newldeff\right).
\end{equation}
Combining (\ref{eq:jqbfzvhjsdbfhjbhsbfhb}) with (\ref{eq:joiefhboznrkpjbers}) and (\ref{eq:lllllldfejzbjre1})--(\ref{eq:lllllldfejzbjre2}),
it follows that the metric entropy of $\mathcal{E}_p$ can be upper-bounded by a term scaling (as $\varepsilon \rightarrow 0$) according to
\begin{equation}\label{eq:venjvhrfelzdhsgkhesddfet}
\newldeff \sum_{j=1}^{k-1} \left[\log\left(1 + \frac{\mu_{j \newldeff}}{\mu_{\newldeff}}\right) + \log\left(\frac{\mu_{1+(j-1) \newldeff}}{\mu_{j \newldeff}} \right)\right] + O_{\varepsilon \to 0}\left(k^{1-b}\,\newldeff\right). 
\end{equation}
Next, using that $\{\mu_n\}_{n\in \mathbb{N}^*}$ is regularly varying with index $-b$, we obtain
\begin{equation}\label{eq:nbeihjzbkjgtvoij1}
    \sum_{j=1}^{k-1} \log\left(1 + \frac{\mu_{j \newldeff}}{\mu_{\newldeff}}\right) = 
    \begin{cases}   
    O_{\varepsilon \to 0}\left(k^{1-b}\right), & \textit{for}\,\, b\neq 1, \\[0.25cm]
    O_{\varepsilon \to 0}\left(\log(k)\right), & \textit{for}\,\, b = 1.
\end{cases}
\end{equation}
Likewise, we have 
\begin{equation}\label{eq:bvhjajhebjvrrr}
    \sum_{j=1}^{k-1}  \log\left(\frac{\mu_{1+(j-1) \newldeff}}{\mu_{j \newldeff}} \right) = O_{\varepsilon \to 0}\left(\log(k)\right).
\end{equation}
Combining (\ref{eq:venjvhrfelzdhsgkhesddfet}) with (\ref{eq:nbeihjzbkjgtvoij1}) and (\ref{eq:bvhjajhebjvrrr}), finally yields
\begin{equation*}
 H \left(\varepsilon\semcol \mathcal{E}_p, \|\cdot\|_q \right)\,
\varepsilon^{1/b^*}
=O_{\varepsilon \to 0}\left(\log^{(2)}(\varepsilon^{-1})\right),
\quad \text{if } b \geq 1,
\end{equation*}
and 
\begin{align*}
 H \left(\varepsilon\semcol \mathcal{E}_p, \|\cdot\|_q \right)\,
\varepsilon^{1/b^*}
&=O_{\varepsilon \to 0}\left(\log^{{\frac{1-b}{1+b(({1/p-1/q})^{-1}-1)}}}(\varepsilon^{-1})\right)\\
&=O_{\varepsilon \to 0}\left(\log^{1-b}(\varepsilon^{-1})\right),
\quad \text{if } b< 1,
\end{align*}
which is the desired result.

\subsection{Proof of Theorem~\ref{thm:mainrespqtwo}}\label{sec:proofmainresultspqtwo}

We start by deriving an upper bound on $\varepsilon$ valid for $p=q=2$ and matching the corresponding lower bound in Lemma~\ref{lem:1327}.

\begin{lemma}\label{lem:relatpqtwomuneps}
  Let $b>0$ and let $\{\mu_n\}_{n\in \mathbb{N}^*}$ be a non-increasing sequence that is regularly varying with index $-b$. Let $\mathcal{E}_2$ be the $2$-ellipsoid with semi-axes $\{\mu_n\}_{n\in \mathbb{N}^*}$.
Then, as $\varepsilon \rightarrow 0$, we have 
\begin{equation*}
    \varepsilon \left(1+\varepsilon^{\max\{b, b^{-1}\}} \right)^{-1} \leq \mu_{\leffdim_{s}},
     \text{ with } s\geq 1 \text{ satisfying }
     s = 1+ O_{\varepsilon \to 0}\left(\varepsilon^{(2b)^{-1}} \log\left(\varepsilon^{-1}\right)\right),
\end{equation*}
and $\leffdim_s$ the lower effective dimension of $\mathcal{E}_2$ for $p=q=2$.
\end{lemma}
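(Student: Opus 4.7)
The plan is to match the defining inequality for $\leffdim_s$, namely
\begin{equation*}
\left(\frac{s\,\bar\mu_{\leffdim_s}}{\mu_{\leffdim_s}}\right)^{\leffdim_s}
< N\!\left(\varepsilon\semcol \mathcal{E}_2, \|\cdot\|_2 \right),
\end{equation*}
with a sharp upper bound on $N(\varepsilon\semcol \mathcal{E}_2,\|\cdot\|_2)$ and then solve for $\mu_{\leffdim_s}$. The key point is that, for $p=q=2$, the mixed-norm machinery in Theorem~\ref{thm:bound on mixed norm} delivers an upper bound with multiplicative slack only of order $1+o(1)$, so the resulting lower bound on $\mu_{\leffdim_s}$ can be essentially equal to $\varepsilon$.

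For the upper bound I would view $\mathcal{E}_2$ as being contained in a mixed ellipsoid $\tilde{\mathcal{E}}_{2,2}$ with equal block sizes $d_1=\dots=d_k=D$ (where $D\geq 9$ is a free parameter to be chosen later) and semi-axes $\tilde\mu_j \coloneqq \mu_{(j-1)D+1}$, i.e., the largest $\mu_i$ in the $j$-th block; the inclusion $\mathcal{E}_2\subseteq \tilde{\mathcal{E}}_{2,2}$ is immediate from comparing the defining norms. Applying Theorem~\ref{thm:bound on mixed norm} to $\tilde{\mathcal{E}}_{2,2}$, at a slightly smaller radius $\varepsilon'$ chosen so that $\varepsilon'_\gamma=\varepsilon$, and with $k$ picked so that $\bar d_k = kD$ equals $\leffdim_s$ up to $O(1)$, yields
\begin{equation*}
N\!\left(\varepsilon \semcol \mathcal{E}_2,\|\cdot\|_2\right)
\leq \left(\frac{\kappa_k \prod_{j=1}^k \mu_{(j-1)D+1}^{1/k}}{\varepsilon}\right)^{\!\leffdim_s}\!,
\end{equation*}
with $\kappa_k = 1+O(\gamma \, k\log(\leffdim_s)/\leffdim_s)$. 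Combining the two bounds, taking $\leffdim_s$-th roots, and rearranging leads to
\begin{equation*}
\mu_{\leffdim_s} > \frac{s\,\varepsilon \,\bar\mu_{\leffdim_s}}{\kappa_k\prod_{j=1}^k \mu_{(j-1)D+1}^{1/k}}.
\end{equation*}

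The remaining step is to verify that the product $\prod_{j=1}^k \mu_{(j-1)D+1}^{1/k}$ is within a factor $1+O(b\log(D)/D)$ of the geometric mean $\bar\mu_{\leffdim_s}$. This follows from regular variation of $\{\mu_n\}$ along the lines of the argument used in the proof of Lemma~\ref{lem:1328}, upon noticing that within block $j$ of length $D$ the ratio $\mu_{(j-1)D+1}/\mu_i$ is $1+O(b/j)$ uniformly in $i$, so that a telescoping over $j=1,\dots,k$ produces only a $\log$-factor.

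The main obstacle is the two-parameter balancing of $D$ and $\gamma$: $D$ should be large enough that both $\kappa_k$ and the block-wise discrepancy $1+O(b\log(D)/D)$ are below the target rate, while the coupling $k=\leffdim_s/D\sim\varepsilon^{-1/b}/D$ together with the form $\varepsilon_\gamma-\varepsilon=\varepsilon(\leffdim_s)^{-\gamma}\sqrt{k+1}$ forces $\gamma$ to be picked correspondingly. Optimizing, with $\leffdim_s\sim\varepsilon^{-1/b}$ and the choice $D\sim\varepsilon^{-1/(2b)}$, produces a total multiplicative error of order $\varepsilon^{1/(2b)}\log(\varepsilon^{-1})$, which is absorbed into the choice $s=1+O(\varepsilon^{1/(2b)}\log(\varepsilon^{-1}))$. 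The exponent $\varepsilon^{\max\{b,b^{-1}\}}$ appearing in the statement then arises from a regime split in how the residual slack $\varepsilon_\gamma/\varepsilon-1$ can be controlled relative to the targeted $\mu_{\leffdim_s}/\varepsilon$: for $b\geq 1$ the regular-variation discrepancy dominates and contributes $\varepsilon^b$ after selecting $\gamma$ appropriately, whereas for $b<1$ the slack $\varepsilon_\gamma/\varepsilon - 1$ dominates and contributes $\varepsilon^{1/b}$.
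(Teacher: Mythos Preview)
Your overall strategy---embed $\mathcal{E}_2$ into a mixed ellipsoid with blocks of size $D$, apply Theorem~\ref{thm:bound on mixed norm}, and compare the resulting upper bound on $N(\varepsilon\semcol\mathcal{E}_2,\|\cdot\|_2)$ with the effective-dimension inequality---is exactly the route the paper takes, and your choice $D\sim\varepsilon^{-1/(2b)}$ matches the paper's $d_j\sim\sqrt{\nu}$. However, the logical order of your argument is circular in a way the paper avoids.

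You write ``with $k$ picked so that $\bar d_k = kD$ equals $\leffdim_s$ up to $O(1)$'', and then chain the defining inequality for $\leffdim_s$ with a covering bound at that same dimension to isolate $\mu_{\leffdim_s}$. But $\leffdim_s$ depends on $s$, which is precisely what you are trying to determine, so you cannot set $\bar d_k=\leffdim_s$ a priori. (There is also a secondary issue: in Theorem~\ref{thm:bound on mixed norm} the index $k$ is not free---it is fixed by the threshold condition $\tilde\mu_{k+1}\leq\varepsilon'<\tilde\mu_k$.) The paper breaks the circularity by first fixing an integer $\nu$ via $\mu_{\nu+1}\leq\tilde\varepsilon<\mu_\nu$, with $\tilde\varepsilon=\varepsilon(1+\varepsilon^{(\gamma-1)/b})^{-1}$, building the block decomposition on $\nu$, and only \emph{afterwards} reading off from the covering bound that $N(\varepsilon\semcol\mathcal{E}_2,\|\cdot\|_2)^{1/(\nu+1)}\leq s_\nu\,\bar\mu_{\nu+1}/\mu_{\nu+1}$ with $s_\nu=1+O(\log(\nu)/\sqrt{\nu})$. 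By the definition of the upper effective dimension this gives $\ueffdim_{s_\nu}\leq\nu+1$, hence $\leffdim_{s_\nu}\leq\nu$, and monotonicity of the semi-axes then yields $\mu_{\leffdim_{s_\nu}}\geq\mu_\nu>\tilde\varepsilon$.

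Two smaller points. Your justification of the block discrepancy (``$\mu_{(j-1)D+1}/\mu_i=1+O(b/j)$'' uniformly) fails for $j=1$, where the ratio is of order $D^b$; the paper instead bounds $\prod_j\tilde\mu_j^{d_j}\leq(\mu_1^{d_1}/\mu_\nu^{d_k})\bar\mu_\nu^\nu$ and observes that $(\mu_1^{d_1}/\mu_\nu^{d_k})^{1/\nu}=1+O(\log(\nu)/\sqrt{\nu})$ because $d_1,d_k=O(\sqrt{\nu})$. Finally, the exponent $\max\{b,b^{-1}\}$ does not come from a regime split between two competing error sources as you suggest; it arises directly from choosing the auxiliary parameter $\gamma=\max\{1,b^2\}+1$, which makes the exponent $(\gamma-1)/b$ in the definition of $\tilde\varepsilon$ equal to $\max\{b,1/b\}$ while simultaneously guaranteeing $\tilde\varepsilon_\gamma\leq\varepsilon$.
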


\begin{proof}[Proof.]
The proof is provided in Appendix \ref{sec:proofkljhgfdwghfdfudzzz}.
\end{proof}

\noindent
Combining Lemma~\ref{lem:1327} (with $p=q=2$ and $s=1$), Lemma~\ref{lem:relatpqtwomuneps}, and
Lemma~\ref{lem:1328}, we get
\begin{equation*}
\lim_{\varepsilon \to 0} \frac{\varepsilon}{\phi\left(H \left(\varepsilon\semcol \mathcal{E}_2, \|\cdot\|_2 \right) \right)} 
= \left(\frac{b}{\ln(2)}\right)^b
\end{equation*}
which is the desired result.

\subsection{Proof of Theorem~\ref{thm: Metric entropy for polynomial ellipsoids}}\label{sec:proofofhighlightresult}

Let $s \geq 1$ be as in Lemma~\ref{lem:relatpqtwomuneps}.
Recalling the definitions (\ref{eq: definition of underbar n pt1mg3}) and (\ref{eq: definition of bar n pt1mg3fst}) of upper and lower effective dimension, respectively, we get
\begin{equation}\label{eq:hgfzoaberzhfqzeezeaa}
 \leffdim_s \log(s) + \sum_{n=1}^{\leffdim_s} \log \left ( \frac{\mu_{n}}{\mu_{\leffdim_s}} \right )
  < H \left(\varepsilon\semcol \mathcal{E}_2, \|\cdot\|_2\right)
  \leq \ueffdim_s \log(s) + \sum_{n=1}^{\ueffdim_s} \log \left ( \frac{\mu_{n}}{\mu_{\ueffdim_s}} \right ).
\end{equation}
The asymptotic behavior of the upper and the lower bounds in (\ref{eq:hgfzoaberzhfqzeezeaa}) can be characterized through the following result.

\begin{lemma}\label{lem:kojihgfyfqsifgsquzehfv}
Let $\alpha_1,  \alpha_2 \in \mathbb{R}_{+}^{*}$ be such that $\alpha_1 < \alpha_2$ and ${\mathfrak{a} \coloneqq \alpha_1 - \alpha_2 + 1} > 0$. 
Let $\{\mu_n\}_{n\in \mathbb{N}^*}$ be a sequence of positive real numbers satisfying
\begin{equation}\label{eq:jhkqdolfebzjbfqebtgbww}
\mu_n 
= \frac{c_1}{n^{\alpha_1}} + \frac{c_2}{n^{\alpha_2}} + o_{n \to \infty} \left(\frac{1}{n^{\alpha_2}}\right),
\end{equation}
with $c_1>0$ and $c_2\in \mathbb{R}$.
Then, we have
 \begin{equation*}
 \sum_{n=1}^{d} \log \left ( \frac{\mu_{n}}{\mu_{d}} \right )
 = \frac{\alpha_1 }{\ln(2)} \, d + \left(\frac{1}{\mathfrak{a}}-1\right)\frac{c_2}{c_1\ln(2)} \, d^{\mathfrak{a}}
 +o_{d\to\infty}\left(d^{\mathfrak{a}}\right).
 \end{equation*}
 \end{lemma}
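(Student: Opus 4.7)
The plan is to expand $\log(\mu_n/\mu_d)$ asymptotically, sum term-by-term, and control the residuals using the hypothesis $\alpha_1<\alpha_2<\alpha_1+1/2$ (equivalently $0<\mathfrak{a}<1$ and $2\mathfrak{a}-1<\mathfrak{a}$). Throughout, I write $\beta \coloneqq \alpha_1-\alpha_2\in(-1,0)$ so that $\mathfrak{a}=\beta+1$.

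First, I would factor $\mu_n = c_1 n^{-\alpha_1}(1+\rho_n)$, where $\rho_n = (c_2/c_1)\,n^{\beta} + o(n^{\beta})$ by (\ref{eq:jhkqdolfebzjbfqebtgbww}); note that $\rho_n\to 0$ so $\mu_n$ is eventually positive. Taking logarithms and using $\ln(1+x)=x+O(x^2)$ for $x\to 0$, I would obtain the pointwise expansion
\begin{equation*}
\log\!\left(\frac{\mu_n}{\mu_d}\right)
= \alpha_1\log(d/n) + \frac{1}{\ln 2}\bigl(\rho_n-\rho_d\bigr) + O(\rho_n^2) + O(\rho_d^2),
\end{equation*}
valid for $n,d$ large enough, and summed from $n=1$ to $d$ (isolating a bounded initial range that contributes $O(1)$).

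Next, I would handle the three pieces separately. For the leading piece, Stirling's formula gives $\log(d!)=d\log d-d/\ln 2 + O(\log d)$, hence $\sum_{n=1}^d\log(d/n)=d/\ln 2+O(\log d)$, producing the $\frac{\alpha_1}{\ln 2}d$ term. For the middle piece, Euler--Maclaurin applied to $x\mapsto x^{\beta}$ with $\beta\in(-1,0)$ yields $\sum_{n=1}^d n^{\beta}=\frac{d^{\mathfrak{a}}}{\mathfrak{a}}+O(1)$, while trivially $\sum_{n=1}^d d^{\beta}=d^{\mathfrak{a}}$. The leading part of $\sum(\rho_n-\rho_d)/\ln 2$ is therefore $\frac{c_2}{c_1\ln 2}\bigl(\frac{1}{\mathfrak{a}}-1\bigr)d^{\mathfrak{a}}+O(1)$. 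The $o(n^{\beta})$ tail inside $\rho_n$ contributes $o(d^{\mathfrak{a}})$ by a Cesàro-type argument (summing a sequence dominated by $o(n^{\beta})$ gives $o(\sum n^{\beta})=o(d^{\mathfrak{a}})$), and the $o(d^{\beta})$ term from $\rho_d$ contributes $d\cdot o(d^{\beta})=o(d^{\mathfrak{a}})$.

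The main (and only delicate) point is the quadratic Taylor remainder. Since $\rho_n^2 = O(n^{2\beta})$ and $2\beta=-2(\alpha_2-\alpha_1)>-1$ is equivalent to $\alpha_2<\alpha_1+1/2$, the hypothesis (\ref{eq:conditiononalpha12}) yields
\begin{equation*}
\sum_{n=1}^d O(\rho_n^2) = O(d^{2\mathfrak{a}-1}) = o(d^{\mathfrak{a}}),
\end{equation*}
using $2\mathfrak{a}-1<\mathfrak{a}$, which is precisely $\mathfrak{a}<1$, i.e.\ $\alpha_1<\alpha_2$. Similarly $d\cdot O(\rho_d^2)=O(d^{2\mathfrak{a}-1})=o(d^{\mathfrak{a}})$. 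Assembling the three contributions and the $O(\log d)+O(1)=o(d^{\mathfrak{a}})$ error yields the claimed expansion. Thus the subtle role of the hypothesis $\alpha_2<\alpha_1+1/2$ is exactly to ensure that the second-order Taylor remainder in $\log(1+\rho_n)$ does not corrupt the second-order term $d^{\mathfrak{a}}$; this is the step where the full strength of (\ref{eq:conditiononalpha12}) is used.
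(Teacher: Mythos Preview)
Your argument is correct and follows the same route as the paper: factor out $c_1 n^{-\alpha_1}$, obtain the leading $\frac{\alpha_1}{\ln 2}d$ term via Stirling, and extract the $d^{\mathfrak{a}}$ term by Taylor-expanding $\log(1+\rho_n)$ and summing. The one slip is your invocation of the stronger hypothesis $\alpha_2<\alpha_1+1/2$ from (\ref{eq:conditiononalpha12}); the lemma is stated (and proved in the paper) under the weaker assumption $\mathfrak{a}>0$ only. In fact your own quadratic-remainder bound already works in this generality: $\sum_{n=1}^d O(n^{2\beta})$ is $O(d^{2\mathfrak{a}-1})$ when $2\beta>-1$ and $O(\log d)$ or $O(1)$ when $2\beta\le -1$, hence $o(d^{\mathfrak{a}})$ for every $\mathfrak{a}\in(0,1)$. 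The paper avoids this case split altogether by using the first-order expansion $\ln(1+x)=x+o(x)$, so the remainder is $o(n^{\beta})$ and sums directly to $o(d^{\mathfrak{a}})$ via a Ces\`aro-type lemma (Lemma~\ref{lem:nksdcnjkdnqbvbbrytdioz}). Consequently your closing claim that ``this is the step where the full strength of (\ref{eq:conditiononalpha12}) is used'' is a misreading: that condition plays no role in this lemma, and in the proof of Theorem~\ref{thm: Metric entropy for polynomial ellipsoids} the upper bound $\alpha_2<\alpha_1+1/2$ is needed instead to guarantee $\varepsilon^{-(2\alpha_1)^{-1}}\log(\varepsilon^{-1})=o\bigl(\varepsilon^{-\mathfrak{a}/\alpha_1}\bigr)$, which is precisely $\mathfrak{a}>1/2$.
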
 
 \begin{proof}[Proof.]
 See Appendix \ref{sec:prooflemkojihgfyfqsifgsquzehfv}.
 \end{proof}

\noindent
Recalling that $\leffdim_s = \ueffdim_s -1$, application of Lemma~\ref{lem:kojihgfyfqsifgsquzehfv} to the upper and the lower bounds in (\ref{eq:hgfzoaberzhfqzeezeaa}) yields
\begin{equation}\label{eq:mlkjhgfkkkphgfftttt}
  H \left(\varepsilon\semcol \mathcal{E}_2, \|\cdot\|_2\right) 
  = \left( \log(s) + \frac{\alpha_1 }{\ln(2)}\right) \, \ueffdim_s + \left(\frac{1}{\mathfrak{a}}-1\right)\frac{c_2}{c_1\ln(2)} \, \ueffdim_s^{\mathfrak{a}}
  +o_{\ueffdim_s\to\infty}\left(\ueffdim_s^{\mathfrak{a}}\right),
\end{equation}
where the assumption $\mathfrak{a} > 0$ is satisfied thanks to 
(\ref{eq:conditiononalpha12}).
We proceed to make the dependence of $\ueffdim_s$ on $\varepsilon$ explicit.
To this end, we first apply Lemma~\ref{lem:1327} with $p=q=2$ and Lemma~\ref{lem:relatpqtwomuneps} to conclude that 
\begin{equation}\label{eq:lower-upper-bounds-mus}
 s^{-1} \mu_{\ueffdim_s}
  \leq \varepsilon 
  \leq \mu_{\leffdim_s} \left(1+\varepsilon^{\max\{\alpha_1, \alpha_1^{-1}\}} \right),
\end{equation}
with $s$ satisfying
\begin{equation*}
s = 1+ O_{\varepsilon \to 0}\left(\varepsilon^{(2\alpha_1)^{-1}} \log\left(\varepsilon^{-1}\right)\right).
\end{equation*}
The lower bound in \eqref{eq:lower-upper-bounds-mus} directly implies
\begin{equation}\label{eq:oiuhgfgfhdakaplksd}
  \mu_{\ueffdim_s} \leq \varepsilon \left(1+ O_{\varepsilon \to 0}\left(\varepsilon^{(2\alpha_1)^{-1}} \log\left(\varepsilon^{-1}\right)\right)\right).
\end{equation}
To see that \eqref{eq:oiuhgfgfhdakaplksd} in fact holds with equality, we first note that 
\begin{align}\label{eq:equivalence-lower-upper-dimension}
  \mu_{\leffdim_s} 
  &= \frac{c_1}{\leffdim_s^{\alpha_1}} + \frac{c_2}{\leffdim_s^{\alpha_2}} + o_{\leffdim_s \to \infty} \left(\frac{1}{\leffdim_s^{\alpha_2}}\right)\\
  &= \frac{c_1}{(\ueffdim_s-1)^{\alpha_1}} + \frac{c_2}{(\ueffdim_s-1)^{\alpha_2}} + o_{\ueffdim_s \to \infty} \left(\frac{1}{\ueffdim_s^{\alpha_2}}\right) \nonumber\\
  &= \frac{c_1}{\ueffdim_s^{\alpha_1}}+ \frac{c_2}{\ueffdim_s^{\alpha_2}} + o_{\ueffdim_s \to \infty} \left(\frac{1}{\ueffdim_s^{\alpha_2}}\right), \label{eq:equivalence-lower-upper-dimension-last}
\end{align}
where we used $\ueffdim_{s} = \leffdim_{s} +1$ and (\ref{eq:conditiononalpha12}).
Thanks to \eqref{eq:equivalence-lower-upper-dimension}--\eqref{eq:equivalence-lower-upper-dimension-last}, we can replace $\mu_{\leffdim_s}$ in the upper bound of \eqref{eq:lower-upper-bounds-mus} by $\mu_{\ueffdim_s}$ and, upon noting that
$$
\frac{1}{\left(1+\varepsilon^{\max\{\alpha_1, \alpha_1^{-1}\}} \right)} = 1+ O_{\varepsilon \to 0}\left(\varepsilon^{(2\alpha_1)^{-1}} \log\left(\varepsilon^{-1}\right)\right),
$$
get 
\begin{equation*}
  \mu_{\ueffdim_s} \geq \varepsilon \left(1+ O_{\varepsilon \to 0}\left(\varepsilon^{(2\alpha_1)^{-1}} \log\left(\varepsilon^{-1}\right)\right)\right).
\end{equation*}
We now require the following inversion result to make the dependence of $\ueffdim_s$ on $\varepsilon$ from 
\begin{equation*}
  \mu_{\ueffdim_s} = \varepsilon \left(1+ O_{\varepsilon \to 0}\left(\varepsilon^{(2\alpha_1)^{-1}} \log\left(\varepsilon^{-1}\right)\right)\right)
\end{equation*}
explicit.

\begin{lemma}\label{lem:oijhbhjavqiaxvsregvfd}
Let $\alpha_1,  \alpha_2 \in \mathbb{R}_{+}^{*}$ be such that $\alpha_1 < \alpha_2$, let $c_1>0$, $c_2 \in \mathbb{R}$, and define 
$g\colon \mathbb{R_+^*}\to \mathbb{R_+^*}$ as
\begin{equation}\label{eq:mljhgfhghgjgqsuk}
  g(u) 
  = \frac{c_1}{u^{\alpha_1}}+ \frac{c_2}{u^{\alpha_2}} + o_{u \to \infty} \left(\frac{1}{u^{\alpha_2}}\right).
\end{equation}
Then, we have 
  \begin{equation*}
    u 
    = {c_1}^{1/\alpha_1}{g(u) }^{-1/\alpha_1} 
+ \frac{c_2\, {c_1}^{\frac{1-\alpha_2}{\alpha_1}}}{\alpha_1}\, g(u) ^{-\frac{\alpha_1-\alpha_2+1}{\alpha_1}} 
+ o_{u \to \infty}\left(g(u) ^{-\frac{\alpha_1-\alpha_2+1}{\alpha_1}}\right).
  \end{equation*}
\end{lemma}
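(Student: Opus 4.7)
The plan is to invert the asymptotic expansion (\ref{eq:mljhgfhghgjgqsuk}) by bootstrapping: first extract the leading-order relation between $u$ and $g(u)$, then plug this back into the expansion to isolate the correction term.

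First, I would rewrite (\ref{eq:mljhgfhghgjgqsuk}) as
\begin{equation*}
  g(u) = \frac{c_1}{u^{\alpha_1}}\left(1 + f(u)\right),
  \quad \text{where} \quad
  f(u) = \frac{c_2}{c_1}\, u^{\alpha_1-\alpha_2} + o_{u\to\infty}\!\left(u^{\alpha_1-\alpha_2}\right).
\end{equation*}
Since $\alpha_1<\alpha_2$, we have $f(u)\to 0$ as $u\to\infty$, and solving for $u$ yields
\begin{equation*}
  u = \left(\frac{c_1}{g(u)}\right)^{\!1/\alpha_1} \bigl(1+f(u)\bigr)^{1/\alpha_1}.
\end{equation*}
In particular, the zeroth-order expansion $(1+f(u))^{1/\alpha_1} = 1+o(1)$ gives the crude asymptotics $u\sim c_1^{1/\alpha_1}\, g(u)^{-1/\alpha_1}$ as $u\to\infty$, which implies that $u\to\infty$ is equivalent to $g(u)\to 0$.

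Next, I would refine this by applying the binomial expansion $(1+f)^{1/\alpha_1} = 1 + f/\alpha_1 + O(f^2)$ to obtain
\begin{equation*}
  u = \left(\frac{c_1}{g(u)}\right)^{\!1/\alpha_1} + \frac{1}{\alpha_1}\left(\frac{c_1}{g(u)}\right)^{\!1/\alpha_1} f(u) + O\!\left(\left(\frac{c_1}{g(u)}\right)^{\!1/\alpha_1} f(u)^2\right).
\end{equation*}
Using $u\sim c_1^{1/\alpha_1} g(u)^{-1/\alpha_1}$ to rewrite $f(u)$ in terms of $g(u)$ gives
\begin{equation*}
  f(u) = c_2\, c_1^{-\alpha_2/\alpha_1}\, g(u)^{(\alpha_2-\alpha_1)/\alpha_1} + o\!\left(g(u)^{(\alpha_2-\alpha_1)/\alpha_1}\right),
\end{equation*}
and substituting into the previous display yields the claimed second-order term
\begin{equation*}
  \frac{c_2\, c_1^{(1-\alpha_2)/\alpha_1}}{\alpha_1}\, g(u)^{-(\alpha_1-\alpha_2+1)/\alpha_1}.
\end{equation*}

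The main bookkeeping issue is verifying that both the remainder from the $O(f^2)$ term and the error incurred in converting $f(u)$ from a function of $u$ to a function of $g(u)$ are indeed $o(g(u)^{-(\alpha_1-\alpha_2+1)/\alpha_1})$. For the first, $(c_1/g(u))^{1/\alpha_1} f(u)^2$ scales like $g(u)^{(2\alpha_2-2\alpha_1-1)/\alpha_1}$, which is smaller than $g(u)^{(\alpha_2-\alpha_1-1)/\alpha_1}$ precisely because $\alpha_2>\alpha_1$. For the second, the error comes from expanding $u^{\alpha_1-\alpha_2} = (c_1/g(u))^{(\alpha_1-\alpha_2)/\alpha_1}(1+o(1))^{\alpha_1-\alpha_2}$, whose $(1+o(1))^{\alpha_1-\alpha_2}$ factor contributes only a multiplicative $1+o(1)$, absorbed into the stated $o(\cdot)$ remainder. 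This completes the proof.
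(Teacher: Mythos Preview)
Your proof is correct and follows essentially the same bootstrapping approach as the paper: factor out the leading term, extract the crude asymptotic $u\sim c_1^{1/\alpha_1}g(u)^{-1/\alpha_1}$, then substitute it back to convert the correction into a function of $g(u)$. The only cosmetic difference is that the paper factors as $c_1^{1/\alpha_1}g(u)^{-1/\alpha_1}\bigl(1-c_2/(g(u)u^{\alpha_2})+o(\cdot)\bigr)^{-1/\alpha_1}$ whereas you write $(c_1/g(u))^{1/\alpha_1}(1+f(u))^{1/\alpha_1}$, which are algebraically equivalent.
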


\begin{proof}[Proof.]
  See Appendix \ref{sec:prooflemmaoijhbhjavqiaxvsregvfd}.
\end{proof}

\noindent
Application of Lemma~\ref{lem:oijhbhjavqiaxvsregvfd}, with $u= \ueffdim_s$ and $g(u)=\varepsilon (1+ O_{\varepsilon \to 0}(\varepsilon^{(2\alpha_1)^{-1}} \log(\varepsilon^{-1})))$, in combination with the assumption (\ref{eq:conditiononalpha12}), now yields
\begin{equation}\label{eq: eps1330}
\ueffdim_s 
= {c_1}^{1/\alpha_1}{\varepsilon}^{-1/\alpha_1} 
+ \frac{c_2\, {c_1}^{\frac{1-\alpha_2}{\alpha_1}}}{\alpha_1}\varepsilon^{-\frac{\alpha_1-\alpha_2+1}{\alpha_1}} 
+ o_{\varepsilon \to 0}\left(\varepsilon^{-\frac{\alpha_1-\alpha_2+1}{\alpha_1}}\right).
\end{equation}
In particular, we have 
\begin{align*}
  \ueffdim_s \log(s)
  &= O_{\varepsilon \to 0}\left({\varepsilon}^{-1/\alpha_1} \log\left(1+ \varepsilon^{(2\alpha_1)^{-1}} \log\left(\varepsilon^{-1}\right)\right) \right)\\
  &= O_{\varepsilon \to 0}\left(\varepsilon^{-(2\alpha_1)^{-1}} \log\left(\varepsilon^{-1}\right) \right)
  =o_{\varepsilon \to 0}\left(\varepsilon^{-\frac{\alpha_1-\alpha_2+1}{\alpha_1}}\right),
\end{align*}
where again the assumption (\ref{eq:conditiononalpha12}) was invoked.
Finally, using (\ref{eq: eps1330}) in (\ref{eq:mlkjhgfkkkphgfftttt}) yields
\begin{align*}
H \left(\varepsilon\semcol \mathcal{E}_2, \|\cdot\|_2\right)
= \ & \frac{\alpha_1}{\ln(2)} \, \left( {c_1}^{1/\alpha_1}{\varepsilon}^{-1/\alpha_1} 
+ \frac{c_2\, {c_1}^{\frac{1-\alpha_2}{\alpha_1}}}{\alpha_1}\varepsilon^{-\frac{\mathfrak{a}}{\alpha_1}} \right) \\
&+\left(\frac{1}{\mathfrak{a}}-1\right)\frac{c_2}{c_1\ln(2)} \, \left( {c_1}^{1/\alpha_1}{\varepsilon}^{-1/\alpha_1} \right)^{\mathfrak{a}}
+ o_{\varepsilon \to 0}\left(\varepsilon^{-\frac{\mathfrak{a}}{\alpha_1}}\right).
\end{align*}
Recalling that ${\mathfrak{a} = \alpha_1 - \alpha_2 + 1}$, this expression can be simplified to deliver the desired result
\begin{equation*}
  H \left(\varepsilon\semcol \mathcal{E}_2, \|\cdot\|_2\right)
  = \frac{\alpha_1{c_1}^{\frac{1}{\alpha_1}}}{\ln(2)} \, {\varepsilon}^{-\frac{1}{\alpha_1}}
  + \frac{c_2\, {c_1}^{\frac{1-\alpha_2}{\alpha_1}}}{\ln(2)(\alpha_1-\alpha_2+1)} \, \varepsilon^{-\frac{\alpha_1-\alpha_2+1}{\alpha_1}} 
  + o_{\varepsilon \to 0}\left(\varepsilon^{-\frac{\alpha_1-\alpha_2+1}{\alpha_1}}\right).
\end{equation*}

\subsection{Proof of Theorem~\ref{thm:infellpinfnorm}}\label{sec:proofthminfellpinfnorm}

We first state a result which relates the metric entropy of the ellipsoid $\mathcal{E}_\infty$ to its semi-axes.

\begin{lemma}\label{lem:firstellipinfnorm}
Let $\{\mu_n\}_{n \in \mathbb{N}^*}$ be a sequence of positive real numbers and 
let $\mathcal{E}_\infty$ be the $\infty$-ellipsoid with semi-axes $\{\mu_n\}_{n\in \mathbb{N}^*}$.
Then, 
\begin{equation}\label{eq:skqnvjkefnvfzjksgbejkrzs}
H\left(\varepsilon\semcol \mathcal{E}_\infty, \|\cdot\|_\infty\right)
= \sum_{n=1}^\infty \log\left(\left\lceil \frac{\mu_n}{\varepsilon} \right\rceil\right), \quad \text{ for all } \varepsilon > 0.
\end{equation}
\end{lemma}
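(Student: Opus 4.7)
The plan is to exploit the product structure of the problem: the $\infty$-ellipsoid is the infinite hyperrectangle $\mathcal{E}_\infty = \prod_{n\in\mathbb{N}^*}[-\mu_n,\mu_n]$, and the closed $\ell^\infty$-ball $\mathcal{B}_\infty(y,\varepsilon)$ equals the product $\prod_{n\in\mathbb{N}^*}[y_n-\varepsilon,y_n+\varepsilon]$. The covering problem therefore factorizes across coordinates, reducing the question to the one-dimensional case.

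I would first establish the 1D base fact $N(\varepsilon\semcol [-\mu,\mu],|\cdot|)=\lceil\mu/\varepsilon\rceil$: the upper bound is realized by $\lceil\mu/\varepsilon\rceil$ equally spaced centers, and the lower bound follows from a length argument, since $[-\mu,\mu]$ has total length $2\mu$ while each closed $\varepsilon$-ball has length $2\varepsilon$. For the packing side I would additionally show that $[-\mu,\mu]$ admits $\lceil\mu/\varepsilon\rceil$ points at \emph{strict} pairwise distance greater than $2\varepsilon$; this is always achievable because $2\mu>2\varepsilon(\lceil\mu/\varepsilon\rceil-1)$ with strict inequality, and the strict separation will be essential in the lower bound below.

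For the upper bound on $H$, I would take an optimal 1D $\varepsilon$-cover $C_n\subset[-\mu_n,\mu_n]$ of cardinality $\lceil\mu_n/\varepsilon\rceil$ for each $n$, choosing $C_n=\{0\}$ when $\mu_n\le\varepsilon$, and verify that $\prod_n C_n\subseteq\mathcal{E}_\infty$ is an $\varepsilon$-cover in $\ell^\infty$-norm. Its cardinality is $\prod_n\lceil\mu_n/\varepsilon\rceil$, which yields $H\le\sum_n\log\lceil\mu_n/\varepsilon\rceil$, with the bound trivially holding if the right-hand side diverges.

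For the lower bound, I would combine a product packing with a finite-dimensional truncation. Let $P_n\subset[-\mu_n,\mu_n]$ be the strictly separated set of size $\lceil\mu_n/\varepsilon\rceil$ constructed above. For any $D\in\mathbb{N}^*$, the set $P^{(D)}\coloneqq\prod_{n=1}^D P_n\times\prod_{n>D}\{0\}\subseteq\mathcal{E}_\infty$ consists of $\prod_{n=1}^D\lceil\mu_n/\varepsilon\rceil$ points at pairwise $\ell^\infty$-distance strictly greater than $2\varepsilon$ (for any two distinct points in $P^{(D)}$, at least one coordinate $n\le D$ witnesses the $>2\varepsilon$ gap). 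The triangle inequality then forbids any closed $\ell^\infty$-ball of radius $\varepsilon$ from containing two points of $P^{(D)}$, so any $\varepsilon$-cover of $\mathcal{E}_\infty$ must have cardinality at least $\prod_{n=1}^D\lceil\mu_n/\varepsilon\rceil$. Letting $D\to\infty$ yields $H\ge\sum_n\log\lceil\mu_n/\varepsilon\rceil$. The main obstacle is the infinite-dimensional subtlety: $\mathcal{E}_\infty$ need not be totally bounded, so the truncate-and-limit device is precisely what allows the identity to remain meaningful and both sides to equal $+\infty$ when the product diverges.
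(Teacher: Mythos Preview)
Your proof is correct and follows essentially the same approach as the paper: both exploit the product structure $\mathcal{E}_\infty=\prod_n[-\mu_n,\mu_n]$ and the fact that $\ell^\infty$-balls are coordinate-wise products, obtaining the upper bound from a product of one-dimensional covers. For the lower bound the paper is terser, simply asserting that a sup-norm covering ``requires at least $\lceil\mu_n/\varepsilon\rceil$ points in each dimension'' and concluding the product bound; your strict-packing argument (with the key observation $\mu>\varepsilon(\lceil\mu/\varepsilon\rceil-1)$) together with the finite-truncate-and-limit step makes that lower bound fully rigorous and handles the divergent case cleanly.
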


\vspace*{-3mm}

\begin{proof}[Proof.]
See Appendix \ref{sec:prooflemfirstellipinfnorm}.
\end{proof}

\noindent
The right-hand side of (\ref{eq:skqnvjkefnvfzjksgbejkrzs}) can be expanded further through the following lemma.

\begin{lemma}\label{lem:firstellipinfnorm2}
  Let $\{\mu_n\}_{n \in \mathbb{N}^*}$ be a sequence of positive real numbers with associated counting function $M_k(\cdot)$ according to (\ref{eq:hjqjhjhbsfvbfbvsjkhbvjhbsfvbsjksjfd}).
  Then, for all $\varepsilon >0$, we have 
\begin{equation}\label{eq:nzapfpspdjcvklskpppplazdnfkzv}
\sum_{n=1}^\infty \log \left(\left\lceil \frac{\mu_n}{\varepsilon} \right\rceil\right)
= \sum_{k=2}^\infty \log(k) 
\left(M_{k-1}(\varepsilon)-M_k(\varepsilon)\right).
\end{equation}
\end{lemma}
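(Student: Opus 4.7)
The plan is to group the terms on the left-hand side of \eqref{eq:nzapfpspdjcvklskpppplazdnfkzv} according to the integer value of $\lceil \mu_n/\varepsilon \rceil$ and then identify the resulting multiplicities with the differences of counting functions appearing on the right-hand side.

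Fix $\varepsilon>0$. For each $k \in \mathbb{N}^*$, observe that $\lceil \mu_n/\varepsilon \rceil = k$ if and only if $(k-1)\varepsilon < \mu_n \leq k\varepsilon$, so that the number of indices $n$ with $\lceil \mu_n/\varepsilon \rceil = k$ equals
\begin{equation*}
\#\{n\in\mathbb{N}^* \mid (k-1)\varepsilon < \mu_n \leq k\varepsilon\}
= M_{k-1}(\varepsilon) - M_k(\varepsilon),
\end{equation*}
where we recall that $M_0(\varepsilon) \coloneqq \#\{n \in \mathbb{N}^* \mid \mu_n > 0\}$ could be infinite but this does not cause issues since it is multiplied by $\log(1)=0$. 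Note that this grouping partitions $\mathbb{N}^*$, since each $\mu_n>0$ lies in exactly one interval $((k-1)\varepsilon, k\varepsilon]$, and only finitely many $\mu_n$ lie in any interval $(k\varepsilon, \infty)$ for $k\geq 1$ whenever $\{\mu_n\}$ is such that the left-hand side is meaningful.

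Since all the summands $\log\lceil \mu_n/\varepsilon\rceil$ are non-negative, Tonelli's theorem (for counting measures) justifies the rearrangement
\begin{equation*}
\sum_{n=1}^\infty \log\left(\left\lceil \frac{\mu_n}{\varepsilon}\right\rceil\right)
= \sum_{k=1}^\infty \log(k)\, \#\{n \mid \lceil \mu_n/\varepsilon \rceil = k\}
= \sum_{k=1}^\infty \log(k)\,\bigl(M_{k-1}(\varepsilon) - M_k(\varepsilon)\bigr).
\end{equation*}
The $k=1$ term vanishes because $\log(1)=0$, allowing the summation to start at $k=2$, which yields the claimed identity.

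The only delicate point is that both sides may be infinite simultaneously; this causes no issue because the identity is to be read in $[0,\infty]$, and the term-by-term positivity ensures that Tonelli applies without any convergence assumption. No other obstacle arises.
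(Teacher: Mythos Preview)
Your proof is correct and follows essentially the same approach as the paper: both group the terms according to the value of $k_n \coloneqq \lceil \mu_n/\varepsilon\rceil$, identify the number of indices with $k_n=k$ as $M_{k-1}(\varepsilon)-M_k(\varepsilon)$, and then drop the $k=1$ contribution since $\log(1)=0$. Your explicit appeal to Tonelli and your remark that the identity is to be read in $[0,\infty]$ make the rearrangement slightly more carefully justified than in the paper, but the argument is otherwise identical.
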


\vspace*{-3mm}

\begin{proof}[Proof.]
See Appendix \ref{sec:prooflemfirstellipinfnorm2}.
\end{proof}

\noindent Setting
\begin{equation*}
d \coloneqq \sup_{n \in \mathbb{N}^*}\left\lceil\frac{\mu_n}{\varepsilon}\right\rceil,
\end{equation*}
we next note that, for all $\varepsilon > 0$,
\begin{equation}\label{eq:1330}
  M_{k-1}(\varepsilon)
= M_{k}(\varepsilon)
=0,
\quad \text{for all } k \geq d +1.
\end{equation}
This allows us to conclude that, in fact, the sum in (\ref{eq:nzapfpspdjcvklskpppplazdnfkzv}) ranges from $2$ to $d$ only, i.e.,
\begin{equation} \label{eq:1231}
\sum_{k=2}^\infty \log(k) 
\left(M_{k-1}(\varepsilon)-M_k(\varepsilon)\right)
= \sum_{k=2}^d \log(k) 
\left(M_{k-1}(\varepsilon)-M_k(\varepsilon)\right).
\end{equation}
Next, we develop (\ref{eq:1231}) further according to
\begin{align}
\sum_{k=2}^d \log(k) \left(M_{k-1}(\varepsilon)-M_k(\varepsilon)\right)
 &= \sum_{k=2}^d \log(k) M_{k-1}(\varepsilon)
- \sum_{k=2}^d \log(k) M_{k}(\varepsilon) \label{eq:1230}\\
& = \sum_{k=1}^{d-1} \log(k+1) M_{k}(\varepsilon)
- \sum_{k=1}^{d-1} \log(k) M_{k}(\varepsilon) \nonumber\\
& = \sum_{k=1}^{d-1} \log\left(1+\frac{1}{k} \right) M_{k}(\varepsilon),\label{eq:1229}
\end{align}
where, in the second step, we used $M_{d}(\varepsilon)=0$.
Finally, with (\ref{eq:1330}), we obtain
\begin{equation}\label{eq:1228}
\sum_{k=1}^{d-1}  \log\left(1+\frac{1}{k} \right) M_{k}(\varepsilon)
= \sum_{k=1}^{\infty}  \log\left(1+\frac{1}{k} \right) M_{k}(\varepsilon).
\end{equation}
Combining Lemmata \ref{lem:firstellipinfnorm} and \ref{lem:firstellipinfnorm2} with (\ref{eq:1231}), (\ref{eq:1230})--(\ref{eq:1229}), and (\ref{eq:1228}) finalizes the proof.

\subsection{Proof of Corollary \ref{cor:infellpinfnorm}}\label{sec:proofcorinfellpinfnorm}

Particularizing (\ref{eq:okijhugffcdsvhcdz}) to
\begin{equation*}
\phi(n)=\frac{c}{n^b},
\quad \text{for } n\in \mathbb{N}^*,
\end{equation*}
yields 
\begin{align}
H \left(\varepsilon\semcol \mathcal{E}_\infty, \|\cdot\|_\infty\right)
&= \sum_{k=1}^\infty \log \left(1+\frac{1}{k} \right) \left(\left\lceil \left(\frac{c}{k\varepsilon}\right)^{1/b} \right\rceil -1\right), \quad \text{for all } \varepsilon > 0. \label{kkkdjriizufouuuu2}
\end{align}
This readily gives
\begin{equation}\label{ubincorinfellnorminf}
 H \left(\varepsilon\semcol \mathcal{E}_\infty, \|\cdot\|_\infty\right)
 \leq \sum_{k=1}^\infty \log \left(1+\frac{1}{k} \right) \left(\left\lfloor \left(\frac{c}{k\varepsilon}\right)^{1/b} \right\rfloor\right)
 \leq \frac{c^{1/b}}{\varepsilon^{1/b}}\sum_{k=1}^\infty \frac{\log \left(1+\frac{1}{k} \right)}{k^{1/b}},
\end{equation}
for all $\varepsilon>0$.
The following lemma provides a matching lower bound.

\begin{lemma}\label{lem:lbincorinfellnorminf}
Let $b>0$ and $c>0$. We have
\begin{equation*}
\sum_{k=1}^\infty \log \left(1+\frac{1}{k} \right) \left(\left\lceil \left(\frac{c}{k\varepsilon}\right)^{1/b} \right\rceil -1\right)
 \geq \frac{c^{1/b}}{\varepsilon^{1/b}}\sum_{k=1}^\infty \frac{\log \left(1+\frac{1}{k} \right)}{k^{1/b}} + O_{\varepsilon \to 0}\left( \log\left(\varepsilon^{-1}\right)\right).
\end{equation*}
\end{lemma}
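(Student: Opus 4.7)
\medskip

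\noindent\textbf{Proof plan.} The plan is to establish the bound by a direct, elementary manipulation of the series on the left-hand side. The first step is to invoke the trivial inequality $\lceil t \rceil \geq t$, valid for all $t > 0$, to obtain
\begin{equation*}
\log \left(1+\frac{1}{k}\right)\left(\left\lceil \left(\frac{c}{k\varepsilon}\right)^{1/b}\right\rceil - 1\right)
\geq \log \left(1+\frac{1}{k}\right)\left(\left(\frac{c}{k\varepsilon}\right)^{1/b} - 1\right).
\end{equation*}
The catch is that this pointwise lower bound cannot be summed over all $k\in\mathbb{N}^*$, since $\sum_{k}\log(1+1/k)$ diverges. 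The key observation resolving this is that the original summand vanishes as soon as $(c/(k\varepsilon))^{1/b}\leq 1$, i.e., as soon as $k \geq c/\varepsilon$. I would therefore set $K \coloneqq \lceil c/\varepsilon \rceil - 1$ and apply the pointwise bound only for $k \in \{1,\dots,K\}$, discarding the (nonnegative) contributions from $k > K$.

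Having done this, I would rearrange the resulting finite sum as
\begin{equation*}
\sum_{k=1}^{K} \log \left(1+\frac{1}{k}\right)\left(\left(\frac{c}{k\varepsilon}\right)^{1/b} - 1\right)
= \frac{c^{1/b}}{\varepsilon^{1/b}}\sum_{k=1}^{\infty} \frac{\log(1+1/k)}{k^{1/b}} - R_1(\varepsilon) - R_2(\varepsilon),
\end{equation*}
where the two error terms are
\begin{equation*}
R_1(\varepsilon) \coloneqq \sum_{k=1}^{K}\log\left(1+\frac{1}{k}\right)
\quad \text{and} \quad
R_2(\varepsilon) \coloneqq \frac{c^{1/b}}{\varepsilon^{1/b}}\sum_{k=K+1}^{\infty}\frac{\log(1+1/k)}{k^{1/b}}.
\end{equation*}
The first term is exactly the constant identified in the statement, so the proof reduces to showing that both $R_1(\varepsilon)$ and $R_2(\varepsilon)$ are $O_{\varepsilon\to 0}(\log(\varepsilon^{-1}))$.

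The term $R_1(\varepsilon)$ is handled instantly by telescoping: $\log(1+1/k) = \log(k+1) - \log(k)$, so $R_1(\varepsilon) = \log(K+1) = O_{\varepsilon \to 0}(\log(\varepsilon^{-1}))$. For $R_2(\varepsilon)$, I would use $\log(1+1/k) \leq 1/(k\ln 2)$ to bound the tail by a constant multiple of $\sum_{k > K} k^{-(1+1/b)}$, which by integral comparison is $O(K^{-1/b})$; combined with the prefactor $c^{1/b}\varepsilon^{-1/b}$ and $K \asymp c/\varepsilon$, this yields $R_2(\varepsilon) = O(1)$. Neither of these estimates is delicate; the only ``obstacle'' worth flagging is the need to truncate before passing to the inequality $\lceil t\rceil - 1 \geq t-1$, which is what makes the divergent telescoping sum collapse to $\log(K+1)$ instead of blowing up. Putting everything together yields the claimed lower bound.
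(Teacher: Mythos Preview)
Your proposal is correct and follows essentially the same route as the paper: truncate the sum at $K\approx c/\varepsilon$ (the paper uses $d=\lfloor c/\varepsilon\rfloor$), apply $\lceil t\rceil\geq t$, telescope $\sum_{k\leq K}\log(1+1/k)=\log(K+1)=O(\log(\varepsilon^{-1}))$, and bound the tail $\sum_{k>K}\log(1+1/k)/k^{1/b}$ via $\log(1+1/k)\leq 1/(k\ln 2)$ and an integral comparison. The organization into $R_1,R_2$ is cosmetically different but the argument is the same.
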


\begin{proof}[Proof.]
See Appendix \ref{sec:prooflbincorinfellnorminf}.
\end{proof}

\noindent
Combining Lemma~\ref{lem:lbincorinfellnorminf}, (\ref{kkkdjriizufouuuu2}), and (\ref{ubincorinfellnorminf}), yields
\begin{equation}\label{eq:13330}
 H \left(\varepsilon\semcol \mathcal{E}_\infty, \|\cdot\|_\infty\right)
 = \frac{c^{1/b}}{\varepsilon^{1/b}}\sum_{k=1}^\infty \frac{\log \left(1+\frac{1}{k} \right)}{k^{1/b}} + O_{\varepsilon \to 0}\left( \log\left(\varepsilon^{-1}\right)\right).
\end{equation}
The desired lower bound (\ref{eq:lbforoptimalityinfinf}) now follows directly from (\ref{eq:13330}).
To get (\ref{eq:lbforoptimalityinfinfmainres}), we further observe that
\begin{align}
\sum_{k=1}^\infty \frac{\log \left(1+\frac{1}{k} \right)}{k^{1/b}}
&= \frac{1}{\ln(2)}\sum_{k=1}^\infty  \sum_{\ell=1}^\infty \frac{(-1)^{\ell+1}}{\ell \, k^{\ell+ 1/b}} \label{eq:13331}\\
&= \frac{1}{\ln(2)}\sum_{\ell=1}^\infty  \sum_{k=1}^\infty \frac{(-1)^{\ell+1}}{\ell \, k^{\ell+ 1/b}}\nonumber\\
&= \frac{1}{\ln(2)} \sum_{\ell=1}^\infty \frac{(-1)^{\ell+1}}{\ell}\zeta\left(\ell+\frac{1}{b}\right),\label{eq:13332}
\end{align}
where we used Fubini's theorem and $\zeta(\cdot)$ denotes the Riemann zeta function.
Using (\ref{eq:13331})--(\ref{eq:13332}) in (\ref{eq:13330}) establishes (\ref{eq:lbforoptimalityinfinfmainres}).

\subsection{Proof of Theorem~\ref{thm:depomdomainn}}\label{sec:proofthm199}

Taking $\eta\to 0$ in the inclusion relation \cite[Chapter 2.3.2, Proposition 2(ii)]{triebelTheoryFunctionSpaces1983} 
\begin{equation*}
  B_{p_1, p_1}^{s+\eta}(\Omega) 
  \subset B_{p_1, p_2}^s(\Omega) 
  \subset B_{p_1, p_1}^{s-\eta}(\Omega),
\end{equation*}
shows that it suffices to prove the statement for the diagonal case $p\coloneqq p_1=p_2$. We employ the
orthonormal wavelet basis construction described in \cite{grohsPhaseTransitionsRate2021} and accordingly start by defining the sets
\begin{equation}\label{eq:nkfezvjbjhbfrhjzbr}
\begin{cases}
J\coloneqq [(\{0\}\times T_0)\cup (\mathbb{N}^*\times T)]\times \mathbb{Z}^d, \\[.2cm]
J_j\coloneqq \left\{(t, m) \in \{F, M\}^d \times \mathbb{Z}^d  \mid (j, t,m)\in J \right\}, \quad &\text{for } j \in \mathbb{N},
\end{cases}
\end{equation}
with 
\begin{equation*}
T_0 \coloneqq \{F\}^d, \quad
T \coloneqq \{F, M\}^d\setminus T_0,
\end{equation*}
where $M$ and $F$ label the mother and father wavelet, respectively, both of which are compactly supported.
The associated wavelet family is denoted by
$\{\psi_{j, t, m}\}_{(j, t, m)\in J}$, where $j$ designates the scaling parameter and $m$ is the translation parameter.
For details on the construction of $\{\psi_{j, t, m}\}_{(j, t, m)\in J}$, including the tensorization aspect (recall that $\Omega \subset \mathbb{R}^{d}$) and
regularity conditions on the mother and father wavelets, the reader is referred to \cite{grohsPhaseTransitionsRate2021}.
Here, we shall only use that
(i) the support of $\psi_{j, t, m}$ is a $d$-dimensional cube of sidelength proportional to $2^{-j}$, for all $(j, t, m)\in J$, and
(ii) $\{\psi_{j, t, m}\}_{(j, t, m)\in J}$ is an orthonormal basis for $L^2(\mathbb{R}^d)$, so that every function $f\in B_{p, p}^s(\Omega) \hookrightarrow L^2(\mathbb{R}^d)$ can be uniquely represented by its wavelet coefficients $\lambda = \{\lambda_{j, t, m}\}_{(j, t, m) \in J }$, with $\lambda_{j, t, m} \coloneqq \langle f , \psi_{j, t, m}\rangle$. 
The norm 
\begin{equation}\label{eq:jkvzekbskjdlvq}
\left\| \lambda \right\|_{b^s_{p,p}}
= \left(\sum_{j=0}^\infty 2^{j\left(s-d\left(\frac{1}{p}-\frac{1}{2}\right)\right)p} \sum_{(t, m) \in J_j} \left\lvert \lambda_{j, t, m} \right\rvert^{p}  \right)^{1/p}
\end{equation}
then defines the space $b^s_{p,p}$ of Besov coefficients.
Next, with
\begin{equation*}
\begin{cases}
J^+ \coloneqq \left\{(j, t, m)\in J \mid \Omega \cap \text{supp} (\psi_{j, t, m}) \neq \emptyset  \right\},
\\[.2cm]
J^- \coloneqq \left\{(j, t, m)\in J \mid \text{supp} (\psi_{j, t, m}) \subseteq \Omega \right\},
\end{cases}
\end{equation*}
let $b_{p, p}^{s, +} $ and $b_{p, p}^{s, -} $ be the spaces of Besov coefficients corresponding to the index sets $J^+$ and $J^-$, respectively, i.e., 
\begin{equation}\label{eq:ivkzejntkjvsnkq}
b_{p, p}^{s, \pm}
\coloneqq \left\{\lambda \in b_{p, p}^{s} \mid \lambda_{j,t,m} = 0, \ \text{ if } (j, t, m ) \in J \setminus J^{\pm} \right\} .
\end{equation}
In particular, for $\lambda \in b_{p, p}^{s, \pm}$ we can rewrite (\ref{eq:jkvzekbskjdlvq}) according to
\begin{equation}\label{eq:nkzaenlkcbajz}
\left\| \lambda \right\|_{b^s_{p,p}}
= \left(\sum_{j=0}^\infty 2^{j\left( s-d\left(\frac{1}{p}-\frac{1}{2}\right)\right)p}  \sum_{(t, m) \in J_j^{\pm}}  \left\lvert \lambda_{j, t, m} \right\rvert^{p} \right)^{1/p},
\end{equation}
with
\begin{equation*}
    J_j^{\pm} \coloneqq \left\{(t, m) \in \{F, M\}^d \times \mathbb{Z}^d  \mid (j, t,m)\in J^{\pm} \right\}, \quad \text{for } j \in \mathbb{N}.
\end{equation*}
The double sum on the right-hand-side of (\ref{eq:nkzaenlkcbajz}) may now be reindexed by summing over $j$ and $k$ according to
\begin{equation*}
   n^{\pm}_{j,k} = \sum_{\tau=0}^{j-1} \#J_\tau^{\pm} +k, 
   \quad \text{for } j\in \mathbb{N} \text{ and } k\in \left\{1, \dots, \#J_j^{\pm}\right\}.
 \end{equation*} 
Next, invoking the boundedness of $\Omega$ and recalling that the mother and father wavelets are compactly supported, it follows that 
there exists $j^* \in \mathbb{N}^*$ such that 
\begin{equation*}
  c \,  \vol{\left(\Omega\right)} 2^{dj} \leq  \# J_j^- \leq \# J_j^+ \leq C \,  \vol{\left(\Omega\right)} 2^{dj},
   \quad \text{for all } j\geq j^*,
\end{equation*}
with $c, C>0$ independent of $j$ and $\Omega$.
In particular, we have 
\begin{equation*}
 \kappa^\pm 2^{dj}\vol{\left(\Omega\right)}
 \leq  n^{\pm}_{j,k} 
 \leq K^\pm 2^{dj}\vol{\left(\Omega\right)}, \quad \text{for all } j\geq j^* \text{ and }  k\in \left\{1, \dots, \#J_j^{\pm}\right\},
\end{equation*}
with $\kappa^\pm, K^\pm > 0$ independent of $j,k$, and $\Omega$.
The reindexed sum in (\ref{eq:nkzaenlkcbajz}) can then be bounded according to
\begin{equation}\label{eq:kjzdkqncjefq}
\mathfrak{c}^\pm \left(\sum_{{n^\pm}=1}^\infty \left \lvert \frac{\lambda_{n^\pm}}{\mu_{n^\pm}} \right\rvert^{p}\right)^{1/p}
  \leq \left\| \lambda \right\|_{b^s_{p,p}}
  \leq \mathfrak{C}^\pm \left(\sum_{n^\pm=1}^\infty \left \lvert \frac{\lambda_{n^\pm}}{\mu_{n^\pm}} \right\rvert^{p}\right)^{1/p}, 
  \quad \text{for all } \lambda \in b_{p, p}^{s, \pm},
\end{equation}
with $\mathfrak{c}^\pm>0$ and $\mathfrak{C}^\pm>0$ independent of $j$ and $\Omega$ and 
\begin{equation*}
  \mu_{n^\pm} \coloneqq \left(\frac{\vol{\left(\Omega\right)}}{n^\pm}\right)^{\left(\frac{s}{d}-\left(\frac{1}{p}-\frac{1}{2}\right)\right)},
  \quad \text{for } {n^\pm}\in \mathbb{N}^*.
\end{equation*}

Through (\ref{eq:kjzdkqncjefq}) we have equipped $b^{s, \pm}_{p,p}$ with ellipsoid structure and can thus apply case (ii) of Corollary \ref{cor:oadcpzrjefnkznrs}, with $b=s/d +1/2-1/p $ and $q=2$, to get the following result on the metric entropy of the unit ball ${\tilde b}^{s, \pm}_{p,p}$ in the coefficient space:
there exist $\varepsilon^*>0$, $\mathfrak{k}^\pm>0$, and $\mathfrak{K}^\pm>0$, all independent of $\Omega$, such that, for all $\varepsilon \in (0, \varepsilon^*)$, we have
\begin{equation}\label{eq:hhhtgfgtgfgt}
\mathfrak{k}^\pm \,  \vol{\left(\Omega\right)}^{1-\frac{d}{s}(\frac{1}{p}-\frac{1}{2})}\,
\varepsilon^{-d/s}
\leq  H \left( \varepsilon \semcol {\tilde b}^{s, \pm}_{p,p}, \|\cdot\|_{\ell^2(J)}\right)
\leq \mathfrak{K}^\pm \,  \vol{\left(\Omega\right)}^{1-\frac{d}{s}(\frac{1}{p}-\frac{1}{2})}\,
\varepsilon^{-d/s}.
\end{equation}%
It remains to relate the metric entropy of ${\tilde b}^{s, \pm}_{p,p}$ to that of
$\mathcal{B}^s_{p, p}(\Omega)$.
This can be effected through the following result.

\begin{lemma}[{\cite[Lemma 11]{grohsPhaseTransitionsRate2021}}]\label{eq:aznkjejfz}
Let $d\in \mathbb{N}^*$, let $\Omega \subset \mathbb{R}^d$ be a nonempty, bounded, and open set, let $s>0$, and let $p_1,p_2\in[1,\infty]$.
Then, there exist continuous linear operators 
\begin{equation*}
T_{+} \colon b_{p_1, p_2}^{s, +} \longrightarrow B^s_{p_1, p_2}(\Omega) 
\quad \text{and} \quad 
T_{-} \colon b_{p_1, p_2}^{s, -} \longrightarrow B^s_{p_1, p_2}(\Omega) 
\end{equation*}
with the following properties:
\begin{itemize}
\item
there exists $\gamma_+>0$, not depending on 
$\Omega$, such that $\|T_+ \lambda \|_{L^2(\Omega)}\leq \gamma_+\|\lambda \|_{\ell^2(J)}$, for all $\lambda \in b_{p_1, p_2}^{s, +} $;

\item 
there exists $\gamma_->0$, not depending on 
$\Omega$, such that $\|T_- \lambda \|_{L^2(\Omega)}=\gamma_-\|\lambda \|_{\ell^2(J)}$, for all $\lambda \in \ell^2(J) \cap b_{p_1, p_2}^{s, -} $;

\item
we have the inclusions
\begin{equation*}
T_- \left({\tilde b}^{s, -}_{p_1,p_2}\right)
\subseteq \mathcal{B}_{p_1, p_2}^s(\Omega)
\subseteq T_+ \left({\tilde b}^{s,+}_{p_1,p_2}\right).
\end{equation*}
\end{itemize}
\end{lemma}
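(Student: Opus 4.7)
The plan is to construct $T_-$ and $T_+$ by wavelet synthesis---$T_-$ using only wavelets supported inside $\Omega$, and $T_+$ by summing all wavelets indexed by $J^+$ followed by restriction to $\Omega$---and to deduce the three properties from (i) the orthonormality of $\{\psi_{j, t, m}\}_{(j, t, m) \in J}$ in $L^2(\mathbb{R}^d)$, (ii) the standard wavelet characterization of Besov norms on $\mathbb{R}^d$, and (iii) an extension operator for Besov spaces on arbitrary bounded open sets. The delicate point, which I expect to be the main obstacle, is obtaining the inclusion $\mathcal{B}_{p_1, p_2}^s(\Omega) \subseteq T_+({\tilde b}^{s, +}_{p_1, p_2})$ with a constant independent of $\Omega$.

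For $T_-$, I would set $T_- \lambda \coloneqq \sum_{(j, t, m) \in J^-} \lambda_{j, t, m} \psi_{j, t, m}$. Since every $\psi_{j, t, m}$ with $(j, t, m) \in J^-$ is supported in $\Omega$ by definition of $J^-$, the function $T_- \lambda$ vanishes outside $\Omega$, and Parseval's identity gives $\|T_- \lambda\|_{L^2(\Omega)} = \|\lambda\|_{\ell^2(J)}$, so $\gamma_- = 1$. Applying the wavelet characterization of $B^s_{p_1, p_2}(\mathbb{R}^d)$ then shows $T_- \lambda \in B^s_{p_1, p_2}(\mathbb{R}^d)$ with $\|T_- \lambda\|_{B^s_{p_1, p_2}(\mathbb{R}^d)} \lesssim \|\lambda\|_{b^s_{p_1, p_2}}$; since $T_- \lambda$ is supported in $\Omega$, restricting yields an element of $B^s_{p_1, p_2}(\Omega)$ with norm bounded by a universal constant, which after rescaling yields the containment $T_-({\tilde b}^{s, -}_{p_1, p_2}) \subseteq \mathcal{B}_{p_1, p_2}^s(\Omega)$.

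For $T_+$, I would set $T_+ \lambda \coloneqq \bigl(\sum_{(j, t, m) \in J^+} \lambda_{j, t, m} \psi_{j, t, m}\bigr)\big|_\Omega$. The bound $\|T_+ \lambda\|_{L^2(\Omega)} \leq \bigl\|\sum_{(j, t, m) \in J^+}\lambda_{j, t, m} \psi_{j, t, m}\bigr\|_{L^2(\mathbb{R}^d)} = \|\lambda\|_{\ell^2(J)}$ is immediate from orthonormality, yielding $\gamma_+ = 1$.

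The hardest step will be the inclusion $\mathcal{B}_{p_1, p_2}^s(\Omega) \subseteq T_+({\tilde b}^{s, +}_{p_1, p_2})$. Given $f \in \mathcal{B}_{p_1, p_2}^s(\Omega)$, I would invoke Rychkov's universal extension operator---which, unlike Stein's, works for arbitrary bounded open sets and preserves Besov regularity with a constant independent of $\Omega$---to produce $F \in B_{p_1, p_2}^s(\mathbb{R}^d)$ with $F|_\Omega = f$ and $\|F\|_{B^s_{p_1, p_2}(\mathbb{R}^d)} \lesssim \|f\|_{B^s_{p_1, p_2}(\Omega)}$. Expanding $F = \sum_{(j, t, m) \in J} c_{j, t, m}(F)\, \psi_{j, t, m}$ and setting $\lambda_{j, t, m} \coloneqq c_{j, t, m}(F)\, \mathbf{1}_{J^+}(j, t, m)$, wavelets outside $J^+$ have support disjoint from $\Omega$, so their contributions to $T_+$ vanish and $T_+ \lambda = f$. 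The wavelet characterization then yields $\|\lambda\|_{b^s_{p_1, p_2}} \lesssim \|F\|_{B^s_{p_1, p_2}(\mathbb{R}^d)} \lesssim \|f\|_{B^s_{p_1, p_2}(\Omega)}$ with all implicit constants independent of $\Omega$, which is precisely what Rychkov's construction guarantees.
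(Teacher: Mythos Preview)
The paper does not prove this lemma; it is quoted verbatim from \cite[Lemma 11]{grohsPhaseTransitionsRate2021} and used as a black box in the proof of Theorem~\ref{thm:depomdomainn}. So there is no ``paper's own proof'' to compare against, and your proposal stands on its own.

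Your construction of $T_\pm$ via wavelet synthesis is the natural one and is almost certainly what the cited reference does. The arguments for $\gamma_\pm$ and for $T_-(\tilde b^{s,-}_{p_1,p_2})\subseteq \mathcal{B}^s_{p_1,p_2}(\Omega)$ are fine.

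There is, however, a genuine issue with the last inclusion. You invoke Rychkov's extension operator and assert that it ``works for arbitrary bounded open sets.'' It does not: Rychkov's operator is constructed for (special) Lipschitz domains, and no linear extension operator with $\Omega$-independent bound is available for \emph{arbitrary} bounded open sets---indeed, such an operator cannot exist in general once the boundary is bad enough. Since the lemma (and Theorem~\ref{thm:depomdomainn}) is stated for arbitrary nonempty bounded open $\Omega$, this step as written fails.

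The fix is simpler than what you propose. For arbitrary open $\Omega$, the space $B^s_{p_1,p_2}(\Omega)$ is \emph{defined} by restriction: $f\in B^s_{p_1,p_2}(\Omega)$ iff there exists $F\in B^s_{p_1,p_2}(\mathbb{R}^d)$ with $F|_\Omega=f$, and $\|f\|_{B^s_{p_1,p_2}(\Omega)}=\inf_F\|F\|_{B^s_{p_1,p_2}(\mathbb{R}^d)}$. Hence for $f$ in the unit ball one can choose $F$ with $\|F\|_{B^s_{p_1,p_2}(\mathbb{R}^d)}\leq 2$, say, and the rest of your argument---expand $F$ in wavelets, truncate to $J^+$, bound the coefficient norm via the wavelet characterization---goes through with a universal constant. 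No extension \emph{operator} is needed; only the existence of \emph{some} extension, which is tautological. You then absorb the resulting constant into the definition of $T_+$ (as you already anticipated with your ``after rescaling'' remark for $T_-$).
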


\noindent 
Application of Lemma~\ref{eq:aznkjejfz} with $p_1=p_2=p$ and $\gamma_-,\gamma_+$ as in Lemma~\ref{eq:aznkjejfz}, 
combined with (\ref{eq:hhhtgfgtgfgt}),
now ensures that there exist $\varepsilon^*>0$, $\mathfrak{k}^\pm>0$, and $\mathfrak{K}^\pm>0$, all independent of $\Omega$, such that, for all $\varepsilon \in (0, \varepsilon^*)$, we have
\begin{align*}
 {\mathfrak{k}^\pm}{\gamma_-^{d/s}} \,  \vol{\left(\Omega\right)}^{1-\frac{d}{s}(\frac{1}{p}-\frac{1}{2})}
\varepsilon^{-d/s}
&\leq  H \left( \varepsilon/\gamma_- \semcol {\tilde b}^{s, -}_{p,p}, \|\cdot\|_{\ell^2(J) }\right)\\
& =  H \left( \varepsilon \semcol T_- \left({\tilde b}^{s, -}_{p,p}\right), \|\cdot\|_{L^2(\Omega)} \right)\\
&\leq  H \left( \varepsilon \semcol\mathcal{B}_{p, p}^s(\Omega), \|\cdot\|_{L^2(\Omega)}\right)\\
&\leq  H \left( \varepsilon \semcol T_+ \left({\tilde b}^{s,+}_{p,p}\right), \|\cdot\|_{L^2(\Omega)} \right)\\
&\leq  H \left( \varepsilon/\gamma_+ \semcol {\tilde b}^{s, +}_{p,p}, \|\cdot\|_{\ell^2(J) }\right)
\leq \mathfrak{K}^\pm \gamma_+^{d/s} \,  \vol{\left(\Omega\right)}^{1-\frac{d}{s}(\frac{1}{p}-\frac{1}{2})}
\varepsilon^{-d/s},
\end{align*}
which is the desired result.

\bibliography{references}

@article{firstpaper,
  author = {Allard, Thomas and Bölcskei, Helmut},
  title = {Ellipsoid methods for metric entropy computation}, 
  year = {2024},
  journal = {submitted to Constructive Approximation}
}

@article{thirdpaper,
  author = {Allard, Thomas and Bölcskei, Helmut},
  title = {Entropy of compact operators with applications to {Landau-Pollak-Slepian} theory and {Sobolev} spaces}, 
  year = {2025},
month ={June},
  journal = {Applied and Computational Harmonic Analysis},
  volume={77},
number = {101762},
}

@article{Pinskerpaper,
  author = {Allard, Thomas},
  title = {Metric Entropy and Minimax Risk of Ellipsoids with an Application to {Pinsker}'s Theorem}, 
  journal={Available online: https://arxiv.org/abs/2510.22441},
  volume={},
  number={},
  pages={},
  year={2025},
  url={},
  publisher={}
}

@book{carlEntropyCompactnessApproximation1990,
  place={Cambridge}, 
  series={Cambridge Tracts in Mathematics}, 
  title={Entropy, Compactness and the Approximation of Operators}, 
  publisher={Cambridge University Press}, 
  author={Carl, Bernd and Stephani, Irmtraud}, 
  year={1990}, 
  collection={Cambridge Tracts in Mathematics}}

@book{donohoCountingBitsKolmogorov2000,
  title={Counting bits with Kolmogorov and Shannon},
  author={Donoho, David Leigh},
  year={2000},
  publisher={Department of Statistics, Stanford University}
}

@book{edmundsFunctionSpacesEntropy1996,
  place={Cambridge}, 
  series={Cambridge Tracts in Mathematics}, 
  title={Function Spaces, Entropy Numbers, Differential Operators}, 
  publisher={Cambridge University Press}, 
  author={Edmunds, D. E. and Triebel, H.}, 
  year={1996}, 
  collection={Cambridge Tracts in Mathematics}}

@article{elbrachterDeepNeuralNetwork2021,
  title = {Deep {neural} {network} {approximation} {theory}},
  author = {Elbrächter, Dennis and Perekrestenko, Dmytro and Grohs, Philipp and Bölcskei, Helmut},
  year = {2021},
  journal = {IEEE Transactions on Information Theory},
  volume = {67},
  number = {5},
  pages = {2581--2623},
  publisher={IEEE}}

@article{grafSharpAsymptoticsMetric2004,
  title={Sharp asymptotics of the metric entropy for ellipsoids},
  author={Graf, Siegfried and Luschgy, Harald},
  journal={Journal of Complexity},
  volume={20},
  number={6},
  pages={876--882},
  year={2004},
  publisher={Elsevier}
}

@book{lorentzConstructiveApproximationAdvanced1996,
  title = {Constructive {Approximation}: {Advanced} {Problems}},
  author = {Lorentz, G. G. and von Golitschek, Manfred and Makovoz, Yuly},
  year = {1996},
  series = {Grundlehren der mathematischen Wissenschaften},
  publisher = {Springer Berlin, Heidelberg}
  }

@article{lorentzMetricEntropyApproximation1966,
  title = {Metric {entropy} and {approximation}},
  author = {Lorentz, G. G.},
  year = {1966},
  journal = {Bulletin of the American Mathematical Society},
  volume = {72},
  number = {6},
  pages = {903--937}}

@article{luschgySharpAsymptoticsKolmogorov2004,
  title={Sharp asymptotics of the {K}olmogorov entropy for {G}aussian measures},
  author={Luschgy, Harald and Pag{\`e}s, Gilles},
  journal={Journal of Functional Analysis},
  volume={212},
  number={1},
  pages={89--120},
  year={2004},
  publisher={Elsevier}
}

@book{wainwrightHighDimensionalStatistics2019,
  place={Cambridge}, 
  series={Cambridge Series in Statistical and Probabilistic Mathematics}, 
  title={High-Dimensional Statistics: A Non-Asymptotic Viewpoint}, 
  publisher={Cambridge University Press}, 
  author={Wainwright, Martin J.}, 
  year={2019}, 
  collection={Cambridge Series in Statistical and Probabilistic Mathematics}}

@book{binghamRegularVariation1987,
  title = {Regular {{Variation}}},
  author = {Bingham, N. H. and Goldie, C. M. and Teugels, J. L.},
  year = {1987},
  edition = {1},
  publisher = {Cambridge University Press}
}

@article{kuhnENTROPYNUMBERSDIAGONAL2001,
  title = {Entropy numbers of diagonal operators between vector-valued sequence spaces},
  author = {Kühn, Thomas and Schonbek, Tomas P.},
  year = {2001},
  journal = {Journal of the London Mathematical Society},
  volume = {64},
  number = {3},
  pages = {739--754}
}

@article{kuhnEntropyNumbersEmbeddings2005,
  title = {Entropy numbers of embeddings of weighted {B}esov spaces},
  author = {Kühn, Thomas and Leopold, Hans-Gerd and Sickel, Winfried and Skrzypczak, Leszek},
  year = {2005},
  journal = {Constructive Approximation},
  volume = {23},
  number = {1},
  pages = {61--77}
}

@article{kuhnEntropyNumbersSequence2008,
  title = {Entropy Numbers in Sequence Spaces with an Application to Weighted Function Spaces},
  author = {Kühn, Thomas},
  year = {2008},
  journal = {Journal of Approximation Theory},
  volume = {153},
  number = {1},
  pages = {40--52}
}

@article{kuhn2005entropy,
  title={Entropy numbers of general diagonal operators.},
  author={K{\"u}hn, Thomas},
  journal={Revista Matem{\'a}tica Complutense},
  volume={18},
  number={2},
  pages={479--491},
  year={2005}
}

@article{rogersNoteCoverings1957,
  title = {A Note on Coverings},
  author = {Rogers, Claude A.},
  year = {1957},
  journal = {Mathematika},
  volume = {4},
  number = {1},
  pages = {1--6}
  }

@article{prosserEntropyCapacityCertain1966,
  title = {The ɛ-Entropy and ɛ-Capacity of Certain Time-Varying Channels},
  author = {Prosser, Reese T.},
  year = {1966},
  journal = {Journal of Mathematical Analysis and Applications},
  volume = {16},
  pages = {553--573}
}

@article{donohoDataCompressionHarmonic1998,
  title = {Data {{compression}} and {{harmonic analysis}}},
  author = {Donoho, David L and Vetterli, Martin and DeVore, R A and Daubechies, Ingrid},
  year = {1998},
  journal = {IEEE Transactions on Information Theory},
  volume = {44},
  number = {6},
pages = {2435-2476}
}

@book{hugLecturesConvexGeometry2020,
  title = {Lectures on {{Convex Geometry}}},
  author = {Hug, Daniel and Weil, Wolfgang},
  year = {2020},
  series = {Graduate {{Texts}} in {{Mathematics}}},
  volume = {286},
  publisher = {Springer International Publishing}
  }

@article{rogersCoveringSphereSpheres1963,
  title = {Covering a Sphere with Spheres},
  author = {Rogers, C. A.},
  year = {1963},
  journal = {Mathematika},
  volume = {10},
  number = {2},
  pages = {157--164}
  }

@article{leopoldEmbeddingsEntropyNumbers2000,
  title = {Embeddings and entropy numbers for general weighted sequence spaces: {T}he non-limiting case},
  shorttitle = {Embeddings and {{Entropy Numbers}} for {{General Weighted Sequence Spaces}}},
  author = {Leopold, Hans-Gerd},
  year = {2000},
  journal = {Georgian Mathematical Journal},
  volume = {7},
  number = {4},
  pages = {731--743}
  }

@article{luschgyFunctionalQuantizationGaussian2002,
  title = {Functional Quantization of {{Gaussian}} Processes},
  author = {Luschgy, Harald and Pagès, Gilles},
  year = {2002},
  journal = {Journal of Functional Analysis},
  volume = {196},
  number = {2},
  pages = {486--531}}

@book{rogersbook1964,
  title = {Packing and Covering},
  author = {Rogers, Claude A.},
  year={1964},
  series={Cambridge Tracts in Mathematics}, 
  volume = {54},
  publisher={Cambridge University Press},
  pages={viii+109}
}

@article{edmunds1998entropy,
  title={Entropy numbers of embeddings of {S}obolev spaces in {Z}ygmund spaces},
  author={Edmunds, DE and Netrusov, Yu},
  journal={Studia Mathematica},
  volume={128},
  number={1},
  pages={71--102},
  year={1998},
  publisher={Polska Akademia Nauk. Instytut Matematyczny PAN}
}

@article{marcus1974varepsilon,
  title={The $\varepsilon$-entropy of some compact subsets of $\ell_{p}$},
  author={Marcus, Michael B},
  journal={Journal of Approximation Theory},
  volume={10},
  number={4},
  pages={304--312},
  year={1974},
  publisher={Elsevier}
}

@article{edmunds1992entropy,
  title={Entropy numbers and approximation numbers in function spaces, {II}},
  author={Edmunds, David E and Triebel, Hans},
  journal={Proceedings of the London Mathematical Society},
  volume={3},
  number={1},
  pages={153--169},
  year={1992},
  publisher={Oxford University Press}
}

@book{triebel2006theofctspace,
  title = {Theory of Function Spaces III},
  author = {Triebel, Hans},
  year = {2006},
  series = {Monographs in Mathematics},
  edition = {1},
  pages = {xii+426}
}

@book{groechenigFoundationsTimeFrequencyAnalysis2001,
  title = {Foundations of Time-Frequency Analysis},
  author = {Gröchenig, Karlheinz},
  series = {Applied and Numerical Harmonic Analysis},
  year = {2001},
  publisher = {Birkhäuser Boston}
}

@inproceedings{merucci2006applications,
  title={Applications of interpolation with a function parameter to {L}orentz, {S}obolev and {B}esov spaces},
  author={Merucci, Claude},
  booktitle={Interpolation Spaces and Allied Topics in Analysis: Proceedings of the Conference held in Lund, Sweden, August 29--September 1, 1983},
  pages={183--201},
  year={2006},
  organization={Springer}
}

@article{cobos1987entropy,
  title={Entropy and {L}orentz-{M}arcinkiewicz operator ideals},
  author={Cobos, Fernando},
  journal={Arkiv f{\"o}r Matematik},
  volume={25},
  pages={211--219},
  year={1987},
  publisher={Kluwer Academic Publishers}
}

@article{vybiral2006function,
  title={Function spaces with dominating mixed smoothness},
  author={Vybiral, Jan},
  year={2006},
  journal={Dissertationes Mathematicae},
  volume={436},
  pages={1--73}
}

@book{pietsch1980ideals,
  title={Operator Ideals},
  author={Pietsch, Albrecht},
  year={1980},
  publisher={North-Holland Publishing Company},
  pages={451}}

@book{triebelTheoryFunctionSpaces1983,
  title = {Theory of Function Spaces},
  author = {Triebel, Hans},
  year = {1983},
  publisher = {{Birkhäuser Basel}}}

@article{birman1980quantitative,
  title={Quantitative analysis in {S}obolev imbedding theorems and applications to spectral theory},
  author={Birman, M. S. and Solomjak, M. Z.},
  journal={American Mathematical Society Translations},
  year={1980},
  series={2},
  volume={114}
}

@article{Birman_1967,
year = {1967},
volume = {2},
number = {3},
pages = {295},
author = {Birman, M. S. and Solomjak, M. Z.},
title = {Piecewise-Polynomial Approximations of Functions of the Classes ${W}_p^{\alpha}$},
journal = {Mathematics of the USSR-Sbornik}
}

@article{focm2024,
    author = {Ou, Weigutian and Bölcskei, Helmut},
    title = {Covering numbers for deep {R}e{LU} networks with applications to function approximation and nonparametric regression },
    journal = {submitted to Foundations of Computational Mathematics},
    status = {submitted},
    month = oct,
    year = 2024,
    keywords = {Deep neural networks, machine learning, covering number, approximation theory, nonparametric regression},
    url = {https://www.mins.ee.ethz.ch/pubs/p/focm2024}
}

@article{edmunds1979embeddings,
  title={Embeddings of {S}obolev spaces},
  author={Edmunds, David E},
  journal={Nonlinear Analysis, Function Spaces and Applications},
  pages={38--58},
  year={1979},
  publisher={BSB BG Teubner Verlagsgesellschaft}
}

@article{Carl_1981, title={Entropy numbers of embedding maps between {B}esov spaces with an application to eigenvalue problems}, volume={90}, journal={Proceedings of the Royal Society of Edinburgh: Section A Mathematics}, author={Carl, Bernd}, year={1981}, pages={63–70}}

@book{bookeigen1987pietsch,
  title={Eigenvalues and s-Numbers},
  publisher={Cambridge University Press},
  author={Pietsch, Albrecht},
  pages={360},
  year={1987}
  }

@book{konigEigenvalueDistributionCompact1986,
  title = {Eigenvalue {{Distribution}} of {{Compact Operators}}},
  author = {König, Hermann},
  editorb = {Gohberg, I.},
  editorbtype = {redactor},
  year = {1986},
  series = {Operator {{Theory}}: {{Advances}} and {{Applications}}},
  volume = {16},
  publisher = {{Birkhäuser Basel}}
}

@article{edmunds1989entropy,
  title={Entropy numbers and approximation numbers in function spaces},
  author={Edmunds, David E and Triebel, Hans},
  journal={Proceedings of the London Mathematical Society},
  volume={3},
  number={1},
  pages={137--152},
  year={1989}
}

@article{grohsPhaseTransitionsRate2021,
  title = {Phase {{transitions}} in {{rate distortion theory}} and {{deep learning}}},
  author = {Grohs, Philipp and Klotz, Andreas and Voigtlaender, Felix},
  year = {2021},
  journal = {Foundations of Computational Mathematics},
  volume ={23}, 
  pages = {329--392}
}

@incollection{feichtinger2020wiener,
  title={Wiener amalgams over {Euclidean} spaces and some of their applications},
  author={Feichtinger, Hans G},
  booktitle={Function spaces},
  pages={123--138},
  year={2020},
  publisher={CRC Press}
}

@book{pinkus1985,
  title = {n-Widths in Approximation Theory},
  author = {Pinkus, Allan},
  year = {1985},
  edition = {1},
  publisher = {Springer Berlin, Heidelberg},
  series={A Series of Modern Surveys in Mathematics}
}

@article{carlInequalitiesEigenvaluesEntropy1980,
  title = {Inequalities between eigenvalues, entropy numbers, and related quantities of compact operators in {B}anach spaces},
  author = {Carl, Bernd and Triebel, Hans},
  year = {1980},
  journal = {Mathematische Annalen},
  volume = {251},
  number = {2},
  pages = {129--133}
}

@article{carl1981entropy,
  title={Entropy numbers of diagonal operators with an application to eigenvalue problems},
  author={Carl, Bernd},
  journal={Journal of Approximation Theory},
  volume={32},
  number={2},
  pages={135--150},
  year={1981}
}

@article{KOSSACZKA2020319,
title = {Entropy numbers of finite-dimensional embeddings},
journal = {Expositiones Mathematicae},
volume = {38},
number = {3},
pages = {319-336},
year = {2020},
author = {Kossaczká, Marta and Vybíral, Jan}
}

\appendix

\section{Proofs of Auxiliary Results}\label{sec:proofaux}

\subsection{Proof of Lemma~\ref{lem: Mixed ellipsoid as a product of balls}}\label{sec:proofMixed ellipsoid as a product of balls}

Arbitrarily fix $x \in {\mathcal{E}}_{2, 2}$ and let
\begin{equation}\label{eq:1334}
\omega_j 
\coloneqq \jrdn^{-\gamma} \left\lceil \left\| \left\{\mu_j^{-1} \jrdn^{\, \gamma} x_{\bar d_{j-1} +i}  \right\}_{i=1}^{ d_j} \right\|_{2}^{2} \right \rceil^{1/2}, 
\quad \text{for } j=1, \dots, k,
\end{equation}
and 
\begin{equation}\label{eq:133422}
\omega_{k+1} 
\coloneqq 
\jrdn^{-\gamma} \left\lceil \jrdn^{\, 2 \gamma} \sum_{j=k+1}^\infty  \sum_{i=1}^{d_j} \frac{ x^2_{\sum_{m=1}^{j-1} d_{m} +i}}{\mu_j^2}   \right \rceil^{1/2},
\end{equation}
where the $\bar d_j$ are as defined in (\ref{eq:defdesbarres}).
It follows from (\ref{eq:1334}) that $\left \| \{x_{i} \}_{i=\bar d_{j-1} +1}^{\bar d_j} \right \|_{2} \leq \omega_j \mu_j$, for $j=1,\dots, k$, and from (\ref{eq:133422}) that $ \left \| \{x_{i} \}_{i=\sum_{m=1}^{j-1} d_{m} +1}^{\sum_{m=1}^{j} d_{m}} \right \|_{2} \leq \omega_{k+1} \mu_j$, for $j\geq k+1$,
hence
\begin{equation*}
    x \in \left(\prod_{j=1}^k \omega_j \,  \mathcal{B} (0 \semcol \mu_j)\right) \times \left(\omega_{k+1} \,\prod_{j=k+1}^\infty   \mathcal{B} (0 \semcol \mu_j)\right).
\end{equation*}
It remains to show that $\omega \coloneqq (\omega_1, \dots, \omega_{k+1})\in\Omega$.
To this end, first note that (\ref{eq:1334}) and \eqref{eq:133422} directly imply $\jrdn^{\, 2\gamma} \omega_j^2 \in \mathbb{N}$, \text{ for all } $j \in \{1, \dots, k+1\}$.
Furthermore, we have 
\begin{align*}
    \left\| \omega \right\|_{2} 
    &= \left(\sum_{j=1}^{k} \jrdn^{-2\gamma}  \left\lceil \left\| \left\{\mu_j^{-1} \jrdn^{\, \gamma} x_{i}  \right\}_{i=\bar d_{j-1}+ 1}^{\bar d_j} \right\|_{2}^{2}  \right \rceil + \jrdn^{-2\gamma} \left\lceil \jrdn^{\, 2 \gamma} \sum_{j=k+1}^\infty  \sum_{i=1}^{d_j} \frac{ x^2_{\sum_{m=1}^{j-1} d_{m} +i}}{\mu_j^2}   \right \rceil \right)^{1/2} \\
    &\leq  \left(\sum_{j=1}^{\infty} \sum_{i=1}^{d_j} \frac{ x^2_{\sum_{m=1}^{j-1} d_{m} +i}}{\mu_j^2} + (k+1) \jrdn^{-2\gamma} \right)^{1/2} \\
    &\leq \|x\|_{2,2, \mu} + \jrdn^{-\gamma} \sqrt{k+1}
    \leq 1 + \jrdn^{-\gamma} \sqrt{k+1},
\end{align*}
where we set $\sum_{m=1}^{0} d_{m} = 0$ and used $\|x\|_{2, 2, \mu}\leq 1$ as a consequence of  $x\in {\mathcal{E}}_{2, 2}$.
This establishes that $\omega \in \Omega$ and thereby concludes the proof.

\subsection{Proof of Lemma~\ref{lem:propertieseffdim}}\label{sec:prooflempropertieseffdim}

Introducing the quantity 
\begin{equation*}
  a_{k,s} 
  \coloneqq \frac{s^k \bar \mu_k^{k}}{\mu_k^{k}}
  = \frac{s^k\prod_{n=1}^{k} \mu_n }{\mu_k^{k}},
  \quad \text{for } k\in \mathbb{N}^*,
\end{equation*}
the upper and lower effective dimensions can be rewritten according to 
\begin{equation*}
\ueffdim_s
= \min \left\{ k \in \mathbb{N}^* \mid N \left(\varepsilon\semcol \mathcal{E}_p, \|\cdot\|_q\right) \leq a_{k,s}\right\},
\end{equation*}
and 
\begin{equation*}
\leffdim_s 
= \max \left\{ k \in \mathbb{N}^* \mid N \left(\varepsilon\semcol \mathcal{E}_p, \|\cdot\|_q\right) >  a_{k,s} \right\}.
\end{equation*}
By the maximality of $\leffdim_s$, we have that 
\begin{equation*}
  N \left(\varepsilon\semcol \mathcal{E}_p, \|\cdot\|_q\right) \leq a_{\leffdim_s+1, s}
\end{equation*}
from which we deduce, by minimality of $\ueffdim_s$, that 
\begin{equation}\label{eq:poijhvuuuiuopp1}
  \ueffdim_s \leq \leffdim_s +1.
\end{equation}
Next, we note that
\begin{equation*}
  a_{k+1, s}
  = \frac{s^{k+1}\prod_{n=1}^{k+1} \mu_n }{\mu_{k+1}^{k+1}}
  = \frac{s^{k+1}\prod_{n=1}^{k} \mu_n }{\mu_{k+1}^{k}}
  \geq \frac{s^k\prod_{n=1}^{k} \mu_n }{\mu_k^{k}} =a_{k, s},
  \quad \text{for all } k\in \mathbb{N}^*.
\end{equation*}
The sequence $\{a_{k,s}\}_{k\in\mathbb{N}^*}$ is therefore non-decreasing.
Combining this observation with 
\begin{equation*}
  a_{\leffdim_s, s}
  < N \left(\varepsilon\semcol \mathcal{E}_p, \|\cdot\|_q\right) 
  \leq a_{\ueffdim_s,s},
\end{equation*}
we obtain 
\begin{equation}\label{eq:poijhvuuuiuopp2}
  \leffdim_s + 1 \leq  \ueffdim_s.
\end{equation}
Combining (\ref{eq:poijhvuuuiuopp1}) and (\ref{eq:poijhvuuuiuopp2}), establishes that $\leffdim_s + 1 = \ueffdim_s$.
Finally, using 
\begin{equation*}
   N \left(\varepsilon\semcol \mathcal{E}_p, \|\cdot\|_q\right) 
  \leq a_{\ueffdim_s, s}
  \quad \text{and} \quad 
  \lim_{\varepsilon \to 0} N \left(\varepsilon\semcol \mathcal{E}_p, \|\cdot\|_q\right) = \infty,
\end{equation*}
it follows that
\begin{equation*}
  \lim_{\varepsilon \to 0} a_{\ueffdim_s, s} = \infty
  \quad \text{and therefore} \quad 
  \lim_{\varepsilon \to 0} {\ueffdim_s}= \infty.
\end{equation*}

\subsection{Proof of Lemma~\ref{lem:1328}}\label{sec:proofoflemma1335}

Fix $\varepsilon >0$. We begin by taking the logarithm in the definition (\ref{eq: definition of underbar n pt1mg3}) of upper effective dimension to obtain
\begin{equation}\label{eq: estimate on logN logp pt1mg3U}
\frac{H \left(\varepsilon\semcol \mathcal{E}_p, \|\cdot\|_q \right)}{\ueffdim_s} 
\leq \log(s) + \frac{1}{\ueffdim_s }\sum_{n=1}^{\ueffdim_s} \log \left ( \frac{\mu_{n}}{\mu_{\ueffdim_s}} \right ).
\end{equation}
Likewise, based on the definition (\ref{eq: definition of bar n pt1mg3fst}) of lower effective dimension, we get
\begin{equation*}
\log(s) + \frac{1}{\leffdim_s}\sum_{n=1}^{\leffdim_s} \log \left ( \frac{\mu_{n}}{\mu_{\leffdim_s}} \right )
< \frac{H \left(\varepsilon\semcol \mathcal{E}_p, \|\cdot\|_q \right)}{\leffdim_s},
\end{equation*}
which, using $\ueffdim_s = \leffdim_s +1$ (see Lemma~\ref{lem:propertieseffdim}), leads to
\begin{equation}\label{eq:ubtrail2}
\log(s)+\frac{1}{\leffdim_s}\sum_{n=1}^{\leffdim_s} \log \left ( \frac{\mu_{n}}{\mu_{\leffdim_s}} \right )
\leq \frac{H \left(\varepsilon\semcol \mathcal{E}_p, \|\cdot\|_q \right)}{\ueffdim_s}\left(1+O_{\varepsilon \to 0} \left(\frac{1}{\ueffdim_s}\right)\right).
\end{equation}
We next analyze the right-hand side of (\ref{eq: estimate on logN logp pt1mg3U}) and the left-hand side of (\ref{eq:ubtrail2}) using the following lemma.

\begin{lemma}\label{lem: Convergence of Cesaro-mean; regularly varying case}
Let $b>0$ and let $\{\mu_n\}_{n\in \mathbb{N}^*}$ be a regularly varying sequence with index $-b$.
Then, we have 
\begin{equation*}
\lim_{N \to \infty}
\frac{1}{N}\sum_{n=1}^N \log \left(\frac{\mu_n}{\mu_N}\right)
=\frac{b}{\ln(2)}.
\end{equation*}
\end{lemma}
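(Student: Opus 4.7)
The plan is to convert the discrete sum into an integral on $(0,1]$ to which the standard toolbox for regularly varying functions applies. Let $\phi(t) = \mu_{\lfloor t \rfloor + 1}$, which by \cite[Theorem 1.9.5]{binghamRegularVariation1987} is regularly varying with index $-b$. Since $\phi \equiv \mu_n$ on $[n-1, n)$, one has $\int_{n-1}^{n} \log\phi(t)\,dt = \log\mu_n$, and therefore
\begin{equation*}
\frac{1}{N}\sum_{n=1}^N \log\!\left(\frac{\mu_n}{\mu_N}\right)
= \frac{1}{N}\int_0^N \log\!\left(\frac{\phi(t)}{\mu_N}\right) dt
= \int_0^1 \log\!\left(\frac{\phi(\alpha N)}{\mu_N}\right) d\alpha,
\end{equation*}
after the change of variable $t = \alpha N$.

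Next, I would identify the pointwise limit of the integrand on $(0,1]$. Regular variation gives $\phi(\alpha N)/\phi(N) \to \alpha^{-b}$ for each fixed $\alpha > 0$, while $\phi(N)/\mu_N = \mu_{N+1}/\mu_N \to 1$ by \cite[Lemma 1.9.6]{binghamRegularVariation1987}; hence the integrand converges pointwise to $-b\log\alpha$, whose integral on $[0,1]$ is $b/\ln(2)$.

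The remaining step is to justify the exchange of limit and integral via dominated convergence. For this I would invoke Potter's bounds \cite[Theorem 1.5.6]{binghamRegularVariation1987}: for any fixed $\delta \in (0,1)$ there exist $C>0$ and $N_0$ such that $\phi(\alpha N)/\phi(N) \leq C\alpha^{-(b+\delta)}$ whenever $N \geq N_0$ and $\alpha N \geq 1$. Since $\{\mu_n\}$, and hence $\phi$, is non-increasing, one also has $\phi(\alpha N) \geq \phi(N)$ for $\alpha \in (0,1]$, so the integrand is sandwiched between $\log(\phi(N)/\mu_N)$ and $\log C + (b+\delta)\log(1/\alpha) + \log(\phi(N)/\mu_N)$; the latter is integrable on $[0,1]$ and, up to a vanishing additive term, independent of $N$. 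The complementary sliver $\alpha \in [0, 1/N)$ contributes at most $O\!\left((\log N)/N\right)$ and is hence negligible. The main obstacle is thus localized: slow-variation convergence fails to be uniform near $\alpha = 0$, and Potter's bound is precisely the tool that supplies an integrable dominant there.
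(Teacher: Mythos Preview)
Your argument is correct and takes a genuinely different route from the paper. The paper splits $\log(\mu_n/\mu_N) = \log\bigl((\mu_n n^b)/(\mu_N N^b)\bigr) + b\log(N/n)$, computes $\frac{1}{N}\sum_{n=1}^N \log(N/n) \to 1/\ln 2$ via Stirling's formula, and then invokes a separate lemma asserting that $\frac{1}{N}\sum_{n=1}^N \log(\nu_n/\nu_N) \to 0$ for any slowly varying sequence $\{\nu_n\}$; that auxiliary lemma is in turn proved through the representation theorem for slowly varying sequences together with a further lemma bounding the $\limsup$ of geometric means. Your integral representation plus dominated convergence with Potter's bounds collapses this chain of lemmas into a single analytic step, at the price of invoking Potter's theorem as a black box rather than building everything from first principles. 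Two minor remarks: (i) the lemma as stated does not assume $\{\mu_n\}$ is non-increasing, and the paper's proof does not need monotonicity, whereas your lower bound $\phi(\alpha N) \geq \phi(N)$ does---this is harmless under the paper's standing convention on semi-axes, and in any case the two-sided Potter bound gives $\phi(\alpha N)/\phi(N) \geq C^{-1}$ on $(0,1]$ for $\alpha N$ large, so a uniform lower envelope is available without monotonicity; (ii) Potter's bound requires $\alpha N \geq X_0$ for some fixed $X_0$, not merely $\alpha N \geq 1$, so the sliver to be treated separately is $[0, X_0/N)$, but your $O((\log N)/N)$ estimate goes through unchanged.
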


\begin{proof}[Proof.]
See Appendix \ref{sec:prooflemconvcesmeanregvar}.
\end{proof}

\noindent
Using Lemma~\ref{lem: Convergence of Cesaro-mean; regularly varying case} in (\ref{eq: estimate on logN logp pt1mg3U}) and (\ref{eq:ubtrail2}), in combination with Lemma~\ref{lem:propertieseffdim}, yields
\begin{equation}\label{eq:nkfndlkvnsfdhjbvjsk}
\lim_{\varepsilon \to 0} \left(\frac{H \left(\varepsilon\semcol \mathcal{E}_p, \|\cdot\|_q \right)}{\ueffdim_s} - \log(s)\right)
=\frac{b}{\ln(2)}.
\end{equation}
We now require the following result.

\begin{lemma}\label{lme:1335199}
Let $b \geq 0$ and let $\varphi$ be a non-increasing regularly varying function with index $-b$. 
Let the sequences $\{a_{k}\}_{k \in \mathbb{N}^*}$ and $\{b_{k}\}_{k \in \mathbb{N}^*}$ be such that 
\begin{equation*}
\begin{cases}
\lim_{k \to \infty} a_{k} = \infty, \\
0 < \liminf_{k \to \infty} b_{k} \leq \limsup_{k \to \infty} b_{k} < \infty.
\end{cases}
\end{equation*}
Then, we have 
\begin{equation*}
\liminf_{k \to \infty} \frac{\varphi(a_k b_k)}{\varphi(a_k)} 
=\left(\limsup_{k\to\infty} b_k\right)^{-b}
\quad \text{and} \quad 
\limsup_{k \to \infty} \frac{\varphi(a_k b_k)}{\varphi(a_k)} 
= \left(\liminf_{k \to \infty} b_{k}\right)^{-b}.
\end{equation*}
\end{lemma}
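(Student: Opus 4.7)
The plan is to reduce Lemma~\ref{lme:1335199} to the uniform convergence theorem for regularly varying functions (see, e.g., \cite[Theorem~1.5.2]{binghamRegularVariation1987}), which states that if $\varphi$ is regularly varying with index $-b$ at infinity, then $\varphi(\lambda t)/\varphi(t) \to \lambda^{-b}$ as $t\to\infty$, uniformly for $\lambda$ in any compact subset of $(0,\infty)$. The hypothesis that $\varphi$ is non-increasing supplies the measurability required to invoke this theorem.

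First, I would fix $\eta\in (0,\liminf_k b_k)$ and use the definitions of $\liminf$ and $\limsup$ to produce $K_0\in\mathbb{N}^*$ such that
$b_k \in [\liminf_k b_k - \eta,\, \limsup_k b_k + \eta]$ for all $k\geq K_0$. Crucially, this interval is a compact subset of $(0,\infty)$. Then, given $\epsilon>0$, the uniform convergence theorem provides a threshold $T>0$ with
\begin{equation*}
\left| \frac{\varphi(\lambda t)}{\varphi(t)} - \lambda^{-b} \right| < \epsilon,
\quad \text{for all } t\geq T \text{ and all } \lambda \in [\liminf\nolimits_k b_k - \eta,\, \limsup\nolimits_k b_k + \eta].
\end{equation*}
Since $a_k\to\infty$, choosing $K_1$ such that $a_k\geq T$ for $k\geq K_1$ and substituting $t=a_k$, $\lambda=b_k$ yields
\begin{equation*}
b_k^{-b} - \epsilon < \frac{\varphi(a_k b_k)}{\varphi(a_k)} < b_k^{-b} + \epsilon,
\quad \text{for all } k\geq \max\{K_0,K_1\}.
\end{equation*}

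Next, I would take $\liminf_k$ and $\limsup_k$ of this two-sided bound and exploit that $x\mapsto x^{-b}$ is continuous and non-increasing on $(0,\infty)$ (trivially constant when $b=0$), so that $\liminf_k b_k^{-b}=(\limsup_k b_k)^{-b}$ and $\limsup_k b_k^{-b}=(\liminf_k b_k)^{-b}$. This produces
\begin{equation*}
(\limsup\nolimits_k b_k)^{-b} - \epsilon \leq \liminf_k \frac{\varphi(a_k b_k)}{\varphi(a_k)} \leq \limsup_k \frac{\varphi(a_k b_k)}{\varphi(a_k)} \leq (\liminf\nolimits_k b_k)^{-b} + \epsilon.
\end{equation*}
Letting $\epsilon\to 0$ then delivers both claimed identities simultaneously.

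I do not anticipate a serious obstacle here: the substantive content is the compactness of the closure of $\{b_k\}_{k\geq K_0}$ in $(0,\infty)$, which is secured directly by the assumption $0<\liminf_k b_k\leq \limsup_k b_k<\infty$. The only minor subtlety is the measurability prerequisite of the uniform convergence theorem, which the monotonicity of $\varphi$ settles at once.
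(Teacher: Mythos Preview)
Your argument is correct and takes a genuinely different route from the paper. The paper works directly from the pointwise definition of regular variation together with monotonicity: it extracts subsequences realizing $\liminf_k \varphi(a_kb_k)/\varphi(a_k)$ and $\limsup_k b_k$ separately, and uses the non-increasing property of $\varphi$ to sandwich $\varphi(a_{k_n}b_{k_n})/\varphi(a_{k_n})$ between $\varphi(a_{k_n}(b_{(+)}\pm\eta))/\varphi(a_{k_n})$, each of which tends to $(b_{(+)}\pm\eta)^{-b}$ by the definition of regular variation. You instead invoke the uniform convergence theorem as a black box and carry out the sandwich on the side of $b_k^{-b}$ rather than of $\varphi$; here the non-increasing hypothesis is only used to secure measurability. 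Your route is shorter and more conceptual; the paper's is more self-contained in that it needs nothing beyond the definition. One small point: your final displayed chain only records $(\limsup_k b_k)^{-b}-\epsilon \leq \liminf \leq \limsup \leq (\liminf_k b_k)^{-b}+\epsilon$, which after $\epsilon\to 0$ gives only two of the four inequalities needed for the two equalities. The missing pair comes from the same two-sided estimate by also taking $\liminf$ of the upper bound and $\limsup$ of the lower, so the conclusion is sound---just make that step explicit.
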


\begin{proof}[Proof.]
  See Appendix \ref{sec:prooflemgivenameplssssempai}.
\end{proof}

\noindent
Next, consider a sequence $\{\varepsilon_k\}_{k \in \mathbb{N}^*}$ satisfying $\varepsilon_k \to_{k\to\infty} 0$ and let $\{\ueffdim_s^{(k)}\}_{k \in \mathbb{N}^*}$ be the associated sequence of upper effective dimensions. 
Note that by \cite[Theorem 1.9.5 case (ii)]{binghamRegularVariation1987}, the step function $\phi$ is regularly varying with index $-b$.
Therefore, application of Lemma~\ref{lme:1335199} to the sequences 
\begin{equation*}
  a_{k} \coloneqq \ueffdim_s^{(k)}
  \quad \text{and} \quad
  b_{k} \coloneqq \frac{H \left(\varepsilon_k \semcol \mathcal{E}_p, \|\cdot\|_q \right)}{\ueffdim_s^{(k)}}
\end{equation*}
yields, when combined with (\ref{eq:nkfndlkvnsfdhjbvjsk}), 
\begin{equation*}
\limsup_{k\to\infty} \frac{\phi \left(H \left(\varepsilon_k \semcol \mathcal{E}_p, \|\cdot\|_q \right)\right)}{\phi \left(\ueffdim_s^{(k)}\right)} 
= \left(\liminf_{k\to\infty} \frac{H \left(\varepsilon_k \semcol \mathcal{E}_p, \|\cdot\|_q \right)}{\ueffdim_s^{(k)}}\right)^{-b}
= \left(\frac{b}{\ln(2)} +\log(s)\right)^{-b}
\end{equation*}
and
\begin{equation*}
\liminf_{k\to\infty} \frac{\phi \left(H \left(\varepsilon_k \semcol \mathcal{E}_p, \|\cdot\|_q \right)\right)}{\phi \left(\ueffdim_s^{(k)}\right)} 
= \left(\limsup_{k\to\infty} \frac{H \left(\varepsilon_k \semcol \mathcal{E}_p, \|\cdot\|_q \right)}{\ueffdim_s^{(k)}}\right)^{-b}
= \left(\frac{b}{\ln(2)} +\log(s)\right)^{-b}.
\end{equation*}
In particular, the limit exists, and we simply write
\begin{equation}\label{eq:1336}
\lim_{k\to\infty} \frac{\phi \left(H \left(\varepsilon_k \semcol \mathcal{E}_p, \|\cdot\|_q \right)\right)}{\phi \left(\ueffdim_s^{(k)}\right)} 
= \left(\frac{b}{\ln(2)} +\log(s)\right)^{-b}.
\end{equation}
Now, combining $\phi(n)=\mu_{n+1}$ and \cite[Lemma~1.9.6]{binghamRegularVariation1987} with (\ref{eq:1336}), we obtain
\begin{align}
\lim_{k\to\infty} \frac{\mu_{\ueffdim_s^{(k)}}}{\phi\left(H \left(\varepsilon_k\semcol \mathcal{E}_p, \|\cdot\|_q \right)\right)} 
&= \lim_{k\to\infty} \frac{\mu_{\ueffdim_s^{(k)}}}{\phi \left(\ueffdim_s^{(k)} \right)} \cdot \frac{\phi \left(\ueffdim_s^{(k)}\right)}{\phi\left(H \left(\varepsilon_k\semcol \mathcal{E}_p, \|\cdot\|_q \right)\right)}\label{eq:1336201}\\
&= \lim_{k\to\infty} \frac{\mu_{\ueffdim_s^{(k)}}}{\mu_{\ueffdim_s^{(k)}+1}} \cdot \lim_{k\to\infty} \frac{\phi \left(\ueffdim_s^{(k)} \right)}{\phi\left(H \left(\varepsilon_k\semcol \mathcal{E}_p, \|\cdot\|_q \right)\right)} \nonumber\\
&= \left(\frac{b}{\ln(2)} +\log(s)\right)^{b}.\label{eq:1336201nnn}
\end{align}

As the sequence $\{\varepsilon_k\}_{k \in \mathbb{N}^*}$ satisfies $\varepsilon_k \to_{k\to\infty} 0 $ but is arbitrary otherwise,  (\ref{eq:1336201})--(\ref{eq:1336201nnn}) implies 
\begin{equation*}
\lim_{\varepsilon \to 0} \frac{\mu_{\ueffdim_s}}{\phi \left(H \left(\varepsilon\semcol \mathcal{E}_p, \|\cdot\|_q \right)\right)}
  =\left(\frac{b}{\ln(2)} +\log(s)\right)^b.
\end{equation*}
The limit 
\begin{equation*}
\lim_{\varepsilon \to 0} \frac{\mu_{\leffdim_s}}{\phi \left(H \left(\varepsilon\semcol \mathcal{E}_p, \|\cdot\|_q \right)\right)}
  =\left(\frac{b}{\ln(2)} +\log(s)\right)^b
\end{equation*}
can be established analogously.

\subsection{Proof of Lemma~\ref{lem:1327}}\label{sec:proofintermediary bound1337}

We start with the lower bound (\ref{eq:lkjbkjbkjbckjbkebzhzzzqdfguyuy}) in Theorem~\ref{thm:profinitethm} to get
\begin{equation}\label{eq:hidogzhvjcabzefjhvbjsh}
N \left(\varepsilon\semcol \mathcal{E}_p^{\ueffdim_s}, \|\cdot\|_q\right)
\geq \varepsilon^{-\ueffdim_s} \left(V_{p,q, \ueffdim_s}\right)^{\ueffdim_s} \bar \mu_{\ueffdim_s}^{\ueffdim_s}.
\end{equation}
Combining (\ref{eq:hidogzhvjcabzefjhvbjsh}) with 
\begin{equation*}
N \left(\varepsilon\semcol \mathcal{E}_p, \|\cdot\|_q\right)
\geq N \left(\varepsilon\semcol \mathcal{E}_p^{\ueffdim_s}, \|\cdot\|_q \right),
\end{equation*}
we obtain
\begin{equation}\label{eq:lkjhgfghqdhjksdcq}
N \left(\varepsilon\semcol \mathcal{E}_p, \|\cdot\|_q \right)
\geq \varepsilon^{-\ueffdim_s} \left(V_{p,q, \ueffdim_s}\right)^{\ueffdim_s} \bar \mu_{\ueffdim_s}^{\ueffdim_s}.
\end{equation}
Moreover, from the definition of $\ueffdim_s$ in (\ref{eq: definition of underbar n pt1mg3}), one has 
\begin{equation*}
N \left(\varepsilon\semcol \mathcal{E}_p, \|\cdot\|_q \right) 
\leq \frac{s^{\ueffdim_s} \bar \mu_{\ueffdim_s}^{\ueffdim_s}}{\mu_{\ueffdim_s}^{\ueffdim_s}},
\end{equation*}
which, when combined with (\ref{eq:lkjhgfghqdhjksdcq}), yields the desired result according to
\begin{equation*}
s \varepsilon 
\geq V_{p,q, \ueffdim_s} \mu_{\ueffdim_s}.
\end{equation*}

\subsection{Proof of Lemma~\ref{lem:klmjhgfddddfhqsdfg}}\label{sec:prooflemklmjhgfddddfhqsdfg}

\noindent
With the sequence $\{\alpha_n\}_{n\in \mathbb{N}^*}$ as defined in statement (ii) of  
Lemma~\ref{lem:lemboundtriangleineqlike} when $p/(pb+1) < q < p$, and $\alpha_n=\mu_{n+1}$, for all $n\in\mathbb{N}^*$, when $p=q$,
and general $s > 1$, 
let us
first consider the case
\begin{equation}\label{eq:rezalkjhugyfss31}
\alpha_{\leffdim_s} \left(1+\frac{2^{1/p}}{\leffdim_s}\right) < \varepsilon,
\end{equation}
as $\varepsilon \rightarrow 0$.
Next, with a view towards application of Lemma~\ref{lem:lemboundtriangleineqlike}, consider the finite lattice 
\begin{equation*}
  \Omega_{s} 
  \coloneqq 
  \begin{cases}
    {(1,1)}, \quad &\text{if } p=\infty,\\[.2cm]
    \left\{\omega \in \mathbb{R}_+^{2} \mid \| \omega \|_{p} \leq 1+ \frac{2^{1/p}}{\leffdim_s} \text{ and } (\leffdim_s \omega_j)^p \in \mathbb{N}, \text{ for } j \in \{1,2\}\right\},
     &\text{otherwise,}
  \end{cases}
\end{equation*}
and note that
\begin{equation*}
  \# \Omega_{s} 
  \begin{cases}
    = 1, & \text{ if } p = \infty, \\
    \leq K \leffdim_s^{2p},
    \quad &\text{for some constant } K>0, \text{ if } p \neq \infty.
  \end{cases}
\end{equation*}
Using arguments similar to those employed in the proof of Lemma~\ref{lem: Mixed ellipsoid as a product of balls} together with the Minkowski inequality, 
one can establish the following inclusion relation according to 
Definition \ref{def:blockdecomp}, with $k=1$, $\bar{d}_1= \leffdim_s$, and $\bar{d}_{2}=\infty$,
\begin{equation*}
\mathcal{E}_{p} 
\subseteq \bigcup_{(\omega_1, \, \omega_2) \in \Omega_{s}}  \left(\omega_1 \mathcal{E}_p^{\leffdim_s}\right)\times \left(\omega_2 \mathcal{E}_p^{[2]}\right).
\end{equation*}
Next, note that (\ref{eq:rezalkjhugyfss31}) guarantees the existence of a $\rho > 0$ such that 
\begin{equation}\label{eq:profiniteeqqaedff31}
\varepsilon = \max_{(\omega_1, \omega_2) \in \Omega_{s}} \,  \left[(\omega_1\, \rho)^q  + (\omega_{2}\, \alpha_{\leffdim_s})^q \right]^{1/q},
\end{equation}
with the usual modification when $q=\infty$.
Application of case (i) in Lemma~\ref{lem:lemboundtriangleineqlike} when $p=q$, respectively case (ii) when $p/(pb+1) < q < p$, now yields
\begin{equation}\label{eqetctuvreaaa31}
  N \left(\varepsilon \semcol \mathcal{E}_p, \|\cdot\|_q\right)
  \leq (\# \Omega_{s}) \, N \left(\rho\semcol \mathcal{E}_p^{\leffdim_s}, \|\cdot\|_q\right).
\end{equation}

From now on we shall work with an $s=s^*>1$ which satisfies
\begin{equation}\label{eq:kojhudsiqhde31}
  \rho \leq ({s^*-1}) \,  \leffdim_{s^*}^{(1/q-1/p)} \mu_{\leffdim_{s^*}},
\end{equation}
but is arbitrary otherwise.
Such an $s^*$ exists as the left-hand side in (\ref{eq:kojhudsiqhde31}) is upper-bounded by $\varepsilon \leffdim_{s^*}$, $\leffdim_{s^*}$ decreases for increasing $s^*$, and the right-hand side approaches infinity as $s^* \rightarrow \infty$.
In particular, (\ref{eq:kojhudsiqhde31}) ensures that the hypothesis (\ref{eq:extraassumptionub2}) of the case \ref{item:iifinitedim} of Theorem~\ref{thm:profinitethm} is satisfied with $\eta = s^* - 1$, $d=\leffdim_{s^*}$, and hence allows to conclude that
\begin{eqnarray}
  \rho 
& \leq & \kappa(\leffdim_{s^*})\,s^* \left(N \left(\rho\semcol \mathcal{E}_p^{\leffdim_{s^*}}, \|\cdot\|_q\right)-1\right)^{-1/\leffdim_{s^*}} \leffdim_{s^*}^{(1/q-1/p)} \bar \mu_{\leffdim_{s^*}} \label{eq:lkanekrjvenzkjfgnvjrD11} \\
& = & \bar \kappa(\leffdim_{s^*})\,s^* N \left(\rho\semcol \mathcal{E}_p^{\leffdim_{s^*}}, \|\cdot\|_q\right)^{-1/\leffdim_{s^*}} \leffdim_{s^*}^{(1/q-1/p)} \bar \mu_{\leffdim_{s^*}}, \label{eq:lkanekrjvenzkjfgnvjrD12}
\end{eqnarray}
with 
\begin{equation*}
  \bar \kappa(\leffdim_{s^*})
  = 1+ o_{\leffdim_{s^*}\to \infty}(1)
  =1+ o_{\varepsilon \to 0}(1).
\end{equation*}
Inserting (\ref{eqetctuvreaaa31}) into (\ref{eq:lkanekrjvenzkjfgnvjrD11})--(\ref{eq:lkanekrjvenzkjfgnvjrD12}), we get
\begin{equation*}
\rho 
\leq \frac{\bar \kappa(\leffdim_{s^*})\,s^* (\# \Omega_{s^*})^{1/\leffdim_{s^*}}  \leffdim_{s^*}^{(1/q-1/p)} \bar \mu_{\leffdim_{s^*}}}{N \left(\varepsilon\semcol \mathcal{E}_p, \|\cdot\|_q\right)^{1/\leffdim_{s^*}}},
\end{equation*}
which, using the definition of lower effective dimension in (\ref{eq: definition of bar n pt1mg3fst}), allows us to infer that 
\begin{equation}\label{eq:kojhudsiqhde231}
\rho 
\leq \tilde \kappa(\leffdim_{s^*}) \, \leffdim_{s^*}^{(1/q-1/p)} \mu_{\leffdim_{s^*}},
\end{equation}
with
\begin{equation}\label{eq:kappacorrectscaleeps31}
\tilde \kappa(\leffdim_{s^*}) 
\coloneqq \bar \kappa(\leffdim_{s^*}) (\# \Omega_{s^*})^{1/\leffdim_{s^*}}
= 1 + o_{\varepsilon \to 0}(1) .
\end{equation}
Next, comparing (\ref{eq:kojhudsiqhde31}) and (\ref{eq:kojhudsiqhde231}), we conclude that $s^*$ can be chosen to satisfy
\begin{equation*}
  ({s^*-1}) = \tilde\kappa(\leffdim_{s^*}),
\quad \text{or equivalently,} \quad
s^* = 1+\tilde\kappa(\leffdim_{s^*}) = 2 + o_{\varepsilon \to 0}(1) .
\end{equation*}
Combining (\ref{eq:profiniteeqqaedff31}) with (\ref{eq:kojhudsiqhde231}) yields
\begin{equation}\label{eq:eefdksjhbnjdhvbjnbnhbsdvffdfee}
\varepsilon 
\leq \max_{(\omega_1, \omega_2) \in \Omega_{s^*}} \,  \left[(\omega_1\, \tilde \kappa(\leffdim_{s^*}) \, \leffdim_{s^*}^{(1/q-1/p)} \, \mu_{\leffdim_{s^*}})^q  + (\omega_{2}\, \alpha_{\leffdim_{s^*}})^q \right]^{1/q}.
\end{equation}
In the case $p/(pb+1) < q < p$, we can use (\ref{eq:ijankjvbhjbvjqookknsd}) to get 
\begin{equation*}
\varepsilon\leq \max_{(\omega_1, \omega_2) \in \Omega_{s^*}} \, \left[\omega_1^q + \omega_2^q\, \left(\frac{b}{\frac{1}{q}-\frac{1}{p}}  -1 \right)^{\frac{q}{p}-1}\right]^{1/q} \, \leffdim_{s^*}^{(1/q-1/p)} \mu_{\leffdim_{s^*}} \left(1+ o_{\varepsilon \to 0}(1)\right),  
\end{equation*}
which, upon application of Hölder's inequality, with parameters $p/q$ and $(1-q/p)^{-1}$, yields
\begin{align*}
    \left[\omega_1^q + \omega_2^q\, \left(\frac{b}{\frac{1}{q}-\frac{1}{p}}  -1 \right)^{\frac{q}{p}-1}\right]^{1/q}
    &\leq \left[\omega_1^p + \omega_2^p\right]^{1/p} \, \left[1+\left(\frac{b}{\frac{1}{q}-\frac{1}{p}}  -1 \right)^{-1}\right]^{(\frac{1}{q}-\frac{1}{p})} \\
    &= \left[\omega_1^p + \omega_2^p\right]^{1/p} \, \left(\frac{b}{b+\frac{1}{p}-\frac{1}{q}}   \right)^{(\frac{1}{q}-\frac{1}{p})}.
\end{align*}
Recalling that $ \| \omega \|_{p} \leq 1+ o_{\varepsilon \to 0}(1)$, for all $\omega \in \Omega_{s^*}$, and
letting 
\begin{equation*}
\gamma_{p,q,b}
\coloneqq   \left(\frac{b}{b+\frac{1}{p}-\frac{1}{q}}   \right)^{(\frac{1}{q}-\frac{1}{p})},
\end{equation*}
we obtain
\begin{equation}\label{eq:jskkjjbdcbjhsjtfgbvjdsbqjjhbqf}
    \varepsilon 
    \leq \gamma_{p,q,b} \, \leffdim_{s^*}^{(1/q-1/p)} \mu_{\leffdim_{s^*}} \left(1+ o_{\varepsilon \to 0}(1)\right).
\end{equation}
In the case $p=q$, the bound (\ref{eq:jskkjjbdcbjhsjtfgbvjdsbqjjhbqf}) follows directly from (\ref{eq:eefdksjhbnjdhvbjnbnhbsdvffdfee}) by noting that $\alpha_{\leffdim_{s^*}}=\mu_{\leffdim_{s^*+1}}$, $\mu_{\leffdim_{s^*+1}}=\mu_{\leffdim_{s^*}}(1+o_{\varepsilon \to 0}(1))$ thanks to \cite[Lemma 1.9.6] {binghamRegularVariation1987}, $\gamma_{p,q,b}=1$, $\leffdim_{s^*}^{(1/q-1/p)}=1$, and $ \| \omega \|_{q} = \| \omega \|_{p} \leq 1+ o_{\varepsilon \to 0}(1)$, for all $\omega \in \Omega_{s^*}$. Putting everything together, we have established the existence of $s^* > 1$ satisfying
\begin{equation*}
  s^* = 2 + o_{\varepsilon \to 0}(1)  \quad
  \text{ and such that } \quad
  \varepsilon \leq \gamma_{p,q, b} \,  \leffdim_{s^*}^{(1/q-1/p)}\, \mu_{\leffdim_{s^*}}\left(1+ o_{\varepsilon \to 0}(1)\right),
\end{equation*}
thereby concluding the proof in the case
$$
\varepsilon > \alpha_{\leffdim_{s^*}}\left(1+\frac{2^{1/p}}{\leffdim_{s^*}}\right), \quad \text{as } \varepsilon \rightarrow 0.
$$
Finally, for
$$
\varepsilon \leq \alpha_{\leffdim_{s^*}}\left(1+\frac{2^{1/p}}{\leffdim_{s^*}}\right), \quad \text{as } \varepsilon \rightarrow 0,
$$
the proof, in the case $p/(pb+1)<q<p$, follows directly upon recalling the definition (\ref{eq:ijankjvbhjbvjqookknsd}) of $\{\alpha_n\}_{n\in \mathbb{N}^*}$ and noting that
\begin{equation*}
  \left(\frac{b}{\frac{1}{q}-\frac{1}{p}}  -1 \right)^{(\frac{1}{p}-\frac{1}{q})} 
  = \left(\frac{\frac{1}{q}-\frac{1}{p}}{b+\frac{1}{p}-\frac{1}{q}} \right)^{(\frac{1}{q}-\frac{1}{p})} 
  < \left(\frac{b}{b+\frac{1}{p}-\frac{1}{q}} \right)^{(\frac{1}{q}-\frac{1}{p})} = \gamma_{p,q, b}.
\end{equation*}
For $p=q$, we again use $\gamma_{p,q,b}=1$, $\leffdim_{s^*}^{(1/q-1/p)}=1$, $\alpha_{\leffdim_{s^*}}=\mu_{\leffdim_{s^*+1}}$, and $\mu_{\leffdim_{s^*+1}}=\mu_{\leffdim_{s^*}}(1+o_{\varepsilon \to 0}(1))$ thanks to \cite[Lemma 1.9.6]{binghamRegularVariation1987}.

\subsection{Proof of Lemma~\ref{lem:klmjhgfddddfhqsdfgEEE}}\label{sec:prooflemklmjhgfddddfhqsdfgEEE}

The proof strategy essentially follows that of the proof of Lemma~\ref{lem:klmjhgfddddfhqsdfg}. We provide the detailed arguments as the required modifications are not obvious.

With the sequence $\{\alpha_n\}_{n\in \mathbb{N}^*}$ as defined in statement (iii) of  
Lemma~\ref{lem:lemboundtriangleineqlike} and general $s > 1$, let us assume, with a view towards contradiction, that
\begin{equation}\label{eq:rezalkjhugyfss31EEE}
\alpha_{\leffdim_s} \left(1+\frac{2^{1/p}}{\leffdim_s}\right) < \varepsilon,
\end{equation}
as $\varepsilon \rightarrow 0$.
Next, with the application of Lemma~\ref{lem:lemboundtriangleineqlike} in mind, we define the finite lattice 
\begin{equation}\label{eq:defomegacasepeqq31EEE}
  \Omega_{s} 
  \coloneqq 
  \begin{cases}
    {(1,1)}, \quad &\text{if } p=\infty,\\[.2cm]
    \left\{\omega \in \mathbb{R}_+^{2} \mid \| \omega \|_{p} \leq 1+ \frac{2^{1/p}}{\leffdim_s} \text{ and } (\leffdim_s \omega_j)^p \in \mathbb{N}, \text{ for } j \in \{1,2\}\right\},
     &\text{otherwise},
  \end{cases}
\end{equation}
and note that
\begin{equation*}
  \# \Omega_{s} 
  \begin{cases}
    = 1, & \text{ if } p = \infty, \\
    \leq K \leffdim_s^{2p},
    \quad &\text{for some constant } K>0, \text{ if } p \neq \infty.
  \end{cases}
\end{equation*}
Using arguments similar to those employed in the proof of Lemma~\ref{lem: Mixed ellipsoid as a product of balls} together with the Minkowski inequality, 
one can establish the following inclusion relation according to 
Definition \ref{def:blockdecomp}, with $k=1$, $\bar{d}_1= \leffdim_s$, and $\bar{d}_{2}=\infty$,
\begin{equation*}
\mathcal{E}_{p} 
\subseteq \bigcup_{(\omega_1, \, \omega_2) \in \Omega_{s}}  \left(\omega_1 \mathcal{E}_p^{\leffdim_s}\right)\times \left(\omega_2 \mathcal{E}_p^{[2]}\right).
\end{equation*}
Next, note that the assumption (\ref{eq:rezalkjhugyfss31EEE}) guarantees the existence of a $\rho > 0$ such that 
\begin{equation}\label{eq:profiniteeqqaedff31EEE}
\varepsilon = \max_{(\omega_1, \omega_2) \in \Omega_{s}} \,  \left[(\omega_1\, \rho)^q  + (\omega_{2}\, \alpha_{\leffdim_s})^q \right]^{1/q}.
\end{equation}
Application of Lemma~\ref{lem:lemboundtriangleineqlike} (iii) now yields
\begin{equation}\label{eqetctuvreaaa31EEE}
  N \left(\varepsilon \semcol \mathcal{E}_p, \|\cdot\|_q\right)
  \leq (\# \Omega_{s}) \, N \left(\rho\semcol \mathcal{E}_p^{\leffdim_s}, \|\cdot\|_q\right).
\end{equation}

From here on we shall work with an $s=s^*>1$ which satisfies
\begin{equation}\label{eq:kojhudsiqhde31EEE}
  \rho \leq ({s^*-1}) \,  \leffdim_{s^*}^{(1/q-1/p)} \mu_{\leffdim_{s^*}},
\end{equation}
but is arbitrary otherwise.
Such an $s^*$ exists as the left-hand side in (\ref{eq:kojhudsiqhde31EEE}) is upper-bounded by $\varepsilon \leffdim_{s^*}$, $\leffdim_{s^*}$ decreases for increasing $s^*$, and the right-hand side approaches infinity as $s^* \rightarrow \infty$.
In particular, (\ref{eq:kojhudsiqhde31EEE}) ensures that the hypothesis (\ref{eq:extraassumptionub2}) of the case \ref{item:iifinitedim} of Theorem~\ref{thm:profinitethm} is satisfied with $\eta = s^* - 1$, $d=\leffdim_{s^*}$, and hence allows to conclude that
\begin{eqnarray}
  \rho 
& \leq & \kappa(\leffdim_{s^*})\,s^* \left(N \left(\rho\semcol \mathcal{E}_p^{\leffdim_{s^*}}, \|\cdot\|_q\right)-1\right)^{-1/\leffdim_{s^*}} \leffdim_{s^*}^{(1/q-1/p)} \bar \mu_{\leffdim_{s^*}} \label{eq:lkanekrjvenzkjfgnvjrD11EEE} \\
& = & \bar \kappa(\leffdim_{s^*})\,s^* N \left(\rho\semcol \mathcal{E}_p^{\leffdim_{s^*}}, \|\cdot\|_q\right)^{-1/\leffdim_{s^*}} \leffdim_{s^*}^{(1/q-1/p)} \bar \mu_{\leffdim_{s^*}}, \label{eq:lkanekrjvenzkjfgnvjrD12EEE}
\end{eqnarray}
where 
\begin{equation*}
  \bar \kappa(\leffdim_{s^*})
  = 1+ o_{\leffdim_{s^*}\to \infty}(1)
  =1+ o_{\varepsilon \to 0}(1).
\end{equation*}
Inserting (\ref{eqetctuvreaaa31EEE}) into (\ref{eq:lkanekrjvenzkjfgnvjrD11EEE})--(\ref{eq:lkanekrjvenzkjfgnvjrD12EEE}), we get
\begin{equation*}
\rho 
\leq \frac{\bar \kappa(\leffdim_{s^*})\,s^* (\# \Omega_{s^*})^{1/\leffdim_{s^*}}  \leffdim_{s^*}^{(1/q-1/p)} \bar \mu_{\leffdim_{s^*}}}{N \left(\varepsilon\semcol \mathcal{E}_p, \|\cdot\|_q\right)^{1/\leffdim_{s^*}}},
\end{equation*}
which, using the definition of lower effective dimension in (\ref{eq: definition of bar n pt1mg3fst}), allows us to conclude that 
\begin{equation}\label{eq:kojhudsiqhde231EEE}
\rho 
\leq \tilde \kappa(\leffdim_{s^*}) \, \leffdim_{s^*}^{(1/q-1/p)} \mu_{\leffdim_{s^*}},
\end{equation}
with
\begin{equation}\label{eq:kappacorrectscaleeps31EEE}
\tilde \kappa(\leffdim_{s^*}) 
\coloneqq \bar \kappa(\leffdim_{s^*}) (\# \Omega_{s^*})^{1/\leffdim_{s^*}}
= 1 + o_{\varepsilon \to 0}(1) .
\end{equation}
Next, comparing (\ref{eq:kojhudsiqhde31EEE}) and (\ref{eq:kojhudsiqhde231EEE}), we conclude that $s^*$ can be chosen to satisfy
\begin{equation}\label{eq:kqlvflmqlm,fksvendfnvEEE}
  ({s^*-1}) = \tilde\kappa(\leffdim_{s^*}),
\quad \text{or equivalently,} \quad
s^* = 1+\tilde\kappa(\leffdim_{s^*}) = 2 + o_{\varepsilon \to 0}(1) .
\end{equation}
Combining (\ref{eq:profiniteeqqaedff31EEE}) with (\ref{eq:kojhudsiqhde231EEE}) and using $1/q-1/p=b$ owing to $q=p/(pb+1)$, yields
\begin{align*}
\varepsilon 
\leq \max_{(\omega_1, \omega_2) \in \Omega_{s^*}} \,  \left[(\omega_1\, \tilde \kappa(\leffdim_{s^*}) \, \leffdim_{s^*}^{b} \, \mu_{\leffdim_{s^*}})^q  + (\omega_{2}\, \alpha_{\leffdim_{s^*}})^q \right]^{1/q}.
\end{align*}
Now, we know, by Karamata's theorem (see \cite[Proposition 1.5.9b]{binghamRegularVariation1987}), that 
\begin{equation*}
 d \, \mu^{1/b}_{d}
    = o_{d\to\infty} \left(\alpha^{1/b}_d \right) \quad \text{and hence}   \quad 
    d^{b} \, \mu_{d}
    = o_{d\to\infty} \left(\alpha_d \right).
\end{equation*}
Consequently, in the regime $\varepsilon\to 0$, 
\begin{equation*}
    \varepsilon
    \leq \max_{(\omega_1, \omega_2) \in \Omega_{s^*}}[\omega_{2}\, \alpha_{\leffdim_{s^*}}]
    \leq \left(1+\frac{2^{1/p}}{\leffdim_{s^*}}\right)\alpha_{\leffdim_{s^*}},
\end{equation*}
which stands in contradiction to (\ref{eq:rezalkjhugyfss31EEE}), whenever $s^*$ is as in (\ref{eq:kqlvflmqlm,fksvendfnvEEE}).
This concludes the proof.

\subsection{Proof of Lemma~\ref{lem:relatpqtwomuneps}}\label{sec:proofkljhgfdwghfdfudzzz}

Let $\gamma \geq 2$, but otherwise arbitrary. Fix $\varepsilon > 0$, let $\tilde \varepsilon \coloneqq \varepsilon (1+\varepsilon^{(\gamma -1)/b})^{-1}$, define
$\nu \in \mathbb{N}^*$ to be the integer satisfying 
\begin{equation}\label{eq:rfhabhgrebzksbhdqvjoaret}
  \mu_{\nu+1} \leq \tilde \varepsilon < \mu_\nu,
\end{equation}
and take $\varepsilon$ small enough for $\nu \geq 81$ to hold.
With a view towards application of Theorem~\ref{thm:bound on mixed norm}, let us further set 
 $k \coloneqq \lfloor \sqrt{\nu} \rfloor$,   
 $d_n \coloneqq \lfloor \sqrt{\nu} \rfloor$, for $n \in \mathbb{N}^* \setminus \{k\}$, and $d_k \coloneqq \nu - \bar d_{k-1} \geq \lfloor \sqrt{\nu} \rfloor$, where $\bar{d}_{j}, j \in \{0,\dots,k+1\}$, is as defined in \eqref{eq:defdesbarres}.
In particular, we observe that $\bar d_k = \nu $ and $d_n \geq 9$, for all $n\in\mathbb{N}^*$.
It will also be convenient to define the semi-axes $\tilde \mu_n \coloneqq \mu_{(\sum_{j=1}^{n-1} d_{j})+1}$, for $n \in \mathbb{N}^*$, so that, owing to
$\{ \mu_n\}_{n\in\mathbb{N}^*}$ being non-increasing, we get 
\begin{equation}\label{eq:bhijavzohbeozzze}
  \mu_{(\sum_{j=1}^{n} d_{j})+i_1} \leq \tilde \mu_{n+1} \leq \mu_{(\sum_{j=1}^{n-1} d_{j})+i_2},
\end{equation}
for all $n \in \mathbb{N}^*$,  with $i_1\in\{1, \dots, d_{n+1}\}$ and $i_2\in\{1, \dots, d_{n}\}$.
We now consider the mixed ellipsoid $\tilde{\mathcal{E}}_{2, 2}$ with dimensions $\{ d_n\}_{n\in\mathbb{N}^*}$ and semi-axes $\{\tilde \mu_n\}_{n\in\mathbb{N}^*}$.
It follows from $\tilde{\mu}_{1}=\mu_{1}$ and \eqref{eq:bhijavzohbeozzze}, that ${\mathcal{E}}_{2} \subseteq \tilde{\mathcal{E}}_{2, 2}$, which, in turn, allows us to conclude that
\begin{equation}\label{eq:bjazrbghaiebhvjbqhjzeb1}
  N \left(\rho\semcol {\mathcal{E}}_{2}, \|\cdot\|_2 \right) 
  \leq N \left(\rho\semcol \tilde{\mathcal{E}}_{2, 2}, \|\cdot\|_2 \right),
  \quad \text{for all } \rho >0.
\end{equation}
Next, we note that
\begin{equation*}
  \tilde \mu_{k+1} =  \mu_{\bar d_k+1} = \mu_{\nu +1}
  \leq \tilde\varepsilon < \mu_{\nu} = \mu_{\bar d_k} \leq \tilde \mu_{k}.
\end{equation*}
Application of Theorem~\ref{thm:bound on mixed norm} now yields
\begin{equation}\label{eq:lkjhgffazfaghkfrehjkzvhv111}
  N \left(\tilde\varepsilon_\gamma\semcol \tilde{\mathcal{E}}_{2, 2}, \|\cdot\|_2 \right)^{1/\bar d_k}
  \leq  \frac{\kappa_k \prod_{j=1}^k \tilde{\mu}_j^{d_j/\bar d_k}}{\tilde \varepsilon_\gamma},
\end{equation}
with
\begin{equation}\label{eq:zsdtdssjhkgzqogeazjv}
  \tilde\varepsilon_\gamma
  \coloneqq \tilde\varepsilon \left({1+\bar d_k^{-\gamma}\sqrt{k+1}}\right).
\end{equation}
We observe that
\begin{equation*}
  \tilde \varepsilon_\gamma 
  \leq \varepsilon \, \frac{1+\sqrt{2} \, \nu^{\frac{1}{4}-\gamma}}{1+\varepsilon^{(\gamma -1)/b}}
  \leq  \varepsilon \, \frac{1+C\varepsilon^{(\gamma-\frac{1}{4})/b}}{1+\varepsilon^{(\gamma -1)/b}}
  \leq \varepsilon,
\end{equation*}
for some constant $C>0$ and $\varepsilon$ small enough. Hence,
\begin{equation}\label{eq:bjazrbghaiebhvjbqhjzeb2}
   N \left(\varepsilon\semcol \tilde{\mathcal{E}}_{2, 2}, \|\cdot\|_2 \right)
   \leq  N \left(\tilde\varepsilon_\gamma\semcol \tilde{\mathcal{E}}_{2, 2}, \|\cdot\|_2 \right).
\end{equation}
Combining (\ref{eq:bjazrbghaiebhvjbqhjzeb1}) and (\ref{eq:bjazrbghaiebhvjbqhjzeb2}), followed by application of
(\ref{eq:lkjhgffazfaghkfrehjkzvhv111}), yields
\begin{equation}\label{eq:lkjhgffazfaghkfrehjkzvhv11}
  N \left(\varepsilon\semcol {\mathcal{E}}_{2}, \|\cdot\|_2 \right)^{1/\bar d_k}
  \leq \frac{\kappa_k \prod_{j=1}^k \tilde{\mu}_j^{d_j/\bar d_k}}{\tilde \varepsilon_\gamma}.
\end{equation}
Starting from (\ref{eq:zsdtdssjhkgzqogeazjv}) and using the left inequality in (\ref{eq:rfhabhgrebzksbhdqvjoaret}), yields
\begin{equation}\label{eq:lkjhgffazfaghkfrehjkzvhv12}
 \tilde \varepsilon_\gamma^{-1}
 \leq  
 \mu_{\nu+1}^{-1}.
\end{equation}
Moreover, it holds that 
\begin{equation}\label{eq:lkjhgffazfaghkfrehjkzvhv13}
   \prod_{j=1}^k {\tilde{\mu}_j}^{d_j}
   = {\tilde{\mu}_1}^{d_1} \prod_{j=2}^k {\tilde{\mu}_j}^{d_j}
   \leq {{\mu}_1}^{d_1} \prod_{j=2}^k \prod_{i=1}^{d_{j-1}}  \mu_{\bar{d}_{j-2}+i}
   \leq \frac{{{\mu}_1}^{d_1}}{\mu_{\nu}^{d_k}} \prod_{j=1}^k \prod_{i=1}^{d_{j}}  \mu_{\bar{d}_{j-1}+i}
   = \frac{{{\mu}_1}^{d_1}}{\mu_{\nu}^{d_k}} \bar \mu^{\nu}_{\nu},
\end{equation}
where we used (\ref{eq:bhijavzohbeozzze}) and $\bar d_k = \nu$.
We proceed by noting that
the fraction on the right-hand side of (\ref{eq:lkjhgffazfaghkfrehjkzvhv13}) satisfies
\begin{equation}\label{eq:lkjhgffazfaghkfrehjkzvhv15}
  \left(\frac{{{\mu}_1}^{d_1}}{\mu_{\nu}^{d_k}}\right)^{1/\nu}
  = 1+O_{\nu \to \infty} \left(\frac{\log (\nu)}{\sqrt{\nu}}\right), 
\end{equation}
which is a consequence of $d_1 \simeq d_k \simeq \sqrt{\nu}$ and the regularly varying nature of the semi-axes.
Finally, it follows directly from (\ref{eq:jsqhkqoioiloio}) combined with the definitions of $k$ and $\jrdn$ that 
\begin{equation}\label{eq:lkjhgffazfaghkfrehjkzvhv14}
  \log\left(\kappa_k\right)
  = O_{\nu \to \infty} \left(\frac{\log (\nu)}{\sqrt{\nu}}\right),
  \quad \text{and, therefore,} \quad 
  \kappa_k
  = 1+O_{\nu \to \infty} \left(\frac{\log (\nu)}{\sqrt{\nu}}\right).
\end{equation}
Using (\ref{eq:lkjhgffazfaghkfrehjkzvhv12}), (\ref{eq:lkjhgffazfaghkfrehjkzvhv13}), (\ref{eq:lkjhgffazfaghkfrehjkzvhv15}), and (\ref{eq:lkjhgffazfaghkfrehjkzvhv14}) in (\ref{eq:lkjhgffazfaghkfrehjkzvhv11}), yields 
\begin{equation}\label{eq:kjnbvqkbefjzbvf}
 N \left(\varepsilon\semcol {\mathcal{E}}_{2}, \|\cdot\|_2 \right)^{1/\nu} \leq \frac{\bar \mu_\nu}{\mu_{\nu+1}}\left(1+O_{\nu \to \infty} \left(\frac{\log(\nu)}{\sqrt{\nu}}\right)\right).
\end{equation}
Next, we note that
$$
  \frac{\bar \mu_\nu^{\nu}}{\mu_{\nu+1}^{\nu}}
  = \frac{\bar \mu_{\nu+1}^{\nu+1}}{\mu_{\nu+1}^{\nu+1}},
$$  
which, when used in \eqref{eq:kjnbvqkbefjzbvf}, leads to
$$
 N \left(\varepsilon\semcol {\mathcal{E}}_{2}, \|\cdot\|_2 \right)^{1/(\nu+1)} \leq \frac{\bar \mu_{\nu+1}}{\mu_{\nu+1}}\left(1+O_{\nu \to \infty} \left(\frac{\log(\nu)}{\sqrt{\nu}}\right)\right).
$$
This can be reformulated as follows: There exist $K>0$ and $\nu^*$ such that, for all $\nu \geq \nu^*$,
\begin{equation}\label{eq:jkgozafjevzhsjkrtpzg2}
  N \left(\varepsilon\semcol {\mathcal{E}}_{2}, \|\cdot\|_2 \right)^{1/(\nu+1)} \leq \frac{\bar \mu_{\nu+1}}{\mu_{\nu+1}} \left(1+K\frac{\log(\nu)}{\sqrt{\nu}}\right)
  \eqqcolon \frac{\bar \mu_{\nu+1}}{\mu_{\nu+1}} s_{{\nu}}.
\end{equation}
In particular, we get, as a direct consequence of the definition (\ref{eq: definition of underbar n pt1mg3}) of upper effective dimension, 
that $\nu+1 \geq \ueffdim_{s_\nu}$. Since the semi-axes are non-increasing, we have 
\begin{equation*}
  \tilde \varepsilon < \mu_\nu \leq \mu_{\ueffdim_{s_\nu}-1}=\mu_{\leffdim_{s_\nu}},
  \quad \text{and hence } \quad
  \varepsilon \leq \mu_{\leffdim_{s_\nu}}\left(1+\varepsilon^{(\gamma-1)/b} \right).
\end{equation*}
To see that $s_\nu$ satisfies the desired asymptotic behavior, we combine (\ref{eq:rfhabhgrebzksbhdqvjoaret})
with 
the fact that the semi-axes are regularly varying with index $-b$. Finally, setting $\gamma =  \max\{1, b^2\}+1$ concludes the proof.

\subsection{Proof of Lemma~\ref{lem:kojihgfyfqsifgsquzehfv}}\label{sec:prooflemkojihgfyfqsifgsquzehfv}

We start by  writing 
 \begin{equation}\label{eq:kjhgfdfghjghgguyghybhbv3}
 \sum_{n=1}^{d} \log  \left(\frac{\mu_{n}}{\mu_{d}} \right)
 = \sum_{n=1}^{d} \log \left({\mu_{n}} \right) - d \, \log \left ({\mu_{d}} \right).
 \end{equation}
 Using (\ref{eq:jhkqdolfebzjbfqebtgbww}), it follows that
 \begin{equation}\label{eq:kjhgfdfghjghgguyghybhbv2}
\sum_{n=1}^{d} \log \left({\mu_{n}} \right)
= \sum_{n=1}^{d} \log \left(\frac{c_1}{n^{\alpha_1}} \right)
+ \sum_{n=1}^{d} \log \left(1+\frac{c_2}{c_1}n^{\alpha_1-\alpha_2} + o_{n \to \infty} \left(n^{\alpha_1-\alpha_2}\right) \right).
 \end{equation}
 We now treat each of the terms on the right-hand side of (\ref{eq:kjhgfdfghjghgguyghybhbv2}) separately.
 First, series-integral comparisons yield the upper bound
 \begin{align*}
 \sum_{n=1}^{d} \log \left(\frac{c_1}{n^{\alpha_1}} \right)
 &= d \, \log(c_1) - \alpha_1   \sum_{n=1}^{d} \log(n) \\
 &\leq d \, \log(c_1) - \alpha_1 \int_{1}^d \log(t)\, dt \\
 &= d \, \log(c_1) - \alpha_1 \left(d \log(d) - \frac{d-1}{\ln(2)}\right),
 \end{align*}
 and the lower bound
 \begin{align*}
 \sum_{n=1}^{d} \log \left(\frac{c_1}{n^{\alpha_1}} \right)
 &= d \, \log(c_1) - \alpha_1   \sum_{n=1}^{d} \log(n) \\
 &\geq d \, \log(c_1) - \alpha_1 \int_{1}^{d+1} \log(t)\, dt \\
 &= d \, \log(c_1) - \alpha_1 \left((d+1) \log(d+1) - \frac{d}{\ln(2)}\right).
 \end{align*}
 Combining these two bounds, we get 
 \begin{equation}\label{eq:kjhgfdfghjghgguyghybhbv1}
 \sum_{n=1}^{d} \log \left(\frac{c_1}{n^{\alpha_1}} \right)
 = - \alpha_1\, d \, \log(d) + \log\left({c_1}{e^{\alpha_1}}\right)\, d +O_{d\to\infty}\left(\log(d)\right).
 \end{equation}
 Next, using 
 \begin{equation*}
 \ln(1+x) = x + o_{x \to 0}(x),
 \end{equation*}
 we obtain
 \begin{equation*}
\sum_{n=1}^{d} \log \left(1+\frac{c_2}{c_1}n^{\alpha_1-\alpha_2} + o_{n \to \infty} \left(n^{\alpha_1-\alpha_2}\right) \right)
 =\sum_{n=1}^{d} \left(\frac{c_2}{c_1\ln(2)}n^{\alpha_1-\alpha_2} + o_{n \to \infty} \left(n^{\alpha_1-\alpha_2}\right) \right).
 \end{equation*}
Upon application of Lemma~\ref{lem:nksdcnjkdnqbvbbrytdioz}, we then get 
 \begin{equation*}
\sum_{n=1}^{d} \log \left(1+\frac{c_2}{c_1}n^{\alpha_1-\alpha_2} + o_{n \to \infty} \left(n^{\alpha_1-\alpha_2}\right) \right)
 =\sum_{n=1}^{d} \left(\frac{c_2}{c_1\ln(2)}n^{\alpha_1-\alpha_2} \right) + o_{d \to \infty} \left(d^{\alpha_1-\alpha_2+1}\right).
 \end{equation*}
 A series-integral comparison further yields
 \begin{equation}\label{eq:kjhgfdfghjghgguyghybhbv}
 \sum_{n=1}^{d} \log \left(1+\frac{c_2}{c_1}n^{\alpha_1-\alpha_2} + o_{n \to \infty} \left(n^{\alpha_1-\alpha_2}\right) \right)
 = \, \frac{c_2}{c_1\, \mathfrak{a}\, \ln(2)}d^{\mathfrak{a}} + o_{d \to \infty} \left(d^{\mathfrak{a}}\right),
 \end{equation}
  where we recall that $\mathfrak{a} = \alpha_1-\alpha_2+1$.
 Finally, combining
 \begin{align*}
 d \, \log \left ({\mu_{d}} \right)
 &= d\, \log \left(\frac{c_1}{d^{\alpha_1}} \right)
+d\, \log \left(1+\frac{c_2}{c_1}d^{\alpha_1-\alpha_2} + o_{d \to \infty} \left(d^{\alpha_1-\alpha_2}\right) \right)\\
&= d\, \log(c_1) - \alpha_1 \, d \, \log(d)+\frac{c_2}{c_1\ln(2) }d^{\mathfrak{a}} + o_{d \to \infty} \left(d^{\mathfrak{a}} \right), 
 \end{align*}
with (\ref{eq:kjhgfdfghjghgguyghybhbv3}), (\ref{eq:kjhgfdfghjghgguyghybhbv2}), (\ref{eq:kjhgfdfghjghgguyghybhbv1}), and (\ref{eq:kjhgfdfghjghgguyghybhbv}), the desired result follows.

\subsection{Proof of Lemma~\ref{lem:oijhbhjavqiaxvsregvfd}}\label{sec:prooflemmaoijhbhjavqiaxvsregvfd}

Starting from (\ref{eq:mljhgfhghgjgqsuk}), we can express $u$ according to
\begin{align}
u
&= \left(\frac{g(u) }{c_1} - \frac{c_2}{c_1{u}^{\alpha_2}} + o_{u \to \infty}\left(\frac{1}{{u}^{\alpha_2}}\right)\right)^{-\frac{1}{\alpha_1}}\label{eq:ajpnrvnzjknpkjdcap1}\\
&={c_1}^{1/\alpha_1}{g(u) }^{-1/\alpha_1}\left(1 - \frac{c_2}{g(u) {u}^{\alpha_2}} + o_{u \to \infty}\left(\frac{1}{g(u){u}^{\alpha_2}}\right)\right)^{-\frac{1}{\alpha_1}}.\label{eq:ajpnrvnzjknpkjdcap2}
\end{align}
In particular, we have 
\begin{equation}\label{eq:ajpnrvnzjknpkjdcap3}
u
={c_1}^{1/\alpha_1}{g(u) }^{-1/\alpha_1}\left(1 + o_{u \to \infty}\left(1\right)\right).
\end{equation}
Replacing $u$ in the terms $u^{\alpha_{2}}$ on the right-hand side of (\ref{eq:ajpnrvnzjknpkjdcap2}) by the right-hand side of
(\ref{eq:ajpnrvnzjknpkjdcap3}), it follows that 
\begin{align*}
u
&= {c_1}^{1/\alpha_1}{g(u)}^{-1/\alpha_1}\left(1 - \frac{c_2}{{c_1}^{\alpha_2/\alpha_1}g(u)^{1-\alpha_2/\alpha_1}} + o_{u \to \infty}\left(\frac{1}{g(u)^{1-\alpha_2/\alpha_1}}\right)\right)^{-\frac{1}{\alpha_1}}\\
&={c_1}^{1/\alpha_1}{g(u)}^{-1/\alpha_1}\left(1 + \frac{c_2}{\alpha_1{c_1}^{\alpha_2/\alpha_1}}g(u)^{\frac{\alpha_2-\alpha_1}{\alpha_1}} + o_{u \to \infty}\left(g(u)^{\frac{\alpha_2-\alpha_1}{\alpha_1}}\right)\right)\\
&=
{c_1}^{1/\alpha_1}{g(u)}^{-1/\alpha_1} 
+ \frac{c_2}{\alpha_1{c_1}^{\frac{\alpha_2-1}{\alpha_1}}}g(u)^{\frac{\alpha_2-\alpha_1-1}{\alpha_1}} 
+ o_{u \to \infty}\left(g(u)^{\frac{\alpha_2-\alpha_1-1}{\alpha_1}}\right),
\end{align*}
thereby concluding the proof.

\subsection{Proof of Lemma~\ref{lem:firstellipinfnorm}}\label{sec:prooflemfirstellipinfnorm}

An explicit $\varepsilon$-covering of $\mathcal{E}_\infty(\{\mu_n\}_{n \in \mathbb{N}^*})$ is obtained by covering each dimension $n\in \mathbb{N}^*$ individually with 
\begin{equation*}
\left\lceil \frac{2\mu_n}{2\varepsilon} \right\rceil
=\left\lceil \frac{\mu_n}{\varepsilon} \right\rceil
\end{equation*}
regularly spaced points.
This immediately yields 
\begin{equation}\label{eq:lkiuyftqedfezfre1}
N\left(\varepsilon\semcol \mathcal{E}_\infty(\{\mu_n\}_{n \in \mathbb{N}^*}), \|\cdot\|_\infty\right)
\leq \prod_{n=1}^\infty \left\lceil \frac{\mu_n}{\varepsilon} \right\rceil,
\end{equation}
for all $\varepsilon >0$.
Conversely, a covering in the sup$_{n \in \mathbb{N}^*}$-norm requires at least
\begin{equation*}
\left\lceil \frac{2\mu_n}{2\varepsilon} \right\rceil
=\left\lceil \frac{\mu_n}{\varepsilon} \right\rceil
\end{equation*}
points in each dimension,
which, in turn, allows us to conclude that
\begin{equation}\label{eq:lkiuyftqedfezfre2}
N\left(\varepsilon\semcol \mathcal{E}_\infty(\{\mu_n\}_{n \in \mathbb{N}^*}), \|\cdot\|_\infty\right)
\geq \prod_{n=1}^\infty \left\lceil \frac{\mu_n}{\varepsilon} \right\rceil.
\end{equation}
Taking the logarithm in (\ref{eq:lkiuyftqedfezfre1}) and (\ref{eq:lkiuyftqedfezfre2}), finalizes the proof.

\subsection{Proof of Lemma~\ref{lem:firstellipinfnorm2}}\label{sec:prooflemfirstellipinfnorm2}

We let $k_n \coloneqq \left\lceil \frac{\mu_n}{\varepsilon} \right\rceil$ and note that, owing to
\begin{equation}\label{eq:1339}
\sum_{n=1}^\infty \log \left(\left\lceil \frac{\mu_n}{\varepsilon} \right\rceil\right)
= \sum_{n=1}^\infty \log \left(k_n\right) 
= \sum_{n=1}^\infty \mathbbm{1}_{\{k_n\geq 2\}}\log \left(k_n\right),
\end{equation}
only values of $n \in \mathbb{N}^*$ with $k_n \geq 2$ need to be considered.
Given an integer $k\geq 2$,
the set of values $n \in \mathbb{N}^*$ for which $k_n = k$ can be characterized through 
\begin{equation*}
(k -1)\varepsilon < \mu_n \leq k \varepsilon,
\end{equation*}
or equivalently as
\begin{equation*}
  \left\{n\in \mathbb{N}^* |\, \mu_n > (k-1)  \varepsilon \right\} \setminus \left\{n\in \mathbb{N}^* |\, \mu_n > k  \varepsilon \right\}.
\end{equation*}
We can hence conclude that there are
$M_{k-1}(\varepsilon)- M_k(\varepsilon)$
integers $n$ for which $k_n=k$.
Using this observation in (\ref{eq:1339}), we obtain
\begin{equation*}
  \sum_{n=1}^\infty \log \left(\left\lceil \frac{\mu_n}{\varepsilon} \right\rceil\right)
  = \sum_{n=1}^\infty \mathbbm{1}_{\{k_n\geq 2\}}\log \left(k_n\right) 
  =\sum_{k=2}^\infty \log(k) \left(M_{k-1}(\varepsilon)- M_k(\varepsilon)\right),
\end{equation*}
which is the desired result.

\subsection{Proof of Lemma~\ref{lem:lbincorinfellnorminf}}\label{sec:prooflbincorinfellnorminf}

With
\begin{equation*}
d \coloneqq \left \lfloor \frac{c}{\varepsilon} \right \rfloor,
\end{equation*}
we have
\begin{equation}\label{eq:log-decomp}
\sum_{k=1}^\infty \log \left(1+\frac{1}{k} \right) \left(\left\lceil \left(\frac{c}{k\varepsilon}\right)^{1/b} \right\rceil -1\right) 
= 
\sum_{k=1}^d \log \left(1+\frac{1}{k} \right) \left\lceil \left(\frac{c}{k\varepsilon}\right)^{1/b} \right\rceil - \sum_{k=1}^d \log \left(1+\frac{1}{k} \right) .
\end{equation}
Next, we note that
\begin{equation}\label{eq:log-decomp2}
\sum_{k=1}^d \log \left(1+\frac{1}{k} \right) 
= \sum_{k=1}^d \left(\log \left(k+1 \right) -\log \left(k \right)\right) =
\log \left(d+1 \right).
\end{equation}
Now, observe that
\begin{equation}\label{eq:1139}
\sum_{k=1}^d \log \left(1+\frac{1}{k} \right) \left\lceil \left(\frac{c}{k\varepsilon}\right)^{1/b} \right\rceil 
\geq  \frac{c^{1/b}}{\varepsilon^{1/b}}\sum_{k=1}^d \frac{\log \left(1+\frac{1}{k} \right)}{k^{1/b}},
\end{equation}
where the right-hand side of (\ref{eq:1139}) can be decomposed according to
\begin{equation}\label{eq:decomprhscorinfell}
\sum_{k=1}^d \frac{\log \left(1+\frac{1}{k} \right)}{k^{1/b}}
= \sum_{k=1}^\infty \frac{\log \left(1+\frac{1}{k} \right)}{k^{1/b}} - \sum_{k=d+1}^\infty \frac{\log \left(1+\frac{1}{k} \right)}{k^{1/b}}.
\end{equation}
Employing the bound $\ln(1+x)\leq x$, for $x \geq 0$, we obtain
\begin{equation}\label{eq:decomprhscorinfell2}
\sum_{k=d+1}^\infty \frac{\log \left(1+\frac{1}{k} \right)}{k^{1/b}}
\leq \frac{1}{\ln(2)}\sum_{k=d+1}^\infty \frac{1}{k^{1/b+1}}.
\end{equation}
A sum-integral comparison yields
\begin{equation}\label{eq:decomprhscorinfell3}
\sum_{k=d+1}^\infty \frac{1}{k^{1/b+1}}
\leq \int_d^\infty \frac{dt}{t^{1/b+1}}
= \frac{b}{ d^{1/b}}
= O_{\varepsilon \to 0}\left(\varepsilon^{1/b}\right).
\end{equation}
Combining (\ref{eq:log-decomp})--(\ref{eq:decomprhscorinfell3}) and noting that $\log(d+1)=O_{\varepsilon \to 0}(\log(\varepsilon^{-1}))$, delivers the desired result.

\subsection{Proof of Lemma~\ref{lem: Convergence of Cesaro-mean; regularly varying case}}\label{sec:prooflemconvcesmeanregvar}

We first observe that
\begin{equation}\label{eq: quaquall}
\frac{1}{N}\sum_{n=1}^N \log \left(\frac{\mu_n}{\mu_N}\right)
= \frac{1}{N}\sum_{n=1}^N \left( \log \left(\frac{\mu_n n^b}{\mu_N N^b}\right) + b \log \left(\frac{N}{n}\right)\right),
\quad \text{for all } N \in \mathbb{N}^*.
\end{equation}
A direct application of Stirling's formula yields 
  \begin{equation}\label{eq:1110}
\frac{1}{N}\sum_{n=1}^N \ln \left(\frac{N}{n}\right)
= \frac{1}{N} \ln \left(\frac{N^N}{N!}\right)
=1 + O_{N\to \infty} \left(\frac{\ln N}{N}\right).
\end{equation}
Combining (\ref{eq: quaquall}) and (\ref{eq:1110}), we obtain
\begin{equation}\label{eq:kljnbhgfytxytfzdeuzferf}
\frac{1}{N}\sum_{n=1}^N \log \left(\frac{\mu_n}{\mu_N}\right)
=\frac{b}{\ln(2)} +  \frac{1}{N}\sum_{n=1}^N  \log \left(\frac{\mu_n n^b}{\mu_N N^b}\right) + O_{N\to \infty} \left(\frac{\log N}{N}\right).
\end{equation}
We next require the following lemma.

\begin{lemma}\label{lem: Convergence of Cesaro-mean; slowly varying case}
Let $\{\nu_n\}_{n\in \mathbb{N}^*}$ be a slowly varying sequence.
Then, we have
\begin{equation*}
\lim_{N \to \infty}
\frac{1}{N}\sum_{n=1}^N \log \left(\frac{\nu_n}{\nu_N}\right)
=0.
\end{equation*}
\end{lemma}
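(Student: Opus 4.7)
The plan is to exploit a Karamata-type representation for slowly varying sequences (see, e.g., \cite[Corollary 2.0.5]{binghamRegularVariation1987}): every positive slowly varying sequence $\{\nu_n\}_{n\in\mathbb{N}^*}$ can be written as
\begin{equation*}
\nu_n = c_n \exp\left(\sum_{k=1}^{n} \frac{\varepsilon_k}{k}\right),
\end{equation*}
where $c_n \to c \in (0, \infty)$ and $\varepsilon_k \to 0$. Taking logarithms, one then gets the clean identity
\begin{equation*}
\log\left(\frac{\nu_n}{\nu_N}\right) = \log\left(\frac{c_n}{c_N}\right) - \frac{1}{\ln 2}\sum_{k=n+1}^{N} \frac{\varepsilon_k}{k},
\quad \text{for } 1 \leq n \leq N,
\end{equation*}
which is what I will average over $n$.

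Summing over $n$ and dividing by $N$, the Cesàro mean splits into two contributions. The first one, $\frac{1}{N}\sum_{n=1}^N \log(c_n / c_N)$, equals $\frac{1}{N}\sum_{n=1}^N \log(c_n) - \log(c_N)$, and since $\log(c_n) \to \log(c)$ both terms tend to $\log(c)$ by the classical Cesàro mean theorem, so the difference vanishes. The second contribution is the key one, and here I interchange the order of summation, which gives
\begin{equation*}
\frac{1}{N}\sum_{n=1}^{N}\sum_{k=n+1}^{N} \frac{\varepsilon_k}{k} = \frac{1}{N}\sum_{k=2}^{N} \frac{k-1}{k}\,\varepsilon_k = \frac{1}{N}\sum_{k=2}^{N}\varepsilon_k - \frac{1}{N}\sum_{k=2}^{N}\frac{\varepsilon_k}{k}.
\end{equation*}
The first sum on the right tends to $0$ by Cesàro applied to $\varepsilon_k \to 0$, and the second is $O(\log N / N) = o(1)$ using the boundedness of $\{\varepsilon_k\}_{k\in\mathbb{N}^*}$. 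Combining these two observations finishes the proof.

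The main subtlety is invoking the sequence version of Karamata's representation cleanly; once this is in hand, the argument is a two-line order-of-summation manipulation followed by two applications of Cesàro. An alternative route, should one prefer to avoid the discrete representation theorem, is to pass to the step function $\phi(t) = \nu_{\lfloor t \rfloor}$, apply the standard Karamata representation for slowly varying functions, replace the Cesàro sum by the integral $\frac{1}{N}\int_0^N \log(\phi(t)/\phi(N))\, dt$ (controlling the error via a series-integral comparison), and then exchange the order of integration in $\int_0^N\!\int_t^N \frac{\varepsilon(s)}{s}\,ds\,dt = \int_0^N \varepsilon(s)\,ds$, which tends to $o(N)$ since $\varepsilon(s)\to 0$. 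Either route delivers the result.
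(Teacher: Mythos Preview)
Your proof is correct and takes a genuinely different route from the paper's. Both approaches start from a Karamata-type representation, but you use the additive form $\nu_n = c_n \exp\bigl(\sum_{k=1}^n \varepsilon_k/k\bigr)$ and compute the limit directly via one order-of-summation swap and two applications of the Ces\`aro mean theorem. The paper instead invokes the ``smoothed'' version (\cite[Theorem~1.9.8]{binghamRegularVariation1987}), writing $\nu_n = v_n(1+o(1))$ with $v_{n+1}/v_n = 1+o(1/n)$, and then proves only the one-sided bound $\limsup_{N} \frac{1}{N}\sum_n \log(v_n/v_N)\le 0$ by means of an auxiliary geometric-mean inequality (Lemma~\ref{lem:1341}); the matching $\liminf$ bound is obtained by applying the same argument to $1/\nu_n$. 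Your argument is shorter and avoids both the auxiliary lemma and the symmetry trick; the paper's route has the minor structural benefit that Lemma~\ref{lem:1341} is isolated as a standalone inequality. One small remark: double-check the exact reference for the discrete Karamata representation (the standard statements in \cite{binghamRegularVariation1987} are Theorems~1.3.1 and 1.9.7), though the result you invoke is certainly correct.
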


\begin{proof}[Proof.]
See Appendix \ref{sec:proofoflemmaconvcesaromeanslowly}.
\end{proof}

\noindent
Application of Lemma~\ref{lem: Convergence of Cesaro-mean; slowly varying case} to the sequence $\{\mu_n n^b\}_{n \in \mathbb{N}^*}$ in (\ref{eq:kljnbhgfytxytfzdeuzferf}) then finishes the proof.

\subsection{Proof of Lemma~\ref{lme:1335199}}\label{sec:prooflemgivenameplssssempai}

For convenience, let 
\begin{equation*}
b_{(-)} \coloneqq \liminf_{k \to \infty} b_k
\quad \text{and} \quad
b_{(+)} \coloneqq \limsup_{k \to \infty} b_k.
\end{equation*}
Next, choose a sequence $\{k_n\}_{n\in\mathbb{N}^*}$ satisfying 
\begin{equation*}
k_n \to_{n \to \infty} \infty
\quad \text{and} \quad  
\lim_{n\to\infty} \frac{\varphi(a_{k_n} b_{k_n})}{\varphi(a_{k_n})}
=\liminf_{k \to \infty} \frac{\varphi(a_{k} b_{k})}{\varphi(a_{k})} .
\end{equation*}
Such a sequence must exist by definition of the lim inf. 
Let $\eta \in (0,b_{(+)}) $ and choose $N_\eta \in \mathbb{N}^*$ such that
\begin{equation}\label{eq:dsjkofbsbkjvgfsffthklp}
b_{(+)} + \eta > b_{k_n},
\end{equation}
for all $n \geq N_\eta$. 
Such an $N_\eta$ exists by definition of the lim sup. 
Now, using that $k_n \to_{n\to\infty} \infty$ and $a_{n} \to_{n\to\infty} \infty$, we get $a_{k_n} \to_{n\to\infty} \infty$.
In particular, there exists $N_\eta' \in \mathbb{N}^*$ such that $a_{k_n}>0$, for all $n \geq N_\eta'$.
As $\varphi$ is non-increasing by assumption, it follows from (\ref{eq:dsjkofbsbkjvgfsffthklp}) that 
\begin{equation}\label{eq:jkfdsbjhbshbsiubqqqazeer}
\frac{\varphi(a_{k_n} b_{k_n})}{\varphi(a_{k_n})}
\geq \frac{\varphi\left(a_{k_n} \left(b_{(+)} + \eta\right)\right)}{\varphi(a_{k_n})},
\end{equation}
for all $n \geq \max\{N_\eta, N_\eta'\}$.
Moreover, since $\varphi$ is regularly varying with index $-b$, we have 
\begin{equation*}
\lim_{n\to\infty}\frac{\varphi\left(a_{k_n} \left(b_{(+)} + \eta\right)\right)}{\varphi(a_{k_n})}
= \left(b_{(+)} + \eta\right)^{-b}.
\end{equation*}
Therefore, taking the limit $n\to\infty$ in (\ref{eq:jkfdsbjhbshbsiubqqqazeer}), we obtain 
\begin{equation}\label{eq:134011}
\liminf_{k \to \infty} \frac{\varphi(a_{k} b_{k})}{\varphi(a_{k})} 
=\lim_{n\to\infty} \frac{\varphi(a_{k_n} b_{k_n})}{\varphi(a_{k_n})}
\geq \lim_{n\to\infty} \frac{\varphi\left(a_{k_n} \left(b_{(+)} + \eta\right)\right)}{\varphi(a_{k_n})}
=\left(b_{(+)} + \eta\right)^{-b}.
\end{equation}
Next, we choose a sequence $\{k'_n\}_{n\in\mathbb{N}^*}$ satisfying 
\begin{equation*}
k_n' \to_{n \to \infty} \infty
\quad \text{and} \quad  
\lim_{n\to\infty} b_{k_n'}
=b_{(+)} .
\end{equation*}
Such a sequence exists by definition of the lim sup.
In particular, there exists $M_\eta \in \mathbb{N}^*$ such that 
\begin{equation}\label{eq:dsjkofbsbkjvgfsffthklp2}
b_{(+)} - \eta < b_{k'_n},
\end{equation}
for all $n \geq M_\eta$.
Now, using that $k_n' \to_{n\to\infty} \infty$ and $a_{n} \to_{n\to\infty} \infty$, we get $a_{k'_n} \to_{n\to\infty} \infty$.
In particular, there exists $M_\eta' \in \mathbb{N}^*$ such that $a_{k'_n}>0$, for all $n \geq M_\eta'$.
As $\varphi$ is non-increasing by assumption, it follows from (\ref{eq:dsjkofbsbkjvgfsffthklp2}) that
\begin{equation}\label{eq:sjfkdlhbfjksnbkdjgnsgkbjsnfkjbn}
\frac{\varphi(a_{k'_n} b_{k'_n})}{\varphi(a_{k'_n})}
\leq \frac{\varphi\left(a_{k'_n} \left(b_{(+)} - \eta\right)\right)}{\varphi(a_{k'_n})},
\end{equation}
for all $n \geq \max\{M_\eta, M_\eta'\}$.
Moreover, since $\varphi$ is regularly varying with index $-b$ and $a_{k'_n} \to_{n\to\infty} \infty$, we have
\begin{equation*}
\lim_{n\to\infty} \frac{\varphi\left(a_{k'_n} \left(b_{(+)} - \eta\right)\right)}{\varphi(a_{k'_n})}
= \left(b_{(+)} - \eta\right)^{-b}.
\end{equation*}
Hence, taking the limit ${n\to\infty}$ in (\ref{eq:sjfkdlhbfjksnbkdjgnsgkbjsnfkjbn}), we obtain
\begin{equation}\label{eq:134011TT}
\lim_{n\to\infty} \frac{\varphi(a_{k'_n} b_{k'_n})}{\varphi(a_{k'_n})}
\leq\lim_{n\to\infty} \frac{\varphi\left(a_{k'_n} \left(b_{(+)} - \eta\right)\right)}{\varphi(a_{k'_n})}
= \left(b_{(+)} - \eta\right)^{-b}.
\end{equation}
Combining (\ref{eq:134011}) and (\ref{eq:134011TT}), yields
\begin{equation*}
\left(b_{(+)} + \eta\right)^{-b}
\leq \liminf_{k \to \infty} \frac{\varphi(a_{k} b_{k})}{\varphi(a_{k})} 
\leq \lim_{n\to\infty} \frac{\varphi(a_{k'_n} b_{k'_n})}{\varphi(a_{k'_n})}
\leq \left(b_{(+)} - \eta\right)^{-b},
\end{equation*}
for all $\eta \in (0,b_{(+)}) $. 
In particular, choosing $\eta$ arbitrarily small results in
\begin{equation*}
\liminf_{k \to \infty} \frac{\varphi(a_{k} b_{k})}{\varphi(a_{k})} 
=\left(\limsup_{k \to \infty} b_{k}\right)^{-b},
\end{equation*}
which is the first statement to be established. The proof of the second statement follows analogously and will hence not be detailed.

\subsection{Proof of Lemma~\ref{lem: Convergence of Cesaro-mean; slowly varying case}}\label{sec:proofoflemmaconvcesaromeanslowly}

Let $\{u_n\}_{\mathbb{N}^*}$ be a slowly varying sequence.
By inspection of the proof of \cite[Theorem~1.9.8]{binghamRegularVariation1987}, it then follows that 
there exists a sequence of positive real numbers $\{v_n\}_{\mathbb{N}^*}$ such that 
\begin{equation}\label{eq:gdgdkfjshbkjghhkjfgvdfrfd}
  u_n = v_n \left(1+o_{n\to\infty}\left(1\right)\right)
  \quad \text{and} \quad 
  \frac{v_{n+1}}{v_n}
  = 1+o_{n\to\infty}\left(\frac{1}{n}\right).
\end{equation}
In particular, setting $w_n=u_n/v_n - 1$, for all $n\in \mathbb{N}^*$, we have $w_n =o_{n\to\infty}(1)$ and therefore
\begin{align*}
  \limsup_{N \to \infty} \frac{1}{N}\sum_{n=1}^N \log \left(\frac{u_n}{u_N}\right)
   &\leq   \limsup_{N \to \infty} \frac{1}{N}\sum_{n=1}^N \log \left(\frac{v_n}{v_N}\right)
   + \limsup_{N \to \infty} \frac{1}{N}\sum_{n=1}^N \log \left(\frac{1+w_n}{1+w_N}\right)\\
   &\leq  \limsup_{N \to \infty} \frac{1}{N}\sum_{n=1}^N \log \left(\frac{v_n}{v_N}\right)
   + \limsup_{N \to \infty} \frac{1}{N}\sum_{n=1}^N w_n \\
   &=  \limsup_{N \to \infty} \frac{1}{N}\sum_{n=1}^N \log \left(\frac{v_n}{v_N}\right),
\end{align*}
where the last step is owing to 
Lemma~\ref{lem:nksdcnjkdnqbvbbrytdioz}.
Next, we prove that 
  \begin{equation}\label{eq:nsjkfnkjbnkjlkjqksndfnjhhqkd}
    \limsup_{N \to \infty} \frac{1}{N}\sum_{n=1}^N \log \left(\frac{v_n}{v_N}\right) \leq  0.
  \end{equation}
To this end, we need the following lemma.

\begin{lemma}\label{lem:1341}
Let $\{b_n\}_{n\in \mathbb{N}^*}$ be a sequence of positive real numbers.
Then, 
\begin{equation*}
\limsup_{n \to \infty} \frac{\left(\prod_{j=1}^n b_j \right)^{1/n}}{b_n}
\leq \limsup_{n \to \infty} \left(\frac{b_n}{b_{n+1}}\right)^{n}
\cdot \limsup_{n \to \infty} \frac{b_{n+1}}{b_n}.
\end{equation*}
\end{lemma}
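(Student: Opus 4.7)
The plan is to take logarithms and reduce the statement to a Cesàro-type averaging inequality, then exploit a simple compatibility between the two limit suprema on the right-hand side.

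Setting $c_n := (\prod_{j=1}^n b_j)^{1/n}/b_n$ and $r_k := \log(b_k/b_{k+1})$, I would first use the telescoping identity $\log(b_j/b_n) = \sum_{k=j}^{n-1} r_k$ and swap the order of summation to obtain
\begin{equation*}
\log c_n = \frac{1}{n}\sum_{j=1}^n \log(b_j/b_n) = \frac{1}{n}\sum_{k=1}^{n-1} k \, r_k.
\end{equation*}
Writing $A := \limsup_{n \to \infty} (b_n/b_{n+1})^n$ and $B := \limsup_{n \to \infty} b_{n+1}/b_n$, and noting that $\limsup_{k \to \infty} k r_k = \log A$ by continuity and monotonicity of $\exp$, the claim reduces to showing $\limsup_{n \to \infty} \log c_n \leq \log(AB)$.

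The next step is a standard Cesàro argument. Assuming $A < \infty$, for every $\varepsilon > 0$ there exists $N = N(\varepsilon)$ such that $k r_k \leq \log A + \varepsilon$ for all $k \geq N$. Splitting the sum at $N$, the first block $\sum_{k=1}^{N-1} k r_k$ is a fixed constant and hence contributes $O(1/n)$ after division by $n$, while the tail contributes at most $\frac{n-N}{n}(\log A + \varepsilon)$. Letting $n \to \infty$ and then $\varepsilon \to 0$ yields $\limsup_{n \to \infty} \log c_n \leq \log A$, i.e., $\limsup_{n \to \infty} c_n \leq A$.

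The final step closes the gap to $AB$ by the observation that $A < \infty$ forces $B \geq 1$: if $B < 1$ then eventually $b_{n+1}/b_n \leq (1+B)/2 < 1$, so that $(b_n/b_{n+1})^n \geq (2/(1+B))^n \to \infty$, contradicting finiteness of $A$. Combining with the previous step gives $\limsup_{n \to \infty} c_n \leq A \leq AB$, as required. The case $A = +\infty$ is immediate provided $B > 0$, with the degenerate combination $B = 0$ and $A = +\infty$ handled by the natural product convention (and is in any case not encountered in the application to Lemma~\ref{lem: Convergence of Cesaro-mean; slowly varying case}, where $A = B = 1$). The only genuinely delicate point is the Cesàro averaging, where one must verify that the uniform tail bound on $k r_k$ survives division by $n$; the remainder of the argument is bookkeeping.
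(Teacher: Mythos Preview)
Your proof is correct and takes essentially the same route as the paper's: both rest on the telescoping identity expressing $(\prod_j b_j)^{1/n}$ in terms of the sequence $(b_k/b_{k+1})^k$, followed by the standard Ces\`aro-type bound (in the paper's multiplicative form $\limsup_n (\prod_{j=1}^n a_j)^{1/n} \leq \limsup_n a_n$, in your additive form $\limsup_n \tfrac{1}{n}\sum_k k r_k \leq \limsup_k k r_k$). The only wrinkle is the final step --- the paper factors $c_n = \tfrac{(\prod_j b_j)^{1/n}}{b_{n+1}} \cdot \tfrac{b_{n+1}}{b_n}$ and bounds the two pieces by $A$ and $B$ separately, whereas you obtain $\limsup_n c_n \leq A$ directly and then invoke the compatibility observation $A<\infty \Rightarrow B \geq 1$; your intermediate bound is thus marginally sharper, but both arguments are equivalent for the application.
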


\begin{proof}[Proof.]
  A similar result was stated without proof in \cite[Eq. (4.7)]{luschgyFunctionalQuantizationGaussian2002}. For completeness, we provide a proof in Appendix \ref{sec:prooflemmabgg1343}.
\end{proof}

\noindent
Direct application of Lemma~\ref{lem:1341} with $b_j=v_j$ now yields
\begin{equation}\label{eq:13412504}
\limsup_{N \to \infty} \frac{\left(\prod_{n=1}^N v_n\right)^{1/N}}{v_N} 
\leq \limsup_{N \to \infty} \left(\frac{v_N}{v_{N+1}}\right)^{N} \cdot
\limsup_{N \to \infty} \frac{v_{N+1}}{v_N}.
\end{equation}
With the second part of (\ref{eq:gdgdkfjshbkjghhkjfgvdfrfd}), we get 
\begin{equation}\label{eq:1342}
\lim_{N \to \infty}\left(\frac{v_N}{v_{N+1}}\right)^{N}
=1
\quad \text{and} \quad
\lim_{N \to \infty}\frac{v_{N+1}}{v_N}
=1.
\end{equation}
Taking logarithms in (\ref{eq:13412504}) hence yields (\ref{eq:nsjkfnkjbnkjlkjqksndfnjhhqkd}) according to
\begin{align*}
\limsup_{N \to \infty} \frac{1}{N}\sum_{n=1}^N \log \left(\frac{v_n}{v_N}\right)
&\leq 
\log \left(\limsup_{N \to \infty} \left(\frac{v_N}{v_{N+1}}\right)^{N}\right)
+\log \left(\limsup_{N \to \infty} \frac{v_{N+1}}{v_N}\right)\\
&\stackrel{(\ref{eq:1342})}{=} 0.
\end{align*}
Putting everything together, we have established that 
\begin{equation}\label{eq:nsjkfnkjbnkjlkjqksndfnjhhqkd77}
  \limsup_{N \to \infty} \frac{1}{N}\sum_{n=1}^N \log \left(\frac{u_n}{u_N}\right) \leq  0.
\end{equation}

\noindent
Next, application of the representation theorem \cite[Theorem~1.9.7]{binghamRegularVariation1987} allows
us to conclude that with $\nu_n$ slowly varying $1/{\nu_n}$ is also slowly varying. We can hence 
evaluate (\ref{eq:nsjkfnkjbnkjlkjqksndfnjhhqkd77}) for $u_n = \nu_n$ and $u_n = 1/\nu_n$, for all $n \in \mathbb{N}^*$, to get, respectively,
\begin{equation*}
  \limsup_{N \to \infty} \frac{1}{N}\sum_{n=1}^N \log \left(\frac{\nu_n}{\nu_N}\right)
\leq 0
\quad \text{and} \quad 
\liminf_{N \to \infty} \frac{1}{N}\sum_{n=1}^N \log \left(\frac{\nu_n}{\nu_N}\right)
\geq 0,
\end{equation*}%
thereby concluding the proof.

\subsection{Proof of Lemma~\ref{lem:1341}}\label{sec:prooflemmabgg1343}

We first show that for an arbitrary sequence of positive real numbers $\{a_n\}_{n\in \mathbb{N}^*}$,
\begin{equation}\label{eq:kjfdkhsgjjhhhhbshvgfcfdgf}
\limsup_{n \to \infty} \left(\prod_{j=1}^n a_j \right)^{1/n}
\leq \limsup_{n \to \infty} a_n.
\end{equation}
To this end, fix $\eta > 0$ and let $k_\eta \in \mathbb{N}^*$ 
be such that, for all $k>k_\eta$, 
\begin{equation*}
a_{k} < \limsup_{n \to \infty} a_n + \eta.
\end{equation*}
Then, for all $k>k_\eta$, we have 
\begin{align*}
 \left(\prod_{j=1}^{k} a_j \right)^{1/{k}}
 &<\left(\left(\limsup_{n \to \infty} a_n + \eta\right)^{{k}-k_\eta}\prod_{j=1}^{k_\eta} a_j \right)^{1/{k}} \\
 &= \left(\limsup_{n \to \infty} a_n + \eta\right)\left(\frac{\prod_{j=1}^{k_\eta} a_j}{\left(\limsup_{n \to \infty} a_n + \eta\right)^{k_\eta}} \right)^{1/{k}}\\
&\xrightarrow[k\to \infty]{} \limsup_{n \to \infty} a_n + \eta.
\end{align*}
The inequality 
\begin{equation*}
 \limsup_{n \to \infty} \left(\prod_{j=1}^n a_j \right)^{1/n}
 < \limsup_{n \to \infty} a_n + \eta
\end{equation*}
hence holds for all $\eta>0$, which, in turn, implies (\ref{eq:kjfdkhsgjjhhhhbshvgfcfdgf}).

Next, set $a_j = (b_j/b_{j+1})^j$, for $j\in \mathbb{N}^*$, and note that
\begin{equation}\label{eq:1343250041}
\frac{\left(\prod_{j=1}^n b_j \right)^{1/n}}{b_{n+1}}
= \left(\prod_{j=1}^{n} a_j\right)^{1/n}, \quad \text{for all } n\in \mathbb{N}^*.
\end{equation}
Application of (\ref{eq:kjfdkhsgjjhhhhbshvgfcfdgf}) now yields
\begin{equation}\label{eq:1343250042}
\limsup_{n \to \infty} \left(\prod_{j=1}^{n} a_j\right)^{1/n} 
\leq \limsup_{n \to \infty} \, a_n
= \limsup_{n \to \infty} \left(\frac{b_n}{b_{n+1}}\right)^n.
\end{equation}
Combining (\ref{eq:1343250041}) and (\ref{eq:1343250042}), we obtain
\begin{equation*}
\limsup_{n \to \infty} \frac{\left(\prod_{j=1}^n b_j \right)^{1/n}}{b_{n+1}}
\leq \limsup_{n \to \infty} \left(\frac{b_n}{b_{n+1}}\right)^n,
\end{equation*}
and therefore
\begin{align*}
    \limsup_{n \to \infty} \frac{\left(\prod_{j=1}^n b_j \right)^{1/n}}{b_{n}}
    &\leq \limsup_{n \to \infty} \frac{\left(\prod_{j=1}^n b_j \right)^{1/n}}{b_{n+1}}\cdot \limsup_{n \to \infty} \frac{b_{n+1}}{b_n}\\
&\leq \limsup_{n \to \infty} \left(\frac{b_n}{b_{n+1}}\right)^n\cdot \limsup_{n \to \infty} \frac{b_{n+1}}{b_n},
\end{align*}
which concludes the proof.

\subsection{Statement and Proof of Lemma~\ref{lem:nksdcnjkdnqbvbbrytdioz}}

\begin{lemma}\label{lem:nksdcnjkdnqbvbbrytdioz}
Let $a>-1$ and let $\{u_i\}_{i\in\mathbb{N}^*}$ be such that $u_d = o_{d\to \infty}(d^{a})$.
Then, 
\begin{equation*}
\sum_{i=1}^d u_i = o_{d\to\infty}\left(d^{a+1}\right).
\end{equation*}
\end{lemma}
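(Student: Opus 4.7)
The plan is to use a standard split-the-sum argument based on the definition of little-$o$. Fix $\varepsilon > 0$. By hypothesis, there exists $N_\varepsilon \in \mathbb{N}^*$ such that $\lvert u_i \rvert \leq \varepsilon \, i^a$ for all $i \geq N_\varepsilon$. Then, for $d \geq N_\varepsilon$, I would decompose
\begin{equation*}
\left\lvert \sum_{i=1}^d u_i \right\rvert \leq \sum_{i=1}^{N_\varepsilon - 1} \lvert u_i \rvert + \varepsilon \sum_{i=N_\varepsilon}^d i^a.
\end{equation*}
The first sum is a constant $C_\varepsilon$ depending only on $\varepsilon$ (and the first $N_\varepsilon-1$ terms of the sequence), hence negligible compared to $d^{a+1}$ as $d \to \infty$, since $a+1 > 0$.

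For the second sum, I would use a standard series-integral comparison. Since $a > -1$, the map $t \mapsto t^a$ is integrable near $0$ (or monotone on $[1, \infty)$), and in both cases $a \geq 0$ and $-1 < a < 0$ one obtains
\begin{equation*}
\sum_{i=1}^d i^a \leq 1 + \int_1^{d+1} t^a \, dt = 1 + \frac{(d+1)^{a+1} - 1}{a+1} \sim_{d \to \infty} \frac{d^{a+1}}{a+1}.
\end{equation*}
(For $a \geq 0$ the bound follows from monotonicity of $t^a$; for $-1 < a < 0$ one can compare each term $i^a$ to $\int_{i-1}^i t^a dt$ for $i \geq 2$ and handle $i=1$ separately.) Hence there exists a constant $K_a > 0$, depending only on $a$, such that $\sum_{i=1}^d i^a \leq K_a \, d^{a+1}$ for all $d \geq 1$.

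Combining these two bounds, I obtain
\begin{equation*}
\limsup_{d \to \infty} \frac{1}{d^{a+1}} \left\lvert \sum_{i=1}^d u_i \right\rvert \leq \limsup_{d \to \infty} \frac{C_\varepsilon}{d^{a+1}} + \varepsilon K_a = \varepsilon K_a.
\end{equation*}
Since $\varepsilon > 0$ was arbitrary, the limsup equals $0$, which is exactly the claim $\sum_{i=1}^d u_i = o_{d \to \infty}(d^{a+1})$. There is no real obstacle: the only thing to watch is that the integral comparison works uniformly in the regime $-1 < a < 0$, which is why the hypothesis $a > -1$ is necessary (otherwise $\sum i^a$ would not grow like $d^{a+1}$).
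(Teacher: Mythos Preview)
Your proof is correct and follows essentially the same split-the-sum strategy as the paper: fix $\varepsilon$, separate the initial finitely many terms from the tail, and control the tail via a series-integral comparison using $a>-1$. Your version is arguably slightly cleaner in that you work with absolute values throughout and make the case distinction $a\geq 0$ versus $-1<a<0$ explicit, whereas the paper writes the bounds one-sidedly and absorbs the constant via the choice $\frac{a+1}{2}\varepsilon$; but the underlying idea is identical.
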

\noindent
\begin{proof}
Fix $\varepsilon > 0$ and take $m\in \mathbb{N}^*$ such that 
\begin{equation*}
u_i \leq \frac{a+1}{2} i^{a} \varepsilon, 
\quad \text{for all } i > m.
\end{equation*}
Such an $m$ is guaranteed to exist by the assumption $u_d = o_{d\to \infty}(d^{a})$.
Now, let $d\geq m$ and split up the sum under consideration according to 
\begin{equation}\label{eq:nvkdnaerkjqnvfksqcnjkn11}
\frac{1}{(d+1)^{a+1}} \sum_{i=1}^{d} u_i 
= \frac{1}{(d+1)^{a+1}} \sum_{i=1}^m u_i + \frac{1}{(d+1)^{a+1}} \sum_{i=m+1}^d u_i,
\end{equation}
with the convention that the second term on the right-hand side of \eqref{eq:nvkdnaerkjqnvfksqcnjkn11} is equal to zero for $d=m$.
We treat the two sums on the right-hand side of \eqref{eq:nvkdnaerkjqnvfksqcnjkn11} individually.
First, we note that there exists a $d^* \in \mathbb{N}^*$ such that 
\begin{equation}\label{eq:nvkdnaerkjqnvfksqcnjkn12}
 \frac{1}{(d+1)^{a+1}} \sum_{i=1}^m u_i \leq \frac{\varepsilon}{2},
 \quad \text{whenever } d\geq d^*.
 \end{equation} 
 Second, a series-integral comparison yields
 \begin{equation}\label{eq:nvkdnaerkjqnvfksqcnjkn13}
 \frac{1}{(d+1)^{a+1}} \sum_{i=m+1}^d u_i
 \leq  \frac{(a+1)\, \varepsilon}{2(d+1)^{a+1}}  \sum_{i=m+1}^d i^{a} 
 \leq  \frac{(a+1)\, \varepsilon}{2(d+1)^{a+1}}  \frac{(d+1)^{a+1}}{a+1} = \frac{\varepsilon}{2}.
 \end{equation}
 Using (\ref{eq:nvkdnaerkjqnvfksqcnjkn12}) and (\ref{eq:nvkdnaerkjqnvfksqcnjkn13}) in (\ref{eq:nvkdnaerkjqnvfksqcnjkn11}), establishes that for all $\varepsilon >0$ there exists a $d^* \in \mathbb{N}^*$ such that
\begin{equation*}
\sum_{i=1}^{d} u_i 
= \varepsilon (d+1)^{a+1} ,
 \quad \text{for all } d\geq d^*,
\end{equation*}
which yields the desired result upon noting that $\varepsilon$ can be taken arbitrarily small.
\end{proof}

\end{document}